\newcommand\twodigits[1]{%
  \ifnum#1<10 0\number#1 \else #1\fi
}
\newcommand{\N}{{\mathbb{N}}}
\newcommand{\Z}{{\mathbb{Z}}}
\newcommand{\Q}{{\mathbb{Q}}}
\newcommand{\R}{\mathbb{R}}			
\newcommand{\F}{{\mathbb F}}
\newcommand{\Fp}{{\mathbb{F}_p}}		
\newcommand{\PP}{{\mathbb P}}
\renewcommand{\AA}{{\mathbb A}}
\newcommand{\calH}{{\mathcal H}}
\newcommand{\calP}{{\mathcal P}}
\newcommand{\calQ}{{\mathcal Q}}
\newcommand{\calW}{{\mathcal W}}
\newcommand{\frakS}{{\mathfrak S}}
\newcommand{\frakH}{{\mathfrak H}}
\newcommand{\frakW}{{\mathfrak W}}
\newcommand{\id}{\mathrm{id}}
\newcommand{\res}{\mathrm{res}}
\newcommand{\Res}{\mathrm{Res}}
\newcommand{\ord}{\mathrm{ord}}
\newcommand{\Disc}{\mathrm{Disc}}
\newcommand{\Spec}{\mathrm{Spec}}
\newcommand{\rdeg}{\mathrm{rdeg}}
\newcommand{\Moore}[2]{\Delta_{#1}(\underline{#2})}
\newcommand{\ul}[1]{\underline{#1}}
\DeclareMathOperator{\coeff}{coeff}
\DeclareMathOperator{\Aut}{Aut}
\DeclareMathOperator{\Proj}{Proj}
\DeclareMathOperator{\GL}{GL}
\DeclareMathOperator{\sgn}{sgn}
\DeclarePairedDelimiter\floor{\lfloor}{\rfloor}
\newtheoremstyle{plain2}    
  {}            
  {}            
  {\itshape}    
  {}            
  {\bfseries}   
  {.}           
  {5pt plus 1pt minus 1pt}  
  {{\thmnumber{(#2)} \thmname{#1}{\thmnote{ (#3)}}}}          
\newcommand{\daniele}[1]{{\color{Red} \sf $\clubsuit\clubsuit\clubsuit$ Daniele: [#1]}}
 \numberwithin{equation}{section}
\newtheorem{theorem}[equation]{Theorem}
\newtheorem*{theorem*}{Theorem}
\newtheorem{corollary}[equation]{Corollary}
\newtheorem{lemma}[equation]{Lemma}
\newtheorem{proposition}[equation]{Proposition}
\newtheorem*{proposition*}{Proposition}
\theoremstyle{definition}
\newtheorem{definition}[equation]{Definition}
\newtheorem*{definition*}{Definition}
\newtheorem{example}[equation]{Example}
\newtheorem*{question}{Question}
\theoremstyle{remark}
\newtheorem{remark}[equation]{Remark}
\newtheorem{conv}[equation]{Convention}
\newtheoremstyle{stepstyle}
  {}     {}   
  {\normalfont}  
  {\parindent}       
  {\itshape} 
  {}         
  {5pt plus 1pt minus 1pt} 
  {{\thmname{#1} \thmnumber{#2}:{\thmnote{#3}}}}          
\theoremstyle{stepstyle}
\newtheoremstyle{point}
  {}     {}   
  {\normalfont}  
  {}       
  {\bfseries} 
  {}         
  {5pt plus 1pt minus 1pt} 
  {{\thmname{#1}\thmnumber{#2}.\thmnote{ #3.}}}          
\theoremstyle{point}
\newtheorem{point}[subsection]{}
\newtheoremstyle{point*}
  {}     {}   
  {\normalfont}  
  {}       
  {\bfseries} 
  {}         
  {5pt plus 1pt minus 1pt} 
  {{\thmname{#1}\thmnote{ #3.}}}          
\theoremstyle{point*}
\newtheorem{point*}[subsubsection]{}
\title{On the arithmetic and geometry of spaces $L_{m+1,n}$}
\author{Michel Matignon} 
\author{Guillaume Pagot}
\author{Daniele Turchetti}
\date{\today}
\@date \else {\vskip3ex \centering\footnotesize\@date\par\vskip1ex}\fi
\else \@footnotetext{\@setdate}\fi}
\begin{document}

\begin{abstract}
Let $p$ be a prime number. Motivated by the local lifting problem for $(\Z/p\Z)^n$ with $n>1$, we prove several new results on certain $\F_p$-vector spaces of logarithmic differential forms on the projective line in characteristic $p$, called \emph{spaces $L_{m+1,n}$}.
Expanding the previous work by the first two authors, we prove positive and negative results for the existence of spaces $L_{m+1,n}$ in many situations.
Moreover, we classify all spaces $L_{4p,2}$ for any $p$, and all spaces $L_{15,2}$ for $p=3$.
Among the novel tools we use, Moore determinants and computational algebra play a prominent role.
\end{abstract}

\maketitle

\setcounter{tocdepth}{1}
\section{Introduction}

Algebraic geometry in positive characteristic is a rich subject that has been intensively studied ever since the foundations of algebraic geometry were rigorously established.\footnote{Most notably, studying varieties over finite fields was the main motivation that prompted Weil to write his seminal book \emph{Foundations of Algebraic Geometry} \cite{Weil46}. Weil's theory was subsequently generalized extensively by Grothendieck into the theory of schemes, which forms the foundation of modern algebraic geometry.}
As a result, a plethora of positive characteristic methods have been developed. 
These mathematical tools not only unlock novel insights into the geometry of algebraic varieties but also have surprising applications to other disciplines.
For example, they form the foundations for the development of advanced coding techniques and cryptographic protocols used nowadays. 
Moreover, the richness of algebraic geometry in positive characteristic has been also exploited to prove results in characteristic zero.
Roughly speaking, this is made possible by the use of two complementary procedures: \emph{lifting to characteristic $0$}, that assigns an object in characteristic zero with a given object in positive characteristic, and \emph{reduction modulo $p$}, that assigns an object in positive characteristic with a given object in characteristic zero.

In this paper, we focus on a particular phenomenon in positive characteristic, namely the existence of the so-called \emph{spaces} $L_{m+1,n}$.
\begin{definition*}
Given $k$ an algebraically closed field of characteristic $p>0$ and strictly positive integers $n,m \in \N$, a set $\Omega$ of differential forms on the curve $\PP^1_k$ 
is called a \emph{space $L_{m+1,n}$} if it satisfies the following conditions
\begin{itemize}
\item The set $\Omega$ is a $n$-dimensional vector space over $\F_p$;
\item Every $\omega \in \Omega - \{0\}$ is logarithmic;
\item Every $\omega \in \Omega - \{0\}$ has a unique zero, which is at $\infty$ and has order $m-1$.
\end{itemize}
\end{definition*}

Classically, the motivation for studying spaces $L_{m+1,n}$ arose from
the following local lifting problem:

Let $G$ be a finite group, and $k[\![z]\!]/k[\![t]\!]$ a $G$-galois
extension. Does there exist a finite extension $R$ of the ring of Witt
vectors $W(k)$ with uniformizer $\pi$ and a $G$-galois extension
$R[\![Z]\!]/R[\![T]\!]$ such the $G$ action on $R[\![Z]\!]$ reduces
modulo $\pi $ to the given $G$ action
on $k[\![z]\!]$?

If this problem has a solution for a $G$-extension
$k[\![z]\!]/k[\![t]\!]$, we say that the extension \emph{lifts to
characteristic $0$} and that $R[\![Z]\!]/R[\![T]\!]$ is a \emph{lift} of the extension.
In this case, we can consider the cover $f:~\Spec(R[\![Z]\!])\longrightarrow \Spec (R[\![T]\!])$: it is unramified at the prime ideal $(\pi)$ and the branch points of $R[\![Z]\!]/R[\![T]\!]$ are defined as the divisors $b$ of $\Spec (R[\![T]\!])$ such that $f$ is ramiﬁed at $f^{-1}(b)$.
After extending $R$, we can assume that all branch points of this cover are $R$-rational. Then, the set of these branch points is also called the \emph{branch locus} of $R[\![Z]\!]/R[\![T]\!]$.

The local lifting problem has a relatively long tradition and can be studied from many angles: the interested reader is referred to the surveys \cite{Obus12a} and \cite{Obus19} for a thorough discussion.
In this paper we are interested in the elementary
abelian case, that is when $G\cong (\Z/p\Z)^n$. 
When $n=2$, \cite[Theorem 5.1]{GreenMatignon98} gives necessary and sufficient conditions for lifting, involving the ramification filtration and combinatorics of the branch locus of lifts of intermediate p-cyclic extensions. 
This was recently generalized by Yang \cite{Yang25} to the case $n>2$, and was used as a guide to build lifts for some $(\Z/2\Z)^3$-extensions.
Subsequent work of Pagot \cite{Pagot25}, combining Yang's results and the notion of \emph{Hurwitz trees}, goes further, solving the local lifting problem for $p=2$ and $n=2,3$.

The investigation of spaces $L_{m+1,n}$ of the present paper results in necessary and sufficient conditions for the existence of lifts of $(\Z/p\Z)^n$-extensions with branch locus of a very specific type, for arbitrary $p$ and $n>1$.
In fact, thanks to the results obtained in \cite{GreenMatignon99}, \cite{HenrioThese} (for $n=1$), \cite{Matignon99} and \cite{PagotThese} (for $n>1$), we know that the existence of a space $L_{m+1, n}$ for a given triple $(p,n,m)$ is equivalent to the existence of a lift of a $(\Z/p\Z)^n$--~extension $k[\![z]\!]/k[\![t]\!]$ with a unique ramification jump of conductor $m$ and an \emph{equidistant branch locus}, that is, the branch locus $\{(T-t_i) \in \Spec(R[\![T]\!]) : i \in I\}$ is such that the set $\{v_R(t_i-t_j) : i,j\in I, i\neq j \}$ consists of a unique element.
Up to change of the parameter $T$, we can always assume that there exists a unique integer $r > 0$ such that $v_R(t_i-t_j)=v_R(t_i)=r$ for every $i, j\in I$ with $i\neq j$.
Given such a lift, one can build a space $L_{m+1,n}$ whose set of poles is $\{\frac{t_i}{\pi^r}\mod \pi: 0\leq i \leq m\} \subset k$.
Conversely, for every space $L_{m+1, n}$, a construction by Pagot (\cite[Th\'eor\`eme 11]{Pagot02}) shows the existence of a lift with a unique ramification jump of conductor $m$ and an equidistant branch locus that induces the given space $L_{m+1, n}$.
Moreover, for such a lift to exist, we know that $m+1$ is necessarily of the form $\lambda p^{n-1}$ for some $\lambda>0$, a fact that leads naturally to the following question
\begin{question}
For which triples $(p,n,\lambda)$ does there exist a space $L_{\lambda p^{n-1},n}$?
\end{question}
 
 In this paper we make progress on this largely open question, as well as introduce tools that can be used to see it in a new light.
 Let us recall what was known prior to this paper: if $n=2$ and $p=2$ there are spaces $L_{2\lambda,2}$ for every $\lambda>0$ (\cite[Th\'eor\`eme 8]{Pagot02}).
If $n=2$ and $p\geq 3$ then there are no spaces $L_{p,2}$ and no spaces $L_{3p,2}$, while spaces $L_{2p,2}$ exist if, and only if $p=3$ (\cite[Th\'eor\`eme 9]{Pagot02}).
If $n=2$ and $\lambda \geq 4$ the only examples of spaces $L_{\lambda p,2}$ that were known before this paper have the special property that $\lambda$ is a multiple of $p-1$ and obey a strict geometric constraint. Similarly, in the case $n \geq 3$ the known examples (see Section \ref{sec:standard}) satisfy $p-1 | \lambda$ and are of a very special nature.

Our contributions to the question above solve the problem of existence of spaces $L_{\lambda p^{n-1},n}$ in four cases:
\begin{itemize}
\item The case $n=2$, $p=3$ and $\lambda=4,5$;
\item The case $n=2$ and $p >3 \lambda$;
\item The case $n=2$ and $\lambda=4$;
\item The case $n\geq 2$ and $p=2$.
\end{itemize}
More precisely, we provide a complete classification of spaces $L_{\lambda p, 2}$ for $p=3$ and $\lambda = 4,5$, we show that for $p>3\lambda$ there are no spaces $L_{\lambda p,2}$, and we show that spaces $L_{4p,2}$ exist only for $p\leq 5$, and that for $p=5$ they all are of a very special kind.
Moreover, we prove new results on the existence of spaces $L_{\lambda p^{n-1},n}$ for $n \geq 3$ that, when applied to the case $p=2$, lead to construction of large classes of spaces $L_{\lambda 2^{n-1}, n}$ when $\lambda$ is either even or congruent to $1$ modulo $2^n-2$. 
In proving the results above, we make use of techniques that have not been exploited in this context before, and that we believe to be of independent interest, such as computational commutative algebra, \'etale pullbacks, and Moore determinants.
The first main result presented in the paper is the case $n=2$ and $p >3 \lambda$.

\begin{theorem*}[cf. Theorem \ref{thm:generic}]
There are no spaces $L_{\lambda p,2}$ when $p>3\lambda$.
\end{theorem*}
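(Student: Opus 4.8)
The plan is to turn the existence of a space $L_{\lambda p,2}$ into a rigid system of polynomial equations in a single auxiliary variable and then show that, once $p>3\lambda$, the logarithmic condition forces a polynomial identity of degree at most $3\lambda$ to hold at all $p$ points of $\Fp$, hence identically — which collides with the data defining the space.

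First I would reformulate. Fix an $\Fp$-basis $\omega_1,\omega_2$. Since every nonzero $\omega\in\Omega$ has a single zero of order $\lambda p-2$ at $\infty$ and otherwise only simple poles, the degree of a differential on $\PP^1$ being $-2$ forces $\omega$ to have exactly $\lambda p$ poles. Writing $\omega_i=N_i\,dx/P$ with $P=\prod_{b\in B}(x-b)$ over the pole set $B$, one checks $\deg N_i=\lambda$ and, by a residue-incidence count, $|B|=(p+1)\lambda$. The leading coefficients $c_1,c_2$ of $N_1,N_2$ must be $\Fp$-linearly independent: otherwise some nonzero $\Fp$-combination $sN_1+tN_2$ drops degree, so $s\omega_1+t\omega_2$ has a zero at $\infty$ of the wrong order. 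The map $\phi=N_2/N_1$ then has degree $\lambda$, and the requirement that each $sN_1+tN_2$ be separable with all roots in $B$ says precisely that $B=\phi^{-1}(\PP^1(\Fp))$ and that $\phi$ is étale over $\PP^1(\Fp)$ (here $p>\lambda$ guarantees tameness and reduced fibers). The product of the $p+1$ linear forms over $\PP^1(\Fp)$ is a Moore determinant, $\prod_{[s:t]}(sN_1+tN_2)=\mathrm{const}\cdot(N_1^pN_2-N_1N_2^p)$, so the whole pole/zero structure collapses to the single identity $N_1^pN_2-N_1N_2^p=\gamma P$, equivalently $\phi^p-\phi=\gamma P/N_1^{p+1}$.

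The decisive input is that every $\omega$ is logarithmic, i.e. all residues lie in $\Fp$. I would compute these via the Wronskian $W=N_1N_2'-N_1'N_2$ (of degree $\le 2\lambda-2$): for $b$ in the fiber $\phi^{-1}(c)$ one gets $\res_b\omega_1=-N_1(b)^{1-p}/\bigl(\kappa\,M_c'(b)\bigr)$ with $M_c=N_2-cN_1$. Multiplying over a whole fiber, the condition $\res_b\omega_1\in\Fp^{\times}$ becomes $\ell(c)^{\lambda p-2}/\Disc(M_c)\in\mathrm{const}\cdot\Fp^{\times}$ for every $c\in\Fp$, where $\ell(c)=c_2-cc_1$. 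Applying Frobenius and using $c^p=c$ on $\Fp$ converts this into the equality of two honest polynomials in $c$ of degree at most $\lambda+2+(2\lambda-2)=3\lambda$; as it holds at the $p>3\lambda$ points of $\Fp$, it is an identity in $k[c]$:
\[
(c_2^{p^2}-c_1^{p^2}c)^{\lambda}(c_2-c_1c)^2\,\Disc(M_c)=\mu\,(c_2^p-c_1^pc)^{\lambda+2}\,\Disc(M_c)^{[p]},\qquad \mu\in k^{\times},
\]
where $(\,\cdot\,)^{[p]}$ raises the coefficients to the $p$th power.

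Finally I would read this as an equality of zero-divisors on the $c$-line. Setting $\theta=c_2/c_1\notin\Fp$, the three points $\theta,\theta^p,\theta^{p^2}$ are pairwise distinct, and since raising coefficients to the $p$th power pushes the divisor of $\Disc(M_c)$ forward by Frobenius, comparing the coefficient at $\theta^p$ yields $\ord_{\theta^p}\Disc(M_c)=\lambda+2+\ord_{\theta}\Disc(M_c)\ge\lambda+2$. But at a branch point $\Disc(M_c)$ vanishes to order equal to the local ramification $\sum_{x\mapsto\theta^p}(e_x-1)\le\lambda-1$, a contradiction. The hard part, I expect, will be the residue computation and the bookkeeping that produces the clean expression $\ell(c)^{\lambda p-2}/\Disc(M_c)$ and brings the effective degree down to exactly $3\lambda$; the discriminant–multiplicity bound is then what pins the threshold at $p>3\lambda$.
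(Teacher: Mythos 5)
Your proposal is correct and follows essentially the same route as the paper: the product of residues over each fiber $\phi^{-1}(c)$ is expressed through $\Disc(M_c)$, the requirement that it lie in $\F_p^\times$ at the $p>3\lambda$ points of $\F_p$ upgrades to your displayed polynomial identity, and comparing orders of vanishing at $\theta^p$ against the ramification bound $\sum_{x\mapsto\theta^p}(e_x-1)\le\lambda-1$ is exactly the paper's contradiction (there phrased via the substitution $z=(t+a^p)/(t+a)$ and a separate lemma identifying the zeros of $\Disc(P-ZQ)$ with the critical values of $P/Q$). The one ingredient you assert rather than prove --- that $\ord_{c_0}\Disc(M_c)$ equals the local ramification of the fiber --- is true in this tame range but is precisely the paper's Lemma \ref{lem:inter2}, which takes a page of resultant manipulations to establish.
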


The proof of this theorem relies on a result of Pagot\footnote{Proposition \ref{prop:Pagot}, first appearing as \cite[Proposition 7]{Pagot02}}, that shows how the existence of $\Omega$ a space $L_{\lambda p,2}$ for fixed $\lambda$ and $p$ is equivalent to the existence of two polynomials $Q_1, Q_2 \in k[X]$ of degree $\lambda$ satisfying three conditions.
The first condition is that $Q_1$ and $Q_2$ are $\F_p$-linearly independent.
The second condition is that the set of poles of $\Omega$ coincides with the set of zeroes of the polynomial $Q_1 Q_2^p - Q_1^p Q_2$.
The third condition can be expressed as a multivariate polynomial system, whose indeterminates are the coefficients of $Q_1$ and $Q_2$.
This system is in general very complicated, but a subsystem of necessary conditions can be extracted thanks to the fact that the nonzero elements of $\Omega$ are logarithmic and hence their poles have residues in $\F_p^\times$.
In the context of our theorem, the necessary conditions can be expressed in terms of the coefficients of $Q_1$ and $Q_2$ and become tractable enough to get a contradiction under the assumptions of the theorem.

This result is a big progress towards the classification of spaces $L_{\lambda p,2}$: thanks to it, the existence of a space $L_{\lambda p,2}$ for a fixed $\lambda$ needs to be checked only at a finite number of primes, which in principle can be done by solving the polynomial system in the coefficients of $Q_1$ and $Q_2$ given by Proposition \ref{prop:Pagot}.
The use of computational tools, such as one of the many algorithms to compute Gr\"obner bases, is essential to perform this task, but not enough to conclude.
In fact, the resulting computations are of a high complexity and a straightforward implementation does not yield a solution in a reasonable time when $\lambda >3$.
Hence, some simplifications are needed to reduce the number of variables in the polynomial system under consideration.
A first simplification is made possible by the fact that a space $L_{\lambda p^{n-1}, n}$ can be transformed by applying to $\PP^1_k$ a homography that fixes $\infty$.
The resulting space is again a space $L_{\lambda p^{n-1}, n}$ that is said to be \emph{equivalent} to the first.
Another useful construction is obtained by considering the action of the relative Frobenius morphism on a space $L_{\lambda p^{n-1}, n}$ (see Section \ref{sec:Frobetale}).
The resulting space is again a space $L_{\lambda p^{n-1}, n}$ that is said to be \emph{Frobenius equivalent} to the first.

When $p=3$ and $n=2$, by performing the above simplifications and applying considerations of symmetry under change of variables, we can find enough relations between the coefficients of $Q_1$ and $Q_2$ to make the polynomial system treatable in the cases $\lambda=4$ and $\lambda=5$.
In this way, we not only find instances of, but we can also classify all possible spaces $L_{12,2}$ and $L_{15,2}$.

\begin{theorem*}[cf. Theorem \ref{thm:class12,2}]
Let $p=3$, $n=2$, and $\lambda=4$.
Up to equivalence, a pair $(Q_1, Q_2)$ of polynomials satisfies the conditions of Proposition \ref{prop:Pagot} (and hence determines a space $L_{12,2}$) if, and only if, it is of the form
\[Q_{1} =a(X^4+(a^4-a^2-1)X^2+a^8)\] 
\[Q_{2}=X^4-(a^4+a^2-1)X^2+1,\]
for some $a \in k$ such that $a^2 \notin \F_3$.
\end{theorem*}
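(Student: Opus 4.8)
The only input we need is Proposition~\ref{prop:Pagot}, which reduces the problem to the classification, up to equivalence, of pairs $(Q_1,Q_2)$ of polynomials of degree $4$ in $k[X]$ (with $\Char k=3$) satisfying its three conditions: that $Q_1,Q_2$ be $\F_3$-linearly independent, that the pole locus be cut out by $Q_1Q_2^3-Q_1^3Q_2$ as prescribed there, and the remaining ``logarithmic'' condition. The plan has three parts: (1) use every available symmetry to force $(Q_1,Q_2)$ into a rigid shape depending on few parameters; (2) turn the necessary conditions coming from the logarithmic hypothesis into a zero-dimensional polynomial system and solve it by a Gröbner basis computation; (3) check conversely that each pair produced in (2) satisfies \emph{all} the conditions of Proposition~\ref{prop:Pagot}.

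For step (1), a pair carries $10$ coefficients, but several operations again yield a space $L_{12,2}$: passing to another $\F_3$-basis of the pencil $\langle Q_1,Q_2\rangle_{\F_3}$; the simultaneous rescaling $(Q_1,Q_2)\mapsto(\mu Q_1,\mu Q_2)$ with $\mu\in k^\times$; applying a homography $X\mapsto\alpha X+\beta$ fixing $\infty$ (the equivalence of the statement); and Frobenius equivalence. I would spend a translation to kill a cubic coefficient and the scalings to normalise two further coefficients, then use the finite groups (rebasings and Frobenius) to select a preferred representative of each orbit. The crucial step --- the ``consideration of symmetry under change of variables'' alluded to in the introduction --- is to show that, after these normalisations, the pencil may be taken invariant under the involution $X\mapsto -X$, so that $Q_1$ and $Q_2$ are polynomials in $X^2$, of the shape $\alpha_iX^4+\beta_iX^2+\gamma_i$. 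I expect this extra symmetry to be forced by the residue conditions: they constrain the pole configuration rigidly enough that, once the earlier normalisations are made, no asymmetric configuration survives. This leaves only a handful of free unknowns.

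For step (2), the most tractable consequence of the logarithmic hypothesis --- already singled out in the introduction --- is that every nonzero form in the associated space has all of its residues in $\F_3^\times=\{1,-1\}$. Written in the remaining coefficients of $Q_1,Q_2$ together with the normalisation from step (1), these residue conditions generate a zero-dimensional ideal whose Gröbner basis is, thanks to the reductions above, computable in practice. Its vanishing locus should be a curve which, once the residual symmetries are absorbed, is parametrised by a single $a\in k$; comparing coefficients one recognises it as
\[
Q_{1}=a\bigl(X^4+(a^4-a^2-1)X^2+a^8\bigr),\qquad Q_{2}=X^4-(a^4+a^2-1)X^2+1 .
\]
The restriction $a^2\notin\F_3$ emerges on its own: for $a=0$ the polynomial $Q_1$ is not of degree $4$, and for $a^2=\pm 1$ one computes $Q_1=\pm a\,Q_2$ with $a\neq 0$, so $Q_1,Q_2$ cease to be $\F_3$-linearly independent; these are precisely the degenerate parameters to discard.

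For step (3), I would check directly that, for every $a$ with $a^2\notin\F_3$, the pair above satisfies \emph{all three} conditions of Proposition~\ref{prop:Pagot} --- not merely the residue conditions used to locate it --- so that the associated $2$-dimensional $\F_3$-space genuinely consists of logarithmic forms with a single zero, at $\infty$ and of the required order; and that the operations used in step (1) carry this family into itself up to equivalence, which is what makes ``up to equivalence'' in the statement accurate. The main obstacle is step (2) together with the evenness reduction inside step (1): even after using every symmetry, keeping the polynomial system within reach of an explicit Gröbner basis computation --- and making sure the normalisation loses no solution and that the passage to the $X\mapsto -X$-invariant case is legitimate --- is the real content of the proof; what remains is linear algebra and a finite, if lengthy, verification.
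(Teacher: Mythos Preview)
Your outline matches the paper's approach closely: normalise via equivalence, reduce to biquadratic polynomials, solve the remaining system by Gr\"obner bases, then verify. Two points where your plan is vaguer or slightly off compared to what the paper actually does:

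\textbf{The biquadratic reduction is itself computational, not a symmetry argument.} You write that you ``expect this extra symmetry to be forced by the residue conditions''. In the paper it is achieved in two concrete steps. First, Lemma~\ref{lem:s1=t1} (a direct computation of the top coefficient $R_{\lambda(p-1)}$) gives $s_1=t_1$, and a translation kills both cubic terms. Second, $s_3=t_3=0$ is \emph{not} obtained by symmetry: the paper assumes $s_3\ne 0$, normalises to $s_3=1$, and shows via a Gr\"obner basis computation that the full system $R_2=\cdots=R_8=0$ (these are the coefficients of $(Q_1^3-Q_1Q_2^2)^2$ at degrees $3k-1$, coming from condition~(iii) of Proposition~\ref{prop:Pagot}, not the residues directly) is then inconsistent. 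After that, a further Gr\"obner step gives $s_4=a^8t_4$, a homothety sets $t_4=1$, and a final computation leaves six candidates for $s_2$, reduced to two cases by the symmetries $X\mapsto iX$ and $(Q_1,Q_2)\mapsto(-Q_1,Q_2)$; one case is $\Omega_a$, the other is identified with $\Omega_{a-1}$ by comparing pole sets (Corollary~\ref{coro:poles}).

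\textbf{Two minor corrections.} Frobenius equivalence is not used for $L_{12,2}$; the classification is up to ordinary equivalence only, and the family is genuinely one-parameter. And your exclusion of $a^2=-1$ is not quite right: there $Q_1=aQ_2$ with $a\notin\F_3$, so the leading coefficients \emph{are} $\F_3$-independent; what fails is that $Q_1Q_2^3-Q_1^3Q_2=(a-a^3)Q_2^4$ acquires repeated roots, equivalently the constant $\bigl((Q_1^3-Q_1Q_2^2)^2\bigr)''=-(a^3-a)^{10}(a^2+1)^5$ vanishes.
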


\begin{theorem*}[cf. Theorem \ref{thm:class15,2}]
Let $p=3$, $n=2$, and $\lambda=5$.
Up to equivalence and Frobenius equivalence, a pair ($Q_1, Q_2$) of polynomials satisfies the conditions of Proposition \ref{prop:Pagot} (and hence determines a space $L_{15,2}$) if, and only if, it is either of the form

\[Q_{1} =(\mu^2-\mu-1)X^5+X^3-(\mu^2-\mu-1)X^2-\mu X\] 
\[Q_{2}=-\mu X^5 +(\mu^2+\mu-1)X^3+(\mu^2+\mu)X^2 +(\mu^2-1)X-(\mu^2+\mu+1),\]

where $\mu \in \F_{27}$ is such that $\mu^3-\mu+1=0$, or of the form

\[Q_{1} =a(X^5-X^3-X^2+aX-(a+1))\] 
\[Q_{2}=(-a-1)(X^5-(a+1)X^3+(a+1)X^2+X+a),\]

where $a\in \F_9$ is such that $a^2+1=0$.
\end{theorem*}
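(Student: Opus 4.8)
The plan is to use Proposition \ref{prop:Pagot} to translate the problem into a statement about pairs of degree-$5$ polynomials, to cut down the number of unknowns by exploiting every available symmetry, and then to finish by a Gröbner basis computation over $\F_3$. Concretely, by Proposition \ref{prop:Pagot} a space $L_{15,2}$ is the same datum as a pair $(Q_1,Q_2)$ of polynomials of degree $5$ in $k[X]$ that are $\F_3$-linearly independent, whose associated pole set is the zero set of $Q_1Q_2^3 - Q_1^3 Q_2 = Q_1 Q_2 (Q_1-Q_2)(Q_1+Q_2)$, and which satisfy the third (logarithmicity) condition loc. cit. Writing $Q_i = \sum_{j=0}^5 q_{i,j} X^j$ gives $12$ unknowns over $k$, which is too many to attack directly; for $\lambda = 5$ a naive Gröbner basis computation of the full system does not terminate in reasonable time.

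So the first block of work is normalization. I would use: \emph{equivalence}, i.e.\ precomposition with an affine automorphism $X \mapsto \alpha X + \beta$ of $\PP^1_k$ fixing $\infty$; change of $\F_3$-basis of $\langle Q_1, Q_2 \rangle$ and rescaling by $k^\times$; and \emph{Frobenius equivalence} (in the sense recalled in the introduction). Using translation and the basis freedom I would, for instance, move a pole to $X = 0$ (so $Q_1(0) = 0$) and kill the degree-$4$ coefficients of $Q_1$ and $Q_2$, and use a dilation to fix one leading coefficient; following the shape of the answer in Theorem \ref{thm:class12,2} and computer experiments, one also looks for solutions invariant under a residual involution of the coordinate. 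After these reductions only a few genuine parameters remain, and the book-keeping of the residual symmetry group acting on them must be kept explicit, since it is what controls the phrase ``up to equivalence and Frobenius equivalence'' in the statement.

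Next I would write down the polynomial system. A manageable necessary part is the \emph{residue subsystem}: every nonzero $\omega \in \Omega$ is logarithmic, hence its residue at each of its $m+1 = 15$ poles lies in $\F_3^\times = \{\pm 1\}$, i.e.\ squares to $1$; since for $p = 3$ the pole polynomial factors as the product of the four degree-$5$ representatives of the lines of $\langle Q_1, Q_2 \rangle$, these constraints become explicit polynomial identities in the $q_{i,j}$, exactly as in the proof of Theorem \ref{thm:generic}. Adjoining the remaining part of the third condition of Proposition \ref{prop:Pagot} (and the genericity condition that the $15$ poles be distinct, with $\infty$ a zero of order $m-1 = 13$) gives the full system $\calS$. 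Then I would compute a Gröbner basis of the ideal generated by $\calS$ over $\F_3$ in the remaining variables: the theorem amounts to the assertion that the corresponding variety is $0$-dimensional, and that, after undoing the normalizations, its points are precisely the $\Frob$-orbit over $\F_{27}$ attached to a root $\mu$ of $\mu^3 - \mu + 1$ and the point over $\F_9$ attached to a root $a$ of $a^2 + 1$, which unwind to the two displayed families. Finally I would check \emph{sufficiency} by substituting both families back into conditions (i)--(iii) of Proposition \ref{prop:Pagot}, and \emph{irredundancy} by checking that no equivalence or Frobenius equivalence relates the two families (for example via an invariant of the configuration of poles, or simply the fields of definition, since the families live over $\F_{27}$ and $\F_9$ respectively).

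I expect the main obstacle to be the interplay between the two hard requirements on the normalization step: it must remove enough variables that the Gröbner basis computation both terminates and returns a human-readable answer, and at the same time it must be \emph{exhaustive}, meaning every pair satisfying Proposition \ref{prop:Pagot} is equivalent (or Frobenius-equivalent) to a normalized one and distinct normalized solutions are genuinely inequivalent. If the normalization is too weak the computation is infeasible; if it is not accounted for completely, the classification could either miss a family or list redundant ones. Getting a handle on the full third condition of Proposition \ref{prop:Pagot} computationally --- rather than just the residue subsystem used for Theorem \ref{thm:generic}, which by itself has strictly more solutions than there are spaces --- is where the real difficulty lies.
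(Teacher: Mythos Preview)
Your plan is essentially the paper's approach: reduce via Proposition \ref{prop:Pagot} to a polynomial system in the coefficients of $(Q_1,Q_2)$, normalize using affine equivalence and $\GL_2(\F_3)$-basis changes, and finish with a Gr\"obner basis computation over $\F_3$. The paper's normalization is slightly different from yours in detail (it sets $s_1=t_1=0$ via Lemma \ref{lem:s1=t1} and a translation, then shows $s_3\neq 0$ or $t_3\neq 0$ and scales to $s_3=1$, rather than anchoring a pole at $0$), but this is a cosmetic difference.

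The one substantive refinement in the paper that you should be aware of is that a single Gr\"obner computation on the reduced system is still not the end: the paper first extracts from the basis a \emph{univariate} constraint on the ratio $a$ of leading coefficients, then uses an explicit description (Lemma \ref{lem:transa}) of how $\GL_2(\F_3)$-basis changes act on $a$ by the fractional linear maps $a\mapsto -a$ and $a\mapsto \pm\frac{1}{a\pm 1}$ to collapse the eleven irreducible factors of that constraint down to three orbit representatives ($a^2+1=0$, $a^3-a^2+1=0$, $a^3-a+1=0$). Only after fixing $a$ in each case does a second Gr\"obner run produce the remaining coefficients, and the resulting short list of solutions is then matched to the explicit examples $\Omega_1,\Omega_2$ and their Frobenius twists by comparing Moore determinants (using Corollary \ref{coro:poles}). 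Your anticipation that ``the normalization must be tight enough to make the computation terminate'' is exactly right; the two-stage strategy (isolate $a$, then solve case by case) is what makes it work in practice.
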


In particular, we have infinite equivalence classes of spaces $L_{12,2}$ and only finitely many equivalence classes of spaces $L_{15,2}$.
Finding a geometric explanation of this phenomenon would be very interesting.

In the case $p>3$ the above simplifications are not enough to conclude. 
However, a finer strategy can be employed to obtain a classification of all spaces $L_{4 p,2}$ for all prime $p$'s, even though it requires more work.
In this case Theorem \ref{thm:generic} allows us to consider only the cases $p=3$ (discussed above) and $p=5,7,11$, which can be solved through a combination of elementary arguments and the support of computational methods, finally yielding the following result.
\begin{theorem*}[cf. Theorem \ref{thm:classL4p,2}]
Let $p$ be a prime number and let $\Omega$ be a space $L_{4p,2}$.
Then $p \in \{2,3,5\}$ and if $p=5$ then we obtain a full classification of spaces $L_{20,2}$.
\end{theorem*}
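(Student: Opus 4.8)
The plan is to combine the bound from Theorem \ref{thm:generic} with a prime-by-prime analysis, using Proposition \ref{prop:Pagot} to turn the existence of a space $L_{4p,2}$ into a polynomial system in the coefficients of two degree-$4$ polynomials. Since here $\lambda = 4$, Theorem \ref{thm:generic} rules out every $p > 12$, leaving only $p \in \{2,3,5,7,11\}$. The prime $p = 2$ is immediate, as a space $L_{8,2}$ exists by \cite[Th\'eor\`eme 8]{Pagot02} (the case $\lambda = 4$ of the fact that spaces $L_{2\lambda,2}$ exist for all $\lambda > 0$), and the prime $p = 3$ is covered by Theorem \ref{thm:class12,2}, which already lists all spaces $L_{12,2}$. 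Everything therefore reduces to $p \in \{5,7,11\}$, where the goal becomes: prove that there is no space $L_{28,2}$ and no space $L_{44,2}$, and classify all spaces $L_{20,2}$.

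For each remaining prime, by Proposition \ref{prop:Pagot} it suffices to study pairs $(Q_1,Q_2)$ of $\F_p$-linearly independent degree-$4$ polynomials whose pole set is the zero locus of
\[
Q_1 Q_2^p - Q_1^p Q_2 \;=\; Q_1 Q_2 \prod_{c \in \F_p^\times}(Q_2 - cQ_1)
\]
(a $2\times 2$ Moore determinant in the entries $Q_1,Q_2$), subject to the third condition of the Proposition. I would first use the available symmetries to cut down the number of free parameters among the ten coefficients of $(Q_1,Q_2)$: normalizing under the homographies of $\PP^1_k$ fixing $\infty$ (substitutions $X \mapsto aX+b$), under $\F_p$-changes of basis of $\Omega$, and under Frobenius equivalence (Section \ref{sec:Frobetale}). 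A judicious normal form should bring the system down to a small number of indeterminates. In parallel I would extract the subsystem of \emph{necessary} conditions coming from the requirement that every nonzero $\omega \in \Omega$ be logarithmic, hence have all residues in $\F_p^\times$ --- these are the relations already used to prove Theorem \ref{thm:generic}, and they constrain the leading coefficients of the $Q_i$ (so that the displayed product has exactly $4p$ simple roots) as well as the lower-order coefficients.

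Once the system is in reduced form, I would settle $p = 7$ and $p = 11$ by showing the associated ideal in the coefficient ring is the unit ideal --- i.e. the variety is empty --- via a Gr\"obner basis computation over $\F_p$, the elementary reductions above having first brought the computation within reach. For $p = 5$ the ideal is nontrivial, and the work is instead to describe its zero set and to verify that each solution, after applying equivalence and Frobenius equivalence, belongs to the family obtained from the standard construction of Section \ref{sec:standard} in the case $p-1 \mid \lambda$ (here $4 \mid 4$); this is the sense in which the spaces $L_{20,2}$ are ``of a very special kind''.

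The main obstacle I anticipate is computational. Even after normalizing away the homography and Frobenius symmetries, the systems attached to $p = 7,11$ --- and the full system for $p = 5$ (as opposed to just its residue subsystem) --- are of high degree in several variables, and a direct Gr\"obner basis computation does not terminate in practice. The real content of the proof is thus the choice of normal form and of auxiliary relations: for instance using the factorization above to force $Q_2 - c_0 Q_1$ to have small degree for a single $c_0 \in \F_p^\times$, and feeding this together with the residue relations into the computation, so that the remaining system becomes tractable; and, for $p = 5$, reading off from the solution set a clean parametrization that matches the standard examples.
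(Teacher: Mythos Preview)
Your overall architecture is right: Theorem~\ref{thm:generic} leaves only $p\in\{5,7,11\}$, and Proposition~\ref{prop:Pagot} converts existence into a polynomial system in the coefficients of $Q_1,Q_2$. But the paper does \emph{not} proceed by normalizing and then feeding the system to a Gr\"obner engine; as you yourself note, that does not terminate. The paper's key structural move, which your proposal is missing, is the following.

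Write $P_j=\frac{aP_0+jP_\infty}{a+j}$ for $j\in\F_p$, so that Lemma~\ref{lem:s1=t1} forces $\Theta:=P_\infty-P_0$ to have degree at most $2$. The proof splits on $\deg\Theta\in\{0,1,2\}$. In each case an affine change of variable makes $\Theta$ a monomial (or nearly so), and then the weighted power sums $N_j(r)=\sum_i h_{j,i}x_{j,i}^r$ satisfy both (a) vanishing/nonvanishing at a few specific $r$ coming from Proposition~\ref{prop:Sum}, and (b) the four-term linear recursion~\eqref{eq:NewtonSums} whose only $j$-dependent coefficient is a single $e_{j,\ell}$. Running the recursion from the known values produces a monic polynomial relation in $e_{j,\ell}$ of degree $<p$ that must hold for all $p$ values of $j$; since the $e_{j,\ell}$ are pairwise distinct, every coefficient of that polynomial vanishes. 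For $p=7,11$ this already yields a contradiction in the cases $\deg\Theta\le 1$ and $\deg\Theta=2$ with $\Theta$ a monomial (Propositions~\ref{prop:degtheta=0}, \ref{prop:degtheta=1}, \ref{prop:degtheta=2monomial}); for $p=5$ with $\deg\Theta=0$ it forces $e_1=e_2=e_3=0$ and then a direct comparison of $N_j(15)$ with $N_j(3)^5$ pins the space down to the standard $L_{20,2}$. Only in the last subcase ($\deg\Theta=2$, $\Theta$ a binomial) is a machine computation used, and even there the paper first proves $e_1=e_3=0$ by reading off the top B\'ezout coefficient via Lemma~\ref{lem:gcdex}, so that the residual system is tiny.

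In short, your plan ``normalize, then Gr\"obner, with some auxiliary relations'' is not wrong in principle, but it is not a proof: the content is precisely in \emph{which} auxiliary relations make the problem finite, and you have not identified them. The $\deg\Theta$ trichotomy together with the Newton-sum recursion is what replaces brute force by a sequence of ``polynomial of degree $<p$ with $p$ roots'' arguments, and it is this idea that actually carries the weight.
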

The proof of this result consists in splitting the analysis of the coefficients of the polynomials $Q_1, Q_2$ into three subcases.
In each of these subcases we show that $Q_1$ and $Q_2$ are necessarily biquadratic polynomials.
Then, either by solving the polynomial system given by Proposition \ref{prop:Pagot} via Grobner bases computations or, where possible, by explicit computing certain weighted power sums in the zeroes of $Q_1 + jQ_2$ and showing that they must satisfy a certain relation, we find a contradiction in all cases unless $p=5$ and $\Omega$ is of a very special kind (the one considered in Section \ref{sec:standard}).\\

We then turn our attention to the case of spaces $L_{\lambda p^{n-1},n}$.
The crucial new tools that we need in this context are Moore determinants and Dickson invariants.
The \emph{Moore determinant} $\Moore{n}{a}$ of a $n$-tuple of elements $\ul{a}:= (a_1, \dots, a_n)$ in a field of characteristic $p$ is defined as the determinant of the associated Moore matrix, namely we have
\[\Moore{n}{a}:= \begin{vmatrix}
a_1 & a_2 & \dots & a_n \\
a_1^p & a_2^p & \dots & a_n^p \\
\vdots & \vdots & \dots & \vdots \\
a_1^{p^{n-1}} & a_2^{p^{n-1}} & \dots & a_n^{p^{n-1}} \\
\end{vmatrix}.  \]
The Moore determinant is a basic object in arithmetic in characteristic $p$, due to its relationship with the theory of additive polynomials (cf. \cite[Chapter 1]{Goss96}).
In this paper, we apply a mix of classical and recent results (obtained in \cite{FresnelMatignon23}) on Moore determinants to describe effectively the differential forms belonging to a space $L_{\lambda p^{n-1},n}$.
Moore determinants are also important to define the Dickson invariants $c_{n,i} \in \F_p[X_1, \dots, X_n]$.
More precisely, for a fixed $n$ and $0\leq i \leq n$, one defines these by the formula
\[ \frac{\Delta_{n+1}(\ul{X},T)}{\Moore{n}{X}} = \sum_{i=0}^n (-1)^{n-i}c_{n,i}T^{p^i}.\]
Dickson invariants play a central role in the invariant theory of finite groups and were defined by Dickson \cite{Dickson11} to prove that the invariant algebra $\F_p[X_1, \dots, X_n]^{GL_n(\F_p)}$ is a polynomial algebra generated by the $c_{n,i}$'s.
Our exposition of Moore determinants and Dickson invariants is self-interested and self-contained: we introduce all and only the results we need in the dedicated Appendix \ref{app:Moore} and include there further references for the keen reader.

When applied to spaces $L_{\lambda p^{n-1},n}$, these techniques prove to be very fruitful, allowing to prove a new condition for the existence of spaces $L_{\lambda p^{n-1},n}$ which for $n=2$ boils down to Proposition \ref{prop:Pagot}.
\begin{theorem*}[combining Theorems \ref{thm:Pagotn0}, \ref{thm:Pagotn} and \ref{prop:Dickson}]
Let $n\geq 2$ and let $\Omega$ be a $n$-dimensional $\F_p$-vector space of differential forms on $\PP^1_k$, generated by elements $\omega_1, \dots, \omega_n$.
Then $\Omega$ is a space $L_{\lambda p^{n-1},n}$ if, and only if, there exist a $n$-tuple of polynomials $\ul{Q}:= (Q_1, \dots, Q_n) \in k[X]^n$ of degree $\lambda$ such that
\begin{itemize}
\item The leading coefficients $q_1, \dots, q_n$ of $Q_1, \dots, Q_n$ satisfy $\Moore{n}{q} \neq 0$.
\item We have
\[\omega_i = \frac{(-1)^{i-1}\Delta_{n-1}(\ul{\widehat{Q_i}})}{\Moore{n}{Q}} dX\] for every $i=1, \dots, n$, where $\ul{\widehat{Q_i}}$ denotes the $n-1$-tuple obtained by removing $Q_i$ from $\ul{Q}$.
\item The $n-1$th Dickson invariant evaluated at the $n$-tuple $\ul{Q}$ satisfies the equation \[c_{n,n-1}(\ul{Q})^{(p-1)}=-1,\]
where the exponent $(p-1)$ denotes the $p-1$-th derivative.
\end{itemize}
\end{theorem*}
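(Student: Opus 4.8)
The plan is to prove the two implications separately; in each direction I would match the three defining properties of a space $L_{m+1,n}$ with $m+1=\lambda p^{n-1}$ --- being $n$-dimensional over $\F_p$, having a unique zero at $\infty$ of order $m-1=\lambda p^{n-1}-2$, and consisting of logarithmic forms --- against conditions (1), (2) and (3). Two background facts are used throughout. First: a meromorphic form on $\PP^1_k$ is logarithmic if and only if it is fixed by the Cartier operator $\calC$, and since $\PP^1_k$ has no nonzero regular form this happens exactly when the form has only simple poles with all residues in $\F_p$. Second: the classical factorization of $\Moore{n}{Q}$, up to a nonzero scalar, as the product $\prod_L \ell_L$ over the $\tfrac{p^n-1}{p-1}$ lines $L$ of the $\F_p$-span $V$ of $Q_1,\dots,Q_n$, of chosen representatives $\ell_L$; the identity $\Delta_{n+1}(\ul{Q},T)=\Moore{n}{Q}\prod_{v\in V}(T-v)$ defining the Dickson invariants; and the multilinear/Laplace properties of Moore matrices --- all of which I would take from Appendix \ref{app:Moore}.

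\emph{The ``if'' direction.} Assume $\ul{Q}$ satisfies (1)--(3) and define $\omega_1,\dots,\omega_n$ by the displayed formula. Condition (1) forces $q_1,\dots,q_n$, hence $Q_1,\dots,Q_n$, to be $\F_p$-independent, so $V$ is $n$-dimensional, $\deg\Moore{n}{Q}=\lambda\tfrac{p^n-1}{p-1}$, every $\ell_L$ has degree exactly $\lambda$, and each $\Delta_{n-1}(\ul{\widehat{Q_i}})$ is nonzero of degree $\lambda\tfrac{p^{n-1}-1}{p-1}$. Using the Moore factorization together with a contraction argument in $\Lambda^{n-1}V$, one checks that for every $\ul{\mu}\in\F_p^n\setminus\{0\}$ the numerator $P_{\ul{\mu}}:=\sum_i(-1)^{i-1}\mu_i\Delta_{n-1}(\ul{\widehat{Q_i}})$ is a nonzero scalar multiple of $\prod_{L\subset H_{\ul{\mu}}}\ell_L$ for a hyperplane $H_{\ul{\mu}}\subset V$, the assignment $\ul{\mu}\mapsto H_{\ul{\mu}}$ being a bijection onto the hyperplanes of $V$; in particular $\Delta_{n-1}(\ul{\widehat{Q_i}})\mid\Moore{n}{Q}$, and after cancellation $\sum_i\mu_i\omega_i$ is a nonzero scalar multiple of $\tfrac{dX}{D_{H_{\ul{\mu}}}}$, where $D_H:=\prod_{L\not\subset H}\ell_L$ has degree $\lambda p^{n-1}$. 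Hence the $\omega_i$ are $\F_p$-independent and $\dim_{\F_p}\Omega=n$. Next I would compute $\calC$ on a form $\omega=\tfrac{A}{F}dX$ with $F=\Moore{n}{Q}$: one has $\calC\omega=-\tfrac1F\bigl[(AF^{p-1})^{(p-1)}\bigr]^{1/p}dX$, so --- using $A=P_{\ul{\mu}}\mid F$ and $F=(\text{scalar})\,A\,D_{H_{\ul{\mu}}}$ --- the equation $\calC\omega=\omega$ is equivalent to $(D_{H_{\ul{\mu}}}^{\,p-1})^{(p-1)}$ being a definite nonzero constant. By the identification of $c_{n,n-1}(\ul{Q})$ with $D_{H_1}^{\,p-1}$ modulo $p$-th powers together with the $\GL_n(\F_p)$-invariance of $c_{n,n-1}$ (this is Theorem \ref{prop:Dickson}), condition (3) is equivalent to $\calC\omega_i=\omega_i$ for all $i$, hence to $\calC$ fixing all of $\Omega$; so every nonzero $\omega\in\Omega$ is logarithmic and has simple poles. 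Condition (3) also forces each $D_H$ to be squarefree: since (3) is translation-invariant, a double root of some $D_H$ --- moved to the origin --- would make the coefficient of $X^{p-1}$ in $D_H^{p-1}$ vanish, contradicting $(D_H^{p-1})^{(p-1)}=-1$. A degree count then gives that each $\sum_i\mu_i\omega_i$ has $\lambda p^{n-1}=m+1$ simple poles, a zero of order $m-1$ at $\infty$, and no other zero. Therefore $\Omega$ is a space $L_{\lambda p^{n-1},n}$.

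\emph{The ``only if'' direction.} Now let $\Omega$ be a space $L_{m+1,n}$. For each pole $a$ of some nonzero element, $\rho_a\colon\omega\mapsto\res_a\omega$ is a nonzero $\F_p$-linear functional on $\Omega$; a counting argument over the incidence between poles and the hyperplanes $\ker\rho_a$ --- each nonzero $\omega$ having exactly $\lambda p^{n-1}$ poles, by the degree-$(-2)$ relation for its divisor --- shows that the pole set has $\lambda\tfrac{p^n-1}{p-1}$ elements and that $S_H:=\{a:\ker\rho_a=H\}$ has exactly $\lambda$ elements for every hyperplane $H\subset\Omega$. The substantive step --- this is Theorem \ref{thm:Pagotn0} --- is to show that the polynomials $D_H:=\prod_{a\in S_H}(X-a)$ are not arbitrary but fit into a Moore configuration: there is an $n$-dimensional $\F_p$-subspace $V\subset k[X]$ of polynomials of degree $\lambda$ whose line representatives are the $D_H$ up to scalars, and after a suitable choice and rescaling of a basis $Q_1,\dots,Q_n$ of $V$ one recovers $\omega_i=\tfrac{(-1)^{i-1}\Delta_{n-1}(\ul{\widehat{Q_i}})}{\Moore{n}{Q}}\,dX$. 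For $n=2$ this is immediate (any two $\F_p$-independent $D_H$ serve as $Q_1,Q_2$, which recovers Proposition \ref{prop:Pagot}); for $n\geq 3$ it is where one must exploit that \emph{every} $\omega\in\Omega$ is logarithmic and the residue relations this imposes. Granting the assembly: (2) holds by construction, (1) holds because each $D_H$ has degree exactly $\lambda$ (so no nontrivial $\F_p$-combination of the $q_i$ vanishes), and (3) follows by running the Cartier computation backwards from $\calC\omega_1=\omega_1$ and invoking Theorem \ref{prop:Dickson}.

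The two hard ingredients are essentially independent. One is the Dickson identity $c_{n,n-1}(\ul{Q})\equiv D_{H_1}^{\,p-1}\pmod{p\text{-th powers}}$ with the correct normalizing constant (Theorem \ref{prop:Dickson}), which converts the combinatorial requirement ``all residues lie in $\F_p$'' into the single differential equation $c_{n,n-1}(\ul{Q})^{(p-1)}=-1$; proving it requires a partial-fraction/Cartier computation fed by the recursive structure of Moore determinants in Appendix \ref{app:Moore}. The other --- the one I expect to be the genuine obstacle for $n\geq 3$ --- is the assembly step in the ``only if'' direction: one must show that the $\lambda$-element fibers $S_H$ of the residue-functional map are forced to combine into a single $n$-parameter family of degree-$\lambda$ polynomials, a phenomenon with no counterpart in the case $n=2$ and without which the Moore--Dickson description could not even be formulated.
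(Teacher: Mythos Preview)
Your ``if'' direction is essentially correct and aligns with the paper's alternative argument in Section~\ref{subsec:applyingPagotn}: the identity $c_{n,n-1}(\ul{Q})=\bigl(\tfrac{P}{P_n}\bigr)^{p-1}+c_{n-1,n-2}(\ul{\hat Q_n})^p$ from Proposition~\ref{prop:Dickson}(i), combined with the $\GL_n(\F_p)$-invariance of $c_{n,n-1}$, shows that $\bigl(\tfrac{P}{P_i}\bigr)^{p-1}-\bigl(\tfrac{P}{P_n}\bigr)^{p-1}$ is a $p$-th power for every $i$, so the Jacobson--Cartier condition of Corollary~\ref{cor:1/P} for $\omega_n$ propagates to all $\omega_i$. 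Your squarefreeness argument is a valid alternative to the paper's, which deduces it from the simple-pole property of logarithmic forms (Remark~\ref{rmk:simplepolesanyn}).

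The ``only if'' direction, however, has a genuine gap. First a labelling slip: the assembly of the $D_H$'s into a Moore configuration is Theorem~\ref{thm:Pagotn}, not Theorem~\ref{thm:Pagotn0}. More importantly, you correctly flag this assembly as the crux but offer no mechanism beyond ``exploit that every $\omega$ is logarithmic and the residue relations.'' The paper's proof proceeds quite differently, by \emph{induction on $n$}: the subspace $\langle\omega_1,\dots,\omega_{n-1}\rangle$ is a space $L_{(\lambda p)p^{n-2},n-1}$, so by induction it arises from polynomials $S_1,\dots,S_{n-1}$ of degree $\lambda p$. One then applies Proposition~\ref{prop:FM5.1.} (the surjectivity of the map $\varphi(\ul a)_i=(-1)^{i-1}\Delta_{n-1}(\ul{\hat a_i})$ on $k^n\setminus V(\Delta_n)$, extended coefficientwise) to produce \emph{a priori} algebraic $n$-tuples $\ul Q$ and $\ul R$ with $\varphi(\ul Q)=\ul P$ and $\varphi(\ul R)=\ul Q$; a chain of identities built on Theorem~\ref{thm:FM4.1.} then forces $Q_n=\theta\,P/\Delta_{n-1}(\ul S)$ for a root of unity $\theta$, which is visibly a polynomial of degree $\lambda$ because the zeros of $\Delta_{n-1}(\ul S)$ are exactly $\calP(\langle\omega_1,\dots,\omega_{n-1}\rangle)\subset\calP(\Omega)$. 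The logarithmicity of the forms is not used directly in this step---what drives it is the algebraic inversion of $\varphi$ and the pole-count identities (Lemmas~\ref{lem:Pagotpoles} and~\ref{lem:hyperplanes}, which your residue-functional argument does recover). Without this inductive/inversion machinery the $Q_i$'s cannot be manufactured, and your sketch does not supply a substitute.
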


This theorem allows us to approach the classification of spaces $L_{\lambda p^{n-1},n}$ from a better viewpoint. 
Namely, we can start with a $n$-tuple of polynomials $\ul{Q}$ and then check if this satisfies the conditions of the theorem.
This approach leads to advances in the classification of spaces $L_{\lambda p^{n-1},n}$ thanks to the fact that the condition $c_{n,n-1}(\ul{Q})^{(p-1)}=-1$ is equivalent to a condition involving the products of two determinants (see Proposition \ref{prop:det}).
In this way, we can achieve new results in the case $p=2$ (for every $n$ and $\lambda$) and in the case $\lambda=1$ (for every $n$ and $p\neq 2$).
These are the next two theorems.

\begin{theorem*}[cf. Theorem \ref{thm:p=2generaln} and Corollary \ref{cor:WRp=2}]
Let $p=2$ and $\lambda$ be either even or $\lambda \equiv 1 \mod (2^n-2)$.
Then there exist infinitely many equivalence classes of spaces $L_{\lambda 2^{n-1}, n}$.
\end{theorem*}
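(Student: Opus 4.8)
The plan is to feed everything through the characterization of spaces $L_{\lambda p^{n-1},n}$ provided by Theorems~\ref{thm:Pagotn0}, \ref{thm:Pagotn} and \ref{prop:Dickson}. Specializing to $p=2$, producing a space $L_{\lambda 2^{n-1},n}$ is the same as producing an $n$-tuple $\ul{Q}=(Q_1,\dots,Q_n)\in k[X]^n$ of polynomials of degree $\lambda$ whose leading coefficients $\ul{q}$ satisfy $\Moore{n}{q}\neq 0$ and for which the Dickson condition holds; since $p-1=1$ here, this condition is simply $c_{n,n-1}(\ul{Q})'=-1=1$, an equation on the ordinary derivative. By Proposition~\ref{prop:det} this is equivalent to a single polynomial identity expressing a product of two (generalized Moore) determinants built from the $Q_i$ and the $Q_i'$ in terms of $\Moore{n}{Q}$, and this determinantal form is what I would actually manipulate. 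The whole theorem then reduces to exhibiting, for every admissible $\lambda$, an explicit positive-dimensional family of tuples $\ul{Q}$ meeting these two requirements, and to checking that the family contains infinitely many members pairwise inequivalent under the affine group $X\mapsto \alpha X+\beta$ (and the Frobenius action of Section~\ref{sec:Frobetale}).

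For the base case $n=2$ every $\lambda$ is admissible, the condition $c_{2,1}(\ul Q)'=1$ unwinds to $(Q_1Q_2)'=1$, and the existence of infinitely many classes of spaces $L_{2\lambda,2}$ is \cite[Th\'eor\`eme 8]{Pagot02}; this will also seed the induction. For $n\ge 3$ I would exploit the recursive structure of the top Dickson invariant, namely $c_{n,n-1}(\ul X)=c_{n-1,n-2}(X_1,\dots,X_{n-1})^{2}+f_{n-1}(X_1,\dots,X_{n-1})(X_n)$: in characteristic $2$ the square summand has zero derivative, so the condition becomes $\big(f_{n-1}(Q_1,\dots,Q_{n-1})(Q_n)\big)'=1$, which is \emph{linear} in the "new" polynomial $Q_n$ once $Q_1,\dots,Q_{n-1}$ are fixed, and which, after expanding the additive polynomial $f_{n-1}$, reads $\sum_{i}c_{n-1,i}(\ul{Q}')'\,Q_n^{2^i}+c_{n-1,0}(\ul{Q}')\,Q_n'=1$ with $\ul{Q}'=(Q_1,\dots,Q_{n-1})$. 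The dichotomy on $\lambda$ is exactly what makes this solvable with degrees equal to $\lambda$: when $\lambda$ is even one chooses the $Q_i$ so that the relevant coefficients $c_{n-1,i}(\ul{Q}')$ become perfect squares (using that over the perfect field $k$ the ring $k[X^{2}]$ consists of squares, so that odd-degree contributions are isolated and controlled), reducing the equation to one that can be solved by a $Q_n$ of the correct even degree while keeping $\Moore{n}{q}\neq 0$; when $\lambda\equiv 1\pmod{2^n-2}$ one instead starts from a standard example as in Section~\ref{sec:standard} — such examples occur for $p=2$ because $p-1=1$ divides $\lambda$ — and obtains the family as a controlled deformation, the congruence $2^n-2\mid\lambda-1$ being precisely the numerical constraint allowing the deformation to preserve all degrees and the nonvanishing of the leading Moore determinant. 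Throughout, $\Moore{n}{q}\neq 0$ should be built in by fixing $\ul{q}$ to be a "standard" $\F_2$-independent tuple of scalars from the start; this is a genuine point to monitor, since the most symmetric normalizations (for instance $q_1+\dots+q_n=0$, or ratios $q_i/q_j\in\F_4\setminus\F_2$) make the Moore determinant degenerate.

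For the count of equivalence classes I would exhibit an invariant of the constructed family under $X\mapsto\alpha X+\beta$ (and Frobenius) taking infinitely many values — for example the configuration of poles, which are the zeros of $\Moore{n}{Q}$ and move nontrivially with the parameters, read through its affine moduli, or equivalently a suitable ratio of coefficients of the $Q_i$ invariant up to the finitely many normalizations. Since the family carries more essential parameters than the dimension $2$ of the affine group (and the Frobenius action is finite), this yields infinitely many classes; Corollary~\ref{cor:WRp=2} packages the regime $\lambda\equiv 1\pmod{2^n-2}$ and Theorem~\ref{thm:p=2generaln} the construction in general.

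The main obstacle is the verification in the middle paragraph: confirming $c_{n,n-1}(\ul{Q})'=1$ — equivalently the determinantal identity of Proposition~\ref{prop:det} — for the constructed families. Unlike $n=2$, where the condition collapses to $(Q_1Q_2)'=1$, for $n\ge 3$ the invariant $c_{n,n-1}$ is genuinely nonlinear, and one must control both it and its derivative along the family; this is where the parity, respectively the congruence, hypothesis on $\lambda$ becomes unavoidable, and where — for the finitely many residual sub-cases that the uniform construction does not obviously cover — a Gr\"obner-basis computation of the type used in the earlier sections may be needed to finish.
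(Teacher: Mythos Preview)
Your setup is correct: for $p=2$ the criterion of Proposition~\ref{prop:det} reduces to the single identity $\det(\ul{Q}',\ul{Q},\ul{Q}^2,\dots,\ul{Q}^{2^{n-2}})=1$, and the recursion $c_{n,n-1}=c_{n-1,n-2}^2+P_{V_{n-1}}(X_n)$ does make the derivative condition depend on $Q_n$ only through $(P_{V_{n-1}}(Q_n))'$. But from this point on your argument is a plan, not a proof. The claim that the resulting equation is ``linear in $Q_n$'' is misleading: it involves $Q_n^{2^i}$ for $0\le i\le n-1$, and solving it with a polynomial $Q_n$ of the prescribed degree is exactly the hard part. Your proposal for even $\lambda$ --- choose $Q_1,\dots,Q_{n-1}$ so that the $c_{n-1,i}$ are squares --- forces $c_{n-1,i}'=0$ for all $i$, whence the condition becomes $c_{n-1,0}\,Q_n'=1$; since $c_{n-1,0}=\Delta_{n-1}(Q_1,\dots,Q_{n-1})$ has positive degree, no polynomial $Q_n$ satisfies this. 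Your proposal for $\lambda\equiv 1\pmod{2^n-2}$ invokes ``deformation of standard spaces'', but standard spaces have $\lambda=1$ and their \'etale pullbacks have $\lambda$ even; you give no mechanism producing $\lambda>1$ odd. The closing appeal to a Gr\"obner computation is a sign that the construction is missing.

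What the paper does is different and fully constructive. One writes $Q_i=U_i^2+XV_i^2$ (the canonical decomposition in characteristic $2$); the determinantal condition then unfolds into a polynomial identity in $X$ whose coefficients are determinants in $\ul{U},\ul{V}$ (Proposition~\ref{prop:UVdet}). The key step is the ansatz $\ul{U}=\alpha\ul{V}+\beta\ul{V}^2$ with $\beta^{2^{n-1}-1}\Moore{n}{V}=1$ (Proposition~\ref{prop:UV}): this forces all the nonconstant determinant coefficients to vanish by linear dependence of columns, while the constant term becomes $\beta^{2^{n-1}-1}\Moore{n}{V}=1$. One then manufactures the $V_i$ as $\Delta_{n-1}(\ul{\hat W_i})$ for auxiliary coprime polynomials $W_1,\dots,W_n$ of common degree $d$, sets $\beta=1/\Moore{n}{W}$, and uses the Chinese Remainder Theorem to find $\alpha$ making each $U_i$ a genuine polynomial (Theorem~\ref{thm:p=2generaln}). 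The degree bookkeeping yields $\lambda=1+d(2^n-2)$ when the free polynomial $R$ is constant and arbitrary even $\lambda$ otherwise, and the inequivalence of the resulting spaces is proved by showing that the pair $(\langle W_1,\dots,W_n\rangle_{\F_2},R)$ is recoverable from $\Omega$ up to an explicit finite ambiguity (Corollary~\ref{cor:WRp=2}). None of this structure is visible in your outline.
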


\begin{theorem*}[Theorem \ref{thm:lambda=1}]
Let $p>2$. Then, there exist no space $L_{p^{n-1},n}$.
\end{theorem*}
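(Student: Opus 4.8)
The plan is to apply the reformulation of "there exists a space $L_{\lambda p^{n-1},n}$" given by Theorems~\ref{thm:Pagotn0},~\ref{thm:Pagotn} and~\ref{prop:Dickson} with $\lambda=1$, and to derive a contradiction. So let $\Omega$ be a space $L_{p^{n-1},n}$; it produces an $n$-tuple $\ul{Q}=(Q_1,\dots,Q_n)$ of polynomials of degree $1$ with $\Moore{n}{q}\neq 0$ (the leading coefficients $q_1,\dots,q_n$ being $\F_p$-linearly independent) and
\[c_{n,n-1}(\ul{Q})^{(p-1)}=-1.\]
First I would unwind the left-hand side. Writing $\mathcal{V}:=\mathrm{span}_{\F_p}(Q_1,\dots,Q_n)$, the factorization of Moore determinants from Appendix~\ref{app:Moore} gives that $\psi(T):=\Delta_{n+1}(\ul{Q},T)/\Moore{n}{Q}=\prod_{Q\in\mathcal{V}}(T-Q(X))$ is $\F_p$-linear in $T$, and comparing $T^{p^{n-1}}$-coefficients yields $c_{n,n-1}(\ul{Q})=-E_{n-1}$ with $E_{n-1}:=[T^{p^{n-1}}]\,\psi(T)$. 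Splitting $\mathcal{V}=\mathcal{W}\oplus\F_p Q_n$, $\mathcal{W}:=\mathrm{span}_{\F_p}(Q_1,\dots,Q_{n-1})$, and using that $\psi_{\mathcal{W}}(T):=\prod_{Q\in\mathcal{W}}(T-Q(X))$ is additive, one gets $\psi(T)=\psi_{\mathcal{W}}(T)^p-\alpha^{\,p-1}\psi_{\mathcal{W}}(T)$ with
\[\alpha:=\psi_{\mathcal{W}}(Q_n)=\prod_{Q\in\mathcal{W}}(Q_n-Q)=\frac{\Moore{n}{Q}}{\Delta_{n-1}(Q_1,\dots,Q_{n-1})};\]
hence $E_{n-1}=D_{n-2}^{\,p}-\alpha^{\,p-1}$ with $D_{n-2}:=[T^{p^{n-2}}]\,\psi_{\mathcal{W}}(T)$. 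Since $D_{n-2}^{\,p}\in k[X^p]$ and $(X^{pm})^{(p-1)}=0$, the condition collapses to $\bigl(\alpha^{\,p-1}\bigr)^{(p-1)}=-1$.

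Next I would translate this derivative condition into a divisibility. The polynomial $\alpha$ is a product of the $p^{n-1}$ degree-one polynomials $Q_n-Q$ ($Q\in\mathcal{W}$), which lie in $p^{n-1}$ distinct $\F_p$-lines of $\mathcal{V}$; its roots are therefore among the poles of $\Omega$, which are distinct, so $\alpha=\alpha_N\prod_s(X-\beta_s)$ is separable ($N=p^{n-1}$). By Leibniz and the vanishing of the first $p-1$ derivatives of a $p$-th power, $\bigl(\alpha^{\,p-1}\bigr)^{(p-1)}=\alpha^p\,(\alpha^{-1})^{(p-1)}$; the partial fraction expansion $\alpha^{-1}=\sum_s \alpha'(\beta_s)^{-1}(X-\beta_s)^{-1}$ together with $\bigl((X-\beta_s)^{-1}\bigr)^{(p-1)}=-(X-\beta_s)^{-p}$ then give, after simplification,
\[\bigl(\alpha^{\,p-1}\bigr)^{(p-1)}=-\sum_s \alpha'(\beta_s)^{\,p-1}\,L_s(X)^p,\]
where $L_s$ is the Lagrange basis polynomial of $\{\beta_s\}$. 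As $k$ is perfect, $\{L_s(X)^p\}_s$ is a $k$-basis of the polynomials in $X^p$ of degree $<p^{n-1}$, and $\sum_s L_s(X)^p=\bigl(\sum_s L_s(X)\bigr)^p=1$, so $\bigl(\alpha^{\,p-1}\bigr)^{(p-1)}=-1$ is equivalent to $\alpha'(\beta_s)^{\,p-1}=1$ for all $s$. Since $\alpha$ is squarefree, $\alpha'(\beta_s)\neq 0$ automatically; thus the condition is: $\alpha'(\beta)\in\F_p$ at every root $\beta$ of $\alpha$, i.e.
\[\alpha(X)\mid\bigl(\alpha'(X)\bigr)^p-\alpha'(X).\]
(This is the form taken here by Proposition~\ref{prop:det}: $\alpha'(\beta_s)^{-1}$ is, up to a unit, a ratio of two Vandermonde determinants in the $\beta_s$.)

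To finish I would exploit the rigid shape of $\alpha$: its $p^{n-1}$ linear factors $Q_n-Q$ have leading coefficients running exactly over the coset $q_n+\mathrm{span}_{\F_p}(q_1,\dots,q_{n-1})$ of an $\F_p$-subspace of $k$. The key extra ingredient is the identity
\[\alpha'(\beta)=(q_n-q_Q)\,D_0(\beta),\qquad D_0:=\prod_{Q'\in\mathcal{W}\setminus\{0\}}Q'=[T]\,\psi_{\mathcal{W}}(T),\]
valid at a root $\beta$ of the factor $Q_n-Q$ (expand $\alpha$ around that factor and use $Q_n(\beta)=Q(\beta)$). It converts the condition into: $(q_n-q_Q)^{p-1}\,D_0(\beta)^{p-1}=1$ for every $Q\in\mathcal{W}$, which I would attack using the coset structure of the $q_n-q_Q$ together with the (recursively controlled) behaviour of $D_0$. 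For $n=2$ this forces $(q_n-c)^{(p-1)(p-2)}$ to be independent of $c\in\F_p$, which is impossible because power maps are non-constant on an affine $\F_p$-line of $k$ (for $p=3$, directly $(q_n-c)^2\neq q_n^2$ as $q_n\notin\F_3$), recovering the known non-existence of spaces $L_{p,2}$; for $n\ge 3$ one iterates this, tracking how the coset shape propagates through $D_0$.

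The main obstacle is exactly this last step for $n\ge 3$. A bare degree count does not suffice (for $p\ge 5$, or for $n\ge 3$, the divisibility $\alpha\mid(\alpha')^p-\alpha'$ is not excluded on degree grounds), so one has to genuinely use that $\alpha$ is not an arbitrary separable polynomial but has leading part governed by the Moore determinant of the $q_i$. The mechanism I would aim for is: forcing the relevant auxiliary polynomials — roughly $(\alpha')^p-\alpha'$ and the degree-$<p^{n-1}$ interpolants, along the coset, of the affine-linear data $q_n-q_Q$ and of the corresponding constant terms — to vanish at the $p^{n-1}$ nodes $\beta_s$ makes each of them identically zero, and the resulting polynomial identities are jointly inconsistent. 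Everything before this — the reduction to $\bigl(\alpha^{\,p-1}\bigr)^{(p-1)}=-1$ and its translation into $\alpha\mid(\alpha')^p-\alpha'$ — is formal.
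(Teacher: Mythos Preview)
Your reduction to $(\alpha^{p-1})^{(p-1)}=-1$ with $\alpha=\Moore{n}{Q}/\Delta_{n-1}(Q_1,\dots,Q_{n-1})$ is correct, and the translation into the divisibility $\alpha\mid(\alpha')^p-\alpha'$ is nice. However, as you yourself acknowledge, the argument is incomplete: the final step for $n\ge 3$ is only a plan (``one iterates this, tracking how the coset shape propagates through $D_0$''), not a proof. Even your $n=2$ claim that $(q_2/q_1-c)^{(p-1)(p-2)}$ cannot be constant on $c\in\F_p$ requires justification for $p\ge 5$, since $(p-1)(p-2)\ge p$ and a mere root count does not exclude it.

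The paper bypasses this difficulty entirely by using the \emph{determinantal} form of Proposition~\ref{prop:det} rather than the divisibility you extract from it. The condition becomes
\[
\left(\det(\ul{Q}',\ul{Q},\ul{Q}^p,\dots,\ul{Q}^{p^{n-2}})\cdot\Moore{n}{Q}^{p-2}\right)^{(p-2)}=1.
\]
The point is that for $\lambda=1$ each $Q_i$ is linear, so $\ul{Q}''=0$; hence $\Moore{n}{Q}''=\det(\ul{Q}'',\ul{Q}^p,\dots,\ul{Q}^{p^{n-1}})=0$ and likewise the first determinant has vanishing derivative. The first factor may therefore be pulled through the $(p-2)$-fold differentiation, and using $\Moore{n}{Q}''=0$ one computes inductively $(\Moore{n}{Q}^{p-2})^{(p-2)}=(\Moore{n}{Q}')^{p-2}$. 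So the condition collapses to
\[
\det(\ul{Q}',\ul{Q},\ul{Q}^p,\dots,\ul{Q}^{p^{n-2}})\cdot(\Moore{n}{Q}')^{p-2}=1,
\]
forcing $\Moore{n}{Q}'$ to be constant (here $p>2$ is used). But writing $Q_i=u_iX+v_i$, the leading term of $\Moore{n}{Q}'$ is $\Moore{n}{u}\,X^{p+p^2+\cdots+p^{n-1}}\neq 0$, a contradiction. The whole argument works uniformly in $n$ with no induction and no analysis of the roots of $\alpha$; the vanishing of second derivatives is what makes $\lambda=1$ special, and this is invisible from your divisibility formulation.
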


Then, we turn our attention to the examples of spaces $L_{\lambda p^{n-1},n}$ for $n\geq 3$ that were known in the literature prior to the present paper.
We remark that all these spaces share a very special structure: each of them is an \'etale pullback of a space $L_{(p-1)p^{n-1},n}$ whose set of poles is the set of nonzero elements in a $n$-dimensional $\F_p$-subvector space of $k$.
We call \emph{standard} any space $L_{(p-1)p^{n-1},n}$ whose poles satisfy the property above, and we apply Theorems \ref{thm:Pagotn0} and \ref{thm:Pagotn}  in the special case of standard spaces, shedding new light on their arithmetic properties.
As a result, we are able to characterize the proper subspaces of standard spaces as \'etale pullbacks of standard spaces of lower dimension.

\begin{proposition*}[cf. Proposition \ref{prop:standardsubsp}]
Let $\Omega$ be a standard $L_{(p-1) p^{n-1},n}$ space and let $\widetilde{\Omega}$ be a proper $r$-dimensional subspace of $\Omega$.
Then, there exists a standard $L_{(p-1)  p^{r-1},r}$ space $\Omega'$ and a degree $p^{n-r}$ morphism $\sigma: \PP^1_k \to \PP^1_k$ ramified only at $\infty$ such that $\widetilde{\Omega}$ is the pullback of $\Omega'$ under $\sigma$.
\end{proposition*}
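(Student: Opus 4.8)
The plan is to first make explicit the structure of a standard space, and then to read off the morphism $\sigma$ and the space $\Omega'$ from it. Let $V\subseteq k$ be the $\F_p$-span of the poles of $\Omega$, so that $\dim_{\F_p}V=n$ and the set of poles of $\Omega$ equals $V\setminus\{0\}$; for an $\F_p$-subspace $U\subseteq V$ write $\psi_U(X)=\prod_{u\in U}(X-u)$ for the monic separable additive polynomial with root set $U$, in the guise recalled in Appendix~\ref{app:Moore}. The first step is to record, by specialising Theorems~\ref{thm:Pagotn0} and~\ref{thm:Pagotn} to standard spaces, the following normal form for the elements of $\Omega$: every nonzero $\omega\in\Omega$ has the shape $\psi_{\ker\phi}^{*}\eta$ for a line $\F_p\phi\subseteq V^{\vee}$, where $\psi_{\ker\phi}$ is viewed as the self-map $X\mapsto\psi_{\ker\phi}(X)$ of $\PP^1_k$ attached to the hyperplane $\ker\phi\subseteq V$, and $\eta$ generates the one-dimensional standard $L_{p-1,1}$ space, on the target line, whose set of poles is $\psi_{\ker\phi}(V)\setminus\{0\}$; moreover the assignment $\phi\mapsto\omega_\phi$ can be normalised to an $\F_p$-linear isomorphism $V^{\vee}\xrightarrow{\sim}\Omega$, under which the poles of $\omega_\phi$ are exactly $V\setminus\ker\phi$. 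The same recipe, applied to any finite-dimensional $\F_p$-subspace of $k$, produces the standard space with that prescribed pole span; in particular standard spaces exist in every dimension and over every subspace we shall need.

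Granting this, a proper $r$-dimensional subspace $\widetilde\Omega\subseteq\Omega$ corresponds under the isomorphism above to an $r$-dimensional subspace $\widetilde V^{\vee}\subseteq V^{\vee}$. Set $W:=\bigcap_{\phi\in\widetilde V^{\vee}}\ker\phi\subseteq V$; by linear duality $\dim_{\F_p}W=n-r\ge 1$, and $\widetilde V^{\vee}$ is exactly the annihilator of $W$ in $V^{\vee}$. Take $\sigma:=\psi_W$, regarded as a self-morphism of $\PP^1_k$, and put $Y:=\psi_W(X)$. Since $\psi_W$ is a separable polynomial of degree $|W|=p^{n-r}$ whose derivative is a nonzero constant, $\sigma$ has degree $p^{n-r}$ and is ramified only over $\infty$, as required. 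The restriction $\psi_W|_V$ is $\F_p$-linear with kernel $V\cap W=W$, so $V':=\psi_W(V)\subseteq k$ has dimension $r$; let $\Omega'$ be the standard $L_{(p-1)p^{r-1},r}$ space on $\PP^1_Y$ with set of poles $V'\setminus\{0\}$, which exists by the previous paragraph since $r\ge 1$.

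It then remains to check $\widetilde\Omega=\sigma^{*}\Omega'$. Every nonzero element of $\Omega'$ has the form $\psi_{\ker\phi'}^{*}\eta'$ for some $\phi'\in(V')^{\vee}$, with $\eta'$ a generator of the standard $L_{p-1,1}$ space with poles $\psi_{\ker\phi'}(V')\setminus\{0\}$. Put $\phi:=\phi'\circ(\psi_W|_V)\in V^{\vee}$; comparing root sets inside $k$ gives $\psi_{\ker\phi'}\circ\psi_W=\psi_{\ker\phi}$, both being monic separable additive polynomials with root set $(\psi_W|_V)^{-1}(\ker\phi')\subseteq V$. Hence $\sigma^{*}(\psi_{\ker\phi'}^{*}\eta')=(\psi_{\ker\phi'}\circ\psi_W)^{*}\eta'=\psi_{\ker\phi}^{*}\eta'$, a form with pole set $V\setminus\ker\phi$ (using $\psi_{\ker\phi}(V)=\psi_{\ker\phi'}(V')$), which by the normal form is a scalar multiple of $\omega_\phi$. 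Since $(\psi_W|_V)^{*}\colon(V')^{\vee}\to V^{\vee}$ is injective with image $\{\,\phi\in V^{\vee}:\phi|_W=0\,\}=\widetilde V^{\vee}$, this scalar multiple lies in $\widetilde\Omega$, so $\sigma^{*}\Omega'\subseteq\widetilde\Omega$. As $\sigma$ is non-constant, $\sigma^{*}$ is injective on differential forms, so $\dim_{\F_p}\sigma^{*}\Omega'=\dim_{\F_p}\Omega'=r=\dim_{\F_p}\widetilde\Omega$, and the inclusion is an equality.

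The step I expect to be the real obstacle is the first one: extracting from Theorems~\ref{thm:Pagotn0} and~\ref{thm:Pagotn} the normal form $\omega=\psi_{\ker\phi}^{*}\eta$ for the elements of a standard space -- equivalently, that the residue of $\omega$ at a pole depends $\F_p$-linearly on the pole -- and arranging the normalisations so that $\phi\mapsto\omega_\phi$ is genuinely $\F_p$-linear, not merely linear on each line. Once this description is available, the remaining steps are routine manipulations with composites of additive polynomials and with the behaviour of differential forms under a morphism \'etale away from $\infty$, all within the Moore-determinant formalism of Appendix~\ref{app:Moore}.
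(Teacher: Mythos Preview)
Your argument is correct and follows the same underlying strategy as the paper: identify $\Omega$ with $V^\vee$, take $\sigma$ to be the structural polynomial of the relevant subspace $W\subset V$, and verify the pullback identity by exploiting additivity of structural polynomials. The paper carries out the verification by fixing a basis and computing $F_i(P_{A_{n-t}}(X))$ for the logarithmic form $\tilde\omega_i=dF_i/F_i$ directly; you instead factor through the one-dimensional case (your ``normal form'' $\omega_\phi=\psi_{\ker\phi}^{*}\eta$) and then use the composition identity $\psi_{\ker\phi'}\circ\psi_W=\psi_{\ker\phi}$, checked by comparing monic additive polynomials with the same root set. This is a tidy repackaging of the same computation rather than a genuinely different route.

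On the step you flag as the obstacle: the $\F_p$-linear isomorphism $\phi\mapsto\omega_\phi$ and the residue-linearity you need are established in the paper not via Theorems~\ref{thm:Pagotn0} and~\ref{thm:Pagotn} but directly from the explicit formula $\omega_\varphi=\sum_{a\in A}\frac{\varphi(a)}{X-a}\,dX$ and the perfect pairing $(\omega,x)\mapsto\res_x(\omega)$ in the Remark following Proposition~\ref{prop:standard}. With that formula in hand, your normal form is immediate (and is exactly the $t=1$ instance of the paper's explicit computation), and the ``scalar multiple'' step is justified by Lemma~\ref{lem:poles1}, which forces the scalar into $\F_p^\times$.
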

When $p=2$, we can show that all standard $L_{2^{n-1},n}$ spaces and their étale pullbacks arise from the construction 
of Section \ref{subsec:applyingPagotn}.

Finally, we remark that the spaces $L_{12,2}$ and $L_{15,2}$ discovered in this paper are, to our knowledge, the first known examples of non-standard spaces $L_{\lambda p^{n-1},n}$ with $p\neq 2$ that are not equivalent to \'etale pullbacks of standard spaces.
Since $p=3$ in these examples, their \'etale pullbacks generate examples of non-standard spaces $L_{36d,2}$ and $L_{45d,2}$ for every positive integer $d$.
The spaces $L_{45d,2}$ when $d$ is odd can not be equivalent to \'etale pullbacks of standard spaces, by a simple argument of pole counts, and therefore we have an infinite class of examples not arising from standard spaces.
We don't know to what extent this generalizes to other values of $p$ and $n$.
More specifically, the following questions remain open:
\begin{itemize}
\item Are there spaces $L_{\lambda p,2}$ for $p \geq 5$, that are not \'etale pullbacks of standard spaces?
\item Are there spaces $L_{\lambda p^{n-1},n}$ for $n \geq 3$, that are not \'etale pullbacks of standard spaces?
\end{itemize}

Answers to these questions would result in great progress in the understanding the role of \'etale pullbacks in generating examples of spaces $L_{\lambda p^{n-1},n}$, and more generally in the structure of these spaces when $p, \lambda, n$ vary.

\subsection*{Structure of the paper}

In Section 2, we present known results and useful constructions on $L_{\lambda p^{n-1},n}$ that are used in all the other sections.
In Section 3, we recall an important characterization of spaces $L_{\lambda p, 2}$, due to Pagot (Proposition \ref{prop:Pagot}), and we prove the non existence of spaces $L_{\lambda p, 2}$ when $p > 3 \lambda$.
In Section 4, we generalize Proposition \ref{prop:Pagot} to the case of spaces $L_{\lambda p^{n-1},n}$ and we apply this to the case $p=2$ to construct our new examples of spaces $L_{\lambda 2^{n-1},n}$.
In Section 5, we define and study \emph{standard $L_{\lambda p^{n-1},n}$-spaces}.
In Section 6, we fix $p=3$ and $n=2$ and provide a complete classification of spaces $L_{12, 2}$ and $L_{15, 2}$.
In Section 7, we fix $\lambda=4$ and $n=2$ and we deal with the outstanding cases in the classification of spaces $L_{4p,2}$.
Finally, in Appendix A we collect all the results on Moore determinants that are used throughout the paper (mostly in Sections 4 and 5).

\subsection*{Notation and conventions}
Let $k$ be an algebraically closed field of characteristic $p>0$.
A meromorphic differential form on $\PP^1_k$ can be written as $\omega=f(X)dX$ for $f(X) \in k(X)$, the field of rational fractions in one variable over $k$.
Given a point $a \in \PP^1_k(k)=k \cup \{\infty \}$ and a differential form $\omega=f(X)dX$ on $\PP^1_k$, the order of $\omega$ at $a$ is equal to the order $\ord_a(f(X))$ if $a \neq \infty$, and it is equal to $\ord_0(f(\frac{1}{X})) - 2$ if $a = \infty$.
When $\ord_a(\omega)>0$, the point $a$ is a \emph{zero} of $\omega$, and when $\ord_a(\omega)<0$, the point $a$ is a \emph{pole} of $\omega$.

A differential form on $\PP^1_k$ is called \emph{logarithmic} if it is of the form $\omega=\frac{dF}{F}$ for some $F \in k(X)$.
We usually denote by $\Omega$ a space $L_{m+1,n}$ and by $\{ \omega_1,\dots, \omega_n \}$ a basis for this space.
For a given subset $S \subset \Omega$, we denote by $\calP(S)$ the subset of $k$ consisting of elements that are poles of at least a non-zero differential form in $S$.
If $\omega \in \Omega - \{0\}$, we write $\calP(\omega)$ for the set of poles of $\omega$.
One deduces from the definition that the set $\calP(\omega)$ consists of $m+1$ simple poles.
Finally, given a finite set of logarithmic differential forms $\{\omega_1, \dots, \omega_n\}$, we denote by $\langle \omega_1, \dots, \omega_n \rangle_{\Fp}$ the $\Fp$-vector space that they generate.
Most of the times, this will not be a space $L_{m+1,n}$, but it will be one under certain predetermined conditions.

\subsection*{Acknowledgments}
Thanks to Jean Fresnel's careful reading and insightful comments, this paper is better than it otherwise would have been. We dedicate it to his memory.

\section{Preliminaries}
In this section, we collect the preliminary results later used to show existence, non-existence and classification results of spaces $L_{m+1,n}$.
We first recall known results on the number of poles in such spaces, as well as proving new lemmas on the combinatorics of the arrangements of such poles.
Then we introduce useful constructions: Frobenius twists and \'etale pullbacks of spaces $L_{m+1,n}$.
Finally, we briefly discuss known results in the case $n=1$.
The main result in this section that was not previously known is Corollary \ref{coro:poles}, stating that a space $L_{m+1,n}$ is characterized by its set of poles.

\subsection{The Jacobson-Cartier condition}
Let us recall here the \emph{Jacobson-Cartier} condition for verifying that a meromorphic differential form on $\PP^1_k$ is logarithmic.
Let $\omega = f(X) dX \in \Omega(k(X))$ be such a form.
Since $k$ is perfect, we have that $k(X)=\oplus_{i=0}^{p-1} k(X)^p X^i$, and hence a unique writing 
\[f(X)=\oplus_{i=0}^{p-1} f_i(X)^p X^i.\]
Note that the rational function $f_{p-1}$ is invariant by translation by any element $a\in k$.
In fact, we can also write $f(X)=\oplus_{i=0}^{p-1} g_{i}(X)^p (X-a)^i$, and by comparing the coefficients we find that $f_{p-1}=g_{p-1}$.
It is a classical result that $\omega$ is logarithmic if, and only if, $f(X)=f_{p-1}(X)$\footnote{In the case of curves this result is due to Jacobson. The study of the correspondence $f \to f_{p-1}$ as an operation over differential forms over curves is addressed in subsequent work of Tate, that was later generalized to higher dimensions by Cartier. Because of this, such correspondence is often known under the name of \emph{Cartier operator}. We therefore deem it reasonable to refer to the result on curves as to the ``Jacobson-Cartier'' condition. We refer to \cite[\S 10, \S 11]{Serre58} for a discussion of the topic that brings all these different perspectives together.}.
In our case, that of differential forms over the projective line, this fact has an elementary proof, that we provide below.

\begin{proposition}\label{prop:Cartier}
Let $\omega = f(X) dX \in \Omega(k(X))$ with $f(X)=\oplus_{i=0}^{p-1} f_i(X)^p X^i$ be a non-zero differential form.
Then $\omega=\frac{dF}{F}$ for some $F\in k(X)$ if, and only if, $f(X)=f_{p-1}(X)$.
\end{proposition}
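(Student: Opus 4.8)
The claim is that a nonzero meromorphic differential form $\omega=f(X)\,dX$ on $\PP^1_k$ is logarithmic if and only if $f=f_{p-1}$, where $f=\oplus_{i=0}^{p-1}f_i^pX^i$ is the decomposition along the basis $1,X,\dots,X^{p-1}$ of $k(X)$ over $k(X)^p$. I would prove the two implications separately, using the partial fraction decomposition of $f$ together with the fact (recalled just above the statement) that $f_{p-1}$ is insensitive to translation $X\mapsto X-a$.

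For the easy direction ($\Leftarrow$, or rather building intuition): if $\omega=dF/F$ with $F\in k(X)$, then writing $F=c\prod_j(X-a_j)^{n_j}$ (with $c\in k^\times$, $a_j\in k$, $n_j\in\Z$) gives $f=\sum_j n_j/(X-a_j)$. So it suffices to check that a form $f=\sum_j n_j/(X-a_j)$ with $n_j\in\F_p$ (viewed in $k$) satisfies $f=f_{p-1}$: indeed $1/(X-a)$ already lies in the $X^{p-1}$-component since $1/(X-a)=(X-a)^{p-1}/(X-a)^p=\big((X-a)^{p-1}\big)\cdot(X-a)^{-p}\cdot$, i.e.\ $1/(X-a)=\big(1/(X-a)\big)^p\cdot(X-a)^{p-1}$ after absorbing, more carefully: $\frac{1}{X-a}=\frac{(X-a)^{p-1}}{(X-a)^p}$, and $(X-a)^p\in k(X)^p$, so this monomial contributes only to the $i=p-1$ slot, with $(p-1)$-component equal to $1/(X-a)$ itself; summing over $j$ and using $\F_p$-linearity, $f_{p-1}=\sum_j n_j/(X-a_j)=f$. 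This also shows conversely that if $\omega$ is logarithmic then necessarily all residues lie in $\F_p$ and $f=f_{p-1}$.

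For the harder direction ($\Rightarrow$ reversed: $f=f_{p-1}\implies\omega$ logarithmic), I would first observe that $f=f_{p-1}$ forces $f$ to have at most simple poles with residues in $\F_p$: expanding $f$ near a pole $a$ as a Laurent series, the condition that the only nonzero component in the $k(X)^p$-decomposition is the $X^{p-1}$ one kills all pole orders $\not\equiv -1\pmod p$ immediately, and a short computation with the higher-order terms $(X-a)^{-1-kp}$ (which would sit in the $i=p-1$ slot but with a $p$-th power coefficient $(X-a)^{-k}$) shows, after subtracting off admissible pieces, that in fact only the simple pole survives and its residue $r_a$ satisfies $r_a=r_a^p$, i.e.\ $r_a\in\F_p$; the cleanest way to organize this is to note $f-\sum_a \frac{r_a}{X-a}$ is a rational function with $f_{p-1}=0$ and no poles in $k$, hence (being a polynomial, by degree considerations at $\infty$ forced by $\omega$ being a differential form, or just directly) it lies in $k(X)^p\cap$ (no-pole), and one checks it must vanish. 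Then set $F:=\prod_a(X-a)^{r_a}$ where $r_a\in\{0,1,\dots,p-1\}$ is the representative; one must check $\sum_a r_a\equiv 0\pmod p$ so that $F\in k(X)$ is a genuine rational function (this follows from the residue theorem on $\PP^1$, $\sum_a r_a + r_\infty = 0$, together with the fact that $\omega$ has no residue issue at $\infty$ that isn't already accounted for — more precisely, $\sum_{\text{all poles incl. }\infty}\operatorname{res}=0$). Finally $dF/F=\sum_a r_a/(X-a)=f$, so $\omega=dF/F$.

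**Main obstacle.** The delicate point is the $\Rightarrow$ direction's pole analysis: showing that $f=f_{p-1}$ genuinely forces \emph{simple} poles with $\F_p$-residues, ruling out higher-order poles of order $\equiv -1\pmod p$ which superficially also live in the $X^{p-1}$-slot. The resolution is that such a term $(X-a)^{-1-kp}$ with $k\ge 1$ equals $(X-a)^{p-1}\cdot(X-a)^{-k(p+1)}$ — wait, that is not a $p$-th power — so one argues instead: near $a$, after the translation $X\mapsto X+a$ (which preserves $f_{p-1}$), write the principal part as $\sum_{\ell\ge 1}c_\ell X^{-\ell}$; the $k(X)^p$-decomposition of $X^{-\ell}$ puts it in slot $i\equiv -\ell\pmod p$ with $p$-th-root $X^{-\lceil\ell/p\rceil}$-type coefficient, and $f=f_{p-1}$ demands the $i\ne p-1$ parts cancel locally while the $i=p-1$ part equals $f$; matching Laurent coefficients, $c_\ell=0$ unless $\ell\equiv 1\pmod p$, and for $\ell=1+kp$ with $k\ge1$ the coefficient appears $p$-th-powered in the reconstruction, forcing $c_{1+kp}=c_{1+kp}^{\,p}\cdot(\text{lower})$ — iterating down, all of these vanish and only $c_1$ (the residue) survives with $c_1\in\F_p$. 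Packaging this local computation cleanly, rather than drowning in indices, is the crux; everything else (the global residue sum and exhibiting $F$) is then formal.
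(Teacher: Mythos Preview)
Your approach is essentially the paper's: partial fractions for the forward direction, and for the converse a pole-by-pole Laurent analysis together with a separate treatment of the polynomial part. Two corrections. First, after subtracting the simple-pole contributions, the remainder $E:=f-\sum_a r_a/(X-a)$ satisfies $E_{p-1}=E$, not $E_{p-1}=0$: both $f$ and $\sum_a r_a/(X-a)$ are fixed by $(\cdot)_{p-1}$, hence so is their difference. The paper then kills $E$ by degrees: writing the polynomial $E=\sum_i E_i^{\,p}X^i$ gives $\deg E_{p-1}\le(\deg E-(p-1))/p<\deg E$, which contradicts $E_{p-1}=E$ unless $E=0$. Second, there is no need to check $\sum_a r_a\equiv 0\pmod p$: the product $F=\prod_a(X-a)^{r_a}$ with nonnegative integer exponents is already a polynomial, hence in $k(X)$, regardless of the sum of the exponents.
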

\begin{proof}
Let $\{x_1,\dots x_r\}$ be the set of poles of $\omega$, which is non-empty because $\omega \neq 0$.
To have $\omega=\frac{dF}{F}$ it is necessary and sufficient that the $x_i$'s are simple and their residues are in $\F_p^\times$. 
When this is the case, $\omega=\sum_{i=1}^{r} \frac{a_i}{X-x_i}dX$.
So, assuming that $\omega$ is logarithmic (and hence $a_i \in \F_p^\times$), we find that
\[f_{p-1}(X)=\left(\sum_{i=1}^{r} \frac{a_i}{X-x_i} \right)_{p-1} = \sum_{i=1}^{r} \left(\frac{a_i}{X-x_i} \right)_{p-1} = \sum_{i=1}^{r} \frac{a_i}{X-x_i}= f(X).\] 
Conversely, suppose that $f(X)=f_{p-1}(X)$.
Then we can consider the pole expansion for the meromophic function $f(X)$, namely:
\[f(X) = E(X)+\sum_{i=1}^r g_{x_i}(X), \; \mbox{where} \; E(X)\in k[X]\; \mbox{and} \; g_{x_i}(X):= \sum_{j>0} \frac{a_{ij}^p}{(X-x_i)^j} \;\mbox{with}\; a_{ij} \in k.\]
Our assumption that $f(X)=f_{p-1}(X)$ can be rewritten as 
\[E(X)=E_{p-1}(X) \;\; \mbox{and} \;\; g_{x_i}(X)=(g_{x_i})_{p-1}(X).\]
So, if we write $E(X)=\sum_{i=0}^{p-1} E_i(X)^p X^i$ with $E_i(X) \in k[X]$ we have $\deg(E)=\max_i(p \deg E_i +i)$, so that $\deg(E)=\deg(E_{p-1})$ implies $E=0$.
Similarly, by writing $\frac{a_{ij}^p}{(X-x_i)^j}= \frac{a_{ij}^p}{(X-x_i)^{qp+s}}$ for suitable $q$ and $0\leq s < p$, we have that $\frac{a_{ij}^p}{(X-x_i)^j}=\frac{a_{ij}^p(X-x_i)^{p-s}}{(X-x_i)^{(q+1)p}}.$ 
If $s\neq 1$, then $\left(\frac{a_{ij}^p}{(X-x_i)^j}\right)_{p-1}=0$, hence we just need to consider the case $s=1$, where we have $\left(\frac{a_{ij}^p}{(X-x_i)^j}\right)_{p-1}=\frac{a_{ij}}{(X-x_i)^{1+\frac{j-1}{p}}}$.
The condition $g_{x_i}(X)=(g_{x_i})_{p-1}(X)$ implies then that $a_{ij}^p=a_{i(1+(j-1)p)}$ which is to say that $a_{i1}\in \F_p$ and $a_{ij}=0$ for $j>1$.
Summarizing, we find that $E(X)=0$ and $g_{x_i}(X)=\frac{a_{i1}}{X-x_i}$ with $a_{i1}\in \F_p$ for every $i$, which is to say that $\omega$ is a logarithmic differential form.
\end{proof}
\begin{corollary}\label{cor:1/P}
Let $\omega=f(X)dX \in \Omega(k(X))$ be such that $f(X)=\frac{1}{P(X)}$ with $P(X)\in k[X]$.
Then $\omega=\frac{dF}{F}$ for some $F\in k(X)$ if, and only if, the $p-1$-th derivative $\left({P}^{p-1}\right)^{(p-1)}$ of the polynomial $P^{p-1}$ is equal to $-1$.
\end{corollary}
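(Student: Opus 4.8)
The plan is to deduce the corollary from Proposition~\ref{prop:Cartier} by computing the component $f_{p-1}$ of $f = 1/P$ explicitly. Since $k$ is perfect, the polynomial $P^{p-1}$ admits a base decomposition $P^{p-1} = \sum_{i=0}^{p-1} R_i(X)^p X^i$ with $R_i \in k[X]$. Writing $f = 1/P = P^{p-1}/P^p$ and dividing this decomposition by the $p$-th power $P^p$ yields $f = \sum_{i=0}^{p-1} (R_i/P)^p X^i$; by uniqueness of the decomposition $f = \oplus_{i=0}^{p-1} f_i^p X^i$ this identifies $f_i = R_i/P$, and in particular $f_{p-1} = R_{p-1}/P$.

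By Proposition~\ref{prop:Cartier}, $\omega$ is logarithmic if and only if $f = f_{p-1}$, that is (as $P \neq 0$) if and only if $R_{p-1} = 1$. To convert this into a statement about $(P^{p-1})^{(p-1)}$, I would differentiate the identity $P^{p-1} = \sum_{i=0}^{p-1} R_i(X)^p X^i$ exactly $p-1$ times. Each $R_i^p$ is a $p$-th power, so $(R_i^p)' = 0$ and the Leibniz rule collapses to $\bigl(R_i^p X^i\bigr)^{(p-1)} = R_i^p\,(X^i)^{(p-1)}$; the right-hand side vanishes for $0 \le i \le p-2$ and equals $(p-1)!\,R_{p-1}^p$ for $i = p-1$. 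Hence $(P^{p-1})^{(p-1)} = (p-1)!\,R_{p-1}^p$, and since $(p-1)! \equiv -1 \pmod p$ by Wilson's theorem, $(P^{p-1})^{(p-1)} = -R_{p-1}^p$.

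Combining the two steps, $\omega$ is logarithmic $\iff R_{p-1} = 1 \iff R_{p-1}^p = 1 \iff (P^{p-1})^{(p-1)} = -1$, which is the assertion. I do not expect a genuine obstacle: the only points needing care are the bookkeeping in the Leibniz expansion and the use of perfectness of $k$ to split $P^{p-1}$ into a sum of $p$-th powers times $X^i$ with \emph{polynomial} coefficients --- it is precisely this that makes ``$f = f_{p-1}$'' equivalent to the clean condition ``$R_{p-1} = 1$'', and hence to the derivative identity, rather than to some messier relation among the $f_i$.
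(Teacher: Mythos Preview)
Your proof is correct and follows essentially the same route as the paper: both invoke Proposition~\ref{prop:Cartier}, write $f=1/P=P^{p-1}/P^p$, and exploit that $P^p$ has zero derivative to reduce the Jacobson--Cartier condition to a statement about $(P^{p-1})^{(p-1)}$. The only organisational difference is that the paper works directly with $f$, using the general identity $f^{(p-1)}=(p-1)!\,f_{p-1}^p=-f_{p-1}^p$ and then computing $f^{(p-1)}=(P^{p-1})^{(p-1)}/P^p$, whereas you first lift the decomposition to the polynomial level via $P^{p-1}=\sum R_i^p X^i$ and differentiate there; the two computations are the same up to multiplying through by $P^p$.
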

\begin{proof}
By Proposition \ref{prop:Cartier}, we have that $\omega=\frac{dF}{F}$ if, and only if, $f=f_{p-1}$.
Since the $p-1$-th derivative of $f$ satisfies $f^{(p-1)}=(p-1)!f_{p-1}^p=-f_{p-1}^p$, then it is equivalent to require that $f^{(p-1)}=-f^p$.
In the case where $f(X)=\frac{1}{P(X)}$, we find $f^{(p-1)}=\left( \frac{P^{p-1}}{P^p}\right)^{(p-1)}=\frac{{(P^{p-1})}^{(p-1)}}{P^p}.$
On the other hand, $-f^p=-\frac{1}{P^p}$, and so the condition $f=f_{p-1}$ becomes $\left({P}^{p-1}\right)^{(p-1)}=-1$.
\end{proof}

\subsection{The combinatorics of poles of a space $L_{m+1,n}$}\label{ssec:combpoles}
The condition that $n$ logarithmic differential forms $\omega_1, \dots, \omega_n$ generate $\Omega$ a space $L_{m+1,n}$ put combinatorial restrictions on the set $\calP(\Omega)$ consisting of all the poles of the elements of $\Omega$. 
In the first part of this section, we rewrite some of the known restrictions using our notation and including alternative proofs for the reader's convenience.
The first is a lemma proved in \cite[Lemme 5 and Lemme 6]{Pagot02}.
\begin{lemma}\label{lem:Pagotpoles}
Let $\Omega$ be a space $L_{m+1,n}$. Then, there is an integer $\lambda>0$ such that $m+1=\lambda p^{n-1}$. Moreover, $|\calP(\Omega)|=\lambda \frac{p^{n}-1}{p-1}$. \qed
\end{lemma}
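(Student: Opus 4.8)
The plan is to obtain the number $|\calP(\Omega)|$ by a double count of incidences between the nonzero forms of $\Omega$ and their poles, and then read off the divisibility $p^{n-1}\mid m+1$ from the resulting identity. The only input about the individual forms that I would use is that residues of logarithmic forms lie in $\F_p$, together with the fact — noted in the definition and also immediate from $\deg K_{\PP^1_k}=-2$, since a nonzero $\omega\in\Omega$ has a single zero, of order $m-1\ge 0$ at $\infty$ — that every nonzero $\omega\in\Omega$ has exactly $m+1$ poles, all of them simple and lying in $k=\AA^1(k)$.

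The key local observation is the following. Fix $x\in k$ and consider the residue map $\res_x\colon\Omega\to k$, $\omega\mapsto\res_x(\omega)$. It is $\F_p$-linear, and since every nonzero element of $\Omega$ is logarithmic its image is contained in $\F_p$; moreover $\res_x(\omega)=0$ precisely when $x$ is not a pole of $\omega$ (the poles of a nonzero $\omega$ being all simple). As $(\F_p,+)$ has no proper nonzero subgroup, $\res_x$ is either identically zero — exactly when $x\notin\calP(\Omega)$ — or surjective onto $\F_p$ with kernel of $\F_p$-dimension $n-1$. Hence, for each $x\in\calP(\Omega)$, the number of $\omega\in\Omega\setminus\{0\}$ having a pole at $x$ equals $p^n-p^{n-1}=p^{n-1}(p-1)$. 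Counting the set $\{(x,\omega):x\in\calP(\Omega),\ \omega\in\Omega\setminus\{0\},\ x\in\calP(\omega)\}$ once by $\omega$ and once by $x$ then gives
\[(p^n-1)(m+1)=|\calP(\Omega)|\cdot p^{n-1}(p-1).\]

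It remains to extract the divisibility statement. Writing $p^n-1=(p-1)(1+p+\cdots+p^{n-1})$ and cancelling the factor $p-1$, the identity becomes $(1+p+\cdots+p^{n-1})(m+1)=|\calP(\Omega)|\cdot p^{n-1}$. Since $1+p+\cdots+p^{n-1}\equiv 1\pmod p$ is coprime to $p^{n-1}$, this forces $p^{n-1}\mid m+1$, say $m+1=\lambda p^{n-1}$; here $\lambda>0$ because $m\ge 1$. Substituting back yields $|\calP(\Omega)|=\lambda\,\frac{p^n-1}{p-1}$, as claimed. I do not anticipate a genuine obstacle: the only point requiring a little care is the identification $\ker\res_x=\{\omega\in\Omega:x\notin\calP(\omega)\}$, which is what guarantees that exactly $p^{n-1}$ forms of $\Omega$ fail to be poled at a given $x\in\calP(\Omega)$; everything else is bookkeeping.
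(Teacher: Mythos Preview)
Your argument is correct. The paper does not include its own proof of this lemma (it is quoted from \cite{Pagot02} with a bare \qed), but your double-counting via the residue functional $\res_x\colon\Omega\to\F_p$ is exactly the mechanism the paper invokes in the surrounding discussion, where the kernel of this map is the hyperplane $\frakH(x)$.
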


Thanks to this result, in the rest of the paper we can restrict to study spaces $L_{\lambda p^{n-1},n}$ for different values of $p$ and $\lambda$, knowing that this hypothesis does not constitute a loss of generality.

Let $\Omega$ be a space $L_{\lambda p^{n-1},n}$, fix a basis $\{\omega_1, \dots, \omega_n\}$ of $\Omega$ and pick a pole $x\in \calP(\Omega)$.
We denote by $h_i$ the residue of $\omega_i$ at $x$ for $i=1, \dots, n$, setting $h_i=0$ if $x$ is not a pole of $\omega_i$.
Let $\omega=a_1\omega_1+\dots+a_n\omega_n \in \Omega$ with $a_i \in \F_p$.
Then we have that $x \in \calP(\omega)$ if, and only if, $a_1h_1+\dots+a_nh_n \neq 0$.
As a result, the set $\mathfrak{H}(x)=\{\sum_{i=1}^n a_i \omega_i \in \Omega : \sum_{i=1}^n a_i h_i =0\}$ is a hyperplane of $\Omega$ consisting of all the differential forms that do not have $x$ as a pole.
Conversely, given a hyperplane $H \subset \Omega$, we define $X_H := \calP(\Omega) - \calP(H)$, the subset of $\calP(\Omega)$ consisting of all poles that do not occur as poles of any element in $H$. 
If we denote by $\calH(\Omega)$ the set of hyperplanes of $\Omega$, the correspondence $x \mapsto \frakH(x)$ defines a function $\frakH:\calP(\Omega) \to \calH(\Omega)$ with the property that $\frakH^{-1}(H)=X_H$ for every $H \in \calH(\Omega)$.

\begin{lemma}\label{lem:hyperplanes}
We have that $|X_H|=\lambda$ for every $H \in \mathcal{H}(\Omega)$.
As a result, the map $\frakH$ is surjective and it is injective if, and only if, $\lambda=1$.
\end{lemma}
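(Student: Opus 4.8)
The plan is to determine $|X_H|$ by a double counting of the incidences between the poles of $\Omega$ and the elements of the hyperplane $H$. Recall from Lemma \ref{lem:Pagotpoles} (together with the observation on $\calP(\omega)$ in the conventions) that every nonzero $\omega \in \Omega$ has exactly $\lambda p^{n-1}$ simple poles, and that $|\calP(\Omega)| = \lambda\frac{p^n-1}{p-1}$. Fix $H \in \calH(\Omega)$ and compute $\sum_{\omega \in H} |\calP(\omega)|$ in two ways. On one hand this sum equals $(p^{n-1}-1)\lambda p^{n-1}$, since $H$ has $p^{n-1}-1$ nonzero elements. On the other hand, writing $|\calP(\omega)| = \#\{x \in \calP(\Omega) : \res_x(\omega) \ne 0\}$ and exchanging the two summations, it equals $\sum_{x \in \calP(\Omega)} \#\{\omega \in H : \res_x(\omega) \ne 0\}$.

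For the inner count, fix $x \in \calP(\Omega)$. Since $x$ is a genuine pole of some element of $\Omega$, the residue map $\res_x$ is a nonzero linear functional on $\Omega$, so $\frakH(x) = \ker(\res_x)$ is a hyperplane. If $x \in X_H$, that is $\frakH(x) = H$, then no element of $H$ has $x$ as a pole and the inner count is $0$; otherwise $H$ and $\frakH(x)$ are distinct hyperplanes, $H \cap \frakH(x)$ has dimension $n-2$, and the inner count is $p^{n-1} - p^{n-2}$. Hence $\sum_{\omega \in H} |\calP(\omega)| = (|\calP(\Omega)| - |X_H|)(p^{n-1} - p^{n-2})$. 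Equating the two expressions, substituting $|\calP(\Omega)| = \lambda\frac{p^n-1}{p-1}$ and dividing by $p^{n-2}(p-1)$, the identity reduces to $\lambda(p-1) = |X_H|(p-1)$, i.e. $|X_H| = \lambda$. For $n = 1$ the only hyperplane is $\{0\}$, $X_{\{0\}} = \calP(\Omega)$, and the claim is Lemma \ref{lem:Pagotpoles} directly.

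For the consequences, recall that $\frakH^{-1}(H) = X_H$ for every $H \in \calH(\Omega)$, as recorded just before the statement. Since $|X_H| = \lambda \ge 1$, every fiber of $\frakH$ is nonempty, so $\frakH$ is surjective; and every fiber has at most one element precisely when $\lambda = 1$, so $\frakH$ is injective if and only if $\lambda = 1$. (As a consistency check, $\sum_{H} |X_H| = \frac{p^n-1}{p-1}\cdot\lambda = |\calP(\Omega)|$, matching Lemma \ref{lem:Pagotpoles}.)

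I do not expect a serious obstacle: once the incidence count is set up, the rest is a linear computation. The only points needing care are verifying that $x$ being a pole of some element of $\Omega$ forces $\dim \frakH(x) = n-1$, so that $H \subseteq \frakH(x)$ already implies $H = \frakH(x)$, and handling the degenerate case $n = 1$ separately, where the factor $p^{n-2}$ is not an integer although the set it counts is empty.
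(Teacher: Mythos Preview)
Your proof is correct, but the route differs from the paper's. The paper observes directly that a hyperplane $H \subset \Omega$ is itself a space $L_{(\lambda p)p^{n-2},\,n-1}$ (since every nonzero $\omega \in H$ is logarithmic with the required zero at $\infty$), so Lemma~\ref{lem:Pagotpoles} applied to $H$ gives $|\calP(H)| = \lambda p\,\frac{p^{n-1}-1}{p-1}$ immediately, and then $|X_H| = |\calP(\Omega)| - |\calP(H)| = \lambda$ by a one-line subtraction. Your double-counting argument avoids invoking this recursive structure and instead recovers $|\calP(\Omega)|-|X_H|$ incidence-wise from the linear algebra of the residue functionals. The paper's approach is shorter and highlights the self-similar fact that subspaces of $L$-spaces are again $L$-spaces, a principle reused in Corollary~\ref{cor:lambdapn}; yours is more self-contained and does not need that structural observation, at the cost of a slightly longer computation and the need to treat $n=1$ separately.
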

\begin{proof}
Note that a hyperplane $H\in \mathcal{H}(\Omega)$ is a space $L_{(\lambda p) p^{n-2},n-1}$.
By Lemma \ref{lem:Pagotpoles} we have that $|\calP(\Omega)|=\lambda \frac{p^n-1}{p-1}$, $|\calP(H)|=\lambda p \frac{p^{n-1}-1}{p-1}$, and we know that $\calP(H)\subset \calP(\Omega)$.
From this it follows that
\[|X_H|= |\calP(\Omega) - \calP(H)|= |\calP(\Omega)| - |\calP(H)| =\lambda \frac{p^n-1}{p-1} - \lambda p \frac{p^{n-1}-1}{p-1} = \lambda \frac{p^n-1-p^n+p}{p-1}=\lambda.\]
\end{proof}

The result of Lemma \ref{lem:hyperplanes} can be rephrased by observing that $\calP(\Omega)= \cup_H X_H$ is a union of sets of cardinality $\lambda$, indexed by the $\frac{p^n-1}{p-1}$ hyperplanes of $\Omega$.
In the light of \ref{lem:Pagotpoles}, we see that this is in fact a disjoint union.

The following Corollary is essentially equivalent to \cite[Lemme 6]{Pagot02}. 
We include here a proof that uses the notation above for the reader's convenience.

\begin{corollary}\label{cor:lambdapn}
Let $\Omega$ be a space $L_{\lambda p^{n-1},n}$ and let $\{ \omega_1, \dots, \omega_n \}$ be a basis of $\Omega$.
Then, for every $1 \leq r\leq n$ we have that 
$|\calP(\omega_1) \cap \dots \cap \calP(\omega_r)|=\lambda (p-1)^{r-1} p^{n-r}$.
\end{corollary}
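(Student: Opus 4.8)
The plan is to reduce this count to a purely combinatorial statement about hyperplanes of the $\F_p$-vector space $\Omega$, exploiting the function $\frakH:\calP(\Omega)\to\calH(\Omega)$ constructed above and the fact, recorded in Lemma \ref{lem:hyperplanes}, that all of its fibers have size exactly $\lambda$.

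First I would note that, by the very definition of $\frakH(x)$ as the hyperplane of differential forms in $\Omega$ that do \emph{not} have $x$ as a pole, a form $\omega_i$ has $x$ as a pole if and only if $\omega_i \notin \frakH(x)$. Hence
\[
\calP(\omega_1) \cap \dots \cap \calP(\omega_r) \;=\; \frakH^{-1}\big( \{ H \in \calH(\Omega) : \omega_i \notin H \text{ for } i = 1, \dots, r \} \big).
\]
Since $\frakH^{-1}(H) = X_H$ with $|X_H| = \lambda$ for every $H \in \calH(\Omega)$ by Lemma \ref{lem:hyperplanes}, the left-hand side has cardinality $\lambda$ times the number of hyperplanes of $\Omega$ avoiding all of $\omega_1, \dots, \omega_r$.

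Then I would carry out that hyperplane count directly. As $\{\omega_1, \dots, \omega_n\}$ is an $\F_p$-basis of $\Omega$, the evaluation map $\phi \mapsto (\phi(\omega_1), \dots, \phi(\omega_n))$ identifies $\Hom_{\F_p}(\Omega, \F_p)$ with $\F_p^n$; under it the nonzero functionals correspond to nonzero tuples, and each hyperplane $H = \ker\phi$ corresponds to the $p-1$ nonzero scalar multiples of $\phi$. The condition "$\omega_i \notin H$ for $i = 1, \dots, r$" translates into "the first $r$ coordinates of the tuple are nonzero", a property visibly stable under scaling. There are $(p-1)^r p^{n-r}$ such tuples (all automatically nonzero once $r\geq 1$), hence $(p-1)^r p^{n-r}/(p-1) = (p-1)^{r-1} p^{n-r}$ such hyperplanes, and multiplying by $\lambda$ yields the asserted value.

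I expect no real obstacle here: the only points needing care are that $\frakH$ has all fibers of size exactly $\lambda$ (supplied by Lemma \ref{lem:hyperplanes}, which in turn rests on the fact that $\calP(\Omega) = \bigsqcup_H X_H$ is a disjoint union) and that the scaling action on functionals preserves the constraint on the first $r$ coordinates, which is immediate. As a consistency check, the case $r = 1$ gives $|\calP(\omega_1)| = \lambda p^{n-1} = m+1$, in agreement with the fact that every nonzero form in $\Omega$ has exactly $m+1$ simple poles.
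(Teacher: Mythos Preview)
Your proof is correct and follows essentially the same approach as the paper: both reduce the pole count to counting hyperplanes of $\Omega$ avoiding $\omega_1,\dots,\omega_r$ via the map $\frakH$ and Lemma~\ref{lem:hyperplanes}, and both arrive at $(p-1)^{r-1}p^{n-r}$ such hyperplanes. Your presentation is slightly more direct in observing immediately that $\calP(\omega_1)\cap\cdots\cap\calP(\omega_r)=\frakH^{-1}\{H:\omega_i\notin H\text{ for all }i\}$, whereas the paper first argues that the intersection is a union of fibers $X_H$, but the substance is the same.
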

\begin{proof}
We begin by remarking that, for every $\omega\in \Omega - \{0\}$ and $H \in \calH(\Omega)$ either $\frakH^{-1}(H) \subset \calP(\omega)$ or $\frakH^{-1}(H) \cap \calP(\omega)=\emptyset$.
As a result, $\calP(\omega_1) \cap \dots \cap \calP(\omega_r)$ is a union of sets of the form $X_H$, and therefore
$\calP(\omega_1) \cap \dots \cap \calP(\omega_r)=\frakH^{-1}(\frakH( \calP(\omega_1) \cap \dots \cap \calP(\omega_r)))$. 
We then have 
\[|\calP(\omega_1) \cap \dots \cap \calP(\omega_r)|=\lambda |\frakH \big( \calP(\omega_1) \cap \dots \cap \calP(\omega_r) \big)|.\]
We can then conclude with a hyperplane-counting argument: we have that $\frakH(\calP(\omega_1))$ is the set of hyperplanes of $\Omega$ not containing $\omega_1$.
More in general, since $\calP(\omega_1) \cap \dots \cap \calP(\omega_r)$ is a union of sets of the form $X_H$, we have that $\frakH(\calP(\omega_1) \cap \dots \cap \calP(\omega_r))=\frakH(\calP(\omega_1)) \cap \dots \cap \frakH(\calP(\omega_r))$.
This latter is the set of hyperplanes of $\Omega$ not containing $\omega_i$ for any $i\in \{1, \dots, r\}$ and so its cardinality is $\frac{p^{n-r} (p-1)^r}{p-1}$.
As a result, we also have that 
\[|\calP(\omega_1) \cap \dots \cap \calP(\omega_r)|=\lambda p^{n-r}(p-1)^{r-1}.\]
\end{proof}

When $n=2$, the result above specializes to the following.

\begin{corollary}\label{cor:lambdap}
Let $\Omega$ be a space $L_{\lambda p,2}$, and let $\omega, \omega' \in \Omega -  \{0\}$.
Then, 
\begin{enumerate}
\item$\langle \omega \rangle_{\Fp} = \langle \omega' \rangle_{\Fp}$ if, and only if, $|\calP(\omega)\cap \calP(\omega')|=\lambda p$
\item $\langle \omega, \omega' \rangle_{\Fp} = \Omega$ if, and only if, 
$|\calP(\omega)\cap \calP(\omega')|=\lambda (p-1)$.
\end{enumerate}
\qed
\end{corollary}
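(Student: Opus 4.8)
The plan is to read off both equivalences from Corollary \ref{cor:lambdapn} specialized to $n=2$, using that two nonzero elements of a two-dimensional $\Fp$-vector space are either proportional or form a basis.

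First I would record the numerical inputs. By Corollary \ref{cor:lambdapn} with $n=2$ and $r=1$ (equivalently, by Lemma \ref{lem:Pagotpoles} with $m+1=\lambda p$) we have $|\calP(\omega)|=\lambda p$ for every $\omega\in\Omega-\{0\}$, since any nonzero $\omega$ can be completed to a basis of $\Omega$. By Corollary \ref{cor:lambdapn} with $n=2$ and $r=2$ we have $|\calP(\omega_1)\cap\calP(\omega_2)|=\lambda(p-1)$ for every basis $\{\omega_1,\omega_2\}$ of $\Omega$. Since $\lambda>0$ and $p\geq 2$, the two numbers $\lambda p$ and $\lambda(p-1)$ are distinct.

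Then I would analyze a pair $\omega,\omega'\in\Omega-\{0\}$ according to the dichotomy. If $\langle\omega\rangle_{\Fp}=\langle\omega'\rangle_{\Fp}$, then $\omega'=c\omega$ for a nonzero $c\in\Fp$, so $\calP(\omega')=\calP(\omega)$ and hence $|\calP(\omega)\cap\calP(\omega')|=\lambda p$. Otherwise $\omega$ and $\omega'$ are $\Fp$-linearly independent; since $\Omega$ is two-dimensional over $\Fp$ this is equivalent to $\langle\omega,\omega'\rangle_{\Fp}=\Omega$, and then $\{\omega,\omega'\}$ is a basis of $\Omega$, so $|\calP(\omega)\cap\calP(\omega')|=\lambda(p-1)$. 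Because the two outputs differ, the value of $|\calP(\omega)\cap\calP(\omega')|$ determines which case occurs: it equals $\lambda p$ exactly when $\langle\omega\rangle_{\Fp}=\langle\omega'\rangle_{\Fp}$, which is statement (1), and it equals $\lambda(p-1)$ exactly when $\langle\omega,\omega'\rangle_{\Fp}=\Omega$, which is statement (2).

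There is no genuine obstacle here; the argument is a direct specialization, and the one fact that must be checked is the strict inequality $\lambda p\neq\lambda(p-1)$, which holds because $\lambda\geq 1$ and $p>1$, both part of the standing hypotheses.
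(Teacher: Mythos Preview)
Your proposal is correct and is exactly the intended argument: the paper states the corollary with a \qed and no proof, since it is meant as the immediate specialization of Corollary~\ref{cor:lambdapn} to $n=2$ together with the dichotomy proportional/basis in a two-dimensional $\F_p$-space. One small quibble: the parenthetical ``equivalently, by Lemma~\ref{lem:Pagotpoles}'' is slightly imprecise, as that lemma computes $|\calP(\Omega)|$ rather than $|\calP(\omega)|$; the fact $|\calP(\omega)|=m+1=\lambda p$ comes straight from the definition of a space $L_{m+1,n}$ (or, as you say, from the $r=1$ case of Corollary~\ref{cor:lambdapn}).
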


\begin{remark}
     The results above can be interpreted as special cases of a theorem by Yang (\cite[Theorem 3.12]{Yang25}), which analyzes the combinatorics of the branch loci of the lifts of $(\Z/p\Z)^n$-extensions $k[\![z]\!]/k[\![t]\!]$.
     For every $\Omega$ a space $L_{m+1,n}$ with set of poles $\calP(\Omega)$ we know by \cite[Th\'eor\`eme 11]{Pagot02} that there is a lift $R[\![Z]\!]/R[\![T]\!]$ of a $(\Z/p\Z)^n$-extension $k[\![z]\!]/k[\![t]\!]$ whose branch locus specializes to $\calP(\Omega)$ and such that the specializations of the branch loci of its $p$-cyclic subextensions are all of the form $\calP(\omega)$ for $\omega\in \Omega$. Under this correspondence, Yang's criteria in the equidistant case are equivalent to Lemma \ref{lem:Pagotpoles} and Corollary \ref{cor:lambdap}.
\end{remark}

Let us now show that the set of poles of a space $L_{m+1,n}$ completely characterizes such a space, starting with the case $n=1$.
\begin{lemma}\label{lem:poles1}
Let $\omega, \omega'$ be logarithmic differential forms on $\PP^1_k$ with a unique zero at $\infty$ of order $m-1$, and let $\calP(\omega)$ and $\calP(\omega')$ be the respective set of poles.
Then, $\calP(\omega)=\calP(\omega')$ if, and only if $\langle \omega \rangle_{\F_p} = \langle \omega' \rangle_{\F_p}$.
\end{lemma}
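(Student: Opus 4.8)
The plan is to prove both implications; the content lies in ``$\calP(\omega)=\calP(\omega')\Rightarrow\langle\omega\rangle_{\F_p}=\langle\omega'\rangle_{\F_p}$''. The reverse implication is immediate, since multiplying a differential form by a nonzero scalar does not change its set of poles: if $\omega'=c\,\omega$ with $c\in\F_p^\times$, then $\calP(\omega)=\calP(\omega')$.

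For the forward direction, suppose $\calP(\omega)=\calP(\omega')=\{x_1,\dots,x_N\}$. Both forms being logarithmic, all their poles are simple with residues in $\F_p^\times$ (the criterion used in the proof of Proposition \ref{prop:Cartier}), and counting the degree $-2$ of $\mathrm{div}(\omega)$ on $\PP^1_k$, using that $\omega$ has a single zero, at $\infty$, of order $m-1$, gives $N=m+1$. Hence we may write $\omega=\sum_{i=1}^{m+1}\frac{a_i}{X-x_i}\,dX$ and $\omega'=\sum_{i=1}^{m+1}\frac{b_i}{X-x_i}\,dX$ with all $a_i,b_i\in\F_p^\times$. First I would set $c:=b_1/a_1\in\F_p^\times$ and consider $\eta:=\omega'-c\,\omega$, the goal being to show $\eta=0$. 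The form $\eta$ is a $k$-combination of the $\frac{dX}{X-x_i}$ whose coefficient at $x_1$ vanishes, so its poles lie in $\{x_2,\dots,x_{m+1}\}$ and are simple, and since $\ord_\infty\omega=\ord_\infty\omega'=m-1\ge 0$ it is regular at $\infty$ with $\ord_\infty\eta\ge m-1$. If $\eta\neq 0$, then $\deg\mathrm{div}(\eta)=-2$ together with the bound $\ord_\infty\eta\ge m-1$, the at most $m$ simple finite poles, and the nonnegativity of the remaining orders forces $-2\ge (m-1)-m=-1$, a contradiction. Therefore $\eta=0$, i.e. $\omega'=c\,\omega$ with $c\in\F_p^\times$, and $\langle\omega\rangle_{\F_p}=\langle\omega'\rangle_{\F_p}$.

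The step requiring the most care is the divisor bookkeeping at infinity: one must check that the principal parts of $\omega$ and $c\,\omega$ at each $x_i$ are simple, so that $\eta$ genuinely has at most $m$ finite poles, and that the hypothesis on the zero at $\infty$ upgrades $\ord_\infty\eta\ge 0$ to $\ord_\infty\eta\ge m-1$ — it is exactly this gain of one unit that makes the degree inequality contradictory. An alternative, equally short argument avoids $\eta$ altogether: writing $Q:=\prod_{i=1}^{m+1}(X-x_i)$, the hypothesis that $\omega$ has no finite zero forces its numerator $\sum_i a_i\prod_{j\ne i}(X-x_j)$ to be a nonzero constant $\alpha$ (it cannot share a root with $Q$ since each $a_i\ne 0$, and any other root would be a finite zero of $\omega$), and likewise the numerator of $\omega'$ equals a constant $\beta\in k^\times$; the partial-fraction expansion $1/Q=\sum_i 1/\bigl(Q'(x_i)(X-x_i)\bigr)$ then yields $a_i=\alpha/Q'(x_i)$ and $b_i=\beta/Q'(x_i)$ for all $i$, so $b_i/a_i=\beta/\alpha$ is independent of $i$ and lies in $\F_p^\times$, whence $\omega'=(\beta/\alpha)\,\omega$.
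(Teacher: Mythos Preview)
Your primary argument is correct and essentially the same as the paper's: both cancel at least one pole by forming an $\F_p^\times$-combination (you pick $c=b_1/a_1$ directly, the paper proceeds by contradiction and chooses $j$ so that some $a_i+jb_i=0$), then derive a contradiction from the degree $-2$ of a nonzero differential with at most $m$ simple poles and order $\ge m-1$ at $\infty$.

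Your alternative partial-fraction argument is a genuinely different and clean route: the hypothesis ``unique zero of order $m-1$ at $\infty$'' forces the numerator $\sum_i a_i\prod_{j\neq i}(X-x_j)$ to be a nonzero constant $\alpha$, hence $\omega=\alpha\,dX/Q$ and $\omega'=\beta\,dX/Q$, whence $a_i=\alpha/Q'(x_i)$, $b_i=\beta/Q'(x_i)$, and $\omega'=(\beta/\alpha)\omega$ with $\beta/\alpha\in\F_p^\times$. This avoids the divisor-counting contradiction entirely and makes transparent that the form is determined up to scalar by $Q$ alone; the paper's approach, on the other hand, generalizes more readily to the situation of Proposition~\ref{prop:poles} where $\calP(\omega')\subset\calP(\Omega)$ rather than equality.
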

\begin{proof}
The non-trivial part is to prove that $\calP(\omega)=\calP(\omega')$ implies that the two forms generate the same $\F_p$-vector space.
Let $\calP(\omega)=\calP(\omega')=\{x_0,\dots,x_m\}$ and let us show that there exists $c\in \F_p^\times$ such that $\omega'=c\omega$. 
As these differential forms are logarithmic with no zeroes outside $\infty$, we have the unique writings $\omega=\sum_{i=0}^{m} \frac{a_i}{X-x_i}dX$ and $\omega'=\sum_{i=0}^{m} \frac{b_i}{X-x_i}dX$, with $a_i, b_i \in \F_p^\times$.\\

Suppose by contradiction that there is no $c\in \F_p^\times$ such that $a_i=c b_i$ for every $i$.
Then there is a $j\in \F_p^\times$ such that $a_i+jb_i=0$ for some but not all $i$'s.
Then, the form $\omega+j \omega'$ is a non-zero logarithmic differential form with a zero of order at least $m-1$ at infinity, and at the same time it has at most $m$ simple poles.
This is not possible, since the degree of any (non-zero) meromorphic differential form on $\PP^1_k$ is $-2$.
\end{proof}


\begin{proposition}\label{prop:poles}
Let $n\geq 2$, let $\Omega$ be a space $L_{\lambda p^{n-1},n}$ and let $\omega'$ be a logarithmic differential form having a unique zero at infinity of order $\lambda p^{n-1} - 2$ and such that $\calP(\omega') \subset \calP(\Omega)$.
Then $\omega' \in \Omega$.
\end{proposition}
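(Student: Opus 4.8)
The plan is to assume $\omega' \notin \Omega$ and derive a numerical contradiction by double counting $\sum_{\omega \in \Omega - \{0\}} |\calP(\omega) \cap \calP(\omega')|$. I would start by recording the basic facts, writing $m+1 = \lambda p^{n-1}$: every $\omega \in \Omega - \{0\}$ is logarithmic with exactly $\lambda p^{n-1}$ simple poles, all of residue in $\F_p^\times$; the same holds for $\omega'$, since a logarithmic form on $\PP^1_k$ whose only zero is at $\infty$ and of order $\lambda p^{n-1} - 2$ must have exactly $\lambda p^{n-1}$ simple poles (every non-zero differential form on $\PP^1_k$ has degree $-2$). I would also use that any $\F_p$-linear combination of logarithmic forms is again logarithmic, since $\sum_i a_i \tfrac{dF_i}{F_i} = \tfrac{d(\prod_i F_i^{a_i})}{\prod_i F_i^{a_i}}$.

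The key step will be to show that
\[ |\calP(\omega) \cap \calP(\omega')| \;\le\; (p-1)\,\lambda\, p^{n-2} \qquad \text{for every } \omega \in \Omega - \{0\}. \]
To this end, fix $\omega$, set $I := \calP(\omega) \cap \calP(\omega')$, and for each $c \in \F_p^\times$ consider $\eta_c := \omega + c\omega'$. This form is nonzero (else $\omega' \in \Omega$) and logarithmic, and $\ord_\infty(\eta_c) \ge \min\{\ord_\infty(\omega), \ord_\infty(\omega')\} = \lambda p^{n-1} - 2$; being logarithmic all its poles are simple, and they all lie in $\calP(\omega) \cup \calP(\omega')$, so by the degree formula $\eta_c$ has at least $\lambda p^{n-1}$ poles. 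A point $x \in \calP(\omega) \cup \calP(\omega')$ is a pole of $\eta_c$ unless $x \in I$ and $\res_x(\omega) + c\,\res_x(\omega') = 0$, which, since both residues lie in $\F_p^\times$, holds for exactly one value of $c \in \F_p^\times$. Writing $k_c$ for the number of $x \in I$ excluded at parameter $c$, one gets $k_c = |\calP(\omega)\cup\calP(\omega')| - |\calP(\eta_c)| \le (2\lambda p^{n-1} - |I|) - \lambda p^{n-1} = \lambda p^{n-1} - |I|$, together with $\sum_{c \in \F_p^\times} k_c = |I|$; summing the inequalities over $c$ yields $|I| \le (p-1)(\lambda p^{n-1} - |I|)$, which rearranges to the displayed bound.

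With this in hand I would double count $N := \sum_{\omega \in \Omega - \{0\}} |\calP(\omega) \cap \calP(\omega')|$. From the bound, $N \le (p^n - 1)(p-1)\lambda p^{n-2}$. Summing instead over the poles of $\omega'$: for each $x \in \calP(\omega') \subseteq \calP(\Omega)$ the set $\frakH(x)$ of forms of $\Omega$ not having $x$ as a pole is a hyperplane of $\Omega$, so $x$ is a pole of exactly $p^n - p^{n-1}$ nonzero forms of $\Omega$, whence $N = |\calP(\omega')|(p^n - p^{n-1}) = \lambda p^{n-1}(p^n - p^{n-1})$. Comparing the two expressions and dividing through by $\lambda(p-1)p^{n-2} > 0$ gives $p^n \le p^n - 1$, which is absurd; therefore $\omega' \in \Omega$.

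I expect the displayed estimate to be the main obstacle: it is the only place using the precise hypotheses on $\omega'$ (logarithmic, single zero at infinity of the prescribed order), via the principle that such a form must have correspondingly many poles, applied uniformly to the family $\{\omega + c\omega'\}_{c \in \F_p^\times}$; the rest is bookkeeping with the disjoint decomposition $\calP(\Omega) = \bigsqcup_H X_H$ underlying Lemma \ref{lem:hyperplanes}. One could alternatively argue by induction on $n$, the base case $n=1$ being Lemma \ref{lem:poles1}: the estimate shows $\calP(\omega')$ cannot meet every $X_H$, so $\calP(\omega') \subseteq \calP(H)$ for some hyperplane $H$ — itself a space $L_{\lambda p\cdot p^{n-2},\,n-1}$ — and the inductive hypothesis gives $\omega' \in H \subseteq \Omega$; but the direct count above bypasses the induction.
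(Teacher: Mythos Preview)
Your proof is correct and follows essentially the same route as the paper's: the same intersection bound $|\calP(\omega)\cap\calP(\omega')|\le\lambda p^{n-2}(p-1)$ (you average over $c\in\F_p^\times$ where the paper picks one extremal $j$ via pigeonhole on the residue classes $Y_j=\{x:\res_x(\omega')=j\,\res_x(\omega)\}$), followed by the same double count (you sum over $\Omega-\{0\}$, the paper over $\PP(\Omega)$, differing only by a harmless factor $p-1$). One small caveat: the closing parenthetical about an inductive alternative is not justified as stated---the bound alone does not obviously force $\calP(\omega')$ to miss some $X_H$---but your main argument does not rely on it.
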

\begin{proof}
Suppose by contradiction that $\omega' \not \in \Omega$. 
Let $\omega \in \Omega - \{0\}$ be a non-zero differential form.
We begin our argument with a proof by contradiction that
\begin{equation}\label{eq:interpoles}
 |\calP(\omega) \cap \calP(\omega')| \leq \lambda(p^{n-1}-p^{n-2}).
\end{equation}

For $j=1,\dots, p-1$, we set $Y_j:=\{x \in \calP(\omega)\cap \calP(\omega') : \res_x(\omega') = j\cdot \res_x(\omega) \}$ in such a way that 
\[\calP(\omega)\cap \calP(\omega') = \bigcup_{j=1}^{p-1} Y_j\]
is a disjoint union. 
If we assume by contradiction that $ |\calP(\omega) \cap \calP(\omega')| >\lambda(p^{n-1}-p^{n-2})$, then there exists at least a value of $j$ for which $|Y_j|>\frac{\lambda (p^{n-1}-p^{n-2})}{p-1}=\lambda p^{n-2}.$
For such a $j$, we set $\omega_j:=\omega'-j \omega$.

Since $\omega' \not \in \Omega$, we have that $\omega_j \neq 0$ and then that $\omega_j$ is a logarithmic differential form with $\calP(\omega_j) \cap Y_j = \emptyset$.
Moreover, since both $\omega$ and $\omega'$ have a zero of order $\lambda p^{n-1} -2$ at $\infty$, then the order of $\infty$ as a zero of $\omega_j$ is at least $\lambda p^{n-1} -2$, resulting in $|\calP(\omega_j)| \geq \lambda p^{n-1}$, because all the poles are simple.
By construction, we also have that $\calP(\omega_j)=\calP(\omega) \cup \calP(\omega') - Y_j$.
Combining this information, we estimate the cardinality $|\calP(\omega_j)|$ as
\begin{align*}
\lambda p^{n-1}& \leq |\calP(\omega_j)| = |\calP(\omega)| + |\calP(\omega')| - |\calP(\omega) \cap \calP(\omega')|- |Y_j|< 2 \lambda p^{n-1} - \lambda(p^{n-1}-p^{n-2}) - \lambda p^{n-2} \\
& = \lambda p^{n-1},
\end{align*}
leading to a contradiction and proving the validity of the inequality (\ref{eq:interpoles}).

We then show that inequality (\ref{eq:interpoles}) can not hold for every $\omega \in \Omega$ using the hypothesis that $\omega' \not \in \Omega$.
First of all, we fix a basis $\omega_1, \dots, \omega_n$ of $\Omega$, we consider the dual basis $\omega_1^\star, \dots, \omega_n^\star$ of $\Omega^\star$
and we index every element of $\Omega$ as $\omega_{\ul{a}} := \sum a_i \omega_i$ for every $\ul{a} \in \F_p^n$.
In this way, we have that $\omega' \neq \omega_{\ul{a}}$ for every $\ul{a} \in \F_p^n$.
For all $\ul{\epsilon}:= \epsilon_1 \omega_1^\star+\dots+ \epsilon_n \omega_n^\star \in \PP(\Omega^\star)$, we set
\[H_{\ul{\epsilon}}:=\{\omega_{\ul{a}} \in \Omega : \ul{\epsilon}(\omega_{\ul{a}}) = 0\},\]  
\[X_{\ul{\epsilon}}:= X_{H_{\ul{\epsilon}}} = \{x \in \calP(\Omega):  \res_x(\omega)=0 \; \forall \; \omega \in H_{\ul{\epsilon}}\}\] and 
\[N_{\ul{\epsilon}}:=|\calP(\omega') \cap X_{\ul{\epsilon}}|.\]
Since $\calP(\omega')\subset \calP(\Omega)$, we have that $\calP(\omega')= \bigsqcup_{\ul{\epsilon}} (\calP(\omega') \cap X_\epsilon)$ and hence that $\sum_{\ul{\epsilon}} N_{\ul{\epsilon}}=\lambda p^{n-1}.$

We note that, for every $\ul{a} \in \PP(\F_p^n)$
the set $\calP(\omega_{\ul{a}})$ is the union
\[\calP(\omega_{\ul{a}}) = \bigcup_{\substack{\ul{\epsilon} \in \PP(\Omega^\star)\\  \ul{\epsilon}(\omega_{\ul{a}})\neq 0}} X_{\ul{\epsilon}}, \]
and in particular it depends only on the class $[\ul{a}] \in \PP(\F_p^n)$.
To conclude the proof, we count $\sum_{[\ul{a}] \in \PP(\F_p^n)} \big|\calP(\omega_{\ul{a}})\cap \calP(\omega')\big|$ in two different ways:

On the one hand, we have
\begin{align*}
    \sum_{[\ul{a}] \in \PP(\F_p^n)} \big|\calP(\omega_{\ul{a}})\cap \calP(\omega')\big| &= \sum_{[\ul{a}] \in \PP(\F_p^n)}\sum_{\substack{\ul{\epsilon} \in \PP(\Omega^\star)\\  \ul{\epsilon}(\omega_{\ul{a}})\neq 0}} N_{\ul{\epsilon}}  = 
    \sum_{\ul{\epsilon} \in \PP(\Omega^\star)}\sum_{\substack{[\ul{a}] \in \PP(\F_p^n)\\ \ul{\epsilon}(\omega_{\ul{a}})\neq 0}} N_{\ul{\epsilon}} \\
    & = \sum_{\ul{\epsilon} \in \PP(\Omega^\star)} p^{n-1} N_{\ul{\epsilon}} = \lambda p^{n-1} p^{n-1} = \lambda p^{2n-2}.
\end{align*}

On the other hand, since $\omega' \neq \omega_{\ul{a}}$ for every $\ul{a} \in \PP(\F_p^n)$, we can apply the inequality (\ref{eq:interpoles}) with $\omega=\omega_{\ul{a}}$ for every $\ul{a} \in \PP(\F_p^n)$ to get that
\[ \sum_{[\ul{a}] \in \PP(\F_p^n)} \big|\calP(\omega_{\ul{a}})\cap \calP(\omega')\big| \leq \lambda (p^{n-1} - p^{n-2})(p^{n-1}+\dots+1)= \lambda( p^{2n-2}- p^{n-2}).\]

We obtain the contradiction $\lambda p^{2n-2} \leq \lambda( p^{2n-2}- p^{n-2})$ and we conclude that $\omega' \in \Omega$.

\end{proof}

From Lemma \ref{lem:poles1} and Proposition \ref{prop:poles} we deduce the very useful corollary that a space  $L_{\lambda p^{n-1},n}$ is characterized by its set of poles.
\begin{corollary}\label{coro:poles}
Let $n \geq 1$ and let $\Omega$ and $\Omega'$ be spaces $L_{\lambda p^{n-1},n}$. Then $\Omega=\Omega'$ if, and only if, $\calP(\Omega)=\calP(\Omega')$.
\end{corollary}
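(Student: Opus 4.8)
The plan is to prove the non-trivial implication: $\calP(\Omega) = \calP(\Omega')$ forces $\Omega = \Omega'$; the converse is immediate, since the set of poles of a space $L_{\lambda p^{n-1},n}$ is determined by the space. I would split the argument according to whether $n = 1$ or $n \geq 2$, since these two cases are governed respectively by Lemma \ref{lem:poles1} and Proposition \ref{prop:poles}.

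For $n = 1$, a space $L_{\lambda,1}$ is by definition a one-dimensional $\F_p$-vector space $\langle \omega \rangle_{\F_p}$, where $\omega$ is a logarithmic differential form with a unique zero at $\infty$ of order $\lambda - 2$, and its set of poles is exactly $\calP(\omega)$. Writing $\Omega = \langle \omega \rangle_{\F_p}$ and $\Omega' = \langle \omega' \rangle_{\F_p}$, the hypothesis $\calP(\Omega) = \calP(\Omega')$ translates into $\calP(\omega) = \calP(\omega')$, and Lemma \ref{lem:poles1} gives at once $\langle \omega \rangle_{\F_p} = \langle \omega' \rangle_{\F_p}$, that is, $\Omega = \Omega'$.

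For $n \geq 2$, I would pick an arbitrary nonzero element $\omega' \in \Omega'$. By the definition of a space $L_{\lambda p^{n-1},n}$, the form $\omega'$ is logarithmic and has a unique zero at $\infty$, of order $\lambda p^{n-1} - 2$, and moreover $\calP(\omega') \subseteq \calP(\Omega') = \calP(\Omega)$. These are precisely the hypotheses of Proposition \ref{prop:poles}, applied to the space $\Omega$ and the form $\omega'$, so that proposition yields $\omega' \in \Omega$. As $\omega'$ was an arbitrary nonzero element of $\Omega'$, we conclude $\Omega' \subseteq \Omega$; since both are $n$-dimensional over $\F_p$, this forces $\Omega = \Omega'$. (Equivalently, the situation is symmetric in $\Omega$ and $\Omega'$, so one also obtains the reverse inclusion directly.)

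There is essentially no remaining obstacle once Lemma \ref{lem:poles1} and Proposition \ref{prop:poles} are available — all the genuine content lies in those two results. The only point requiring a moment's care is the bookkeeping identity $m - 1 = \lambda p^{n-1} - 2$, relating the order of the zero at $\infty$ prescribed in the definition of a space $L_{m+1,n}$ to the order appearing in the statement of Proposition \ref{prop:poles}, so as to confirm that its hypotheses genuinely apply to every nonzero element of $\Omega'$.
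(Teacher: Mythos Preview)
Your proof is correct and follows exactly the approach of the paper, which simply states that the corollary is deduced from Lemma \ref{lem:poles1} (for $n=1$) and Proposition \ref{prop:poles} (for $n\geq 2$). You have filled in the routine details faithfully, including the dimension argument to upgrade the inclusion $\Omega' \subseteq \Omega$ to equality.
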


We conclude this section by establishing a notion of equivalence between two spaces $L_{\lambda p^{n-1},n}$, which will be employed in later sections.

\begin{definition}\label{defn:equiv}
    If $\Omega, \Omega'$ are two spaces $L_{\lambda p^{n-1},n}$, we say that they are \emph{equivalent} if there is an automorphism $\sigma \in \Aut_k(\PP^1_k)$ such that $\sigma(\infty)=\infty$ and $\Omega' = \sigma^\star \Omega$.
\end{definition}

An immediate consequence of Corollary \ref{coro:poles} is that $\Omega$ is equivalent to $\Omega'$ if, and only if, there exist $a\in k^\times, b\in k$ such that $\calP(\Omega')=a\calP(\Omega)+b$.

\subsection{Frobenius action and \'etale pullbacks}\label{sec:Frobetale}

Given a space $L_{\lambda p^{n-1}, n}$, there are two constructions that we can apply to construct more spaces.

The first construction exploits the action of the relative Frobenius on a space $L_{\lambda p^{n-1}, n}$. 
Recall that $\Omega(k(X))$ denotes the $k$-algebra of meromorphic differential forms on $\PP^1_k$. Consider the relative Frobenius operator $\Phi: \Omega(k(X)) \to \Omega(k(X))$ acting on the coefficients of a form by raising them to the power $p$:
\[\Phi\left(\frac{\sum a_i X^i}{\sum b_i X^i} dX\right) = \frac{\sum a_i^p X^i}{\sum b_i^p X^i} dX.\]

Then we have the following result:

\begin{lemma}
Let $\Omega$ be a space $L_{\lambda p^{n-1},n}$.
Then $\Phi(\Omega)$ is again a space $L_{\lambda p^{n-1},n}$. 
Moreover, for every choice of $p, \lambda$, and $n$, $\Phi$ is bijective when restricted to the set of spaces $L_{\lambda p^{n-1},n}$.
\end{lemma}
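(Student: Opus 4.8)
The map $\Phi$ is induced by the absolute Frobenius of $k$ acting on coefficients, so the first thing to record is that $\Phi$ is a \emph{ring} homomorphism of $k[X]$: by the freshman's dream $\Phi(fg)=\Phi(f)\Phi(g)$ and $\Phi(f+g)=\Phi(f)+\Phi(g)$, and since $k$ is algebraically closed (hence perfect) $\Phi$ is bijective on $k[X]$ with inverse $\Psi$ raising each coefficient to the $p^{-1}$-th power. It extends to a field automorphism of $k(X)$, and thus to a bijective additive operator on $\Omega(k(X))$; the key point is that $\Phi(a\omega)=a^{p}\,\Phi(\omega)=a\,\Phi(\omega)$ for $a\in\Fp$, so $\Phi$ is $\Fp$-linear. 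Consequently $\Phi(\Omega)$ is automatically an $n$-dimensional $\Fp$-vector space, and only the two geometric conditions in the definition of a space $L_{\lambda p^{n-1},n}$ remain to be checked.

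For the logarithmic condition I would use that $\Phi$ commutes with $\tfrac{d}{dX}$ — the exponents are untouched and $i^{p}\equiv i \bmod p$ — so that $\Phi\!\left(\tfrac{dF}{F}\right)=\tfrac{d(\Phi F)}{\Phi F}$; alternatively, $\Phi$ commutes with the Cartier decomposition $f=\bigoplus_i f_i^{p}X^i$ of Proposition~\ref{prop:Cartier}, whence $f=f_{p-1}\Rightarrow\Phi(f)=\Phi(f)_{p-1}$. For the zero/pole condition, write a nonzero $\omega\in\Omega$ as $\omega=\sum_i\frac{a_i}{X-x_i}\,dX$ with $a_i\in\Fp^{\times}$ and the $x_i$ distinct; then, since $a_i^{p}=a_i$, one gets $\Phi(\omega)=\sum_i\frac{a_i}{X-x_i^{p}}\,dX$, so $\calP(\Phi\omega)=\{x_i^{p}\}$ is again a set of $\lambda p^{n-1}$ distinct simple poles with residues in $\Fp^{\times}$ (the map $t\mapsto t^{p}$ being a bijection of $k$). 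Finally, $\Phi$ commutes with the coordinate change $X\mapsto 1/X$ (it merely relabels monomials or acts on coefficients), so $\ord_\infty(\Phi\omega)=\ord_\infty(\omega)=\lambda p^{n-1}-2$; since the canonical divisor on $\PP^1_k$ has degree $-2$ and $\Phi\omega$ already has $\lambda p^{n-1}$ simple poles together with a zero of order $\lambda p^{n-1}-2$ at $\infty$, there is no room for another zero, so $\infty$ is the unique zero. Hence $\Phi(\Omega)$ is a space $L_{\lambda p^{n-1},n}$.

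Bijectivity is then formal. The inverse operator $\Psi$ exists because $k$ is perfect, and the identical argument shows that $\Psi$ also carries spaces $L_{\lambda p^{n-1},n}$ to spaces $L_{\lambda p^{n-1},n}$; since $\Phi$ and $\Psi$ are mutually inverse on all of $\Omega(k(X))$, their restrictions to the set of spaces $L_{\lambda p^{n-1},n}$ are mutually inverse bijections. (One could instead invoke Corollary~\ref{coro:poles}: $\Phi$ acts on pole sets by $\calP(\Omega)\mapsto\{t^{p}:t\in\calP(\Omega)\}$, and $t\mapsto t^{p}$ is a bijection of $k$, so the induced map on pole sets — hence on spaces — is bijective.)

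The proof is essentially bookkeeping; the only thing requiring (minor) care is confirming that $\Phi$ genuinely commutes with the three operations used above — differentiation, the Cartier decomposition, and the coordinate change $X\mapsto 1/X$ at $\infty$ — each of which reduces to $i^{p}\equiv i\bmod p$ together with the freshman's dream. I do not expect a substantial obstacle.
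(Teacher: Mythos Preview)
Your proof is correct and follows essentially the same route as the paper: write $\omega=\sum_i \frac{a_i}{X-x_i}\,dX$, observe $\Phi(\omega)=\sum_i \frac{a_i}{X-x_i^p}\,dX$ is again logarithmic with $\lambda p^{n-1}$ simple poles, check the unique zero at $\infty$, and invoke perfectness of $k$ for bijectivity. The only minor difference is that the paper verifies the zero-at-$\infty$ condition via the equivalent system of equations $\sum_i a_i x_i^k=0$ (which is manifestly preserved under $x_i\mapsto x_i^p$), whereas you argue by commutation with $X\mapsto 1/X$ and a degree count; both are straightforward.
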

\begin{proof}
Let $\omega=\sum_{i=1}^{\lambda p^{n-1}} \frac{a_i}{X-x_i}dX \in \Omega$ with $a_i \in \F_p^\times$.
Then $\Phi(\omega)=\sum_{i=1}^{\lambda p^{n-1}} \frac{a_i}{X-x_i^p}dX$ is clearly logarithmic.
Moreover, it has a unique zero at $\infty$ because this condition is equivalent to the one given by equation (\ref{eq:zeroatinfty}).
Since $\F_p$-linearly independent forms are sent to linearly independent forms, we have that $\Phi(\Omega)$ is a space $L_{\lambda p^{n-1},n}$.

To show the bijectivity note that, since $k$ is algebraically closed, the Frobenius is an automorphism of $k$.
Its inverse induces the inverse of $\Phi$, which restricts naturally to the set of spaces $L_{\lambda p^{n-1},n}$ for the reasons above.
\end{proof}

Concretely, the relative Frobenius acts on the points of $\PP^1_k$ by raising them to the $p$-th power.
If $\Omega$ is a space $L_{\lambda p^{n-1}, n}$, then one gets the poles of $\Phi(\Omega)$ by raising to the $p$-th power the poles of $\Omega$.
This condition determines uniquely the space $\Phi(\Omega)$ thanks to Corollary \ref{coro:poles}.\\

The second construction exploits the properties of finite \'etale covers of the affine line in characteristic $p>0$.
More precisely, we fix $d>0$ and we recall that in this setting a finite \'etale morphism $\AA^1_k \to \AA^1_k$ of degree $dp$ is induced by a map $k[X]\to k[X]$ sending $X$ to a polynomial of the form $\gamma X + T(X^p)$ with $\gamma \in k^\times$ and $T\in k[X]$ a polynomial of degree $d$.
This is equivalent to ask that $X$ is sent to a polynomial whose derivative is a non-zero constant.
Such a morphism extends uniquely to a degree $dp$ cover $\phi: \PP^1_k \to \PP^1_k$ branched only over $\infty$, and we can consider the pullback map $\phi^\star: \Omega(k(X)) \to \Omega(k(X))$.

\begin{lemma}\label{lem:pullbacketale}
Let $\phi: \PP^1_k \to \PP^1_k$ be the compactification of a finite \'etale morphism $\AA^1_k \to \AA^1_k$ of degree $dp$.
Let $\Omega=\langle \omega_1, \dots, \omega_n\rangle_{\F_p}$ be a space $L_{\lambda p^{n-1}, n}$.
Then, the $\F_p$-vector space generated by the differential forms $\phi^{\star}(\omega_1), \dots, \phi^{\star}(\omega_n)$ is a space $L_{d \lambda p^{n}, n}$, called the \emph{\'etale pullback} of $\Omega$ via $\phi$.
\end{lemma}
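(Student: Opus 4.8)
The plan is to use the very explicit shape of the étale map to make the pullback on differential forms completely transparent, and then to count poles and zeros. Write the given degree $dp$ étale morphism $\AA^1_k \to \AA^1_k$ as $X \mapsto \psi(X) := \gamma X + T(X^p)$ with $\gamma \in k^\times$ and $T \in k[X]$ of degree $d$. The two features I would exploit are that $\psi$ is a polynomial of degree exactly $dp$ — so $\phi$ is totally ramified of index $dp$ over $\infty$ and étale over all of $\AA^1_k$ — and that $\psi'(X) = \gamma$ is a nonzero constant, which gives the clean formula $\phi^\star(g\,dX) = \gamma\, g(\psi(X))\,dX$ for every $g \in k(X)$.

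First I would reduce to checking one form at a time. Since $\psi$ is non-constant, $\phi^\star$ is injective on $\Omega(k(X))$, and it is $k$-linear; hence $\phi^\star\omega_1,\dots,\phi^\star\omega_n$ are $\F_p$-linearly independent, the space $\langle \phi^\star\omega_1,\dots,\phi^\star\omega_n\rangle_{\F_p}$ equals $\phi^\star(\Omega)$ and has dimension $n$, and its nonzero elements are exactly the forms $\phi^\star\omega$ with $\omega \in \Omega\setminus\{0\}$. So it is enough to show that each such $\phi^\star\omega$ is logarithmic and has a single zero, located at $\infty$ and of order $d\lambda p^{n} - 2$. Logarithmicity is painless: if $\omega = dF/F$ then $\phi^\star\omega = d(F\circ\phi)/(F\circ\phi)$ (alternatively, one checks from the formula above that the residues of $\phi^\star\omega$ coincide with those of $\omega$, hence lie in $\F_p^\times$, and that all its poles are simple).

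For the zero and pole structure I would argue as follows. Each $\omega \in \Omega\setminus\{0\}$ has exactly $\lambda p^{n-1}$ poles in $\AA^1_k$, all simple, and no finite zeros (this follows from the definition of a space $L_{\lambda p^{n-1},n}$ together with $\deg\omega=-2$ on $\PP^1_k$). Since $\phi$ is étale over $\AA^1_k$ of degree $dp$, each pole of $\omega$ pulls back to exactly $dp$ distinct simple poles of $\phi^\star\omega$, while over a finite point which is neither a pole nor a zero of $\omega$ the form $\phi^\star\omega$ is regular and non-vanishing; hence $\phi^\star\omega$ has exactly $\lambda p^{n-1}\cdot dp = d\lambda p^{n}$ simple poles in $\AA^1_k$ and no finite zeros. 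Since $\phi^\star\omega$ is a non-zero meromorphic form on $\PP^1_k$, its total degree is $-2$, which forces $\ord_\infty(\phi^\star\omega) = d\lambda p^{n} - 2$; in particular $\infty$ is the unique zero, with the prescribed order. This gives all three defining conditions of a space $L_{d\lambda p^{n}, n}$.

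The one step that requires care — and the place where the étale hypothesis is genuinely used — is controlling the behavior at $\infty$: the ramification there has index $dp$, which is divisible by $p$, hence wild, so a naive ``multiply the order of the zero by the ramification index'' would give the wrong answer. The device that avoids any delicate computation of the different is the global degree relation $\deg(\phi^\star\omega) = -2$ on $\PP^1_k$, which pins down $\ord_\infty$ once the (étale) finite part has been accounted for. If one prefers, the same number falls out of Riemann--Hurwitz: the different of $\phi$ is supported at $\infty$ with exponent $2dp-2$, so $\ord_\infty(\phi^\star\omega) = dp(\lambda p^{n-1}-2) + (2dp-2) = d\lambda p^{n}-2$.
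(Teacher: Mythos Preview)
Your proof is correct and follows essentially the same approach as the paper: both exploit the explicit shape $\psi' = \gamma \in k^\times$ to write $\phi^\star(\omega) = \gamma\,g(\psi(X))\,dX$, verify logarithmicity via $\phi^\star(dF/F) = d(F\circ\phi)/(F\circ\phi)$, and use linearity of $\phi^\star$ to pass from basis elements to the whole space. The only cosmetic difference is at $\infty$: the paper simply observes that $\phi^\star\omega = \frac{\gamma\,dX}{P(\psi(X))}$ with $P\circ\psi$ a polynomial of degree $d\lambda p^{n}$ and reads off the order directly, whereas you count the $d\lambda p^{n}$ finite simple poles and invoke $\deg(\phi^\star\omega)=-2$ (or Riemann--Hurwitz) to recover the same number.
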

\begin{proof}
The restriction of $\phi$ to $\AA^1_k$ is induced by a polynomial $S(X)$ such that $S'(X)=\gamma \in k^\times$.
Set $Z=S(X)$ and $\omega_i= \frac{dX}{P_i(X)}=\frac{F_i'(X)}{F_i(X)}dX$. 
Then $\phi ^\star (\omega_i)=\frac{[F_i(S(X))]'}{F_i(S(X))}dX$ and hence is logarithmic. 
Moreover
\[ \phi^{\star}(\omega_i)= \frac{dZ}{P_i(Z)}=\frac{\gamma dX}{P_i(S(X))}.\]
and hence it has a unique zero of order $d \lambda p^{n}-2$ at infinity.
Finally, $\phi^{\star}$ is a linear operator, hence we have $\sum_i a_i \phi^{\star}(\omega_i)=\phi^{\star}\left(\sum_i a_i\omega_i\right)$. 
It follows that any $\F_p$-linear combination of $\phi^{\star}(\omega_i)$ is also a logarithmic differential form with a unique zero of order $d \lambda p^{n}-2$ at infinity.
\end{proof}

\subsection{Known results on spaces $L_{\lambda, 1}$}

It is easy to verify that every $\ell$-dimensional subspace of a space $L_{\lambda p^{n-1}, n}$ is a space $L_{\lambda p^{n-1}, \ell}$.
It is therefore useful to have results in the case $n=1$, as these can provide significant information for studying the higher dimensions too.
In this short section, we recall the known results in dimension one that will be used in the rest of the paper.

\begin{proposition}\label{prop:Llambda1} 
Let $\omega \in \Omega(k(X))$ be a differential form on the projective line $\mathbb{P}^1_k$.
The following conditions are equivalent:
\begin{enumerate}
\item The $\F_p$-vector space $\langle \omega \rangle_{\F_p}$ is a space $L_{\lambda, 1}$.
\item The differential form $\omega$ has precisely $\lambda$ distinct simple poles $x_1, \dots, x_\lambda$ and corresponding residues $a_1, \dots, a_\lambda$ in $\F_p^\times$ (which is equivalent to being logarithmic).
Moreover, these poles and residues satisfy the polynomial equations:
\begin{equation}\label{eq:zeroatinfty}
\displaystyle \sum_{i=1}^{\lambda} a_i x_i^k = 0\;\; \text{for} \; \; 0 \leq k \leq \lambda-2.
\end{equation}
\item
We can write $\omega=\frac{1}{P(X)} dX$ with $P(X)\in k[X]$ of degree $\lambda$ in such a way that the coefficient of $X^{p-1}$ in the polynomial $P^{p-1}$ is $1$ and that the coefficient of $X^{\mu p-1}$ in the polynomial $P^{p-1}$ vanishes for all $2 \leq \mu \leq \lambda + \floor{\frac{1-\lambda}{p}}$.
\end{enumerate}
\end{proposition}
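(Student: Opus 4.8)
The plan is to prove a cycle of implications, say $(1)\Rightarrow(2)\Rightarrow(3)\Rightarrow(1)$, though in practice $(1)\Leftrightarrow(2)$ is essentially the definition of a space $L_{\lambda,1}$ unwound, so the real content is the equivalence of $(3)$ with the other two. First I would establish $(1)\Leftrightarrow(2)$: by definition, $\langle\omega\rangle_{\F_p}$ being a space $L_{\lambda,1}$ means $\omega$ is logarithmic, has a unique zero at $\infty$, and that zero has order $\lambda-2$ (since $m+1=\lambda$ here, so $m-1 = \lambda-2$). By Proposition \ref{prop:Cartier}, $\omega$ logarithmic is equivalent to having simple poles with residues in $\F_p^\times$; write $\omega=\sum_{i=1}^{\lambda} \frac{a_i}{X-x_i}dX$. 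The degree of a differential form on $\PP^1_k$ being $-2$ forces the number of poles counted with the order-$(\lambda-2)$ zero at $\infty$ to balance, so there are exactly $\lambda$ poles. The order of vanishing at $\infty$: substituting $X=1/Y$ and expanding, $\omega = -\big(\sum_i a_i \sum_{k\geq 0} x_i^k Y^k\big) \frac{dY}{Y^2}\cdot(\text{unit})$, so the order at $\infty$ is $\geq \lambda-2$ precisely when $\sum_i a_i x_i^k = 0$ for $0\leq k\leq \lambda-2$; combined with $\sum_i a_i \neq 0$ would overshoot, but in fact the condition for order \emph{exactly} $\lambda-2$ is automatic from the pole count once $\sum_i a_i x_i^k=0$ for $k\leq\lambda-2$. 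This gives equation (\ref{eq:zeroatinfty}), and the reverse direction is the same computation read backwards.

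Next I would prove $(2)\Leftrightarrow(3)$. Given $(2)$, since all poles are simple with nonzero residues, $\omega = \frac{c}{P(X)}dX$ where $P(X)=\prod_i (X-x_i)$ and $c\in k^\times$; rescaling $P$ (multiplying by a constant, which does not change its roots) we may normalize $c=1$, so $\omega = \frac{1}{P(X)}dX$ with $\deg P = \lambda$. By Corollary \ref{cor:1/P}, $\omega$ is logarithmic iff $(P^{p-1})^{(p-1)} = -1$. The heart of the matter is now to translate the two normalization/vanishing statements in $(3)$ — that the coefficient of $X^{p-1}$ in $P^{p-1}$ is $1$, and that the coefficient of $X^{\mu p - 1}$ in $P^{p-1}$ vanishes for $2\leq \mu\leq \lambda + \lfloor\frac{1-\lambda}{p}\rfloor$ — into the statement "$\omega$ logarithmic with unique zero of order $\lambda-2$ at $\infty$". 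The key observation is that if $f = 1/P$ and we write $f = \bigoplus_{i=0}^{p-1} f_i(X)^p X^i$ (the Jacobson–Cartier decomposition), then $f^{(p-1)} = -f_{p-1}^p$, and $f^{(p-1)} = (P^{p-1})^{(p-1)}/P^p$; moreover the polynomial $(P^{p-1})^{(p-1)}$ picks out exactly those coefficients of $P^{p-1}$ sitting in degrees $\equiv p-1 \pmod p$, i.e.\ the coefficients of $X^{\mu p - 1}$ for $\mu \geq 1$. So $(P^{p-1})^{(p-1)} = -1$ is equivalent to: the coefficient of $X^{p-1}$ in $P^{p-1}$ equals $1$, \emph{and} the coefficient of $X^{\mu p - 1}$ in $P^{p-1}$ vanishes for all $\mu\geq 2$.

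The main obstacle — and the step I would be most careful about — is showing that the \emph{a priori} infinite family of vanishing conditions ($\mu\geq 2$) is in fact equivalent to the finite range $2\leq\mu\leq\lambda+\lfloor\frac{1-\lambda}{p}\rfloor$ stated in $(3)$. This is a degree bound: $\deg(P^{p-1}) = \lambda(p-1)$, so the coefficient of $X^{\mu p - 1}$ is automatically zero once $\mu p - 1 > \lambda(p-1)$, i.e.\ once $\mu > \frac{\lambda(p-1)+1}{p} = \lambda - \frac{\lambda-1}{p}$, equivalently $\mu \geq \lambda - \lfloor\frac{\lambda-1}{p}\rfloor + 1 = \lambda + \lfloor\frac{1-\lambda}{p}\rfloor + 1$ (using $\lfloor -x\rfloor = -\lceil x\rceil$ and $\lceil\frac{\lambda-1}{p}\rceil$ vs.\ $\lfloor\frac{\lambda-1}{p}\rfloor$ — one must check this floor identity holds, with the mild subtlety of whether $p\mid \lambda-1$). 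Hence the conditions for $\mu$ beyond that range are vacuous, and the finitely many stated in $(3)$ suffice. I would double-check the boundary case $\mu p - 1 = \lambda(p-1)$ (possible only when $p\mid\lambda-1$) to confirm it is correctly included in the stated range. Once this bookkeeping is settled, $(3)$ says precisely "$(P^{p-1})^{(p-1)}=-1$ together with the pole/zero data from $(2)$", and by Corollary \ref{cor:1/P} this closes the cycle. One should also note that given $(3)$, the fact that $\omega = \frac{1}{P}dX$ with $\deg P = \lambda$ already forces $\omega$ to have a zero of order $\lambda - 2$ at $\infty$ and no other zeros (as $1/P$ has no zeros in $\AA^1_k$), so conditions $(2)$ and the "unique zero at $\infty$" requirement come for free.
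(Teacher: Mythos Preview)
Your proposal is correct and follows essentially the same route as the paper: the equivalence $(i)\Leftrightarrow(ii)$ is obtained by the change of variable $Z=1/X$ and a formal power series expansion (the paper does this via partial fractions, obtaining the system (\ref{eq:zeroatinfty2}), but the content is identical), and the equivalence $(i)\Leftrightarrow(iii)$ is deduced from Corollary \ref{cor:1/P}. Your treatment is in fact more thorough on one point: the paper simply asserts that $(i)\Leftrightarrow(iii)$ follows from Corollary \ref{cor:1/P}, whereas you carefully work out why the condition $(P^{p-1})^{(p-1)}=-1$ is equivalent to the finitely many coefficient conditions stated in $(iii)$, via the degree bound $\mu p-1\leq \lambda(p-1)$ and the floor/ceiling bookkeeping (including the boundary case $p\mid\lambda-1$, which the paper exploits just after the proposition to show nonexistence in that case).
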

\begin{proof}
Let us assume (i). 
Then $\omega$ is logarithmic and has a unique zero of order $\lambda-2$ at infinity.
We write $\omega=\sum_{i=1}^{\lambda} \frac{a_i}{X-x_i}dX$ and we introduce a change of variable $Z=\frac{1}{X}$.
Then,
\[\omega =  \sum_{i=1}^{\lambda} \frac{- a_i}{Z(1-x_i Z)}dZ = \sum_{i=1}^{\lambda} \left( \frac{- a_i}{Z} + \frac{-a_ix_i}{1-x_i Z} \right) dZ =\sum_{i=1}^{\lambda} \frac{- a_i x_i}{(1-x_i Z)}dZ,\]
the first equality resulting from partial fraction expansion, and the second arising from the fact that $\omega$ has no poles at infinity and then $\sum_{i=1}^\lambda a_i = 0$.  
The further condition that the zero at infinity of $\omega$ is of order $\lambda-2$ results in the equality
\[\sum_{i=1}^{\lambda} \frac{a_ix_i}{1-x_i Z} dZ = \frac{u Z^{\lambda-2}}{\prod_{i=1}^{\lambda} (1-x_i Z)}dZ,\]
for some $u\in k^\times$.
By developing the denominators in this equality in formal power series in the variable $Z$, one obtains the following equations: 
\begin{equation}\label{eq:zeroatinfty2}
\begin{cases}
\displaystyle \sum_{i=1}^{\lambda } a_i x_i^k = 0\;\; \text{for} \; \; 0 \leq k \leq \lambda-2,\\
\displaystyle \sum_{i=1}^{\lambda} a_i x_i^{\lambda-1} = u,   \\
\displaystyle \sum_{i=1}^{\lambda} a_i x_i^{\lambda+k-1} = u\cdot c_{k}(x_1, \dots, x_{\lambda})  \;\; \text{for} \; \; k \geq 1,
\end{cases}
\end{equation}
where $c_k$ is the $k$-th complete homogeneous symmetric polynomial.
The first line show that the system (\ref{eq:zeroatinfty}) is satisfied, while the second line ensures that the poles are distinct.

Conversely, assume $(ii)$.
Then, we have that $\omega$ is logarithmic thanks to the condition on the poles and residues.
Moreover, from the development in formal power series it follows that the equations in (\ref{eq:zeroatinfty}) imply that the zero at infinity is of order $\geq \lambda-2$.
Since there are precisely $\lambda$ distinct simple poles, this is enough to conclude that there are no other zeroes and that the order at infinity is precisely $\lambda-2$.\\

The equivalence between $(iii)$ and $(i)$ follows from Corollary \ref{cor:1/P}.
\end{proof}

\begin{remark}
The element $u \in k^\times$ appearing in Equation \eqref{eq:zeroatinfty2} can be expressed in terms of the polynomial $P$ appearing in condition $(iii)$ as $u=\frac{1}{\alpha}$, where $\alpha$ is the leading coefficient of $P$.
\end{remark}

It is easy to construct logarithmic differential forms $\omega$ satisfying the conditions of Proposition \ref{prop:Llambda1}, as the following examples show:

\begin{example}\label{ex:Llambda1}
Let $\lambda\in \Z$ with $\lambda>1$ and $p \nmid  (\lambda -1)$.
Consider $f(X):=\frac{X^{\lambda-1} -1}{X^{\lambda-1}}$ and the associated logarithmic differential form $\omega:=\frac{df}{f} =(\lambda-1)\frac{dX}{X^\lambda-X}$.
Then, we have that $\omega = \frac{dX}{P(X)}$ for $P(X)= \frac{X^\lambda-X}{\lambda-1}$.
There are $\lambda$ simple poles and no zeroes outside $\infty$, then the unique zero at $\infty$ is of order $\lambda-2$
and $\langle \omega \rangle_{\Fp}$ is a space $L_{\lambda,1}$
\end{example}

It follows from Example \ref{ex:Llambda1} that spaces $L_{\lambda, 1}$ exist for all $\lambda$ satisfying $p \nmid (\lambda-1)$.
The converse is also true: if $p \mid (\lambda-1)$ then by Proposition \ref{prop:Llambda1} (iii) the leading term of $P^{p-1}$ vanishes, but this implies that $\deg(P)< \lambda$, which is not possible.
The paper \cite{GreenMatignon99} contains more results on spaces $L_{\lambda, 1}$, such as a description of all possible spaces $L_{\lambda, 1}$ in the case $\lambda<p+1$.
A simple but fundamental example that fits in this case is the following:
\begin{example}
Let $p\geq 3$ and $f(X):= \prod_{i=1}^{p-1} (X-i1_k)^i \in k[X]$, where $1_k \in k$ is the unity of the field $k$. 
Then, $\Omega:=\langle \frac{df}{f} \rangle_{\Fp}$ is a space $L_{p-1,1}$. 
In fact, by construction the non-zero forms in $\Omega$ are logarithmic and their set of (simple) poles is $\{1_k, 2 \cdot 1_k, \dots, (p-1)\cdot 1_k\}$. At the pole $i \cdot 1_k$, the residue of $\omega$ is equal to $i \cdot 1_k$.
We can then verify that the equations (\ref{eq:zeroatinfty}) are satisfied: 
\[\begin{cases}
\sum_{i=1}^{p-1} i^{k} \equiv 0 \; \; \mod p & \text{ for } 1 \leq k \leq p-2\\
\sum_{i=1}^{p-1} i^{p-1} \equiv \sum_{i=1}^{p-1} 1 \equiv -1 & \mod p
\end{cases}\]
By Proposition \ref{prop:Llambda1}, $\Omega$ is a space $L_{p-1,1}$.
We can obtain the same result by rewriting the differential form as $\omega=\frac{df}{f}= \frac{dX}{1-X^{p-1}}$, from which the computation of residues also follows.
\end{example}

\begin{remark}
Without the assumption $m<p$, a deeper overview of the possible $m+1$-tuples of residues $\underline{a}$ (called \emph{Hurwitz data}) is contained in Henrio's Ph.D. thesis \cite{HenrioThese}.
It is worth noting that several questions about Hurwitz data remain unanswered (see also \cite[\S 1.1]{Pagot02}).
\end{remark}

\begin{remark}
Let $q=p^t$ with $t\geq 1$.
In \cite[Definition 3.2.]{FresnelMatignon23} a generalization of spaces $L_{m+1, n}$ to the setting of $\F_q$-vector spaces is introduced.
Namely, a space $L_{m+1, n}^q$ is defined as a $\F_q$-vector space of differential forms on $\PP^1_k$ whose nonzero elements have simple poles, a unique zero of order $m-1$ at $\infty$ and residues in $\F_q$.
We remark that all the results proved in this section for spaces $L_{m+1, n}$ generalize to spaces $L_{m+1, n}^q$.
In particular, the key Proposition \ref{prop:Cartier} has a natural analogue, and if one introduces the operator $\nabla$ acting on the field $k(X)$ by
\[ \nabla: f(X) \mapsto \left(f(X)^{(p-1)} \right)^{1/p}, \] 
the Jacobson-Cartier condition (Corollary \ref{cor:1/P}) is generalized over $\F_q$ by saying that a differential form $\omega=\frac{dX}{P(X)}$ has simple poles and residues in $\F_q$ if, and only if, $\nabla^t(P(X)^{q-1})=(-1)^t$.
As in Proposition \ref{prop:Llambda1}, it is therefore possible to express the fact that $\omega$ generates a space $L_{m+1, 1}^q$ as a condition on the coefficients of $P(X)^{q-1}$.
More precisely, the condition states that the coefficient of $X^{q-1}$ is $(-1)^t$ and that the coefficients of $X^{\mu q -1}$ vanish for every $\mu\geq 2$.

More generally, it is not difficult to adapt the majority of the results in the rest of the papers to spaces $L_{m+1, n}^q$.
The main idea is to change the key definition of the Moore determinant of an $n$-tuple $\underline{a}:=(a_1, \dots, a_n) \in k^n$ into
\[\Moore{n}{a}:= \begin{vmatrix}
a_1 & a_2 & \dots & a_n \\
a_1^q & a_2^q & \dots & a_n^q \\
\vdots & \vdots & \dots & \vdots \\
a_1^{q^{n-1}} & a_2^{q^{n-1}} & \dots & a_n^{q^{n-1}} \\
\end{vmatrix}\]
as already considered in \cite{FresnelMatignon23}.
The vanishing of this Moore determinant is a necessary and sufficient condition for $\F_q$-linear dependence of the elements of $\underline{a}$ and key Theorem \ref{thm:Pagotn} restated in terms of $q$-Moore determinants and spaces $L_{m+1, n}^q$ holds with a very similar proof.

However, we have decided not to state our results in this full generality and the case $q=p$ remains the focus of our study: in fact, only in this setting the elements of a space $L_{m+1, n}^q$ are logarithmic, while we do not have a similar interpretation otherwise.
This property is crucial to solve a concrete question about lifting local actions of elementary abelian groups (see \cite[Th\'eor\`eme 11]{Pagot02}), while it remains unknown what geometric interpretation or other use might have the existence of a space $L_{m+1, n}^q$ for $q$ not prime.
\end{remark}

\section{An obstruction to the existence of spaces $L_{\lambda p, 2}$}

We now consider the case $n=2$, where we recall results established by Pagot and show that, for $p>3\lambda$ there are no spaces $L_{\lambda p, 2}$.

\subsection{Known results on spaces $L_{\lambda p, 2}$}
We recall the following fundamental result by Pagot (\cite[Proposition 7]{Pagot02}). A proof is included, which contains elements that are crucial for the main result of this section, as well as for the generalization that we propose in Section \ref{sec:Pagotn}.
\begin{proposition}\label{prop:Pagot}
Let $\omega_1, \omega_2 \in \Omega(k(X))$. Then the $\F_p$-vector space $\Omega$ generated by $\omega_1$ and $\omega_2$ is a space $L_{\lambda p,2}$ if and only if there exist two polynomials $Q_1,Q_2 \in k[X]$ satisfying the following conditions:
\begin{enumerate}
\item For every $[i:j] \in \PP^1(\F_p)$ we have $\deg(iQ_1 +jQ_2) = \lambda$
\item $\displaystyle \omega_1=\frac{Q_2 dX}{Q_1Q_2^p-Q_1^pQ_2}$ and $\displaystyle \omega_2=\frac{-Q_1 dX}{Q_1Q_2^p-Q_1^pQ_2}$
\item The $p-1$-th derivative $\left(\frac{Q_2^{p^2-1}-Q_1^{p^2-1}}{Q_2^{p-1}-Q_1^{p-1}}\right)^{(p-1)}$ of the polynomial $\frac{Q_2^{p^2-1}-Q_1^{p^2-1}}{Q_2^{p-1}-Q_1^{p-1}}$ is equal to $-1$.
\end{enumerate}
\end{proposition}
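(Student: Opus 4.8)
The plan is to reduce everything to the $n=1$ results via the hyperplane structure described in Corollary \ref{cor:lambdap}, together with the Jacobson--Cartier criterion. First I would establish the ``only if'' direction. Suppose $\Omega=\langle \omega_1,\omega_2\rangle_{\F_p}$ is a space $L_{\lambda p,2}$. Each $\omega\in\Omega-\{0\}$ generates a space $L_{\lambda p,1}$, so by Proposition \ref{prop:Llambda1}(iii) we may write $\omega=\frac{dX}{P(X)}$ with $\deg P=\lambda p$. The key observation is that the $p+1$ polynomials arising from a basis of the $p+1$ lines in $\PP^1(\F_p)$ are highly constrained: by Corollary \ref{cor:lambdap}, two forms spanning $\Omega$ have exactly $\lambda(p-1)$ common poles, while $\langle\omega\rangle$ and $\langle\omega'\rangle$ equal iff they share $\lambda p$ poles. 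Concretely, writing $\omega_1=\frac{dX}{R_1}$ and $\omega_2=\frac{dX}{R_2}$, I would argue that there exist $Q_1,Q_2\in k[X]$ of degree $\lambda$ with $R_i$ divisible by $Q_1$ and $Q_2$ in the appropriate pattern, so that the full set of poles $\calP(\Omega)$ is the zero set of $Q_1Q_2^p-Q_1^pQ_2$ (a polynomial of degree $\lambda(p+1)=|\calP(\Omega)|$ by Lemma \ref{lem:Pagotpoles}, whose roots are simple since $\gcd(Q_1,Q_2)=1$ when condition (i) holds), and $iQ_1+jQ_2$ cuts out $X_{H}$ for the hyperplane $H$ annihilating $(i,j)$. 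The normalization forcing $\omega_1=\frac{Q_2\,dX}{Q_1Q_2^p-Q_1^pQ_2}$ comes from matching residues: the residue of $\omega_1$ at a pole $x$ with $Q_2(x)=0$ is read off as $Q_2'(x)/(Q_1Q_2^p-Q_1^pQ_2)'(x)$, and one checks this lands in $\F_p^\times$ precisely by the logarithmic condition. The factorization $\frac{Q_2^{p^2-1}-Q_1^{p^2-1}}{Q_2^{p-1}-Q_1^{p-1}}$ should then be recognized as $\prod_{[i:j]\in\PP^1(\F_p)}(iQ_1+jQ_2)^{p-1}$ divided by $(Q_1Q_2^p-Q_1^pQ_2)^{?}$ — more precisely I would verify the polynomial identity $Q_2^{p^2-1}-Q_1^{p^2-1} = (Q_2^{p-1}-Q_1^{p-1})\cdot\frac{(Q_1Q_2^p-Q_1^pQ_2)^{p-1}}{(Q_1Q_2)^{\,?}}$ or the cleaner statement that this quotient equals the $(p-1)$st power of $P$ where $\frac{dX}{P}$ is a suitable generator, so that condition (iii) is exactly the Jacobson--Cartier condition $(P^{p-1})^{(p-1)}=-1$ of Corollary \ref{cor:1/P} for that generator.

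For the ``if'' direction I would run the argument in reverse: given $Q_1,Q_2$ satisfying (i)--(iii), define $\omega_1,\omega_2$ by the formulas in (ii). Condition (i) guarantees $Q_1Q_2^p-Q_1^pQ_2$ has degree exactly $\lambda(p+1)$ and, since $\gcd(Q_1,Q_2)=1$, that its roots are simple and disjoint from the roots of $Q_1$ and $Q_2$ where needed; hence $\omega_1,\omega_2$ have the correct pole order and a zero of the right order $\lambda p-2$ at $\infty$ (one computes the order at infinity from the degrees: $\deg(Q_1Q_2^p-Q_1^pQ_2)-\deg Q_2 - 2 = \lambda(p+1)-\lambda-2=\lambda p-2$). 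Linear independence over $\F_p$ follows because $\omega_1/\omega_2=-Q_2/Q_1$ is non-constant by (i). It remains to show every nonzero $\F_p$-combination $a\omega_1+b\omega_2 = \frac{(aQ_2-bQ_1)\,dX}{Q_1Q_2^p-Q_1^pQ_2}$ is logarithmic with a unique zero at $\infty$ of the right order; the zero-order computation is the same degree count, and for the logarithmic property I would use condition (iii) together with the observation that for each $[a:b]$ one can rewrite $a\omega_1+b\omega_2 = \frac{dX}{P_{[a:b]}}$ for a polynomial $P_{[a:b]}$ of degree $\lambda p$ (after cancelling the common factor, which is exactly $\prod_{[i:j]\neq[b:-a]}(iQ_1+jQ_2)$), and then deduce from a single instance of Corollary \ref{cor:1/P} that all of them satisfy it — this propagation across the pencil is the technical heart.

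The main obstacle I anticipate is the precise bookkeeping of the factorization identity relating $\frac{Q_2^{p^2-1}-Q_1^{p^2-1}}{Q_2^{p-1}-Q_1^{p-1}}$ to the pencil $\prod_{[i:j]}(iQ_1+jQ_2)$ and to the polynomial $Q_1Q_2^p-Q_1^pQ_2 = Q_1Q_2\prod_{[i:j]\in\PP^1(\F_p)}(\text{something})$ — over $\F_p$ one has $X^{p^2}-X = \prod_{a\in\F_{p^2}}(X-a)$ and more usefully, treating $t=Q_2/Q_1$ as a variable, $t^{p^2}-t=\prod_{[i:j]\in\PP^1(\F_p)}(jt-i)\cdot(\text{unit})$ up to the right normalization, i.e. $Q_1 Q_2^{p^2} - Q_1^{p^2} Q_2$ factors through the linear forms $iQ_1+jQ_2$. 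Pinning down exactly which power of $(Q_1Q_2^p-Q_1^pQ_2)$, of $Q_1$, and of $Q_2$ appear, so that condition (iii) becomes verbatim the hypothesis of Corollary \ref{cor:1/P} applied to the denominator in (ii), is where I would spend the most care; everything else is degree counting and residue computations that parallel the $n=1$ case already handled in Proposition \ref{prop:Llambda1} and Corollary \ref{cor:1/P}. I would also need to double-check that condition (iii) is independent of the choice of basis (equivalently, of which line in the pencil one singles out), which should follow from the symmetry of the expression $\frac{Q_2^{p^2-1}-Q_1^{p^2-1}}{Q_2^{p-1}-Q_1^{p-1}}$ under $\mathrm{GL}_2(\F_p)$ acting on $(Q_1,Q_2)$.
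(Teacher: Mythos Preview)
Your plan matches the paper's approach: build the $Q_i$ from the partition of $\calP(\Omega)$ into the sets $X_{[i:j]}$, normalize by a scalar so that (ii) holds, then connect (iii) to Corollary~\ref{cor:1/P} via a polynomial identity, using $\GL_2(\F_p)$-invariance of the expression in (iii) to propagate the logarithmic condition to the whole pencil. Two points need correction.

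First, the identity you are hunting for is simpler than the factorizations you sketch. After cancelling $Q_2$ one has $\omega_1=\frac{dX}{Q_1Q_2^{p-1}-Q_1^p}$, so Corollary~\ref{cor:1/P} reads $\bigl((Q_1Q_2^{p-1}-Q_1^p)^{p-1}\bigr)^{(p-1)}=-1$. The paper then observes directly that
\[
Q_2^{p(p-1)} + (Q_1Q_2^{p-1}-Q_1^p)^{p-1}=\sum_{i=0}^{p} Q_1^{i(p-1)}Q_2^{(p-i)(p-1)}=\frac{Q_2^{p^2-1}-Q_1^{p^2-1}}{Q_2^{p-1}-Q_1^{p-1}},
\]
a geometric-series identity in $Q_1^{p-1}$ and $Q_2^{p-1}$ (use $\binom{p-1}{j}\equiv(-1)^j$ for the left equality). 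Since $Q_2^{p(p-1)}$ is a $p$-th power its $(p-1)$-th derivative vanishes, so (iii) is exactly the Jacobson--Cartier condition for $\omega_1$. No detour through $Q_1Q_2^{p^2}-Q_1^{p^2}Q_2$ is needed.

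Second, your claim that condition (i) alone forces $\gcd(Q_1,Q_2)=1$ is false: (i) says only that the leading coefficients are $\F_p$-independent, and the polynomials may still share a root (e.g.\ $Q_1=X^2$, $Q_2=aX^2+X$ with $a\notin\F_p$). Coprimality and simplicity of roots are \emph{consequences} of the full set (i)--(iii), since logarithmic forms have simple poles; this is the content of the remark following the proposition in the paper. Fortunately you do not need coprimality for the order count at $\infty$: after the formal cancellation of $Q_2$, the form $\omega_1=\frac{dX}{Q_1Q_2^{p-1}-Q_1^p}$ visibly has its only zero at $\infty$. Relatedly, your residue computation ``at a pole $x$ with $Q_2(x)=0$'' is mis-stated: such an $x$ is precisely \emph{not} a pole of $\omega_1$, because the $Q_2$ in the numerator cancels the corresponding factor in the denominator.
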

\begin{proof}
Suppose that $\Omega$ is a space $L_{\lambda p,2}$.
Let us fix a basis $(\omega_1, \omega_2)$ of $\Omega$, and remark that the set of poles of an element $i\omega_1 + j\omega_2 \in \Omega$ depends only on the corresponding $[i:j] \in \PP^1(\F_p)$.
Let us then denote by $X_{[i:j]}$ the set of poles of differential forms in $\Omega$ that are not poles of $i\omega_1 + j\omega_2$.
By the results of section \ref{ssec:combpoles}, every $X_{[i:j]}$ consists of $\lambda$ elements and the set consisting of the $X_{[i:j]}$ for all $[i:j] \in \PP^1(\F_p)$ is a partition of the set of $\lambda (p+1)$ poles of $\Omega$.
Let us consider the polynomials
\[
P_{[i:j]}(X)=\prod_{x\in X_{[i:j]}} (X-x) \;\; \mbox{and} \;\; P(X)= \prod_{[i:j]\in \PP^1(\F_p)} P_{[i:j]}(X).
\]
Since $i\omega_1 + j\omega_2$ has a unique zero at infinity and poles outside $X_{[i:j]}$, we need that
\[ i\omega_1 + j\omega_2 = \dfrac{c_{ij}P_{[i:j]}(X)}{P(X)} dX\] 
for nonzero constants $c_{ij} \in k$ that satisfy the condition
\[ c_{ij}P_{[i:j]}(X)= i c_{10}P_{[1:0]}(X) + jc_{01}P_{[0:1]}(X).\]
Since all the $P_{[i:j]}$ are monic polynomials, this means in particular that $c_{ij} = ic_{10}+j c_{01}$.
Let us set $a=\frac{c_{10}}{c_{01}}$ and note that $a \notin \F_p$, otherwise we would have $c_{(p-1)a}=0$, which is not possible, as $\omega_1$ and $\omega_2$ are assumed $\F_p$-linearly independent.
As a result, since $k$ is algebraically closed, there is an element $c \in k^\times$ satisfying $c^p = -\frac{1}{c_{01}\left(a^p-a\right)}$.
Let us set 
\[Q_1 := - cP_{[0:1]} \;\; \text{and} \;\; Q_2 := ac P_{[1:0]} .\]
Then, $iQ_2-jQ_1=c(iaP_{[1:0]}+jP_{[0:1]})=c\frac{c_{ij}}{c_{01}}P_{[i:j]}$, which is a polynomial of degree $\lambda$.\\
Moreover, we have $Q_1Q_2^p-Q_1^pQ_2=-c^{p+1}(a^p-a) P = \frac{c}{c_{01}}P$ and hence 
\[ \omega_1=\frac{Q_2 dX}{Q_1Q_2^p-Q_1^pQ_2} \;\; \mbox{and} \;\; \omega_2=\frac{-Q_1 dX}{Q_1Q_2^p-Q_1^pQ_2},\] as required by condition $(ii)$.
Among other things, this proves that $\omega_1$ is of the form $\frac{1}{P(X)}dX$, and hence it satisfies the hypotheses of Corollary \ref{cor:1/P} (in fact, the multiplication by the element $c\in k^\times$ performed earlier on is crucial for having $\omega_1$ in such a form).
Applying the corollary, we then have that $\left((Q_1Q_2^{p-1}-Q_1^p)^{p-1}\right)^{(p-1)}=-1$.
In order to prove $(iii)$, we only need to show that
\begin{equation}\label{eq:JCinvariant}
    \left((Q_1Q_2^{p-1}-Q_1^p)^{p-1}\right)^{(p-1)} =  \left(\frac{Q_2^{p^2-1}-Q_1^{p^2-1}}{Q_2^{p-1}-Q_1^{p-1}}\right)^{(p-1)}.
\end{equation}
To do this, we simply note that 
\[Q_2^{p(p-1)}+(Q_1Q_2^{p-1}-Q_1^p)^{p-1}=\sum_{i=0}^{p} Q_1^{i(p-1)}Q_2^{(p-i)(p-1)}=\frac{Q_2^{p^2-1}-Q_1^{p^2-1}}{Q_2^{p-1}-Q_1^{p-1}},\]
and therefore $((Q_1Q_2^{p-1}-Q_1^p)^{p-1})'=\left(\frac{Q_2^{p^2-1}-Q_1^{p^2-1}}{Q_2^{p-1}-Q_1^{p-1}}\right)'$, from which it follows \emph{a fortiori} that 
$$\left((Q_1Q_2^{p-1}-Q_1^p)^{p-1}\right)^{(p-1)} =  \left(\frac{Q_2^{p^2-1}-Q_1^{p^2-1}}{Q_2^{p-1}-Q_1^{p-1}}\right)^{(p-1)}=-1.$$
\smallskip

Conversely, let us start with $Q_1$ and $Q_2$ satisfying the three conditions of the proposition and show that they give rise to a space $L_{\lambda p, 2}$.
By $(ii)$, we write $\omega_1=\frac{Q_2 dX}{Q_1Q_2^p-Q_1^pQ_2}$ and $\omega_2=\frac{-Q_1 dX}{Q_1Q_2^p-Q_1^pQ_2}$.
By $(i)$, there are poles of $\omega_1$ that are not poles of $\omega_2$, so that $\omega_2$ can not be a multiple of $\omega_1$ and $\Omega=\langle \omega_1, \omega_2 \rangle_{\Fp}$ has dimension 2.
We then note that any non-zero element $\omega \in \Omega$ can be written as $\omega=\frac{R_2 dX}{R_1R_2^p-R_1^pR_2}$ for some pair of polynomials $(R_1,R_2) = (Q_1,Q_2)M$ obtained multiplying $(Q_1,Q_2)$ by a suitable matrix $M \in GL_2(\F_p)$.
It follows that $\omega$ has a unique zero at $\infty$.
We leave the reader with the task to check that $\frac{Q_2^{p^2-1}-Q_1^{p^2-1}}{Q_2^{p-1}-Q_1^{p-1}}=\frac{R_2^{p^2-1}-R_1^{p^2-1}}{R_2^{p-1}-R_1^{p-1}}$. Combining this with condition $(iii)$ we get that $\left(\frac{R_2^{p^2-1}-R_1^{p^2-1}}{R_2^{p-1}-R_1^{p-1}}\right)^{(p-1)}=-1$.
By applying equation (\ref{eq:JCinvariant}) we find that $\left((R_1R_2^{p-1}-R_1^p)^{p-1}\right)^{(p-1)}=-1$, which by Corollary \ref{cor:1/P} implies that $\omega$ is logarithmic for every $\omega \in \Omega$.
\end{proof}
\begin{remark}\label{rmk:simplepolesn=2}
    It follows from the proposition that, given a pair $Q_1, Q_2$ satisfying conditions $(i), (ii)$ and $ (iii)$, then $Q_1$ and $Q_2$ are necessarily coprime, and have simple roots.
    In fact, the poles of $\omega_1$ being simple, we have that $\prod_{i=0}^{p-1} (Q_1+iQ_2)=Q_1^p-Q_1Q_2^{p-1}$ has simple roots. In particular, $Q_1$ has simple roots and no roots in common with $Q_1+Q_2$, hence no roots in common with $Q_2$.
    Similarly, from the fact that $\omega_2$ has simple poles one derives that $Q_2$ has simple roots.
\end{remark}

\begin{remark}\label{rmk:Moore}
The polynomial $(Q_1Q_2^p-Q_1^pQ_2)$ appearing in the denominators of $\omega_1, \omega_2$ is called the \emph{Moore determinant} and denoted by $\Delta_2(Q_1,Q_2)$.
This is the determinant of the Moore matrix
$\begin{pmatrix}
Q_1 & Q_2 \\
Q_1^p & Q_2^p
\end{pmatrix}$.
Furthermore, the polynomial $\frac{Q_2^{p^2-1}-Q_1^{p^2-1}}{Q_2^{p-1}-Q_1^{p-1}}$ is the \emph{Dickson invariant} $c_{2,1}$ evaluated in $Q_1, Q_2$.
The appearance of Moore determinants and Dickson invariants is far from a coincidence: as we will see in Section \ref{sec:Pagotn}, these have a fundamental role in the generalizations of Proposition \ref{prop:Pagot} for spaces $L_{\lambda p^{n-1},n}$.
Moreover, Moore determinants will be helpful to simplify some proofs, even in the case of dimension 2.
For this reason, we have collected the results we need on Moore determinants and Dickson invariants in Appendix \ref{app:Moore}.
\end{remark}

In light of the result of the Proposition \ref{prop:Pagot}, we introduce the following definition, which will be extensively used in the classification of spaces $L_{12,2}$ and $L_{15,2}$:
\begin{definition}\label{defn:givesriseto}
Let $Q_1, Q_2 \in k[X]$ be polynomials of degree $\lambda$ such that $\deg(iQ_1 +jQ_2) = \lambda$, for every $[i:j] \in \PP^1(\F_p)$.
Define the associated differential forms 
\[ \omega_1:= \frac{dX}{\left (Q_1Q_2^{p-1}-Q_1^p \right)} \; \mbox{and} \; \omega_2:= \frac{dX}{\left(Q_2Q_1^{p-1}-Q_2^p\right)}.\]
We say that the pair $(Q_1,Q_2)$ \emph{gives rise to} the pair $(\omega_1, \omega_2)$.
Moreover, if there exists $c \in k^\times$ such that the pair $(c Q_1, c Q_2)$ gives rise to $(\frac{\omega_1}{c^p}, \frac{\omega_2}{c^p})$ a basis of $\Omega$ a space $L_{\lambda p, 2}$, then we say that the pair $(Q_1,Q_2)$ is a \emph{prompt} for the space $\Omega$.
\end{definition}

\begin{remark}\label{rmk:givesriseto}
By Proposition \ref{prop:Pagot}, a pair $(Q_1,Q_2)$ of polynomials of degree $\lambda$ is a prompt for a space $L_{\lambda p, 2}$ if, and only if, $Q_1$ and $Q_2$ have leading coefficients that are $\F_p$-independent, and are such that $\left((Q_1^p-Q_1Q_2^{p-1})^{p-1}\right)^{(p-1)}$ is a non-zero constant $d\in k^\times$.
The number $c$ then needs to satisfy $d(c^{p})^{p-1}=-1$, and hence it is uniquely determined up to multiplication by a $p-1$-th root of unity.
\end{remark}

\begin{conv}\label{conv:Ri}
If we have polynomials $Q_1$ and $Q_2$ satisfying the conditions $(i)$ and $(ii)$ of Proposition \ref{prop:Pagot}, we can write $(Q_1^p-Q_1Q_2^{p-1})^{p-1}(X)=\sum r_i X^i$, and apply Proposition \ref{prop:Llambda1} to deduce that condition $(iii)$ is equivalent to the equalities $r_{p-1}= 1$ and $r_{k p -1}=0$ for $k=2, \dots,\lambda (p-1)$.
The $r_i$'s are polynomials in the coefficients of $Q_1$ and $Q_2$, and the equalities above will be used in Sections \ref{sec:class} and \ref{sec:L4p,2} to classify certain spaces $L_{\lambda p, 2}$.
In order to simplify a frequently used notation, we set $R_{k}:=r_{k p -1}$ for $k=1, \dots,\lambda (p-1)$.
\end{conv}

\begin{lemma}\label{lem:s1=t1}
Assume $(\lambda,p) \neq (1,2)$ and let $Q_1,Q_2\in k[X]$ be polynomials of degree $\lambda$ with
\begin{align*}
Q_1(X)= a\left(X^\lambda+\sum_{i=1}^{\lambda} (-1)^i s_i X^{\lambda-i} \right) \\
Q_2(X)= b \left(X^\lambda+\sum_{i=1}^{\lambda} (-1)^i t_i X^{\lambda-i} \right).
\end{align*} 
If the pair $(Q_1,Q_2)$ is a prompt for a space $L_{\lambda p, 2}$, then we have $s_1=t_1$.
\end{lemma}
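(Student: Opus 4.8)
The plan is to isolate a single coefficient relation coming from condition $(iii)$ of Proposition~\ref{prop:Pagot} — the ``top'' one, $R_{\lambda(p-1)}=0$ in the notation of Convention~\ref{conv:Ri} — and to show that it already forces $s_1=t_1$. To set things up, first note that $a\neq 0$ and $b\neq 0$ since $\deg Q_1=\deg Q_2=\lambda$, and that $a^{p-1}\neq b^{p-1}$: indeed, since $(Q_1,Q_2)$ is a prompt, the leading coefficients $a,b$ are $\F_p$-linearly independent (Remark~\ref{rmk:givesriseto}), so $a/b\notin\F_p$ and hence $(a/b)^{p-1}\neq 1$. Also, by Remark~\ref{rmk:givesriseto} the $(p-1)$-th derivative of $(Q_1^p-Q_1Q_2^{p-1})^{p-1}=\sum_i r_iX^i$ is a nonzero constant, which (exactly as in the discussion leading to Convention~\ref{conv:Ri}, resting on the proof of Proposition~\ref{prop:Llambda1}) is equivalent to $r_{kp-1}=0$ for $2\le k\le\lambda(p-1)$. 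The hypothesis $(\lambda,p)\neq(1,2)$ is precisely what makes $\lambda(p-1)\ge 2$ (for $p\ge 3$ this holds for all $\lambda\ge 1$; for $p=2$ it amounts to $\lambda\ge 2$), so the relation $R_{\lambda(p-1)}=r_{\lambda(p-1)p-1}=0$ is at our disposal.

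Next I would compute the two leading coefficients of $P:=Q_1^p-Q_1Q_2^{p-1}$, a polynomial of degree $\lambda p$. Writing $P=c_0X^{\lambda p}+c_1X^{\lambda p-1}+\cdots$, observe that the monomial $X^{\lambda p-1}$ does not occur in $Q_1^p$ (which only involves the powers $X^{ip}$), so $c_1$ is read off from $-Q_1Q_2^{p-1}$ alone. Using $Q_1=a(X^\lambda-s_1X^{\lambda-1}+\cdots)$ and $Q_2^{p-1}=b^{p-1}\bigl(X^{\lambda(p-1)}-(p-1)t_1X^{\lambda(p-1)-1}+\cdots\bigr)=b^{p-1}\bigl(X^{\lambda(p-1)}+t_1X^{\lambda(p-1)-1}+\cdots\bigr)$, one gets $c_0=a^p-ab^{p-1}=a(a^{p-1}-b^{p-1})$ and $c_1=ab^{p-1}(s_1-t_1)$; by the previous paragraph $c_0\in k^\times$.

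Finally, raising to the power $p-1$: since $\lambda(p-1)p-1=\deg(P^{p-1})-1$, the quantity $R_{\lambda(p-1)}$ is the coefficient of $X^{\deg(P^{p-1})-1}$ in $P^{p-1}$, which equals $\binom{p-1}{1}c_0^{p-2}c_1=-c_0^{p-2}c_1$. Combining this with $R_{\lambda(p-1)}=0$ and $c_0\neq 0$ yields $c_1=0$, i.e. $ab^{p-1}(s_1-t_1)=0$, and since $a,b\neq 0$ we conclude $s_1=t_1$.

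I do not expect a genuine obstacle here, as the argument is essentially a leading-term computation. The only points that need care are the bookkeeping of the two top coefficients of $Q_1Q_2^{p-1}$ (in particular the sign, via $-(p-1)\equiv 1\bmod p$) and checking that the excluded case $(\lambda,p)=(1,2)$ is exactly the one where the relevant condition reads $r_1=1$ rather than $r_1=0$ — and where, consistently, $s_1\neq t_1$ is in fact forced.
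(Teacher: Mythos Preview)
Your argument is correct and follows the same strategy as the paper: isolate the single relation $R_{\lambda(p-1)}=0$ and show it forces $s_1=t_1$. The only difference is cosmetic — the paper computes the top two coefficients via the substitution $Z=1/X$, while you read them off directly from $Q_1Q_2^{p-1}$; both computations yield $R_{\lambda(p-1)}=-a^{p-1}b^{p-1}(a^{p-1}-b^{p-1})^{p-2}(s_1-t_1)$, and your handling of the excluded case $(\lambda,p)=(1,2)$ is accurate.
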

\begin{proof}
We have that $Q_1$ and $Q_2$ satisfy condition $(iii)$ of Proposition \ref{prop:Pagot} and hence the polynomials $R_k$ of Convention \ref{conv:Ri} vanish for $k>1$.
In particular, this is true for $R_{\lambda(p-1)}$, the coefficient of degree $\lambda(p-1)p-1$ of the polynomial $(Q_1^p-Q_1Q_2^{p-1})^{p-1}(X)$.
To compute $R_{\lambda(p-1)}$, let us write $(Q_1^p-Q_1Q_2^{p-1})^{p-1}=X^{\lambda(p-1)p}\left[ \left( \frac{Q_1}{X^\lambda} \right)^{p-1} \frac{Q_2}{X^\lambda}- \left(\frac{Q_2}{X^\lambda}\right)^p \right]^{p-1}$, introduce the variable $Z=\frac{1}{X}$ and compute the coefficient of $Z$ in the expression in brackets above. 
We have:
\begin{align*}
\left( \frac{Q_1}{X^\lambda} \right)^{p-1} \frac{Q_2}{X^\lambda}- \left(\frac{Q_2}{X^\lambda}\right)^p & \equiv a^{p-1}b(1-s_1Z)^{p-1}(1-t_1Z) - b^p(1-t_1^pZ^p)  \mod \; Z^2 k[Z]\\
& \equiv b\left (a^{p-1}(1-(t_1-s_1)Z-b^{p-1} \right ) \mod \; Z^2 k[Z].
\end{align*}
From this, we deduce that
\begin{align*}
\scriptstyle{\left( \left( \frac{Q_1}{X^\lambda} \right)^{p-1} \frac{Q_2}{X^\lambda}- \left(\frac{Q_2}{X^\lambda}\right)^p \right)^{p-1}} & \equiv  b^{p-1}\left (a^{p-1}(1-(t_1-s_1)Z)-b^{p-1} \right )^{p-1} \mod Z^2k[Z]\\
& \equiv b^{p-1}\left( (a^{p-1}-b^{p-1})^{p-1} + a^{p-1}(a^{p-1}-b^{p-1})^{p-2}(t_1-s_1) Z \right ) \mod Z^2k[Z]. &
\end{align*}
We see then that $R_{\lambda(p-1)}=b^{p-1}a^{p-1}(a^{p-1}-b^{p-1})^{p-2}(t_1-s_1)=0$. Since $a,b$ are nonzero and $\F_p$-linearly independent, then we have that $s_1=t_1$.
\end{proof}

\subsection{A new generic obstruction to the existence of spaces $L_{\lambda p, 2}$}

It is a result of Pagot (cf. \cite[Theor\`eme 1 and Theor\`eme 2]{Pagot02}) that spaces $L_{p,2}$ and $L_{3p,2}$ exist only for $p=2$ and spaces $L_{2p,2}$ exist only for $p=2,3$.
In this section, we show that there are no spaces $L_{\lambda p,2}$ if $p$ is large enough with respect to $\lambda$, vastly improving on the previously known situation.
The genericity in the title of the section refers then to the fact that our result holds for all but finitely many primes once $\lambda$ is fixed, which allows for a direct computation of the remaining cases.
For example, for $\lambda=4$ we only need to check the primes $p \in \{2,3,5,7,11\}$.
For $p=2$ the result is classical (see Remark \ref{rmk:L_2lambda,2}), the case $p=3$ is achieved in Section \ref{sec:L12,2} and the case $p\in \{5,7,11\}$ is achieved in Section \ref{sec:L4p,2}, completing the classification of spaces $L_{4p,2}$.

Let us now state our result. 
To simplify the demonstration, we exclude from the statement the case $\lambda=1$, which has a known short proof (see \cite[Th\'eor\`eme 2, Part 1]{Pagot02}).
By contrast, the proof in the other known cases $\lambda=2,3$ consists of several pages and the argument below consistently simplifies it.

\begin{theorem}\label{thm:generic}
Let $p>3\lambda$. Then there are no spaces $L_{\lambda p,2}$.
\end{theorem}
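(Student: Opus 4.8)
My plan is to use Proposition~\ref{prop:Pagot} to translate the statement into a problem about two polynomials $Q_1,Q_2\in k[X]$ of degree $\lambda$ with $\F_p$-linearly independent leading coefficients $q_1,q_2$, the crucial constraint being that $\omega_1=\dfrac{dX}{Q_1Q_2^{p-1}-Q_1^p}=-\dfrac{dX}{P_1}$ is logarithmic, where $P_1:=Q_1^p-Q_1Q_2^{p-1}=\prod_{j\in\F_p}(Q_1-jQ_2)$. By Corollary~\ref{cor:1/P} and Convention~\ref{conv:Ri} this is the system $R_k=0$ for $2\le k\le\lambda(p-1)$, where $R_k$ is the coefficient of $X^{kp-1}$ in $P_1^{p-1}$. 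I would not try to analyse the whole system: instead I would extract information only from the $O(\lambda)$ equations $R_k=0$ with $k$ near the top of the range, which are the tractable ones.

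To make those equations explicit I would use the factorization $P_1^{p-1}=Q_1^{p-1}(Q_1^{p-1}-Q_2^{p-1})^{p-1}=\sum_{i=0}^{p-1}Q_1^{(p-1)(p-i)}Q_2^{(p-1)i}$ together with the base-$p$ digit expansion $(p-1)(p-i)=p(p-i-1)+i$, which rewrites $P_1^{p-1}$ as a polynomial in $X^p$ plus $\sum_{i=1}^{p-1}Q_1^iQ_2^{p-i}\,M_i(X^p)$, with $M_i$ of degree $\lambda(p-2)$ and known leading term. Passing to reversed polynomials ($Z=1/X$, $\widetilde{Q_i}(Z)=Z^\lambda Q_i(1/Z)$), the top equations $R_{\lambda(p-1)}=R_{\lambda(p-1)-1}=\cdots=0$ become conditions on the low-order coefficients of $1/\widetilde{P_1}=\big(\widetilde{Q_1}(\widetilde{Q_1}^{p-1}-\widetilde{Q_2}^{p-1})\big)^{-1}$, namely that the coefficients of $Z,Z^{p+1},Z^{2p+1},\dots$ vanish; here the hypothesis $p>3\lambda$ is what keeps the powers of $Z^p$ in $M_i(X^p)$ from interfering with these low-order coefficients, so that each such condition collapses to a single explicit polynomial identity in the coefficients $q_i^{(\ell)}:=[X^{\lambda-\ell}]Q_i$. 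The first of them reproduces Lemma~\ref{lem:s1=t1} ($q_1^{(1)}q_2=q_1q_2^{(1)}$, which up to equivalence lets me also assume $Q_1$ has no $X^{\lambda-1}$ term), and I expect the rest to cascade: once the earlier relations are imposed, the $\ell$-th equation should reduce to a nonzero multiple of $q_1^{(\ell)}q_2-q_1q_2^{(\ell)}$, the scalar being $\sum_{i=1}^{p-1}i\rho^i=-\rho(\rho-1)^{p-2}\ne 0$ with $\rho=(q_2/q_1)^{p-1}\notin\{0,1\}$, which is where the $\F_p$-independence of $q_1,q_2$ enters.

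If that cascade goes through for $\ell=1,\dots,\lambda$ one gets $Q_2=(q_2/q_1)Q_1$, whence $P_1=\big(1-(q_2/q_1)^{p-1}\big)Q_1^p$ and $\omega_1=-\dfrac{dX}{P_1}$ has poles of order $p\ge 2$, contradicting that it is logarithmic; this contradiction proves there are no spaces $L_{\lambda p,2}$. The part I expect to be delicate — and the reason the bound is $p>3\lambda$ rather than merely $p>\lambda$ — is controlling this cascade: the actual coefficient extractions are of the form $[Z^{mp}]$ of products of polynomials of degrees up to $\sim 2\lambda$ and $\sim\lambda(p-2)$, and for small $p$ spurious cross terms (and spurious factorizations of the resulting polynomials, which may vanish without forcing proportionality — exactly as happens for the genuine spaces $L_{6,2}$ at $p=3$) obstruct the argument; the bound $p>3\lambda$ should be precisely what rules these out, while also guaranteeing that every integer one must invert (coming from $(p-1)$-st derivatives and multinomial coefficients, all of size $\le\lambda p$) is prime to $p$. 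Once this bookkeeping is carried out, the conclusion $Q_2=(q_2/q_1)Q_1$ and the resulting contradiction finish the proof.
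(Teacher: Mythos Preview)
Your cascade is where the argument breaks. The first step is fine (it is Lemma~\ref{lem:s1=t1}: $R_{\lambda(p-1)}=0$ gives $q_1^{(1)}q_2=q_1q_2^{(1)}$), but already at the second step the structure you hope for is not there. After translating so that $q_1^{(1)}=q_2^{(1)}=0$, the next equation $R_{\lambda(p-1)-1}=[Z^{p+1}]\widetilde{P_1}^{\,p-1}=0$ becomes
\[
\sum_{i=1}^{p-1}\bigl[Z^{p+1}\bigr]\bigl(\widetilde{Q_1}^{\,i}\widetilde{Q_2}^{\,p-i}\bigr)\cdot q_1^{\,p-1-i}q_2^{\,i-1}=0,
\]
and each $[Z^{p+1}](\widetilde{Q_1}^{\,i}\widetilde{Q_2}^{\,p-i})$ is the coefficient of degree $p+1$ in a $p$-fold product of series of the form $q_*+q_*^{(2)}Z^2+q_*^{(3)}Z^3+\cdots$. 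This is a homogeneous polynomial of degree roughly $(p+1)/2$ in the $q_*^{(\ell)}$ with $\ell\ge 2$, not a linear form $c\,(q_1^{(2)}q_2-q_1q_2^{(2)})$. The bound $p>3\lambda$ does not make these higher-order terms disappear; there is no reason the equation collapses to proportionality of the degree-$2$ coefficients, and nothing you have written explains why the subsequent steps would either. The role you assign to $p>3\lambda$ (killing cross terms in coefficient extractions) is a guess, not a mechanism.

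The paper's proof uses a completely different idea. For each $j\in\F_p$ it forms the \emph{product} $H_j$ of the residues of $\omega_2$ at the four roots of $P_j$; since $\omega_2$ is logarithmic, $H_j\in\F_p^\times$, hence $H_j^{p-1}=1$. Expressing $H_j$ via discriminants and resultants turns $H_j^{p-1}=1$ into a polynomial relation of degree $\le 3\lambda$ in $j$, valid for all $j=0,\dots,p-1$; it is here that $p>3\lambda$ enters, upgrading these $p$ equalities to a polynomial identity in $j$. That identity forces the one-parameter discriminant $R(t)=\Disc(P_t)\cdot(a+t)^{2\lambda-3}$ to have $-a^p$ as a root of order $\ge\lambda+3$. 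A separate lemma relating roots of the discriminant $D(Z)=\Disc(P-ZQ)$ to critical values of $P/Q$ then bounds the relevant root order by $\lambda-1$, giving the contradiction. The argument is global (in $j$) rather than coefficient-by-coefficient, and this is what makes the hypothesis $p>3\lambda$ do real work.
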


To prove the theorem, we need to recall some notation and establish two fundamental lemmas.
If we have a space $L_{\lambda p,2}$ generated by a basis$(\omega_1, \omega_2)$ and we consider the polynomials $Q_1$ and $Q_2$ giving rise to $(\omega_1, \omega_2)$,
we recall from the proof of Proposition \ref{prop:Pagot} that, for every $[i:j] \in \PP^1(\F_p)$, $P_{[i:j]}$ denotes the monic polynomial whose zeroes are those of $iQ_2-jQ_1$, and that $a$ denotes the quotient of the leading terms of $Q_1$ and $Q_2$, which satisfies $a \notin \F_p$.
We are now ready to establish our lemmas:

\begin{lemma}
For every $t\in k - \{ -a\}$, let $P_t:= \frac{aP_{[1:0]}+tP_{[0:1]}}{a+t}$ and denote by $\Disc(P_t)$ its discriminant.
Then there exists a polynomial $R(X)\in k[X]$ such that:
\begin{enumerate}[ref=\thelemma~(\roman*)]
\item We have $\Disc(P_t)=\frac{R(t)}{(a+t)^{2\lambda-3}}$ and $\deg(R(X)) \leq 2\lambda - 3$.\label{lem:inter1_i}
\item Let $p>3\lambda$. Then the element $-a^p \in k$ is a root of $R(X)$ of order $\geq \lambda +3$.\label{lem:inter1_ii}
\end{enumerate}
\end{lemma}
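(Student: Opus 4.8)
The plan is to analyze the discriminant of $P_t$ as a polynomial in $t$ by exploiting the identity $P_t = \frac{aP_{[1:0]}+tP_{[0:1]}}{a+t}$ and the fact that, up to a unit, $(a+t)P_t$ interpolates linearly between $P_{[1:0]}$ and $P_{[0:1]}$, passing through $P_{[i:j]}$ at the value $t=j/i$ (with suitable normalization). For part (i), I would start from the classical expression of the discriminant as (a constant times) the resultant $\Res(P_t, P_t')$, which is a determinant (the Sylvester matrix) whose entries are the coefficients of $P_t$ and $P_t'$. Writing $P_t = \frac{1}{a+t}\widetilde{P}_t$ with $\widetilde{P}_t := aP_{[1:0]}+tP_{[0:1]}$ a polynomial of degree $\lambda$ whose coefficients are \emph{linear} in $t$, the Sylvester matrix of $(\widetilde{P}_t, \widetilde{P}_t')$ has entries linear in $t$ and size $(2\lambda-1)$, so $\Disc(\widetilde P_t)$ is a polynomial in $t$ of degree at most $2\lambda-1$; but since $\widetilde P_t$ and $P_t$ differ by the scalar $(a+t)$, and discriminant scales as the $(2\lambda-2)$-nd power of the scalar under $P \mapsto cP$ for a degree-$\lambda$ polynomial (more precisely $\Disc(cP) = c^{2\lambda-2}\Disc(P)$), we get $\Disc(P_t) = (a+t)^{-(2\lambda-2)}\Disc(\widetilde P_t)$. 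A slightly more careful bookkeeping — using that the leading coefficient of $\widetilde P_t$ is itself linear in $t$ (it is $a q_1 + t q_2$ where $q_1,q_2$ are the leading coefficients of $P_{[1:0]},P_{[0:1]}$, which are both $1$ since these are monic, so the leading coefficient is exactly $a+t$) — shows one factor of $(a+t)$ can be pulled out of $\Disc(\widetilde P_t)$ as well, giving $\Disc(P_t) = \frac{R(t)}{(a+t)^{2\lambda-3}}$ with $\deg R \le 2\lambda-3$. This is the routine half.

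For part (ii), the key geometric input is that specializing $t$ so that $P_t$ becomes proportional to $iQ_2 - jQ_1$ for $[i:j]\in \PP^1(\F_p)$ gives polynomials with \emph{simple} roots (by Remark \ref{rmk:simplepolesn=2}, each $Q_1+iQ_2$, equivalently each $P_{[i:j]}$, has simple roots), so $\Disc(P_t)$ does \emph{not} vanish there. The interesting value is $t = -a^p$: I claim that at $t = -a^p$ the polynomial $\widetilde P_t = aP_{[1:0]} - a^p P_{[0:1]}$ degenerates, forcing $R(-a^p)=0$ to high order. Concretely, recall from the proof of Proposition \ref{prop:Pagot} that $Q_1 Q_2^p - Q_1^p Q_2 = \Delta_2(Q_1,Q_2)$ factors, up to a constant, as $\prod_{[i:j]\in\PP^1(\F_p)}(iQ_2 - jQ_1)$, i.e. $\Delta_2(Q_1,Q_2) = c'\prod_{[i:j]} P_{[i:j]}$ for a constant $c'$. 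The value $t=-a^p$ is precisely where the linear pencil $aP_{[1:0]} + tP_{[0:1]}$ "crosses" the vanishing locus coming from the $p$-power structure: writing $Q_2 = acP_{[1:0]}$, $Q_1 = -cP_{[0:1]}$ as in the proof, one checks $aP_{[1:0]} - a^pP_{[0:1]} = \frac{1}{c}(Q_2 - (a^p/?)\cdots)$ — the point being that $aQ_2^{?}$-type combination makes $\widetilde P_{-a^p}$ equal to a $p$-th power times a low-degree factor, so its derivative is supported on few points and the discriminant of $P_{-a^p}$ vanishes to order at least $\lambda+3$.

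The main obstacle — and the step I would spend real effort on — is pinning down exactly why $\widetilde P_{-a^p}$ degenerates into (essentially) a $p$-th power, and translating this into a \emph{quantitative} order-of-vanishing statement $\ge \lambda+3$ for $R$ at $t=-a^p$. I would approach it by a local computation near $t = -a^p$: parametrize $t = -a^p + \varepsilon$, expand $\widetilde P_t = \widetilde P_{-a^p} + \varepsilon P_{[0:1]}$, and estimate $\Res(\widetilde P_t, \widetilde P_t')$ in powers of $\varepsilon$ using that $\widetilde P_{-a^p}$ has a large "inseparable part". The hypothesis $p > 3\lambda$ enters to guarantee that various exponents and binomial coefficients appearing in the expansion do not vanish mod $p$ and that the degree count $2\lambda-3$ is compatible with a root of order $\ge \lambda+3$ (note $2\lambda-3 \ge \lambda+3$ already needs $\lambda\ge 6$, so for small $\lambda$ the conclusion will actually force $R$ to vanish identically or have additional structure — a subtlety to handle, possibly meaning the lemma as stated is used only for $\lambda$ large or the bound is interpreted with $R$ possibly zero). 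I would double-check the edge cases $\lambda \le 5$ separately, consistent with the paper's remark that those are handled by other means.
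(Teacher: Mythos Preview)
Your argument for part (i) is on the right track but incomplete: to get the sharp exponent $2\lambda-3$ (rather than $2\lambda-1$) you need to use Lemma~\ref{lem:s1=t1}, which says $s_1=t_1$. Since $P_{[1:0]}$ and $P_{[0:1]}$ are monic with equal subleading coefficients, the polynomial $P_t$ is monic with subleading coefficient independent of $t$; thus the first \emph{two} columns of the Sylvester matrix of $(P_t,P_t')$ are constant in $t$, and only the remaining $2\lambda-3$ columns carry entries of the form $\frac{\text{linear in }t}{a+t}$. Without this observation your ``slightly more careful bookkeeping'' does not actually produce the required bound.

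Your approach to part (ii) contains a genuine gap: the claim that $\widetilde P_{-a^p}=aP_{[1:0]}-a^pP_{[0:1]}$ ``degenerates into essentially a $p$-th power'' or ``has a large inseparable part'' is unfounded. In terms of $Q_1,Q_2$ one has $\widetilde P_{-a^p}=\tfrac{1}{c}(Q_2+a^pQ_1)$, which is a generic degree-$\lambda$ polynomial with no special factorization in $X$; there is no reason for its derivative to be small or its discriminant to vanish intrinsically. The value $-a^p$ is \emph{not} special for the polynomial $P_t(X)$ itself. The paper's mechanism is completely different and arithmetic in nature: one computes the product $H_j$ of the residues of $\omega_2$ at the zeros of $P_j$ (for $j=0,\dots,p-1$), expresses it via resultants and discriminants as
\[
H_j=\delta\,\frac{(a+j)^{\lambda p-3}}{R(j)}
\]
for a constant $\delta$, and then uses that $\omega_2$ is logarithmic, so $H_j\in\F_p^\times$ and hence $H_j^{p-1}=1$. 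This yields, for each $j\in\{0,\dots,p-1\}$, the relation
\[
\delta^{p-1}(a^{p^2}+j)^{\lambda}(a+j)^3R(j)=R_p(j)(a^p+j)^{\lambda+3},
\]
where the appearance of $a^p$ comes from the Frobenius identity $(a+j)^p=a^p+j$. Since both sides are polynomials of degree at most $3\lambda$ in $j$ and they agree at $p>3\lambda$ values, they agree identically; comparing orders of vanishing at $-a^p$ (and using $a^p\neq a,a^{p^2}$) forces $R$ to vanish there to order at least $\lambda+3$. Your local-expansion idea in $\varepsilon=t+a^p$ cannot see any of this, because nothing special happens to $P_t$ at $t=-a^p$ as a function of $X$; the root of $R$ at $-a^p$ is an artifact of the $\F_p$-rationality of residues, not of any degeneration of the pencil.
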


\begin{proof}
We will first prove item $(i)$, and then use it as one of the ingredients for the proof of $(ii)$.
\smallskip

\emph{Proof of (i)}:
Let $a_i$ be the coefficient of degree $i$ in the polynomial $P_t$.
The discriminant $\Disc(P_t)$ is the determinant of the Sylvester matrix
\[
\begin{pmatrix}
a_\lambda & a_{\lambda-1} & \cdots & a_2& a_1 & a_0 & \cdots & 0 \\
0 & a_\lambda &  \cdots & a_3 & a_2 & a_1 & \cdots & 0 \\
\vdots & \vdots & \ddots & \vdots & \vdots & \vdots & & \vdots \\
0 & 0 & \cdots & a_\lambda &  a_{\lambda-1} & a_{\lambda -2} & \cdots & a_0 \\
\lambda a_\lambda & (\lambda-1)a_{\lambda-1} & \cdots & 2a_2& a_1 & 0 & \cdots & 0 \\
0 & \lambda a_\lambda & \cdots & 3 a_3 & 2a_2 & a_1 & \cdots & 0 \\
\vdots & \vdots & \ddots & \vdots & \vdots & \vdots &\ddots & \vdots \\
0 & 0 & \cdots & \lambda a_\lambda & (\lambda-1)a_{\lambda-1} & (\lambda-2)a_{\lambda-2} &\cdots & a_1
\end{pmatrix}.
\]
Since both $a_\lambda$ and $a_{\lambda-1}$ are independent of $t$ (the former being 1 and the latter as a result of Lemma \ref{lem:s1=t1}), the first two columns of this matrix are always the same for every $t$.
As a result, we are left with at most $2\lambda -3$ rows that contain quantities of the form $\frac{p_{ij}(t)}{a+t}$, where $p_{ij}(t)$ is either zero or a polynomial of degree 1.
Applying Leibniz formula for the determinant, we then obtain that $\Disc(P_t)=\frac{R(t)}{(a+t)^{2\lambda-3}}$ for some $R(t)$ of degree at most $2\lambda -3$ as desired.

\smallskip
\emph{Proof of (ii)}: 
We consider the differential form $\omega_2=\frac{-Q_1 dX}{Q_1Q_2^p-Q_1^pQ_2}$ and remark that the poles of $\omega_2$ are the zeroes of the polynomial $Q_2^p-Q_1^{p-1}Q_2 = \prod_{j=0}^{p-1} \left( Q_2 - jQ_1 \right)$.
Hence, for every pole $x$ of $\omega_2$, there exists a number $j \in \{0,\dots, p-1\}$ such that $x$ is a zero of $Q_2 - j Q_1$.
The residue of $\omega_2$ at $x$ can then be computed as follows:
\[\res_{\omega_2}(x)=\frac{-1}{\left(Q_2^p-Q_1^{p-1}Q_2\right)^{'}(x)}=\frac{-1}{(Q_2-jQ_1)'(x)\prod_{i\neq j}(Q_2-iQ_1) (x)}=\frac{1}{(Q_2-jQ_1)'(x) (Q_1^{p-1}(x))}.\]
For every $j=0, \dots, p-1$, we recall from the proof of Proposition \ref{prop:Pagot} that $Q_2-jQ_1=c(a+j)P_{[1:j]}$, and we observe that $P_{[1:j]}$ coincides with the polynomial $P_j$ defined in the statement of the Lemma.
We can then rewrite the identity above as 
\[ \res_{\omega_2}(x)=\frac{1}{c(a+j){P_{j}'}(x)Q_1(x)^{p-1}}.\]

Considering the product $H_j$ of the residues at all the poles that are roots of $P_{j}$, we have that
\[H_j:= \prod_{x \in Z(P_{j})} \res_{\omega_2}(x) = \frac{1}{c^\lambda (a+j)^\lambda \prod_{x \in Z(P_{j})} P_{j}^{'}(x) \prod_{x \in Z(P_{j})} Q_1^{p-1}(x)}.\]

If we denote by $\Res ( \bullet, \bullet)$ the resultant of two polynomials, we can rewrite the above as 

\[H_j= \frac{1}{(-1)^{\frac{\lambda(\lambda-1)}{2}}\Disc(P_{j}) (c (a+j))^{-\lambda(p-2)}  \Res(Q_2 - jQ_1,Q_1)^{p-1}} = (-1)^\frac{\lambda(\lambda-1)}{2}  \frac{(c (a+j))^{\lambda(p-2)}} {\Disc(P_{j}) \Res(Q_2,Q_1)^{p-1}}.\]
 
By Lemma \ref{lem:inter1_i}, we can express the discriminant $\Disc(P_{j})$ in terms of the polynomial $R(X)$ to obtain that $H_j = \delta \frac{(a+j)^{\lambda p - 3}}{R(j)}$,
where we have set $\delta:= (-1)^{\frac{\lambda(\lambda-1)}{2}}  \frac{c^{\lambda(p-2)}}{\Res(Q_2, Q_1)^{p-1}}$ for ease of notation, since this is independent of $j$.

Since the differential form $\omega_2$ is logarithmic, we have that $H_j\in \F_p^\times$ and in particular that $H_j^{p-1}=1$. 
The following equations then hold for every $j \in \{0, \dots, p-1\}$:

\begin{align*}
\delta^{p-1} (a+j)^{(\lambda p -3)(p-1)}& =  R(j)^{p-1} \\
\delta^{p-1} (a+j)^{(\lambda p^2-(\lambda+3) p +3)} R(j)& =  R(j)^{p} \\
\delta^{p-1} (a+j)^{\lambda p^2} (a+j)^3 R(j)& =  R(j)^{p}(a+j)^{(\lambda+3)p} \\
\delta^{p-1} (a^{p^2}+j)^{\lambda}(a+j)^3 R(j)& =  R_p(j)(a^p+j)^{(\lambda+3)},\\
\end{align*}
where $R_p(X)$ denotes the polynomial obtained from $R(X)$ by raising its coefficients to the $p$-th power.
We thus have obtained the equation
\[ \delta^{p-1} (a^{p^2}+j)^{\lambda}(a+j)^3 R(j) -  R_p(j)(a^p+j)^{(\lambda+3)} = 0,\]
which is a polynomial equation of degree at most $3\lambda$ in $j$ that is satisfied for every $j=0,\dots,p-1$.
Since we have that $p>3\lambda$, this is actually an equality of univariate polynomials, namely we have that

\begin{equation*}
\delta^{p-1} (a^{p^2}+X)^{\lambda}(a+X)^3 R(X) =  R_p(X)(a^p+X)^{(\lambda+3)}
\end{equation*}
holds in the ring $k[X]$.
The right hand side of the equation admits $-a^p$ as root of order at least $\lambda+3$.
Since $a^p \neq a$ and $a^p \neq a^{p^2}$ we conclude that $R(X)$ has $-a^p$ as root of order at least $\lambda+3$, as well.
\end{proof}

\begin{remark}
Despite its fairly elementary proof, Lemma \ref{lem:inter1_i} is quite powerful. Combining the lower bound on the order of $-a^p$ as a zero of $R(X)$ given in $(ii)$ and the upper bound on the degree of $R(X)$ given by $(i)$ one gets that $2\lambda-3 \geq \lambda+3$, which gives $\lambda \geq 6$. 
It follows that no further argument is needed to prove Theorem \ref{thm:generic} when $\lambda \leq 5$.
In order to prove it in full generality, we need to go a bit further.
\end{remark}

In order to prove the auxiliary lemma below, we introduce the following notation: for polynomials $A,B \in k[X]$, we denote by $A \wedge B \in k[X]$ the monic gcd of $A$ and $B$ and by $\rdeg(A)$ the degree of the reduction of $A$, that is, the number of distinct roots of the polynomial $A$.
Finally, if $A = \prod_{i=1}^{\rdeg(A)} (X-x_i)^{m_i}$, we denote by $Z_A = \{x_1, \dots, x_{\rdeg{A}}\}$ the set of zeroes of $A$, and we define $\hat{\prod}_{x\in Z_A} B(x):= \prod_{i=1}^{\rdeg{A}} B(x_i)^{m_i}$.

\begin{lemma}\label{lem:inter2}
Let $k$ be an algebraically closed field of characteristic $p>0$.
Let $P,Q$ be coprime polynomials in $k[X]$ such that $0 \leq \deg(Q) < \deg(P) < p$ and let $D(Z) \in k[Z]$ be the discriminant in degree $\deg(P)$ of the polynomial $P(X) - Z Q(X) \in k[X]$.

Then, the derivative $\left( \frac{P}{Q} \right)'$ of the rational function $\frac{P}{Q} \in k(X)$ can be written as
\[ \left( \frac{P(X)}{Q(X)} \right)' = \frac{A(X)}{B(X)Q(X)} \]
such that $A \wedge Q = 1$ and $A(X) = \prod_{i=1}^d (X - x_i)$ with $x_1, \dots, x_d \in k$ and $d=\deg(P)+\rdeg(Q)-1$.
Moreover, the $x_i$'s are such that
\[ D(Z)= c \prod_{i=1}^d\left(Z-\frac{P(x_i)}{Q(x_i)}\right) \; \mbox{for some} \; c \in k^\times.\]
In particular, $\deg(D) = \deg(P)+\rdeg(Q)-1$.
\end{lemma}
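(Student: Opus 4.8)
The plan is to compute the derivative $\left(\frac{P}{Q}\right)' = \frac{P'Q - PQ'}{Q^2}$ and then carefully cancel common factors in the numerator and denominator, tracking degrees and distinctness of roots. First I would write $Q = \prod_{i=1}^{\rdeg(Q)}(X-y_i)^{m_i}$ and set $Q_{\mathrm{red}} = \prod_i (X-y_i)$, so that $Q/(X-y_i)$ divides $Q'$ for each $i$; more precisely $Q' = Q_{\mathrm{red}}^{-1} Q \cdot \widetilde{Q}$ where $\widetilde{Q}$ has degree $\rdeg(Q)-1$ and (using $\deg Q < p$, so no unexpected vanishing of multiplicities mod $p$) is coprime to $Q_{\mathrm{red}}$. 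Thus $P'Q - PQ'$ has $Q/Q_{\mathrm{red}}$ as a factor, and dividing it out gives a numerator of the form $P'Q_{\mathrm{red}} - P\widetilde{Q}$ over the new denominator $Q_{\mathrm{red}} \cdot Q$. Setting $A := P'Q_{\mathrm{red}} - P\widetilde{Q}$ and $B := Q_{\mathrm{red}}$, we get $\left(\frac{P}{Q}\right)' = \frac{A}{B\,Q}$, and a degree count gives $\deg A = \deg P + \rdeg(Q) - 1 =: d$ (the top terms do not cancel since $\deg(P'Q_{\mathrm{red}})$ and $\deg(P\widetilde{Q})$ agree but the leading coefficients differ — here one uses $\deg P < p$ so $\deg P' = \deg P - 1$, and also $\deg Q < \deg P$ to keep things clean). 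The coprimality $A \wedge Q = 1$ follows because at a root $y_i$ of $Q$ the function $P/Q$ has a genuine pole (as $P, Q$ are coprime), so its derivative has a pole of strictly higher order there, forcing $y_i$ not to be a root of $A$.

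Next I would identify the roots of $A$ with the critical points of the map $X \mapsto P(X)/Q(X)$ and relate them to the discriminant $D(Z)$ of $P(X) - Z\,Q(X)$. The key observation is that $x_0$ is a root of $A$ if and only if $P(X) - \frac{P(x_0)}{Q(x_0)} Q(X)$ has a multiple root at $x_0$, i.e. if and only if $Z = \frac{P(x_0)}{Q(x_0)}$ makes $D(Z)$ vanish. So the roots of $D$ are precisely the values $\frac{P(x_i)}{Q(x_i)}$ for $x_i$ ranging over roots of $A$. To turn this into the stated factorization I would argue that the $d$ critical points $x_1,\dots,x_d$ are \emph{simple} roots of $A$ — this is the step I expect to be the main obstacle, since in general a higher-order critical point could collapse several $x_i$, and one must use $\deg P < p$ (so that the relevant Wronskian-type quantities do not vanish identically in characteristic $p$) together with the hypothesis that we are in the "generic" degree $\deg P$ to rule this out, or alternatively argue directly that $\gcd(A, A') $ has the expected behaviour. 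Granting simplicity, each $x_i$ contributes a simple root $\frac{P(x_i)}{Q(x_i)}$ to $D(Z)$, the leading coefficient of $D$ in $Z$ being a nonzero constant $c \in k^\times$ (computed from the resultant formula for the discriminant, using $\deg Q < \deg P$ and $\deg P < p$), which yields $D(Z) = c\prod_{i=1}^d \left(Z - \frac{P(x_i)}{Q(x_i)}\right)$ and in particular $\deg D = d = \deg P + \rdeg(Q) - 1$.

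Finally I would double-check the degree bookkeeping: $\deg B = \rdeg(Q)$, $\deg(BQ) = \rdeg(Q) + \deg Q$, and $\deg A = \deg P + \rdeg(Q) - 1$, consistent with $\left(\frac{P}{Q}\right)'$ having a pole of order $\deg Q - \deg P + 1$ at infinity (i.e. $\deg(BQ) - \deg A = \deg Q - \deg P + 1$, which is positive by hypothesis). The writeup would present the construction of $A$ and $B$ first (a short computation), then the pole-order argument for $A \wedge Q = 1$, then the identification of $Z_A$ with the critical points and the discriminant factorization, with the simplicity of the critical points as the one point requiring genuine care; the normalization $A$ monic is arranged by dividing through by the leading coefficient of $P'Q_{\mathrm{red}} - P\widetilde{Q}$, which is absorbed into $B$ or into the constant $c$.
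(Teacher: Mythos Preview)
Your construction of $A$ and $B$, the degree count, and the argument for $A \wedge Q = 1$ are all correct and match the paper's approach (the paper also sets $B = Q/(Q\wedge Q')$ and takes $A$ proportional to $N/(Q\wedge Q')$ with $N = P'Q - PQ'$).

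The gap is in the discriminant factorization. The roots $x_1,\dots,x_d$ of $A$ are \emph{not} simple in general, and the hypothesis $\deg P < p$ does not rescue this: already $P = X^3$, $Q = 1$ in characteristic $p>3$ gives $A = 3X^2$ with a double root at $0$, and $D(Z) = -27Z^2$ has a double root at $P(0)/Q(0)=0$. So the statement is really a statement \emph{with multiplicity}: if $x_0$ is a root of $A$ of order $m$, then $Z = P(x_0)/Q(x_0)$ must appear as a factor of $D$ with that same order $m$. Your set-level identification of critical points with roots of $D$ does not establish this, and your proposed fix (proving simplicity) is aiming at a false target.

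The paper handles the multiplicities by a direct resultant computation rather than a pointwise critical-point analysis. Starting from $D(t) = \Res(P-tQ,P'-tQ')$ (up to a constant depending only on the leading coefficient of $P$, which is independent of $t$ since $\deg Q < \deg P$), one rewrites this as a product $\hat{\prod}_{x \in Z_t}(P'-tQ')(x)$, divides and multiplies by $\hat{\prod}_{x\in Z_t} Q(x)$ to convert it into $\Res(P'Q-tQ'Q,P-tQ)/\Res(P-tQ,Q)$, and then uses the row operation $P'Q - tQ'Q \equiv P'Q - PQ' = N \pmod{P-tQ}$ to get $c\,\hat{\prod}_{x\in Z_N}(P-tQ)(x)$. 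Splitting this last product over $Q(x)=0$ (constant in $t$) versus $Q(x)\neq 0$ and recognizing that the latter indexing set is exactly $Z_A$ (with multiplicities) gives $D(t) = c\,\hat{\prod}_{x\in Z_A}\bigl(P(x)/Q(x) - t\bigr)$. The $\hat{\prod}$ notation (product with multiplicity) is what makes the argument go through without any simplicity claim.
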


\begin{proof}
Let us write $N=P'Q-PQ'$, so that $\left(\frac{P}{Q}\right)'=\frac{N}{Q^2}$. 
Clearly $(P\wedge P')(Q\wedge Q')$ divides $N$ and local computations at the zeroes of $P Q$, relying on the fact that both $\deg{(P)}$ and $ \deg{(Q)}$ are prime to $p$, show that the quotient $N_0 :=\frac{N}{(P\wedge P')(Q\wedge Q')}$ has no common roots neither with $P$ nor with $Q$.
We can then set 
\[A=a(P\wedge P')N_0 \; \; \mbox{and} \; \; B=\frac{Q}{Q\wedge Q'}\] for a suitable $a\in k^\times$ that makes $A$ monic.
By construction, $A \wedge Q =1$ and $A \wedge B =1$.
Hence, a zero of $\left(\frac{P}{Q}\right)'$ is either a zero of $P\wedge P'$ or of $N_0$. 

Since $p$ does not divide $\deg(P)-\deg(Q)$, the term of degree $\deg P+\deg Q-1$ in $N$ is nonzero and hence $\deg(N)=\deg P+\deg Q-1$.
Moreover, since $p$ does not divide $\deg(Q)$, one has $\deg(Q\wedge Q') =\deg Q-\rdeg Q$. 
It follows that 
\begin{align*}
\deg(A)&=\deg ((P\wedge P')N_0) = \deg(N)-\deg(Q\wedge Q')=\deg P+\deg Q-1-(\deg Q-\rdeg Q)\\ &=\deg P+\rdeg Q-1.
\end{align*}

Let us now prove the second part of the statement.
For $t\in k$, we denote by $Z_t$ the set of zeroes of the polynomial $P- tQ \in k[X]$, and by $Z_N$ the set of zeroes of the polynomial $N$. 
We then compute $D(t)$ using properties of resultants: we have
\begin{eqnarray*}
D(t)&=&\Res(P-tQ,P'-tQ')\\
&=& c_1\hat\prod_{x\in Z_t} (P'-tQ')(x)
\end{eqnarray*}
where $c_1$ is independent of $t$, as it just involves the leading term of $P-tQ$ and we have by assumption that $\deg(Q)<\deg(P)$.
As a result, we can write
\[D(t)=c_1 \frac{\hat{\prod}_{x\in Z_t}\left(P'(x)Q(x)-tQ'(x)Q(x)\right)}{ \hat{\prod}_{x\in Z_t} Q(x)} .\]

Note that this is possible as $P\wedge Q=1$ and then for every $x\in Z_t$ we have $Q(x)\neq 0$. 
Now, up to multiplication by a constant that does not depend on $t$, the numerator of the fraction above is equal to $\Res(P'Q-tQ'Q,P-tQ)$, while the denominator is equal to $\Res(P-tQ, Q)$.
We can then write 
  \begin{eqnarray*}
  D(t)&=&c_2\frac{\Res(P'Q-tQ'Q,P-tQ)}{\Res(P-tQ,Q)}\\
  &=&c_2\frac{\Res(P'Q-PQ',P-tQ)}{\Res(P,Q)}\\
  &=& c_3\hat{\prod_{x\in Z_N}}(P-tQ)(x)
  \end{eqnarray*}
  where the new constant $c_3$ results from dividing $c_2$ by $\Res(P,Q)$ and multiplying by a suitable power of the leading term of $N$.
  Using this, we have
   \begin{eqnarray*}
  D(t) &=& c_3\hat{\prod_{\substack{x\in Z_N\\Q(x)=0}}}(P-tQ)(x)\hat{\prod_{\substack{x\in Z_N\\Q(x)\neq0}}}(P-tQ)(x)\\
  &=&c_3\hat{\prod_{\substack{x\in Z_N\\Q(x)=0}}}P(x)\hat{\prod_{\substack{x\in Z_N\\Q(x)\neq0}}}(P-tQ)(x)\\
  &=&c_4\hat{\prod_{\substack{x\in Z_N\\Q(x)\neq0}}}(P-tQ)(x)\\
  &=&c_4\hat{\prod_{\substack{x\in Z_N\\Q(x)\neq0}}}Q(x)\hat{\prod_{\substack{x\in Z_N\\Q(x)\neq0}}}\left(\frac{P}{Q}-t\right)(x)\\
  &=&c_5\hat{\prod_{\substack{x\in Z_N\\Q(x)\neq0}}}\left(\frac{P(x)}{Q(x)}-t\right)\\
  &=&c_5\hat{\prod_{x\in Z_A}}\left(\frac{P(x)}{Q(x)}-t\right)\\
  \end{eqnarray*}
where the final equation is justified by the facts that $N= a^{-1}A(Q\wedge Q')$ and that $A \wedge Q=1$.
\end{proof}

\begin{proof}[Proof of Theorem \ref{thm:generic}]

Assume by contradiction that there is a space $L_{\lambda p, 2}$ with $p> 3\lambda$.
We set $P:= aP_{[1:0]}-a^p P_{[0:1]}$ and $Q:=a(P_{[1:0]}-P_{[0:1]})$ and we remark that these satisfy the conditions of Lemma \ref{lem:inter2}.
As a result, the zeroes of $D(Z)=\Disc(P-ZQ )\in k[Z]$ are all of the form $\frac{P(x)}{Q(x)}$ for $x$ a zero of $\left( \frac{P}{Q} \right)'$.
In particular, $z=0$ is a zero of $D$ of order at most $\deg(P')=\lambda -1$, as it corresponds to a zero $x$ of $\left( \frac{P}{Q} \right)'$ that also satisfies $P(x)=0$ (and therefore also $P'(x)=0$).

Let us now give a lower bound to the order of $0$ as a zero of $D$ and see that it is incompatible with the one above. 
To do this, we evaluate the polynomial $D$ at an element $z = \frac{t + a^p}{t+a}$.
In this way, we have
\begin{align*}
D(z) & = \Disc\left(P - \frac{(t + a^p)}{(t+a)} Q\right) = \Disc \left( \frac{(t+a)P-(t+a^p)Q}{(t+a)} \right) \\
& = \Disc \left( \frac{(a-a^p)(aP_{[1:0]}+t P_{[0:1]})}{(t+a)} \right) = \Disc \big((a-a^p) P_t \big) = (a-a^p)^{2 \lambda -1} \Disc(P_t).
\end{align*} 

By Lemma \ref{lem:inter1_i}, this last expression can be written in terms of $R(t)$, giving
\[D(z) = (a-a^p)^{2 \lambda -1}\frac{R(t)}{(a+t)^{2\lambda - 3}}.\]
By Lemma \ref{lem:inter1_ii}, $-a^p$ is a zero of order at least $\lambda+3$ of the polynomial function $t \mapsto R(t)$ and, since the expression of $z$ in $t$ is a linear fractional transformation, we have that $0$ is a zero of the same order of the polynomial $D(Z)$.
This gives the desired contradiction and concludes the proof of the theorem.
\end{proof}

\begin{remark}\label{rmk:genericobs}
It is natural to ask whether the obstruction of Theorem~\ref{thm:generic}
extends to the spaces $L_{\lambda p^{n-1},n}$.
However, any $2$-dimensional subspace of a space $L_{\lambda p^{n-1},n}$ would have to
satisfy the inequality $p > 3\lambda p^{n-2}$ in order for
Theorem~\ref{thm:generic} to apply and this condition is meaningful only when $n=2$.
Consequently, no higher-dimensional obstructions arise directly from
Theorem~\ref{thm:generic}, and a more refined approach is required.
We establish preliminary results in this direction—most notably
Theorem~\ref{thm:lambda=1} that settles the case $\lambda=1$ for every $n$—but it remains open whether a generic
condition yielding obstructions to the existence of spaces $L_{\lambda p^{n-1},n}$ for fixed $\lambda, n$, and almost every $p$ exists when $n>2$.
\end{remark}

\section{Conditions for the existence of spaces $L_{\lambda p^{n-1}, n}$}\label{sec:Pagotn}

In this section, we prove a generalization of Proposition \ref{prop:Pagot} that applies to spaces $L_{\lambda p^{n-1}, n}$ for any $n \geq 2$ and discuss some of its consequences.
As anticipated in \ref{rmk:Moore}, our strategy makes a crucial use of Moore determinants.
Definition and results about Moore determinants that we use in this section are recalled in Appendix \ref{app:Moore}.
For every $n$-tuple of the form $\ul{X}:=(X_1, \dots, X_n)$, we denote by $\Delta_n(\ul{X})$ the associated Moore determinant.
Moreover, we denote by $\ul{\hat{X}_i}$ the $n-1$-tuple obtained from $\ul{X}$ by removing $X_i$ and by $\Delta_{n-1}(\ul{\hat{X}_i})$ the associated Moore determinant.



\begin{lemma}\label{lem:MooreP}
Let $Q_1, \dots, Q_n \in k[X]$ be polynomials of common degree $\lambda \geq 1$.
Let $P:= \Delta_n(\ul{Q})$ and for every $i\in \{1, \dots, n\}$ let $P_i:= (-1)^{i-1} \Delta_{n-1}(\ul{\hat{Q}_i})$.
Then, we have
\begin{align*}
  \Moore{n}{P}=P^{1+p+\cdots+p^{n-2}},
\end{align*}
\end{lemma}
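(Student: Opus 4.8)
The plan is to convert the relation between $\ul Q$ and $\ul{P}:=(P_1,\dots,P_n)$ into a single matrix identity and then compute one determinant in two different ways. Let $M$ denote the Moore matrix of $\ul Q$, with rows indexed $0,\dots,n-1$ and $(r,c)$-entry $Q_c^{p^{r}}$, so $\det M=\Delta_n(\ul Q)=P$, and let $M^{\ast}$ denote the Moore matrix of $\ul P$. First I would record three consequences of Laplace (cofactor) expansion of $\det M$, of exactly the kind assembled in Appendix~\ref{app:Moore}. Deleting the last row and the $c$-th column of $M$ produces precisely the Moore matrix of $\ul{\hat{Q}_c}$, so expanding $\det M$ along its last row and absorbing the signs $(-1)^{c-1}$ into the $P_c$'s gives
\[\sum_{c=1}^{n}Q_c^{p^{n-1}}P_c=(-1)^{n-1}P.\]
Replacing the last row of $M$ by a copy of its $(m+1)$-st row, for $0\le m\le n-2$, gives a matrix with two equal rows; expanding its vanishing determinant along that last row with the same cofactors yields $\sum_{c}Q_c^{p^{m}}P_c=0$ for $0\le m\le n-2$. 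Finally, expanding $\det M$ along its \emph{first} row, pulling a $p$-th power out of each complementary minor (valid in characteristic $p$, together with $x^{p}=x$ for $x\in\F_p$), produces
\[P=\sum_{c=1}^{n}Q_c\,P_c^{\,p}.\]

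Next I would set $G:=M\,(M^{\ast})^{\mathsf{T}}$, an $n\times n$ matrix over $k[X]$ with $(r,s)$-entry $G_{rs}=\sum_{c=1}^{n}Q_c^{p^{r}}P_c^{p^{s}}$ for $0\le r,s\le n-1$. For $s\le r$ one pulls a $p^{s}$-th power outside the sum, so $G_{rs}=\bigl(\sum_{c}Q_c^{p^{r-s}}P_c\bigr)^{p^{s}}$, which by the identities above vanishes unless $r-s=n-1$ (forcing $r=n-1$, $s=0$), where it equals $(-1)^{n-1}P$; in particular all diagonal entries of $G$ vanish and the unique nonzero entry on or below the diagonal is the corner $G_{n-1,0}=(-1)^{n-1}P$. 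For $0\le r\le n-2$ the superdiagonal entry is $G_{r,r+1}=\bigl(\sum_{c}Q_c\,P_c^{\,p}\bigr)^{p^{r}}=P^{p^{r}}$. Thus $G$ is upper triangular apart from that one corner. On the one hand $\det G=\det M\cdot\det M^{\ast}=P\cdot\Delta_n(\ul P)$ by multiplicativity; on the other hand, expanding $\det G$ along its last row (whose only nonzero entry is $G_{n-1,0}$) leaves an $(n-1)\times(n-1)$ upper triangular minor with diagonal $G_{0,1},\dots,G_{n-2,n-1}$, so that $\det G=(-1)^{n-1}P\cdot(-1)^{n+1}\prod_{r=0}^{n-2}P^{p^{r}}=P\cdot P^{1+p+\cdots+p^{n-2}}$. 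Equating the two expressions and cancelling the nonzero polynomial $P$ gives $\Moore{n}{P}=P^{1+p+\cdots+p^{n-2}}$, as claimed; for the cancellation one notes that $P=\Delta_n(\ul Q)$ is a non-zero-divisor in $k[X]$ whenever $Q_1,\dots,Q_n$ are $\F_p$-linearly independent, and the general case follows by first proving the identity with $Q_1,\dots,Q_n$ replaced by indeterminates, where $\Delta_n$ does not vanish.

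The only genuine difficulty is the bookkeeping: carefully pinning down which entries of $G$ vanish, recognising the superdiagonal entries as the powers $P^{p^{r}}$, and tracking the signs $(-1)^{n-1}$ and $(-1)^{n+1}$ so that they cancel. I would sanity-check the computation against $n=2$ (where $P_1=Q_2$, $P_2=-Q_1$, and indeed $\Delta_2(P_1,P_2)=\Delta_2(Q_1,Q_2)=P=P^{1}$) and $n=3$ before committing to the general argument, and I would present the three Laplace-expansion identities of the first paragraph as quotations from Appendix~\ref{app:Moore} rather than reproving them here.
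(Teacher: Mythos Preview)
Your proof is correct, but it takes a genuinely different route from the paper's.  The paper simply observes that the statement is the special case $\ul{a}=\ul{Q}$ of Theorem~\ref{thm:FM4.1.} (quoted in Appendix~\ref{app:Moore} from \cite{FresnelMatignon23}), so the whole proof is one line of citation.  You instead give a self-contained argument: you form the product $G=M(M^\ast)^{\mathsf T}$ of the two Moore matrices, read off from the three cofactor identities that $G$ has zero diagonal, a single nonzero entry $(-1)^{n-1}P$ in the lower-left corner, and superdiagonal $P,P^p,\dots,P^{p^{n-2}}$, and then compare $\det G$ computed multiplicatively with $\det G$ computed by expansion along the last row.  In effect you have reproved Theorem~\ref{thm:FM4.1.} by a direct matrix manipulation rather than by appealing to \cite{FresnelMatignon23}.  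The advantage of your approach is that it is entirely elementary and transparent (the whole argument fits on one page and uses nothing beyond Laplace expansion and the Frobenius endomorphism); the advantage of the paper's approach is brevity, since the identity is already packaged as a quotable result.  Your handling of the cancellation of $P$ by passing to indeterminates is also clean and correct.
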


\begin{proof}
This is a direct corollary of Theorem \ref{thm:FM4.1.}.
\end{proof}

\begin{proposition}\label{prop:Pe|P}
Let $Q_1, \dots, Q_n \in k[X]$ be polynomials of common degree $\lambda \geq 1$.
Let $P:= \Delta_n(\ul{Q})$ and let us define for every $i\in \{1, \dots, n\}$ the polynomials $P_i:= (-1)^{i-1} \Delta_{n-1}(\ul{\hat{Q}_i})$ and for every $\underline{\epsilon}\in \F_p^n-\{\underline 0\}$ the polynomials $P_{\underline{\epsilon}}:= \sum_{i} \epsilon_i P_i$.
Denote by $q_i$ be the leading coefficient of $Q_i$ and assume that $\Delta_n(\underline q)\neq 0$.
Then, the following conditions are met
\begin{enumerate}
    \item We have the equality $\deg P=(1+p+p^2+\dots+p^{n-1})\lambda$
    \item For every $\underline{\epsilon}\in \F_p^n-\{\underline 0\}$, $\deg P_{\underline\epsilon}=(1+p+p^2+\dots+p^{n-2})\lambda$
    \item For every $\underline{\epsilon}\in \F_p^n-\{\underline 0\}$ $P_{\underline\epsilon}|P$.
\end{enumerate}

\end{proposition}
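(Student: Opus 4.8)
The plan is to read off $(1)$ and $(2)$ from the extreme coefficients of the determinants involved, and to deduce $(3)$ from Moore's product formula together with an identification of $P_{\ul\epsilon}$ with the Moore determinant of a hyperplane inside $\langle Q_1,\dots,Q_n\rangle_{\Fp}$.

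\emph{Parts $(1)$ and $(2)$.} Expanding the Moore determinant along its rows, every monomial occurring in $\Delta_n(\ul Q)$ has $X$-degree $\lambda(1+p+\cdots+p^{n-1})$, and the coefficient of $X^{\lambda(1+p+\cdots+p^{n-1})}$ is exactly $\Delta_n(\ul q)$; since $\Delta_n(\ul q)\neq 0$ by hypothesis, $(1)$ follows. The same computation applied to the $(n-1)$-tuple $\ul{\widehat{Q_i}}$ shows that $\deg\Delta_{n-1}(\ul{\widehat{Q_i}})\le\lambda(1+p+\cdots+p^{n-2})$, with coefficient of the extreme monomial equal to $\Delta_{n-1}(\ul{\widehat{q_i}})$, which is nonzero because $\Delta_n(\ul q)\neq 0$ forces $q_1,\dots,q_n$ to be $\Fp$-linearly independent (Appendix \ref{app:Moore}), hence so is any $(n-1)$-element subfamily. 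Therefore each $P_i$ has degree $\lambda(1+p+\cdots+p^{n-2})$ with leading coefficient $p_i:=(-1)^{i-1}\Delta_{n-1}(\ul{\widehat{q_i}})$, and the coefficient of $X^{\lambda(1+p+\cdots+p^{n-2})}$ in $P_{\ul\epsilon}=\sum_i\epsilon_i P_i$ equals $\sum_i\epsilon_i p_i$. By Lemma \ref{lem:MooreP} (which, being a consequence of Theorem \ref{thm:FM4.1.}, is an algebraic identity valid for elements of $k$) one has $\Delta_n(p_1,\dots,p_n)=\Delta_n(\ul q)^{1+p+\cdots+p^{n-2}}\neq 0$, so $p_1,\dots,p_n$ are $\Fp$-linearly independent and $\sum_i\epsilon_i p_i\neq 0$ for every $\ul\epsilon\in\Fp^n-\{\ul 0\}$. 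This gives $(2)$.

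\emph{Part $(3)$.} For $\ul\beta\in\Fp^n-\{\ul 0\}$ put $L_{\ul\beta}:=\sum_i\beta_i Q_i$; as the $q_i$ are independent, $\deg L_{\ul\beta}=\lambda$, and $L_{\ul\beta}$ depends, up to an $\Fp^\times$-scalar, only on $[\ul\beta]\in\PP^{n-1}(\Fp)$. Moore's product formula (Appendix \ref{app:Moore}), applied to the $\Fp$-independent family $Q_1,\dots,Q_n$, yields $P=\Delta_n(\ul Q)=c\prod_{[\ul\beta]\in\PP^{n-1}(\Fp)}L_{\ul\beta}$ for some $c\in k^\times$. It thus suffices to prove that, up to a constant in $k^\times$, $P_{\ul\epsilon}$ is the subproduct $\prod_{[\ul\beta]\cdot\ul\epsilon=0}L_{\ul\beta}$ over the hyperplane $H_{\ul\epsilon}=\{[\ul\beta]:\ul\beta\cdot\ul\epsilon=0\}$, for then $P_{\ul\epsilon}\mid P$ is immediate. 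To establish this, let $V=\langle Q_1,\dots,Q_n\rangle_{\Fp}\subset k(X)$, let $\phi_{\ul\epsilon}\colon V\to\Fp$ be the functional $Q_i\mapsto\epsilon_i$, and choose an $\Fp$-basis $w_1,\dots,w_n$ of $V$ with $\phi_{\ul\epsilon}(w_1)=1$ and $w_2,\dots,w_n\in\ker\phi_{\ul\epsilon}$. Writing $w_i=\sum_j g_{ij}Q_j$ with $g\in\GL_n(\Fp)$, we get $g\ul\epsilon=e_1$ and $M(\ul w)=M(\ul Q)\,g^{\top}$ for the associated Moore matrices. Passing to adjugates and comparing first columns—using that the first row of cofactors of any Moore matrix consists of the $p$-th powers of its Moore cofactors $P_i$, and that the first column of $\mathrm{adj}(g)$ equals $\det(g)\cdot\ul\epsilon$—one obtains $\Delta_{n-1}(w_2,\dots,w_n)^p=\det(g)\,P_{\ul\epsilon}^{\,p}$, hence $\Delta_{n-1}(w_2,\dots,w_n)=\det(g)\,P_{\ul\epsilon}$ since Frobenius is injective on $k(X)$. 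Finally, Moore's product formula applied to the $\Fp$-independent family $w_2,\dots,w_n$ expresses $\Delta_{n-1}(w_2,\dots,w_n)$, up to a $k^\times$-constant, as the product of representatives of all $1+p+\cdots+p^{n-2}$ lines of $\ker\phi_{\ul\epsilon}$; and a line $\langle L_{\ul\beta}\rangle$ lies in $\ker\phi_{\ul\epsilon}$ precisely when $\ul\beta\cdot\ul\epsilon=0$. Hence $P_{\ul\epsilon}=c'\prod_{[\ul\beta]\in H_{\ul\epsilon}}L_{\ul\beta}$ for some $c'\in k^\times$, and $P_{\ul\epsilon}\mid P$.

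\emph{Main difficulty.} The elementary observation that replacing the $k$-th column of the matrix defining $P_{\ul\epsilon}$ by $\sum_i\beta_i(\text{column }i)$ turns it into a column all of whose entries are divisible by $L_{\ul\beta}$ already shows $L_{\ul\beta}\mid P_{\ul\epsilon}$ whenever $\ul\beta\cdot\ul\epsilon=0$; the subtle step is to upgrade this to the full divisibility $P_{\ul\epsilon}\mid P$, since $P=\Delta_n(\ul Q)$ need not be squarefree under the hypothesis $\Delta_n(\ul q)\neq 0$ alone, so multiplicities have to be controlled. This is exactly what the adjugate computation above provides (alternatively, one could run a degeneration argument from the dense locus of tuples $\ul Q$ for which the $L_{\ul\beta}$ are pairwise coprime, where the statement is transparent, and spread it out by a closedness argument); making this step precise is the heart of the proof.
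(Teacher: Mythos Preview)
Your proof is correct; parts $(1)$ and $(2)$ match the paper's argument essentially verbatim (the paper cites \cite[Corollary 2.1]{FresnelMatignon23} for the nonvanishing of the leading coefficient of $P_{\ul\epsilon}$, while you deduce it from $\Delta_n(p_1,\dots,p_n)=\Delta_n(\ul q)^{1+p+\cdots+p^{n-2}}\neq 0$, which is the same fact).

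For part $(3)$ both arguments have the same architecture: identify $P_{\ul\epsilon}$, up to a unit, with the Moore determinant of a basis of the hyperplane $\ker\phi_{\ul\epsilon}\subset\langle Q_1,\dots,Q_n\rangle_{\Fp}$, and then invoke Moore's product formula (\ref{eq:Mooreprod}) twice to see that this divides $P$. The difference is how the identification is obtained. The paper uses Proposition~\ref{prop:FM2.3.} (specifically formula~(\ref{eq:deltae})) to get
\[
\prod_{Q\in\ker\varphi_{\ul\epsilon}-\{0\}}Q=(-1)^{n-1}P_{\ul\epsilon}^{\,p-1},
\]
and then extracts a $(p-1)$-th root against the product over a fixed system of representatives $S(W)$ to conclude $P_{\ul\epsilon}=\mu\prod_{Q\in S(\ker\varphi_{\ul\epsilon})}Q$ with $\mu\in\F_p^\times$. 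You instead change to an adapted basis $w_1,\dots,w_n$ and use an adjugate computation to get $\Delta_{n-1}(w_2,\dots,w_n)=\det(g)\,P_{\ul\epsilon}$ directly; this is precisely the transformation rule recorded later as Lemma~\ref{lem:Anew} (applied with $M=g^\top$), specialized to the first coordinate. Your route avoids the external citation and the $(p-1)$-th root step, at the cost of a short matrix manipulation; the paper's route is slightly more structural and yields the sharper statement $\mu\in\F_p^\times$ rather than $\mu\in k^\times$ (which is irrelevant for divisibility but worth noting).
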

\begin{proof}
Let us prove the three items separately:
\begin{enumerate}
    \item The leading coefficient of $P$ is $\Delta_n(\underline q)$, which is non-zero by hypothesis, hence computing the Moore determinant gives $\deg P=(1+p+p^2+\dots+p^{n-1})\lambda$.
    \item The leading coefficient of $P_{\underline\epsilon}$ is $\sum_i \epsilon_i\Delta_{n-1}(\underline{\hat{q}_i})$, which is non-zero by \cite[Corollary 2.1]{FresnelMatignon23}, hence $\deg P_{\underline\epsilon}=\deg{P_1}=(1+p+p^2+\dots+p^{n-2})\lambda$.
    \item We first note that we have 
\[P_{\underline\epsilon}=\sum_{i=1}^n(-1)^{i-1}\epsilon_i \Moore{n-1}{\hat{Q_i}}= \begin{vmatrix}
\epsilon_1 & \epsilon_2 & \dots & \epsilon_n\\
Q_1 & Q_2 & \dots &Q_n\\
Q_1^p & Q_2^p & \dots &Q_n^p\\
\vdots & \vdots & \ddots & \vdots \\
Q_1^{p^{n-2}} & Q_2^{p^{n-2}} & \dots &Q_n^{p^{n-2}}
\end{vmatrix},
\]
a determinant that we denote by $\delta_{\ul{\epsilon}}(\underline{Q})$ as in Appendix \ref{app:Moore}.

Let $W:= \langle Q_1, \dots, Q_n\rangle_{\Fp}$. 
Since $\Delta_n(\underline q)\neq 0$, then the $Q_i$'s are $\F_p$-linearly independent, hence $\dim W = n$.
Let $\{Q_1^\star, \dots, Q_n^\star\}$ be the basis of $W^\star$ which is dual to $\{Q_1, \dots, Q_n\}$ and denote by $\varphi_{\ul{\epsilon}} \in W^\star$ the $\F_p$-linear form $\sum_{i=1}^n (-1)^{i-1}\epsilon_i Q_i^\star$.
Then by Formula (\ref{eq:deltae}) we have
\[\prod_{Q \in \ker \varphi_{\ul{\epsilon}} - \{\ul{0}\}} Q = (-1)^{n-1}\delta_{\underline{\epsilon}}(\underline{Q})^{p-1}=(-1)^{n-1}P_{\underline\epsilon}^{p-1}.\]
We choose the following system of representatives of the projectivization $\PP(W)$ of $W$:
\[S(W):= \bigcup_{i=1}^n \left( Q_i+\F_p Q_{i-1}+\dots + \F_p Q_1 \right).\]
and for every subspace $V \subset W$ we denote by $S(V)$ the intersection $V \cap S(W)$.
It is a system of representatives of $\Proj(V)$.
A counting argument shows that $\prod_{Q\in V - \{\ul{0}\}} Q = (-1)^{\dim V} (\prod_{Q\in S(V)} Q)^{p-1}$, which combined with the above gives the identity $P_{\underline\epsilon}^{p-1}=\left( \prod_{Q \in S(\ker \varphi_\epsilon)} Q\right)^{p-1}$.
We have then that there exists $\mu \in \F_p^\times$ such that $P_{\underline\epsilon}=\mu \prod_{Q \in S(\ker \varphi_{\ul{\epsilon}})} Q$.
By Equation (\ref{eq:Mooreprod}), we have that $P=\Moore{n}{Q}=\prod_{Q \in S(W)} Q$. Since $S(\ker(\varphi_{\ul{\epsilon}})) \subset S(W)$ it follows that $P_{\underline\epsilon}|P$.
\end{enumerate}
\end{proof}

\begin{remark}\label{rmk:PPi}
Proposition \ref{prop:Pe|P} shows that the expression $\frac{P}{P_i}$ is a polynomial.
We observe that it is an additive polynomial in the variable $Q_i$. 
Let us prove this for $i=n$, from which the other cases follow.
First of all, from \eqref{eq:Mooreprod} we get that \[ P=\prod_{i=1}^n \prod_{\epsilon_{i-1}\in
\F_p} \dots \prod_{\epsilon_{1}\in \F_p}
(Q_i+\epsilon_{i-1}Q_{i-1}+\dots+\epsilon_1Q_1)
  \] and that  \[ P_n=(-1)^{n-1}\prod_{i=1}^{n-1} \prod_{\epsilon_{i-1}\in \F_p}
\dots \prod_{\epsilon_{1}\in \F_p}
(Q_i+\epsilon_{i-1}Q_{i-1}+\dots+\epsilon_1Q_1).
  \]
  Putting these two equations together results in the formula
  \[ \frac{P}{P_n}=(-1)^{n-1}\prod_{\epsilon_{n-1}\in \F_p} \dots
\prod_{\epsilon_{1}\in \F_p}
(Q_n+\epsilon_{n-1}Q_{n-1}+\dots+\epsilon_1Q_1).
  \] 
 If we denote by ${\mathcal Q}_{n-1}$ the $\F_p$-vector space
$\langle Q_1,Q_2,\dots, Q_{n-1}\rangle_{\Fp}$ the formula above is rewritten as 
  \[ \frac{P}{P_n}= (-1)^{n-1} P_{\calQ_{n-1}}(Q_n), \]
where $P_{\calQ_{n-1}}$ is the structural polynomial of ${\mathcal Q}_{n-1}$ of Definition \ref{defn:strucpoly}, which is additive in the variable $Q_n$.
\end{remark}

\subsection{Constructing spaces $L_{\lambda p^{n-1},n}$ from polynomials}\label{subsec:Pagotn}

We now have all the tools to prove the two general results of this section (Theorems \ref{thm:Pagotn0} and \ref{thm:Pagotn}).
Let us first establish some general results on the relationship between the $Q_i$'s and some spaces of differential forms that we can build from them:

\begin{definition}\label{defn:Qi}
Let $\ul{Q}:=(Q_1, \dots, Q_n) \in k[X]^n$ be a $n$-tuple of polynomials of degree $\lambda \geq 1$ with leading coefficients $q_i$ satisfying $\Delta_n(\underline q)\neq 0$.
We write $P:= \Delta_n(\ul{Q})$ and $P_i:= (-1)^{i-1} \Delta_{n-1}(\ul{\hat{Q}_i})$.
We define differential forms $\omega_i:=\frac{P_i}{P}dX$ and the space $\Omega:=\langle \omega_1, \dots, \omega_n \rangle_{\Fp}$.
We say that the $n$-tuple $\ul{Q}$ \emph{gives rise} to the basis $(\omega_1, \dots, \omega_n)$.
\end{definition}

By Proposition \ref{prop:Pe|P} we have that for all $\underline{\epsilon}\in \F_p^n-\{\underline 0\}$ the polynomial $P_{\underline\epsilon}$ divides $P$ and from Definition \ref{defn:Qi} we see that

\[ \frac{P_{\underline\epsilon}}{P}dX = \epsilon_1\omega_1+\dots+\epsilon_n\omega_n. \]

This entails that all the nonzero differential forms in $\Omega$ have a unique zero of order $\lambda p^{n-1}-2$ at infinity.
However, they are not in general logarithmic.
In the following, we begin an investigation of conditions for $\Omega$ to be a space $L_{\lambda p^{n-1},n}$ that culminates in Theorem \ref{thm:Pagotn0}. 

For every $M \in GL_n(\F_p)$ we denote by $(\underline{Q} M)_1, \dots, (\underline{Q} M)_n$ the components of the vector $\underline{Q} M$ obtained by applying the matrix $M$ to $\underline{Q}$.
We note that all the entries $(\underline{Q} M)_i$'s are polynomials of degree $\lambda$ with leading coefficients that are $\Fp$-independent.
We associate with this $n$-tuple the differential forms $\omega_i^M:=(-1)^{i-1}\frac{\Delta_{n-1}\big( \widehat{(\underline{Q} M)_i} \big)}{\Delta_n(\underline{Q} M)}dX$ and the space $\Omega':=~\langle \omega_1^M, \dots, \omega_n^M \rangle_{\Fp}$.\\

\begin{proposition}\label{prop:common}
Assume the notation of Definition \ref{defn:Qi}.
\begin{enumerate}[label=\ref{prop:common}(\roman*)]
\item For every $M \in GL_n(\F_p)$ we have that
\[ (\omega_1^M, \dots, \omega_n^M) = (\omega_1, \dots, \omega_n) (M^{-1})^t,\]
where $(M^{-1})^t \in GL_n(\Fp)$ is the transpose of the inverse of $M$.
In particular, $\Omega'=\Omega$. \label{prop:QiMi}
\item  Let $\underline{Q}:=(Q_1, \dots, Q_n)$ and $\underline{T}:=(T_1, \dots, T_n)$ be $n$-tuples of polynomials in $k[X]$ giving rise to the same basis $(\omega_1, \dots, \omega_n)$. Then $\underline{Q} =\underline{T}$. \label{prop:QiMii}
\item Let $\underline{Q}:=(Q_1, \dots, Q_n)$ and $\underline{T}:=(T_1, \dots, T_n)$ be $n$-tuples of polynomials in $k[X]$ giving rise to bases of the same space $\Omega$.
Then, there exists a matrix $M \in GL_n(\F_p)$
such that $\underline{T}
 =  \underline{Q} M.$ \label{prop:QiMiii}
\end{enumerate}
\end{proposition}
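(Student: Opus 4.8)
The statement is essentially bookkeeping around two facts: that $x\mapsto x^{p}$ is $\F_p$-linear, so that replacing $\ul Q$ by $\ul QM$ with $M\in\GL_n(\F_p)$ multiplies the Moore matrix of $\ul Q$ on the right by $M$; and a reciprocity for Moore determinants from Appendix~\ref{app:Moore} (in the spirit of Lemma~\ref{lem:MooreP} and of the classical formula for the minors of an adjugate matrix). I would prove the items in the order (i), (ii), (iii). For item (i): since the entries of $M$ lie in $\F_p$, one has $(\ul QM)_j^{p^{k}}=\sum_l M_{lj}Q_l^{p^{k}}$, so the Moore matrix of $\ul QM$ is the Moore matrix of $\ul Q$ times $M$, and in particular $\Delta_n(\ul QM)=\det(M)\,\Delta_n(\ul Q)=\det(M)P$. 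For the numerators I would use the determinants $\delta_{\ul\epsilon}(\ul Q)$ of Appendix~\ref{app:Moore}, which satisfy $\delta_{e_j}(\ul Q)=(-1)^{j-1}\Delta_{n-1}(\ul{\widehat{Q_j}})$ and are linear in $\ul\epsilon$: writing $e_j=(e_jM^{-1})M$ and pulling $M$ out of all rows below the first gives $\delta_{e_j}(\ul QM)=\det(M)\,\delta_{e_jM^{-1}}(\ul Q)=\det(M)\sum_l (M^{-1})_{jl}P_l$. Dividing by $\Delta_n(\ul QM)$ the factors $\det(M)$ cancel and one reads off $\omega_j^M=\sum_l(M^{-1})_{jl}\omega_l$, which is precisely $(\omega_1^M,\dots,\omega_n^M)=(\omega_1,\dots,\omega_n)(M^{-1})^t$; as $(M^{-1})^t\in\GL_n(\F_p)$, the span is unchanged, so $\Omega'=\Omega$.

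For item (ii): write $P,P_i$ for the polynomials of Definition~\ref{defn:Qi} attached to $\ul Q$ and $P',P_i'$ for those attached to $\ul T$, and note that each $P_i,P_i'$ is nonzero since its leading coefficient is the $\F_p$-independent scalar $\Delta_{n-1}(\ul{\widehat{q_i}})$. The hypothesis $\tfrac{P_i}{P}dX=\omega_i=\tfrac{P_i'}{P'}dX$ for all $i$ yields a single rational function $\mu:=\tfrac{P}{P'}=\tfrac{P_i}{P_i'}$, so that $P=\mu P'$ and $P_i=\mu P_i'$ for every $i$. Taking Moore determinants of the $n$-tuples $(P_1,\dots,P_n)$ and $(P_1',\dots,P_n')$, using Lemma~\ref{lem:MooreP} together with the scaling rule $\Delta_n(\mu P_1',\dots,\mu P_n')=\mu^{1+p+\dots+p^{n-1}}\Delta_n(P_1',\dots,P_n')$, one gets $(\mu P')^{1+p+\dots+p^{n-2}}=\mu^{1+p+\dots+p^{n-1}}(P')^{1+p+\dots+p^{n-2}}$; cancelling $(P')^{1+p+\dots+p^{n-2}}\ne 0$ and then $\mu^{1+p+\dots+p^{n-2}}\ne 0$ forces $\mu^{p^{n-1}}=1$, hence $\mu=1$ (as $k(X)$ has characteristic $p$). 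Thus $P=P'$ and $P_i=P_i'$ for all $i$, and it remains to recover the $Q_i$. Here I would invoke the reciprocity of Appendix~\ref{app:Moore} (Theorem~\ref{thm:FM4.1.}), which expresses $(-1)^{i-1}\Delta_{n-1}$ of the $(n-1)$-tuple obtained from $(P_1,\dots,P_n)$ by deleting $P_i$ as $\varepsilon_n\,P^{(p^{n-2}-1)/(p-1)}Q_i^{\,p^{n-2}}$ for a sign $\varepsilon_n$ depending only on $n$ (for $n=2$ this reduces to $P_i=(-1)^{i-1}\Delta_1(\cdot)$ and the degree count $(1+p+\dots+p^{n-2})^2\lambda$ on each side pins down the exponents in general). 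Applying this to $\ul Q$ and to $\ul T$ and using $P_i=P_i'$ and $P=P'$ gives $Q_i^{\,p^{n-2}}=T_i^{\,p^{n-2}}$, hence $Q_i=T_i$.

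For item (iii): $\ul Q$ and $\ul T$ give rise to two $\F_p$-bases of the same space $\Omega$, so $(\omega_1^{\ul T},\dots,\omega_n^{\ul T})=(\omega_1^{\ul Q},\dots,\omega_n^{\ul Q})N$ for a unique $N\in\GL_n(\F_p)$. Set $M:=(N^{-1})^t\in\GL_n(\F_p)$; by item (i) the tuple $\ul QM$ gives rise to $(\omega_1^{\ul Q},\dots,\omega_n^{\ul Q})(M^{-1})^t=(\omega_1^{\ul Q},\dots,\omega_n^{\ul Q})N=(\omega_1^{\ul T},\dots,\omega_n^{\ul T})$, so $\ul QM$ and $\ul T$ give rise to the \emph{same} ordered basis; by item (ii), $\ul T=\ul QM$, which is the assertion.

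The routine parts are (i) and the passage from (i) and (ii) to (iii); the one subtlety there is keeping straight which of $M$, $M^{-1}$, $M^t$ occurs, since transposition enters through the change of variables from the $Q_i$ to the $\omega_i$. The genuine input — and the place where I expect the real work to lie — is the Moore-determinant reciprocity used to finish (ii): all the later reconstructions of the $Q_i$ from differential forms (Sections~\ref{sec:Pagotn}–\ref{sec:standard}) rest on it, and if Appendix~\ref{app:Moore} does not already contain it in exactly the needed form, deriving it (e.g.\ from the product formula $\Delta_n(\ul Q)=\prod_{Q\in S(W)}Q$ together with an adjugate-type computation) is the main obstacle.
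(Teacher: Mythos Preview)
Your argument is correct and follows essentially the same route as the paper: part (i) is the same multilinearity computation (you package it via $\delta_{\ul\epsilon}$, the paper via minors), and (iii) is reduced to (ii) via (i) in exactly the same way. For the end of (ii), the paper uses the fiber description in Proposition~\ref{prop:FM5.1.} ($\varphi(\ul Q)=\varphi(\ul T)\Rightarrow \ul Q=\theta\ul T$ with $\theta^{1+p+\dots+p^{n-2}}=1$, and then $\Delta_n(\ul Q)=\Delta_n(\ul T)$ forces $\theta=1$), whereas you use the $\varphi^2$-formula from the \emph{same} proposition; both are valid and equally short. Your one slip is the citation: the reciprocity you need is Proposition~\ref{prop:FM5.1.}, not Theorem~\ref{thm:FM4.1.}, and since it is already in the appendix there is no remaining obstacle.
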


\begin{proof}
\begin{enumerate}
\item For every $i,j \in \{1, \dots, n\}$ we denote by $M_{i,j}$ the $(i,j)$-th minor of the matrix $M$.
We then have that 
$\Delta_{n-1}(\widehat{(\underline{Q} M)}_j) = \sum_{i=1}^n M_{i,j} \Moore{n-1}{\hat{Q}_i}$
using $\F_p$-multilinearity and the alternating property of $\Delta_{n-1}$.
Then, using the fact that $\Delta_{n}(\underline{Q} M)=\Moore{n}{Q} \det(M)$ and that $M$ is invertible, we get

\begin{align*}
\omega_j^M := & \frac{(-1)^{j-1}\Delta_{n-1}(\widehat{(\underline{Q} M)}_j)}{\Delta_n(\underline{Q} M)} dX = \frac{(-1)^{j-1}\sum_{i=1}^n M_{i,j} \Moore{n-1}{\hat{Q}_i}}{\Delta_n(\underline{Q} M)} dX = \frac{\sum_{i=1}^n {(-1)^{i+j}M_{i,j} P_i}}{\Delta_n(\underline{Q} M)} dX \\
=& \frac{1}{\det(M)} \sum_{i=1}^n (-1)^{i+j} \frac{M_{i,j}P_{i}}{P} dX  = \sum_{i=1}^n \frac{(-1)^{i+j}}{\det(M)} M_{i,j} \omega_i.
\end{align*}

In other words, $(\omega_1^M, \dots, \omega_n^M) = (\omega_1, \dots, \omega_n) (M^{-1})^t$.
We have then that $\omega_1^M, \dots, \omega_n^M$ is a basis of $\Omega$ for every invertible matrix $M \in GL_n(\F_p)$.

\item[(ii) and (iii)] Let $\omega_{i,Q}:=\frac{(-1)^{i-1}\Delta_{n-1}(\hat{Q_i})}{\Moore{n}{Q}} dX$ and $\omega_{i,T}:=\frac{(-1)^{i-1}\Delta_{n-1}(\hat{T_i})}{\Moore{n}{T}} dX$.
Since $\underline{T}$ and $\underline{Q}$ both arise from the space $\Omega$ we have that there exists a matrix $N \in GL_n(\F_p)$ such that
\[ (\omega_{1,Q}, \dots, \omega_{n,Q} ) N =(\omega_{1,T}, \dots, \omega_{n,T} ) \]
Then, by part (i), one has that the $n$-tuple $\underline{Q} (N^{-1})^t$ gives rise to the basis $(\omega_{1,T}, \dots, \omega_{n,T})$.
Let $M:= (N^{-1})^t$ and let us show that $\underline{Q} M = \underline{T}$: this will both prove $(ii)$ (in which case $N= M= \mathbb{I}$) and $(iii)$.
From the fact that $\underline{Q} M$ and $ \underline{T}$ give rise to the same basis we get
\[ \left( \frac{\Delta_{n-1}\left(\widehat{(\underline{Q}M)}_1\right)}{\Delta_n(\underline{Q} M)}  , \dots, \frac{(-1)^{n-1}\Delta_{n-1}\left(\widehat{(\underline{Q}M)}_n\right)}{\Delta_n(\underline{Q} M)} \right) =  \left( \frac{\Delta_{n-1}(\hat{T}_1)}{\Delta_n(\underline{T})}, \dots, \frac{(-1)^{n-1}\Delta_{n-1}(\hat{T_n})}{\Delta_n(\underline{T})} \right). \]
We can then apply the Moore determinant to the terms of this equality and use Theorem \ref{thm:FM4.1.} to get that
\[ \Delta_{n}(\underline{Q}M)^{p^{n-1}}=\Moore{n}{T}^{p^{n-1}}, \]
which implies that $\Delta_{n}(\underline{Q}M)=\Moore{n}{T}$.
Hence, we have that 
\[(-1)^{i-1}\Delta_{n-1}(\widehat{(\ul{Q}M)_i})=(-1)^{i-1}\Delta_{n-1}(\widehat{T_i}) \;\; \mbox{ for every} \; \; i=1, \dots, n\] which we know by Proposition \ref{prop:FM5.1.} to be equivalent to the fact that $\underline{Q} M = \theta \underline{T} $ for some $\theta \in k(X)^{alg}$ with $\theta^{1+p+\dots+p^{n-2}}=1$.
But since $\Delta_{n}(\underline{Q}M)=\Moore{n}{T}$, we have that $\theta^{1+\dots+p^{n-1}}=1$ and hence $\theta^{p^{n-1}}=1$, that is, $\theta=1$.
\end{enumerate}
\end{proof}

For a space $L_{\lambda p^{n-1}, n}$, we will show in Theorem \ref{thm:Pagotn} that we can always associate polynomials $Q_1, \dots, Q_n$ giving rise to a basis as in Definition \ref{defn:Qi}.
In this context, Proposition \ref{prop:QiMiii} says that two choices of such a $n$-tuple are necessarily related by multiplication of an invertible matrix with entries in $\F_p$.

Let us now prove another useful proposition, first recalling from Definition \ref{defn:strucpoly}, that the \emph{structural polynomial} of a $\Fp$-vector space $V$ is defined as $P_V(X):= \prod_{v \in V} (X-v) \in k[X]$.

\begin{proposition}\label{prop:subspacestruc}
Let $\Omega = \langle \omega_1, \dots, \omega_n \rangle_{\Fp}$ be as in definition \ref{defn:Qi}.
For every $1\leq t\leq n$, let $\Omega_t\subset \Omega$ be the $\Fp$-subspace of $\Omega$ generated by $\{\omega_{n-t+1}, \dots, \omega_n \}$ and let $\calQ_{n-t} = \langle Q_1, \dots, Q_{n-t} \rangle_{\Fp}$.
Then, the $t$-tuple of polynomials $\big(P_{\calQ_{n-t}}(Q_{n-t+1}), \dots, P_{\calQ_{n-t}}(Q_n)\big)$ gives rise to the basis $(-1)^{n-t}(\omega_{n-t+1},\dots,\omega_n\big)$ of $\Omega_t$.
\end{proposition}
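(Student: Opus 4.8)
The statement relates the $t$-tuple $\big(P_{\calQ_{n-t}}(Q_{n-t+1}), \dots, P_{\calQ_{n-t}}(Q_n)\big)$ of polynomials to the basis $(\omega_{n-t+1}, \dots, \omega_n)$ of $\Omega_t$ (up to sign). Since "gives rise to" is defined via Moore determinants (Definition \ref{defn:Qi}), the proof is a Moore-determinant computation. The key input is Remark \ref{rmk:PPi}, which already identifies $\frac{P}{P_n} = (-1)^{n-1} P_{\calQ_{n-1}}(Q_n)$, i.e. the structural polynomial appears as a quotient of Moore determinants. The strategy is to iterate this: identify the Moore determinant $\Delta_t$ of the $t$-tuple $\big(P_{\calQ_{n-t}}(Q_{n-t+1}), \dots, P_{\calQ_{n-t}}(Q_n)\big)$, and its $(t-1)$-subdeterminants, with $P$ and the $P_i$ (for $i \geq n-t+1$) respectively, up to a common factor and sign.

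**Key steps.** First I would record what $P_{\calQ_{n-t}}(Q_j)$ means explicitly: writing $\calQ_{n-t} = \langle Q_1, \dots, Q_{n-t}\rangle_{\Fp}$, it equals $\prod_{v\in \calQ_{n-t}}(Q_j - v)$, an additive (separable) polynomial in $Q_j$ of degree $p^{n-t}$ in $Q_j$. By the factorization of Moore determinants into products of $\Fp$-linear forms (equation \eqref{eq:Mooreprod} and the surrounding discussion used in Proposition \ref{prop:Pe|P} and Remark \ref{rmk:PPi}), one has
\[
\Delta_n(\underline{Q}) = P = \prod_{i=1}^{n}\prod_{\epsilon_{i-1},\dots,\epsilon_1\in\Fp}\big(Q_i + \epsilon_{i-1}Q_{i-1}+\cdots+\epsilon_1 Q_1\big),
\]
and the factor coming from the "first $n-t$ rows", namely $\Delta_{n-t}(Q_1, \dots, Q_{n-t})$, divides $P$ with quotient (up to sign) $\prod_{j=n-t+1}^{n} P_{\calQ_{n-t}}(Q_j)'$s-worth of factors. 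More precisely, I expect
\[
P = (-1)^{?}\,\Delta_{n-t}(Q_1,\dots,Q_{n-t})^{?}\cdot \Delta_t\big(P_{\calQ_{n-t}}(Q_{n-t+1}), \dots, P_{\calQ_{n-t}}(Q_n)\big),
\]
and an analogous identity relating $P_i$ (for $i \in \{n-t+1,\dots,n\}$) to $\Delta_{t-1}$ of the $(t-1)$-subtuple obtained by deleting the $i$-th entry. Then dividing, the common factor $\Delta_{n-t}(Q_1,\dots,Q_{n-t})$ cancels and one reads off $\omega_i = \frac{P_i}{P}dX$ up to the global sign $(-1)^{n-t}$. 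I would set this up cleanly by applying Theorem \ref{thm:FM4.1.} (the formula $\Delta_n(\underline{P}) = P^{1+p+\cdots+p^{n-2}}$ and its refinements) or, more directly, by the "Laplace expansion along the first $n-t$ rows" of a Moore matrix, which expresses $\Delta_n(\underline{Q})$ as a sum over $t$-subsets that telescopes because all but the "staircase" term vanish after row-reduction — this is exactly the mechanism behind equation \eqref{eq:Mooreprod}.

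**Main obstacle.** The delicate point is bookkeeping of the exponents and signs: the structural polynomial $P_{\calQ_{n-t}}(Q_j)$ has degree $p^{n-t}$ in $Q_j$, so the Moore matrix of the $t$-tuple $\big(P_{\calQ_{n-t}}(Q_j)\big)_j$ involves powers $p^0, p^1, \dots, p^{t-1}$ of entries that are themselves degree-$p^{n-t}$ polynomials in the $Q_j$, and matching this against the bottom $t$ rows of the original $n\times n$ Moore matrix requires the precise identity
\[
P_{\calQ_{n-t}}(Q_j)^{p^{s}} = \text{(an }\Fp\text{-linear combination determined by }\calQ_{n-t}\text{) of } Q_j, Q_j^p, \dots, Q_j^{p^{n-1}},
\]
which is where the additivity of the structural polynomial and Fermat's little theorem in $\Fp$ enter. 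I expect to handle this by induction on $t$, using the $t=1$ case (which is essentially Remark \ref{rmk:PPi}, after replacing $Q_n$ by each $Q_j$ with $j \geq n-t+1$ in turn and using Proposition \ref{prop:common} to reorganize) and an inductive step that peels off one more variable $Q_{n-t}$ from $\calQ$. The sign $(-1)^{n-t}$ should emerge from tracking the $(-1)^{i-1}$ conventions in the definition of the $P_i$ through the Laplace expansion; I would verify it on a small case ($t = n$, where $\calQ_0 = 0$, $P_{\calQ_0}(Q_j) = Q_j$ and the claim is a tautology, and $t = n-1$) to pin down the exponent rather than carrying it symbolically throughout.
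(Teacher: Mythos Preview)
Your approach is correct and is in essence the same idea as the paper's, but you are working much harder than necessary. The factorization you are aiming for,
\[
\Delta_n(\underline{Q}) \;=\; \Delta_{n-t}(Q_1,\dots,Q_{n-t})\cdot \Delta_t\big(P_{\calQ_{n-t}}(Q_{n-t+1}),\dots,P_{\calQ_{n-t}}(Q_n)\big),
\]
together with its analogue for $\Delta_{n-1}(\widehat{Q_i})$ ($n-t+1\le i\le n$), is already recorded in the appendix as Lemma~\ref{lem:structuralMoore} and Corollary~\ref{cor:structuralMoore}. The paper's proof of Proposition~\ref{prop:subspacestruc} is therefore a one-line specialization of Corollary~\ref{cor:structuralMoore} to $X_i=Q_i$, followed by the sign check comparing $(-1)^{j-1}$ (from the $t$-tuple definition) with $(-1)^{n-t+j-1}$ (from $P_{n-t+j}$), which yields the global factor $(-1)^{n-t}$.

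Two specific simplifications to your plan: first, no induction on $t$ is needed. The Moore product formula \eqref{eq:Mooreprod} gives the factorization in one stroke (this is exactly the proof of Lemma~\ref{lem:structuralMoore}): split the product $\prod_{i=1}^n\prod_{\epsilon}(\cdots)$ at $i=n-t$, and use additivity of $P_{\calQ_{n-t}}$ to recognise the second block as $\Delta_t\big(P_{\calQ_{n-t}}(Q_{n-t+1}),\dots\big)$. Second, Theorem~\ref{thm:FM4.1.} is not the right tool here; it concerns $\Delta_n(\varphi(\underline{a}))$, not the structural-polynomial factorization. Your concern about matching $P_{\calQ_{n-t}}(Q_j)^{p^s}$ against rows of the original Moore matrix via ``Laplace expansion'' is bypassed entirely once you use the product side rather than the determinant side.
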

\begin{proof}
If we specialize Corollary \ref{cor:structuralMoore} to the case $X_i = Q_{i}$, we get, for every $n-t+1 \leq i \leq n$, that
\[ \frac{\Delta_{n-1}(\widehat{Q_i})}{\Moore{n}{Q}} = \frac{\Delta_{t-1}(P_{\calQ_{n-t}}(Q_{n-t+1}), \dots, \widehat{P_{\calQ_{n-t}}(Q_{i})}, \dots, P_{\calQ_{n-t}}(Q_{n}))}{\Delta_t(P_{\calQ_{n-t}}(Q_{n-t+1}), \dots, P_{\calQ_{n-t}}(Q_{n}))}. \]
As a result, the $t$-tuple $(P_{\calQ_{n-t}}(Q_{n-t+1}), \dots, P_{\calQ_{n-t}}(Q_n))$ gives rise to the basis of $\Omega_t$ given by the $t$-tuple $\big((-1)^{n-t}\omega_{n-t+1}, \dots, (-1)^{n-t} \omega_{n} \big)$.
\end{proof}

\begin{theorem}\label{thm:Pagotn0}
Let $\Omega$ be a space of differential forms constructed as in Definition \ref{defn:Qi}. 
If there exists a non-zero $\omega\in \Omega$ that is a logarithmic differential form, then $\Omega$ is a space $L_{\lambda p^{n-1},n}$.
\end{theorem}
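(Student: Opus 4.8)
The plan is to split the statement into a structural part that is essentially already available and the real point. From the discussion following Definition~\ref{defn:Qi} we know that every nonzero element of $\Omega$ has a unique zero, at $\infty$, of order $\lambda p^{n-1}-2$; and the $\omega_i$ are $\Fp$-independent, since $\sum_i\epsilon_i\omega_i=0$ would force $\sum_i\epsilon_iP_i=0$, impossible for $\ul\epsilon\in\Fp^n-\{\ul0\}$ by Proposition~\ref{prop:Pe|P}, so $\dim_{\Fp}\Omega=n$. Thus the only thing to prove is that the existence of \emph{one} nonzero logarithmic $\omega\in\Omega$ forces \emph{every} nonzero element of $\Omega$ to be logarithmic. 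For this I would use the Cartier‑type operator $\calC\colon\Omega(k(X))\to\Omega(k(X))$, $\calC(f\,dX):=f_{p-1}\,dX$, of Proposition~\ref{prop:Cartier}, together with three easy facts: $\calC$ is $\Fp$-linear; $\calC(g^p\eta)=g\,\calC(\eta)$ for $g\in k(X)$ (immediate from $(g^pf)_{p-1}=gf_{p-1}$); $\calC(dX)=0$ (since $1_{p-1}=0$); and, crucially, by Proposition~\ref{prop:Cartier} a nonzero form is logarithmic if and only if it is fixed by $\calC$.

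The first step is to record the ``Cramer relations'' among $\omega_1,\dots,\omega_n$. Expanding a Moore determinant along its bottom row: for $0\le\ell\le n-2$, replacing the row $(Q_1^{p^{n-1}},\dots,Q_n^{p^{n-1}})$ of the Moore matrix of $\ul Q$ by $(Q_1^{p^{\ell}},\dots,Q_n^{p^{\ell}})$ yields a matrix with a repeated row, so $\sum_i Q_i^{p^\ell}P_i=0$; for $\ell=n-1$ the matrix is the Moore matrix itself, giving $\sum_i Q_i^{p^{n-1}}P_i=(-1)^{n-1}P$. Recalling $\omega_i=(P_i/P)\,dX$, these read $\sum_i Q_i^{p^\ell}\omega_i=0$ for $0\le\ell\le n-2$ and $\sum_i Q_i^{p^{n-1}}\omega_i=(-1)^{n-1}\,dX$. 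Now I would apply $\calC$ to the relations of index $\ell=1,\dots,n-1$: since $Q_i^{p^\ell}=(Q_i^{p^{\ell-1}})^p$, the rules above together with $\calC(dX)=0$ turn them into $\sum_i Q_i^{p^{\ell'}}\,\calC(\omega_i)=0$ for $\ell'=0,\dots,n-2$, i.e.\ the coefficients $h_i$ of $\calC(\omega_i)=h_i\,dX$ satisfy exactly the same homogeneous linear system (over $k(X)$) as the $f_i=P_i/P$.

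Then comes a short linear‑algebra step. The coefficient matrix $(Q_i^{p^\ell})_{0\le\ell\le n-2,\,1\le i\le n}$ is $(n-1)\times n$; its maximal minors are, up to sign, the $\Delta_{n-1}(\ul{\hat Q_i})=\pm P_i$, and they cannot all vanish, for otherwise $P=(-1)^{n-1}\sum_i Q_i^{p^{n-1}}P_i$ would be zero, contradicting $\deg P=(1+p+\cdots+p^{n-1})\lambda$ (Proposition~\ref{prop:Pe|P}). Hence the matrix has rank $n-1$ over $k(X)$, its solution space is one‑dimensional and spanned by $(P_1,\dots,P_n)$, so there is a single $c\in k(X)$ with $\calC(\omega_i)=c\,\omega_i$ for all $i$. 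Finally, if $\omega=\sum_i a_i\omega_i$ ($a_i\in\Fp$) is a nonzero logarithmic element, then $\omega=\calC(\omega)=\sum_i a_i\calC(\omega_i)=c\,\omega$, hence $c=1$; therefore $\calC$ fixes every $\omega_i$, and by $\Fp$-linearity it fixes every nonzero $\omega_{\ul\epsilon}\in\Omega$, so all nonzero elements of $\Omega$ are logarithmic. Together with $\dim_{\Fp}\Omega=n$ and the unique zero at $\infty$ of order $\lambda p^{n-1}-2$, this says precisely that $\Omega$ is a space $L_{\lambda p^{n-1},n}$. (The case $n=1$ is immediate.)

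I expect the main difficulty to be bookkeeping rather than conceptual: getting the signs right in the cofactor expansions that produce the Cramer relations, and checking carefully the interaction of $\calC$ with multiplication by $p$-th powers and with $dX$, including the degenerate small-$n$ situations (for $n=2$ only the inhomogeneous relation is used). Once that is in place, the conceptual crux—that $\calC$ scales all the $\omega_i$ by one and the same rational function $c$—is essentially forced by the one‑dimensionality of the null space, and a single logarithmic element then pins $c$ down to $1$.
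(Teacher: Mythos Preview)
Your proof is correct, and it takes a genuinely different route from the paper's.

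The paper's primary argument reduces the question to the case $n=2$: assuming $\omega_n$ is logarithmic, it uses Proposition~\ref{prop:subspacestruc} (with $t=2$) to show that the pair $(P_{\calQ_{n-2}}(Q_{n-1}),P_{\calQ_{n-2}}(Q_n))$ gives rise to the two-dimensional subspace $\langle\omega_{n-1},\omega_n\rangle_{\Fp}$, and then invokes Proposition~\ref{prop:Pagot} to conclude that $\omega_{n-1}$ is logarithmic too; iterating yields the claim. The paper also sketches a second proof in \S\ref{subsec:applyingPagotn} via Dickson invariants: one shows $\big(\frac{P}{P_i}\big)^{p-1}-\big(\frac{P}{P_n}\big)^{p-1}\in k[X]^p$ using the $\GL_n(\Fp)$-invariance of $c_{n,n-1}$, so the Jacobson--Cartier condition of Corollary~\ref{cor:1/P} transfers from $\omega_n$ to each $\omega_i$.

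Your approach bypasses both the reduction to $n=2$ and the invariant-theoretic identity. You exploit directly that the Cartier operator $\calC$ is $\Fp$-linear and satisfies $\calC(g^p\eta)=g\,\calC(\eta)$; applying $\calC$ to the Cramer relations $\sum_i Q_i^{p^\ell}\omega_i=0$ ($1\le\ell\le n-2$) and $\sum_i Q_i^{p^{n-1}}\omega_i=(-1)^{n-1}dX$ shows that $(\calC(\omega_1),\dots,\calC(\omega_n))$ lies in the same one-dimensional $k(X)$-null space as $(\omega_1,\dots,\omega_n)$, forcing $\calC(\omega_i)=c\,\omega_i$ for a single $c\in k(X)$, which a single logarithmic element pins to $c=1$. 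This is self-contained, avoids any induction on $n$, and does not need Proposition~\ref{prop:Pagot} as an input. The paper's approaches, on the other hand, make the structural relationship with lower-dimensional subspaces (respectively with Dickson invariants) explicit, which is useful elsewhere in the paper.
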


\begin{proof}
Lemma \ref{lem:MooreP} ensures that $\Moore{n}{P} \neq 0$, hence the $P_i$'s are $\F_p$-linearly independent and then $\Omega=\langle \omega_1, \dots, \omega_n \rangle_{\Fp}$ is a vector space of dimension $n$ of differential forms that have a unique zero at $\infty$ (recall that by Proposition \ref{prop:Pe|P} we have that $P_{\underline{\epsilon}} | P$).

Up to a change of basis of $\Omega$, we can assume that $\omega=\omega_n$.
We then need to show that, if $\omega_n$ is a logarithmic differential form then all the forms in $\Omega$ are logarithmic.
We start by claiming the following: if $\omega_n$ is logarithmic, then $\langle \omega_{n-1}, \omega_n \rangle_{\Fp}$ is a space $L_{\lambda p^{n-1},2}$. 
For this, we recall from the proof of Proposition \ref{prop:Pagot} that it is sufficient to find polynomials $R_1$ and $R_2$ such that 
$\displaystyle \omega_n =\frac{R_1 dX}{R_1^pR_2-R_1R_2^p}$ and $\displaystyle \omega_{n-1}=\frac{R_2 dX}{R_1R_2^p-R_1^pR_2}$.
This is done by applying Proposition \ref{prop:subspacestruc} to the case $t=2$, and setting $R_1=P_{\calQ_{n-2}}(Q_{n-1})$ and $R_2=P_{\calQ_{n-2}}(Q_{n})$.

It follows that $\omega_{n-1}$ is also a logarithmic differential form. 
Using the same argument, we can show that $\omega_i$ is logarithmic for every $i=1, \dots, n-1$ and then that every form in $\Omega$ is logarithmic, which entails that $\Omega$ is a space $L_{\lambda p^{n-1},n}$.
\end{proof}

\begin{remark}\label{rmk:simplepolesanyn}
    As a consequence of the theorem, we can prove that $P=\Delta_n(\ul{Q})$ has simple roots, and hence that the $Q_i$'s are pairwise coprime and have simple roots,
    generalising Remark \ref{rmk:simplepolesn=2} to any $n$.
    To see that $P=\Delta_n(\ul{Q})$ has simple roots, we note that its degree is precisely $\lambda \frac{p^n-1}{p-1}$, and that by construction its zeroes are the elements of $\calP(\Omega)$, which we know by Lemma \ref{lem:Pagotpoles} to be a set of cardinality $\lambda \frac{p^n-1}{p-1}$, as $\Omega$ is a space $L_{\lambda p^{n-1}, n}$.
    The roots of $P$ then need to be simple.
\end{remark}

In analogy with the case $n=2$, Theorem~\ref{thm:Pagotn0} provides a directly computable method to verify that a space constructed as in Definition~\ref{defn:Qi} is a space $L_{\lambda p^{n-1},n}$.
Namely, one can apply Proposition~\ref{prop:Llambda1}~(iii) and conclude that it suffices to solve the following system of polynomial equations:

\begin{equation}\label{eq:coeff}
\begin{cases}
\coeff \left( \big( \frac{P}{P_n} \big)^{p-1} , X^{p-1}\right) = 1 \\
\coeff \left( \big( \frac{P}{P_n} \big)^{p-1} , X^{\mu p -1}\right) = 0, \;\; 2 \leq \mu \leq \lambda p^{n-2}(p-1).
\end{cases}
\end{equation}

Note that, in adapting the proposition to the case of an $n$-dimensional space, the number of poles under consideration becomes $\lambda p^{n-1}$, and the number of equations must be computed accordingly.

The reciprocal of Theorem \ref{thm:Pagotn0} also holds, completing the generalization of Proposition \ref{prop:Pagot}:
\begin{theorem}\label{thm:Pagotn}
Let $\Omega$ be a space $L_{\lambda p^{n-1},n}$ with $n \geq 2$.
Then, there exist polynomials~$Q_1,\cdots,Q_n \in~k[X]$ of degree $\lambda$ such that, writing $P:=\Moore{n}{Q}$ and $P_i:=(-1)^{i-1}\Delta_{n-1}(\ul{\hat Q_i})$, we have that $P_i | P$ and $\Omega=\langle \omega_1, \dots, \omega_n \rangle_{\F_p}$ with
$\omega_i=\frac{P_i}{P}dX$.
\end{theorem}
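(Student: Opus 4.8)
The theorem is the converse of Theorem~\ref{thm:Pagotn0}, and I would prove it by induction on $n$. For $n=2$ it is exactly the forward implication of Proposition~\ref{prop:Pagot}: the polynomials $Q_1,Q_2$ produced there have degree $\lambda$, and with $P=\Delta_2(Q_1,Q_2)=Q_1Q_2^p-Q_1^pQ_2$, $P_1=\Delta_1(Q_2)=Q_2$ and $P_2=-\Delta_1(Q_1)=-Q_1$, condition $(ii)$ there reads $\omega_i=\frac{P_i}{P}dX$, while $P_i\mid P$ is clear. So assume $n\geq 3$ and that the statement holds in dimension $n-1$ for every value of the parameter, and let $\Omega$ be a space $L_{\lambda p^{n-1},n}$ with basis $\omega_1,\dots,\omega_n$.

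First I would reduce to a hyperplane. Set $H:=\langle\omega_2,\dots,\omega_n\rangle_{\Fp}$, which by the remark preceding Proposition~\ref{prop:Llambda1} (cf.\ the proof of Lemma~\ref{lem:hyperplanes}) is a space $L_{(\lambda p)p^{n-2},n-1}$. By the inductive hypothesis there is an $(n-1)$-tuple of polynomials of degree $\lambda p$ realizing a basis of $H$ in the sense of Definition~\ref{defn:Qi}; acting by a suitable element of $GL_{n-1}(\Fp)$ and invoking Proposition~\ref{prop:common}, I may assume it realizes the basis $(\omega_2,\dots,\omega_n)$, and I denote it $(S_2,\dots,S_n)$. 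Then I would single out the missing polynomial: let $Q_1$ be the monic degree-$\lambda$ polynomial with zero set the block $X_H=\calP(\Omega)\setminus\calP(H)$, which has $\lambda$ elements by Lemma~\ref{lem:hyperplanes} and is disjoint from $\calP(H)$, hence coprime to $\Delta_{n-1}(\underline S)$ and to every $S_i$. The key point is to \emph{descend} from degree $\lambda p$ to degree $\lambda$, namely to produce polynomials $Q_2,\dots,Q_n$ of degree $\lambda$ with
\[
S_i=Q_i^p-Q_1^{p-1}Q_i=P_{\langle Q_1\rangle_{\Fp}}(Q_i)\qquad(2\leq i\leq n);
\]
the shape of these equations is forced by Proposition~\ref{prop:subspacestruc}, which identifies the realizing polynomials of the hyperplane spanned by the last $n-1$ basis vectors of a space built from a tuple $\underline Q$ with the structural-polynomial images $P_{\langle Q_1\rangle_{\Fp}}(Q_i)$.

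To solve $S_i=P_{\langle Q_1\rangle_{\Fp}}(Q_i)$, divide by $Q_1^p$: this becomes $S_i/Q_1^p=\wp(Q_i/Q_1)$ with $\wp(t)=t^p-t$. Since $Q_1$ has simple zeros and is coprime to $S_i$, the function $S_i/Q_1^p$ has constant polynomial part and a pole of order exactly $p$ at each zero $\alpha$ of $Q_1$, so it lies in $\wp(k(X))$ iff at each such $\alpha$ the coefficients of $(X-\alpha)^{-2},\dots,(X-\alpha)^{-(p-1)}$ in its Laurent expansion vanish; granting this, $Q_i:=Q_1\bigl(b_0-\sum_\alpha b_\alpha/(X-\alpha)\bigr)$ — where $\wp(b_0)$ is the polynomial part and $\wp\bigl(b_\alpha/(X-\alpha)\bigr)$ the principal part at $\alpha$ — is a polynomial of degree $\lambda$ whose leading coefficient $b_0$ is an Artin–Schreier root of the leading coefficient of $S_i$; since $\wp$ is $\Fp$-linear and $S_2,\dots,S_n$ have $\Fp$-independent leading coefficients, the leading coefficients $q_1=1,q_2,\dots,q_n$ of $Q_1,\dots,Q_n$ are $\Fp$-independent, so $\Delta_n(\underline q)\neq 0$. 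I expect the \textbf{main obstacle} to be precisely this vanishing of the intermediate principal-part coefficients of $S_i/Q_1^p$ at the zeros of $Q_1$. It should follow from the fact that every nonzero form of $\Omega$ is logarithmic (Corollary~\ref{cor:1/P}) together with the pole combinatorics of Lemma~\ref{lem:hyperplanes}: the zeros of $Q_1$ are simple poles of $\omega_1$ with residues in $\Fp^\times$, and this rigidity, transported through the inductive description of $H$ by $\underline S$ (and, where convenient, the case $n=2$ applied to the planes $\langle\omega_1,\omega_i\rangle$), should pin down the local shape of $S_i/Q_1^p$; for $p=2$ the condition is vacuous.

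It remains to verify that $\underline Q=(Q_1,\dots,Q_n)$ works. As $\Delta_n(\underline q)\neq 0$, it defines by Definition~\ref{defn:Qi} a space $\Omega'=\langle\omega_1',\dots,\omega_n'\rangle_{\Fp}$ with $\omega_i'=\frac{P_i}{P}dX$ and $P_i\mid P$ (Proposition~\ref{prop:Pe|P}). By construction and Proposition~\ref{prop:subspacestruc}, the span of $\omega_2',\dots,\omega_n'$ is realized, up to sign, by $(S_2,\dots,S_n)$, hence equals $H$; in particular $\Omega'$ contains a nonzero logarithmic form (e.g.\ $\omega_2\in H\subseteq\Omega'$), so $\Omega'$ is a space $L_{\lambda p^{n-1},n}$ by Theorem~\ref{thm:Pagotn0}. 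Finally $\calP(\Omega')\setminus\calP(H)$ and $X_H=\calP(\Omega)\setminus\calP(H)$ both have $\lambda$ elements (Lemma~\ref{lem:hyperplanes}), and both contain the zero set of $Q_1$ (which divides $\Delta_n(\underline Q)$); hence they coincide, $\calP(\Omega')=\calP(H)\sqcup X_H=\calP(\Omega)$, and $\Omega'=\Omega$ by Corollary~\ref{coro:poles}. This exhibits $Q_1,\dots,Q_n$ with the required properties.
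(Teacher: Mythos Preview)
Your inductive setup and endgame are sound, and in fact the paper also argues by induction on $n$ starting from Proposition~\ref{prop:Pagot}. But the crucial step, the one you flag yourself as the ``main obstacle'', is left as a hope rather than proved. Solving $S_i=Q_i^p-Q_1^{p-1}Q_i$ amounts to showing that $S_i/Q_1^p$ lies in $\wp(k(X))$; this is a genuine Artin--Schreier obstruction, and at each zero $\alpha$ of $Q_1$ it requires not only the vanishing of the coefficients of $(X-\alpha)^{-2},\dots,(X-\alpha)^{-(p-1)}$ but also a compatibility between the coefficients of $(X-\alpha)^{-p}$ and $(X-\alpha)^{-1}$. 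None of the tools you invoke (logarithmicity via Corollary~\ref{cor:1/P}, the pole combinatorics of Lemma~\ref{lem:hyperplanes}, or the $n=2$ case applied to the planes $\langle\omega_1,\omega_i\rangle$, which only produces polynomials of degree $\lambda p^{n-2}$) yields these local conditions in any direct way. As written, this is a gap.

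The paper sidesteps this local analysis entirely. It applies the inductive hypothesis to the hyperplane $\langle\omega_1,\dots,\omega_{n-1}\rangle$ to obtain $S_1,\dots,S_{n-1}$, but then, rather than solving an Artin--Schreier equation, it uses the surjectivity of the map $\varphi:\underline a\mapsto\bigl((-1)^{i-1}\Delta_{n-1}(\underline{\hat a_i})\bigr)_i$ on $(k(X)^{\mathrm{alg}})^n\setminus V(\Delta_n)$ (Proposition~\ref{prop:FM5.1.}) to produce a priori a tuple $\underline Q$ with $\varphi(\underline Q)=\underline P$, where $P_i$ is the numerator of $\omega_i$. A chain of Moore--determinant identities (Theorem~\ref{thm:FM4.1.} and equations \eqref{eq:DeltaP}--\eqref{eq:theta} in the proof) then gives the closed formula $Q_n=\theta\,P/\Delta_{n-1}(\underline S)$ for a root of unity $\theta$, which is manifestly a polynomial of degree $\lambda$ because $\Delta_{n-1}(\underline S)$ has simple zeros equal to $\calP(\langle\omega_1,\dots,\omega_{n-1}\rangle)\subset\calP(\Omega)=Z(P)$. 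The same argument, permuting indices, handles each $Q_i$. If you want to rescue your approach, this formula is exactly what you are missing: it tells you what $Q_i$ must be without ever solving $\wp$ locally.
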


\begin{proof}
The case $n=2$ is provided by Proposition \ref{prop:Pagot}, so we can proceed by induction: we fix $n\geq 3$ and assume that we have the result of the theorem in dimension up to $n-1$.\\
Let $(\omega_1, \dots, \omega_n)$ be a basis of $\Omega$, and let $\displaystyle P(X) = \prod_{x \in \calP(\Omega)}(X-x)$.
Then we can find $P_1, \dots, P_n \in k[X]$ such that $\omega_i=\frac{P_i}{P} dX$.
By Lemma \ref{lem:Pagotpoles} the degree of $P$ is $\lambda(1+p+\dots+p^{n-1})$ and then $\deg(P_i)=\lambda(1+p+\dots+p^{n-2})$.
By \cite[Proposition 4.1]{FresnelMatignon23} specialized at the case $q=p$ (cf. also the remark in \cite[p. 68]{Pagot02}) we have that $\Delta_n(P_1, \dots, P_n)=\gamma P^{1+p+\dots + p^{n-2}}$
for some $\gamma \in k^\times$.
By possibly multiplying $P$ and the $P_i$'s by $\mu \in k^\times$ satisfying $\mu^{p^{n-1}}\gamma=1$, we can assume that
\begin{equation}\label{eq:DeltaP}
\Delta_n(P_1, \dots, P_n)=P^{1+p+\dots + p^{n-2}}.
\end{equation}
We now remark that the space $\langle \omega_1, \dots, \omega_{n-1}\rangle_{\Fp}$ is a space $L_{\lambda p^{n-1}, n-1}$ and then by inductive hypothesis there exist $S_1, \dots, S_{n-1} \in k[X]$ of degree $\lambda p$ such that 
$\displaystyle \frac{P_i}{P} = (-1)^{i-1} \frac{\Moore{n-2}{\hat{S_i}}}{\Moore{n-1}{S}}$, which means that $P_i=(-1)^{i-1} \Moore{n-2}{\hat{S_i}} \frac{P}{\Moore{n-1}{S}}$ for every $1 \leq i \leq n-1$.
We then have that
\begin{equation}\label{eq:PiSi}
\Delta_{n-1}(P_1, \dots, P_{n-1})=\Delta_{n-1}((-1)^{i-1} \Moore{n-2}{\hat{S_i}})\frac{P^{1+p+\dots+p^{n-2}}}{\Moore{n-1}{S}^{1+p+\dots+p^{n-2}}}=\frac{P^{1+p+\dots+p^{n-2}}}{\Moore{n-1}{S}^{p^{n-2}}},
\end{equation}
where the last equality is obtained by applying Theorem \ref{thm:FM4.1.}.

To conclude, we need to show that there exist $Q_1,Q_2,\dots,Q_n \in k[X]$ of degree $\lambda$ such that $P=\Moore{n}{Q}$ and $P_i=(-1)^{i-1}\Delta_{n-1}(\ul{\hat Q_i})$.
Let $\varphi: k^n \longrightarrow k^n$ be the map defined by $\left(\varphi(\underline{a})\right)_i  =  (-1)^{i-1}\Moore{n-1}{\hat{a_i}}$.
Then by Proposition \ref{prop:FM5.1.} (since $\Moore{n}{P} \neq 0$) there exist $n$-tuples $\underline{Q}, \underline{R}$ of elements of $k(X)^{\mathrm{alg}}$ satisfying $\varphi(\underline{R})=\underline{Q}$, $\varphi(\underline{Q})=\underline{P}$, and 
\begin{equation}\label{eq:PiRi}
P_i = (-1)^{n-1} \Moore{n}{R}^{1+p+\dots+p^{n-3}} R_i^{p^{n-2}}.
\end{equation}
We ought to show that the entries of $\underline{Q}$ are polynomials with coefficients in $k$.
From equation (\ref{eq:PiRi}) we can deduce the following identities
\begin{align}
\Delta_{n-1}(P_1, \dots, P_{n-1}) = (-1)^{n-1}(\Moore{n}{R})^{(1+p+\dots+p^{n-3})(1+p+\dots+p^{n-2})}\Delta_{n-1}(R_1^{p^{n-2}}, R_2^{p^{n-2}}, \dots, R_{n-1}^{p^{n-2}}) \label{eq:n-1}\\
\Moore{n}{P} = (-1)^{n(n-1)}\Moore{n}{R}^{(1+p+\dots+p^{n-3})(1+p+\dots+p^{n-1})}\Moore{n}{R}^{p^{n-2}} = \Moore{n}{R}^{(1+p+\dots+p^{n-2})^2}.\label{eq:n}
\end{align} 
Combining (\ref{eq:n}) with (\ref{eq:DeltaP}) gives 
\begin{equation}\label{eq:theta}
\Moore{n}{R}^{1+p+\dots+p^{n-2}}= \theta P
\end{equation}
for some $\theta \in k$ such that $\theta^{1+p+\dots+p^{n-2}}=1$.

Moreover, we have that $Q_n= (-1)^{n-1}\Delta_{n-1}(R_1, \dots, R_{n-1})$, and then
\begin{align*}
Q_n^{p^{n-2}} & = (-1)^{n-1} \Delta_{n-1}(R_1^{p^{n-2}}, \dots, R_{n-1}^{p^{n-2}})=\frac{\Delta_{n-1}(P_1, \dots, P_{n-1})}{\Moore{n}{R}^{(1+p+\dots+p^{n-2})(1+p+\dots+p^{n-3})}}= \\
& = \frac{\Delta_{n-1}(P_1, \dots, P_{n-1})}{(\theta P)^{1+p+\dots+p^{n-3}}} =\frac{P^{1+p+\dots+p^{n-2}}}{(\theta P)^{1+p+\dots+p^{n-3}} \Moore{n-1}{S}^{p^{n-2}}}=\frac{P^{p^{n-2}}}{\theta^{1+p+\dots+p^{n-3}} \Moore{n-1}{S}^{p^{n-2}}},
\end{align*}
where the equalities are obtained by applying equations (\ref{eq:n-1}), (\ref{eq:theta}) and (\ref{eq:PiSi}).
Finally, using the fact that $\theta^{1+p+\dots+p^{n-2}}=1$, we get that
\[Q_n = \frac{\theta P}{\Moore{n-1}{S}},\]
which is a polynomial of degree $\lambda$ thanks to the fact that the zeroes of $\Moore{n-1}{S}$ are simple and correspond to the set of poles of the space $\langle \omega_1, \dots, \omega_{n-1}\rangle_{\Fp}$ (see Theorem \ref{thm:Pagotn0}).
Moreover, by Lemma \ref{lem:hyperplanes}, we have that $\deg(Q_n)=\lambda$.
In a completely analogous way, we can show that the $Q_i$'s are polynomials of degree $\lambda$ also for $1\leq i \leq n-1$.
Finally, we need to multiply the $Q_i's$ by a constant in order for them to satisfy the Theorem.
In fact, we have \[  \Moore{n}{P}= \Delta_n(\Delta_{n-1}(\underline{\hat{Q}_1}),\dots,(-1)^{i-1}\Delta_{n-1}(\underline{\hat{Q}_i}),\dots,(-1)^{n-1}\Delta_{n-1}(\underline{\hat{Q}_n})),\] which combined with (\ref{eq:DeltaP}) and Theorem \ref{thm:FM4.1.} gives 
\[P^{1+p+\cdots+p^{n-2}}=\Delta_n(\underline Q)^{1+p+\cdots+p^{n-2}}.\]

Then, there exists $\theta'\in k$ with $\theta'^{1+p+\cdots+p^{n-2}}=1$ and $\Delta_n(\underline Q)=\theta' P$. 

Let $\theta''\in k$ with $(\theta'')^{p^{n-1}}\theta'=1$. This satisfies $(\theta'')^{1+p+\cdots+p^{n-2}}=1$ and then by Proposition \ref{prop:FM5.1.} we have $\varphi(\theta''\underline Q)= \underline P$ and 
\[\Delta_n(\theta''\underline Q)=(\theta'')^{1+p+\cdots+p^{n-1}}\Delta_n(\underline Q)=(\theta'')^{p^{n-1}}\theta' P= P.\]
\end{proof}

Finally, we conclude the section with two results that relate the poles of a space $L_{\lambda p^{n-1}, n}$ and the zeroes of linear combinations of the polynomial $Q_i$'s.

\begin{corollary}\label{cor:subspacepoles} 
Let $\Omega=\langle \omega_1, \dots, \omega_n \rangle_{\Fp}$ be a space $L_{\lambda p^{n-1},n}$ and let $Q_1, \dots, Q_n$ be the $n$-tuple of polynomials arising from Theorem \ref{thm:Pagotn}. 
Then, for every $1\leq t\leq n$ the subspace $\Omega_t= \langle \omega_{n-t+1}, \dots, \omega_n \rangle_{\Fp}$ is such that  
\[ \calP(\Omega_{t}) =  \calP(\Omega) - \bigcup_{\underline{\epsilon}\in \F_p^{n-t} - \{0\}} Z\left(\sum_{i=1}^{n-t} \epsilon_i Q_{i} \right).\]
\end{corollary}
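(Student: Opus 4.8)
The plan is to read off the two sides of the claimed equality as the zero sets of the two factors in a Moore-determinant factorization of $P=\Delta_n(\ul Q)$.

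\textbf{Step 1: factor $\Delta_n(\ul Q)$ along the subspace $\calQ_{n-t}$.} Put $\calQ_{n-t}=\langle Q_1,\dots,Q_{n-t}\rangle_{\Fp}$, with structural polynomial $P_{\calQ_{n-t}}$, and $W=\langle Q_1,\dots,Q_n\rangle_{\Fp}$. First I would record the identity
\[\Delta_n(\ul Q)=\Delta_{n-t}(Q_1,\dots,Q_{n-t})\cdot\Delta_t\bigl(P_{\calQ_{n-t}}(Q_{n-t+1}),\dots,P_{\calQ_{n-t}}(Q_n)\bigr).\]
This is the factorization underlying Corollary \ref{cor:structuralMoore} already exploited in the proof of Proposition \ref{prop:subspacestruc}; alternatively it follows directly from the product formula \eqref{eq:Mooreprod}. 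Indeed, writing $\Delta_n(\ul Q)=\prod_{v\in S(W)}v$ and splitting $S(W)=S(\calQ_{n-t})\cup\bigl(\bigcup_{i=n-t+1}^n(Q_i+\Fp Q_{i-1}+\dots+\Fp Q_1)\bigr)$, the product over $S(\calQ_{n-t})$ is $\Delta_{n-t}(Q_1,\dots,Q_{n-t})$; and for each $i>n-t$, grouping the $\Fp$-coefficients of the indices $\le n-t$ collapses the product into $P_{\calQ_{n-t}}(Q_i)+\epsilon_{i-1}P_{\calQ_{n-t}}(Q_{i-1})+\dots+\epsilon_{n-t+1}P_{\calQ_{n-t}}(Q_{n-t+1})$ by additivity of $P_{\calQ_{n-t}}$, so the remaining product is $\Delta_t(P_{\calQ_{n-t}}(Q_{n-t+1}),\dots,P_{\calQ_{n-t}}(Q_n))$ by \eqref{eq:Mooreprod} again. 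A degree check, $\lambda\tfrac{p^{n-t}-1}{p-1}+\lambda p^{n-t}\tfrac{p^t-1}{p-1}=\lambda\tfrac{p^n-1}{p-1}$, confirms the bookkeeping.

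\textbf{Step 2: identify the three zero sets.} By Theorem \ref{thm:Pagotn} we have $P=\Delta_n(\ul Q)=\prod_{x\in\calP(\Omega)}(X-x)$, so $Z(P)=\calP(\Omega)$. For the first factor, the factors appearing in the product formula for $\Delta_{n-t}(Q_1,\dots,Q_{n-t})$ form a system of representatives of $\PP(\calQ_{n-t})$ — each has nonzero leading coefficient because $q_1,\dots,q_{n-t}$ are $\Fp$-independent, as $\Delta_n(\ul q)\neq0$ — hence $Z\bigl(\Delta_{n-t}(Q_1,\dots,Q_{n-t})\bigr)=\bigcup_{[v]\in\PP(\calQ_{n-t})}Z(v)=\bigcup_{\ul\epsilon\in\F_p^{n-t}-\{0\}}Z\bigl(\sum_{i=1}^{n-t}\epsilon_iQ_i\bigr)$. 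For the second factor, Proposition \ref{prop:subspacestruc} tells us that the $t$-tuple $\bigl(P_{\calQ_{n-t}}(Q_{n-t+1}),\dots,P_{\calQ_{n-t}}(Q_n)\bigr)$ gives rise to a basis of $\Omega_t$; since $\Omega_t$ is a subspace of a space $L_{\lambda p^{n-1},n}$ it is itself a space $L_{\lambda p^{n-1},t}$ (i.e. of type $L_{(\lambda p^{n-t})p^{t-1},t}$), so applying Theorem \ref{thm:Pagotn0} and Remark \ref{rmk:simplepolesanyn} to $\Omega_t$ shows that $\Delta_t\bigl(P_{\calQ_{n-t}}(Q_{n-t+1}),\dots,P_{\calQ_{n-t}}(Q_n)\bigr)$ equals $\prod_{x\in\calP(\Omega_t)}(X-x)$ up to a nonzero constant, so its zero set is $\calP(\Omega_t)$.

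\textbf{Step 3: conclude by disjointness.} By Remark \ref{rmk:simplepolesanyn} the polynomial $P=\Delta_n(\ul Q)$ has simple roots, hence the two factors of Step 1 are coprime and $\calP(\Omega)=Z(P)$ is the \emph{disjoint} union of $\bigcup_{\ul\epsilon\in\F_p^{n-t}-\{0\}}Z\bigl(\sum_{i=1}^{n-t}\epsilon_iQ_i\bigr)$ and $\calP(\Omega_t)$. Rearranging gives exactly $\calP(\Omega_t)=\calP(\Omega)-\bigcup_{\ul\epsilon\in\F_p^{n-t}-\{0\}}Z\bigl(\sum_{i=1}^{n-t}\epsilon_iQ_i\bigr)$. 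The only delicate points are that the second factor is \emph{precisely} the pole polynomial of $\Omega_t$ — where one must use that $\Omega_t$ is a space $L$ and that the tuple of Proposition \ref{prop:subspacestruc} is one of its associated tuples, keeping track that the relevant degree there is $\lambda p^{n-t}$, not $\lambda$ — and the coprimality of the two factors coming from the simplicity of the roots of $P$; everything else is bookkeeping with \eqref{eq:Mooreprod}.
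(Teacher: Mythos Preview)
Your proof is correct and follows essentially the same approach as the paper: both arguments use Proposition \ref{prop:subspacestruc} to identify $\calP(\Omega_t)$ with the zero set of $S_t=\Delta_t\bigl(P_{\calQ_{n-t}}(Q_{n-t+1}),\dots,P_{\calQ_{n-t}}(Q_n)\bigr)$, then use the Moore product formula (your Step~1 factorization is exactly Lemma \ref{lem:structuralMoore}) together with the simplicity of the roots of $\Delta_n(\ul Q)$ to conclude that $\calP(\Omega_t)$ is the complement in $\calP(\Omega)$ of the zeros of the nonzero elements of $\calQ_{n-t}$. The only organizational difference is that you state the factorization of $\Delta_n(\ul Q)$ up front, while the paper expands $S_t$ and compares directly; the content is the same.
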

\begin{proof}
By Proposition \ref{prop:subspacestruc}, we have that $\calP(\Omega_t)=Z(S_t)$ where 
\[ S_t = \Delta_t(P_{\calQ_{n-t}}(Q_{n-t+1}), \dots, P_{\calQ_{n-t}}(Q_n)),\]
where $P_{\calQ_{n-t}}$ is the structural polynomial of $\langle Q_1, \dots, Q_{n-t} \rangle_{\Fp}$.
Since $P_{\calQ_{n-t}}$ is an additive polynomial, we have that 
\[S_t=\prod_{i=n-t+1}^n \prod_{\epsilon_{i-1}\in \F_p} \dots \prod_{\epsilon_{n-t+1}\in \F_p} (P_{\calQ_{n-t}}(Q_{i}+\epsilon_{i-1}Q_{i-1}+\dots+\epsilon_{n-t+1}Q_{n-t+1}))=\]  \[\prod_{i=n-t+1}^n \prod_{\epsilon_{i-1}\in \F_p} \dots \prod_{\epsilon_{n-t+1}\in \F_p} \prod_{Q\in \calQ_{n-t}}(Q+Q_{i}+\epsilon_{i-1}Q_{i-1}+\dots+\epsilon_{n-t+1}Q_{n-t+1}), \]
and the fact that the zeroes of $\Moore{n}{Q}$ are simple (see Theorem \ref{thm:Pagotn0}) ensures that the zeroes of $S_t$ are precisely those zeroes of $\Moore{n}{Q}$ that are not zeroes of any $Q\in \calQ_{n-t} - \{0\}$.
This is equivalent to say that
\[ \calP(\Omega_{t}) =  \calP(\Omega) - \bigcup_{\underline{\epsilon}\in \F_p^{n-t} - \{0\}} Z\left(\sum_{i=1}^{n-t} \epsilon_i Q_{i} \right).\]
\end{proof}

\begin{corollary}
Let $\Omega=\langle \omega_1, \dots, \omega_n \rangle_{\Fp}$ be a space $L_{\lambda p^{n-1},n}$ and let $Q_1, \dots, Q_n$ be the $n$-tuple of polynomials arising from Theorem \ref{thm:Pagotn} for this basis. 

Denote by $q_i$ the leading coefficient of $Q_i$. 
Then, for every $1\leq t \leq n$, we have the equality of Moore determinants
\[ \Delta_n(Q_1, \dots, Q_n)=\alpha \Delta_t(P_{\calQ_{n-t}}(Q_{n-t+1}), \dots, P_{\calQ_{n-t}}(Q_{n}))\Delta_{n-t}(Q_{1}, \dots, Q_{n-t}),\]
where 
\[\alpha = \frac{\Delta_n(q_1, \dots, q_n)}{\Delta_t(P_{\ul{q}}(q_{n-t+1}), \dots,P_{\ul{q}}(q_n))\Delta_{n-t}(q_{1}, \dots, q_{n-t})} \in k^\times,\] and $P_{\ul{q}_t}$ is the structural polynomial of the vector space $\langle q_1, \dots, q_{n-t}\rangle_{\Fp}$.
\end{corollary}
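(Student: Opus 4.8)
The plan is to establish the identity in two stages: first prove that
$\Moore{n}{Q}=c\,\Delta_t\big(P_{\calQ_{n-t}}(Q_{n-t+1}),\dots,P_{\calQ_{n-t}}(Q_n)\big)\,\Delta_{n-t}(Q_1,\dots,Q_{n-t})$ for some $c\in k^\times$ by comparing zero sets, and then pin down $c=\alpha$ by comparing leading coefficients. Throughout I write $P:=\Moore{n}{Q}$, $D:=\Delta_{n-t}(Q_1,\dots,Q_{n-t})$, $S:=\Delta_t\big(P_{\calQ_{n-t}}(Q_{n-t+1}),\dots,P_{\calQ_{n-t}}(Q_n)\big)$, and $\Omega_t:=\langle\omega_{n-t+1},\dots,\omega_n\rangle_{\Fp}$.

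For the first stage I would proceed as follows. By Formula (\ref{eq:Mooreprod}), applied with the system of representatives $S(W)=\bigcup_{i=1}^n(Q_i+\Fp Q_{i-1}+\dots+\Fp Q_1)$ of $\PP(W)$ for $W:=\langle Q_1,\dots,Q_n\rangle_{\Fp}$, one has $P=\prod_{Q\in S(W)}Q$ and $D=\prod_{Q\in S(\calQ_{n-t})}Q$ with $S(\calQ_{n-t})=\bigcup_{i=1}^{n-t}(Q_i+\Fp Q_{i-1}+\dots+\Fp Q_1)\subseteq S(W)$; hence $D\mid P$, and since $P$ has simple roots by Remark \ref{rmk:simplepolesanyn}, so does $D$. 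By Proposition \ref{prop:subspacestruc} together with Definition \ref{defn:Qi}, the $t$-tuple $\big(P_{\calQ_{n-t}}(Q_{n-t+1}),\dots,P_{\calQ_{n-t}}(Q_n)\big)$ gives rise to a basis of $\Omega_t$, so Remark \ref{rmk:simplepolesanyn} applied to $\Omega_t$ shows that $S$ has simple roots with zero set $\calP(\Omega_t)$. Now Corollary \ref{cor:subspacepoles} gives $\calP(\Omega_t)=\calP(\Omega)-\bigcup_{\ul\epsilon\in\Fp^{n-t}-\{0\}}Z\big(\sum_{i=1}^{n-t}\epsilon_iQ_i\big)$, while $\bigcup_{\ul\epsilon}Z\big(\sum_{i=1}^{n-t}\epsilon_iQ_i\big)=\bigcup_{Q\in S(\calQ_{n-t})}Z(Q)=Z(D)\subseteq Z(P)=\calP(\Omega)$. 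Thus $Z(P)=Z(S)\sqcup Z(D)$ as sets, and since all three polynomials have only simple roots we get $P=c\,S\,D$ for some $c\in k^\times$.

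For the second stage I would compare leading coefficients, using the elementary fact that the Moore determinant of finitely many polynomials of a common degree $d$ has degree $d(1+p+\dots+p^{m-1})$ and leading coefficient equal to the Moore determinant of their leading coefficients (every monomial in the Leibniz expansion has this top degree). This gives at once that $P$ has leading coefficient $\Moore{n}{q}$ (nonzero by hypothesis) and that $D$ has leading coefficient $\Delta_{n-t}(q_1,\dots,q_{n-t})$ (nonzero since $q_1,\dots,q_n$ are $\Fp$-linearly independent). For $S$, I would first compute the leading coefficient of $P_{\calQ_{n-t}}(Q_i)$ for $i>n-t$: every nonzero element of $\calQ_{n-t}=\langle Q_1,\dots,Q_{n-t}\rangle_{\Fp}$ has degree exactly $\lambda$, with leading coefficient the corresponding nonzero $\Fp$-combination of $q_1,\dots,q_{n-t}$, whereas $q_i\notin V:=\langle q_1,\dots,q_{n-t}\rangle_{\Fp}$; hence each factor $Q_i-Q$ of $P_{\calQ_{n-t}}(Q_i)=\prod_{Q\in\calQ_{n-t}}(Q_i-Q)$ has degree exactly $\lambda$, so $P_{\calQ_{n-t}}(Q_i)$ has degree $\lambda p^{n-t}$ with leading coefficient $\prod_{v\in V}(q_i-v)=P_{\ul q}(q_i)$, where $P_{\ul q}$ is the structural polynomial of $V$. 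Consequently $S$ has leading coefficient $\Delta_t\big(P_{\ul q}(q_{n-t+1}),\dots,P_{\ul q}(q_n)\big)$, which is nonzero because $y\mapsto P_{\ul q}(y)$ is $\Fp$-linear with kernel $V$ and hence sends the family $q_{n-t+1},\dots,q_n$ (which is $\Fp$-linearly independent modulo $V$) to an $\Fp$-linearly independent family. Comparing leading coefficients in $P=c\,S\,D$ then forces $\Moore{n}{q}=c\,\Delta_t\big(P_{\ul q}(q_{n-t+1}),\dots,P_{\ul q}(q_n)\big)\,\Delta_{n-t}(q_1,\dots,q_{n-t})$, i.e. $c=\alpha$, and in particular $\alpha\in k^\times$.

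I expect the main obstacle to be the bookkeeping in the second stage — concretely, identifying the leading coefficient of $P_{\calQ_{n-t}}(Q_i)$ with $\prod_{v\in V}(q_i-v)$ and verifying the non-vanishing of the two Moore determinants of leading coefficients; both rely on combining the $\Fp$-linear independence of $q_1,\dots,q_n$ with the $\Fp$-linearity of the structural polynomial map, and some care is needed to confirm that no degree drop occurs. I would also note that the first stage can be bypassed: applying (\ref{eq:Mooreprod}) to $\ul Q$ and splitting the product $\prod_{i=1}^n\prod_{\epsilon_1,\dots,\epsilon_{i-1}\in\Fp}(Q_i+\epsilon_{i-1}Q_{i-1}+\dots+\epsilon_1Q_1)$ into the ranges $i\le n-t$ and $i>n-t$ identifies the first part with $D$ and — by the computation already carried out in the proof of Corollary \ref{cor:subspacepoles} — the second with $S$, so that in fact $P=S\,D$ exactly; the displayed formula for $\alpha$ then simply records the corresponding Moore determinant identity (Corollary \ref{cor:structuralMoore}) for the scalars $q_1,\dots,q_n$, i.e. that $\alpha=1$.
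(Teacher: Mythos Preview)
Your proof is correct and follows essentially the same route as the paper's: match the (simple) zero sets of both sides via Remark~\ref{rmk:simplepolesanyn}, Proposition~\ref{prop:subspacestruc}, and Corollary~\ref{cor:subspacepoles}, then fix the constant by comparing leading coefficients---your Stage~2 simply spells out what the paper condenses into the phrase ``comparison of the leading coefficients''. Your closing observation is also correct and worth retaining: Lemma~\ref{lem:structuralMoore} applied to $\ul{Q}$ (and to $\ul{q}$) gives $P=S\,D$ on the nose and shows $\alpha=1$, so the corollary is in fact a direct specialization of that lemma.
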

\begin{proof}
We know that the zeroes of $\Delta_n(\ul{Q})$ are simple and consist of the set $\calP(\Omega)$, and by Proposition \ref{prop:subspacestruc} the zeroes of $\Delta_t(P_{\calQ_{n-t}}(Q_{n-t+1}), \dots, P_{\calQ_{n-t}}(Q_{n}))$ are simple and consist of the set $\calP(\Omega_t)$ with $\Omega_t=~\langle \omega_{n-t+1}, \dots, \omega_n \rangle_{\Fp}$.
We may then apply Corollary \ref{cor:subspacepoles} to see that the set of zeroes of the polynomials on both sides of the equation are equal, and that these zeroes are all simple.
The corollary then follows from a comparison of the leading coefficients of these polynomials.
\end{proof}

We conclude this part with a result that describes the $n$-tuples of polynomials giving rise to a basis of an étale pullback of a space $L_{\lambda p^{n-1},n}$.

\begin{proposition}\label{prop:pullbackQi}
Let $\Omega=\langle \omega_1, \dots, \omega_n \rangle_{\Fp}$ be a space $L_{\lambda p^{n-1},n}$ and let $Q_1, \dots, Q_n$ be the $n$-tuple of polynomials arising from Theorem \ref{thm:Pagotn}. Let $S(X)\in k[X]$ with $S'(X)\in k^\times$ and let $\sigma^\star(\Omega)$ be the pullback of $\Omega$ with respect to the morphism $\PP^1_k \overset{\sigma}{\rightarrow}  \PP^1_k $ induced by $X \mapsto S(X)$ (cf. Lemma \ref{lem:pullbacketale}).
Then the polynomials arising from Theorem \ref{thm:Pagotn} for $\sigma^\star(\Omega)$ are $\big(\eta Q_1(S), \dots, \eta Q_n(S)\big)$ with $\eta^{p^{n-1}}=\frac{1}{S'(X)}$.
In particular, we have that $\calP(\sigma^\star(\Omega))=\{a\in k\ |\ S(a)\in \calP(\Omega)\}$.
\end{proposition}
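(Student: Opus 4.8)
The plan is to produce the claimed $n$-tuple explicitly, check that it fulfils the hypotheses of Theorem~\ref{thm:Pagotn} for $\sigma^\star(\Omega)$, and then conclude by the uniqueness statements in Proposition~\ref{prop:common}. The one algebraic fact I will lean on is that forming a Moore determinant commutes with the substitution $X\mapsto S(X)$: raising to the $p^{j}$-th power and evaluating at $S(X)$ are both ring endomorphisms of $k[X]$ and they commute, since $(h^{p^{j}})(S(X))=\bigl(h(S(X))\bigr)^{p^{j}}$ for every $h\in k[X]$. Hence the Moore matrix of $(A_1(S),\dots,A_m(S))$ is the Moore matrix of $(A_1,\dots,A_m)$ with $X$ replaced by $S(X)$, so $\Delta_m\bigl(A_1(S),\dots,A_m(S)\bigr)=\bigl(\Delta_m(A_1,\dots,A_m)\bigr)(S(X))$. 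Together with the scaling identity $\Delta_m(\mu B_1,\dots,\mu B_m)=\mu^{\,1+p+\dots+p^{m-1}}\Delta_m(B_1,\dots,B_m)$ for $\mu\in k^{\times}$ (immediate from the definition of the Moore determinant, cf. Appendix~\ref{app:Moore}), this is all the machinery needed.

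First I would set $\gamma:=S'(X)\in k^{\times}$, let $\eta\in k$ be the unique element with $\eta^{p^{n-1}}=1/\gamma$ (unique because $x\mapsto x^{p^{n-1}}$ is a bijection of $k$), and put $\widetilde Q_i:=\eta\,Q_i(S)$, $\widetilde P:=\Delta_n(\underline{\widetilde Q})$ and $\widetilde P_i:=(-1)^{i-1}\Delta_{n-1}(\underline{\widehat{\widetilde Q}_i})$. The two identities above give $\widetilde P=\eta^{\,1+p+\dots+p^{n-1}}P(S)$ and $\widetilde P_i=\eta^{\,1+p+\dots+p^{n-2}}P_i(S)$, so
\[
\frac{\widetilde P_i}{\widetilde P}\,dX \;=\; \eta^{-p^{n-1}}\,\frac{P_i(S(X))}{P(S(X))}\,dX \;=\; S'(X)\,\frac{P_i(S(X))}{P(S(X))}\,dX \;=\; \sigma^\star\!\Bigl(\tfrac{P_i}{P}\,dX\Bigr) \;=\; \sigma^\star(\omega_i).
\]
Thus the $n$-tuple $\underline{\widetilde Q}$ gives rise, in the sense of Definition~\ref{defn:Qi}, to the basis $(\sigma^\star\omega_1,\dots,\sigma^\star\omega_n)$ of $\sigma^\star(\Omega)$.

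Next I would check that $\underline{\widetilde Q}$ is an admissible $n$-tuple for Theorem~\ref{thm:Pagotn} applied to $\sigma^\star(\Omega)$. Each $\widetilde Q_i$ has degree $\lambda\deg S$, which is exactly the degree predicted by that theorem, since $\sigma^\star(\Omega)$ is a space $L_{(\deg S)\lambda\, p^{\,n-1},n}$ (Lemma~\ref{lem:pullbacketale}). Writing $q_i$ for the leading coefficient of $Q_i$ and $s$ for that of $S$, the leading coefficient of $\widetilde Q_i$ is $\eta s^{\lambda}q_i$, so $\Delta_n$ of the leading coefficients equals $(\eta s^{\lambda})^{\,1+p+\dots+p^{n-1}}\Delta_n(\underline q)\neq 0$: indeed $\Delta_n(\underline q)\neq 0$ because $\deg\Delta_n(\underline Q)=\deg P=|\calP(\Omega)|=\lambda\tfrac{p^{n}-1}{p-1}$ by Lemma~\ref{lem:Pagotpoles}, which forces the leading coefficient $\Delta_n(\underline q)$ of $P$ to be nonzero. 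By Proposition~\ref{prop:Pe|P} the divisibility $\widetilde P_i\mid\widetilde P$ then holds automatically, so $\underline{\widetilde Q}$ meets all the requirements of Theorem~\ref{thm:Pagotn}. Since, by Proposition~\ref{prop:QiMii}, the $n$-tuple giving rise to a fixed basis is unique, $\underline{\widetilde Q}$ is precisely the $n$-tuple attached to the basis $(\sigma^\star\omega_i)_i$ (and, by Proposition~\ref{prop:QiMiii}, any other choice of basis of $\sigma^\star(\Omega)$ changes it only by the corresponding matrix in $GL_n(\F_p)$).

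Finally, for the statement on poles: $\calP(\sigma^\star(\Omega))=Z(\widetilde P)=Z\bigl(P(S(X))\bigr)=\{a\in k : S(a)\in Z(P)=\calP(\Omega)\}$, using that the zero set of $\widetilde P=\Delta_n(\underline{\widetilde Q})$ is $\calP(\sigma^\star(\Omega))$ (Theorem~\ref{thm:Pagotn0}, Remark~\ref{rmk:simplepolesanyn}) and that $Z(P)=\calP(\Omega)$. I expect the only genuinely delicate point to be the bookkeeping of the exponents $1+p+\dots+p^{j}$ in the two Moore-determinant identities, which must line up to produce exactly $\eta^{p^{n-1}}=1/S'(X)$; making the commutation of $\Delta_m$ with substitution watertight is routine. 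Neither is a real obstacle.
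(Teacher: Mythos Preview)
Your proof is correct and follows essentially the same approach as the paper's own proof. The paper's argument is the terse one-liner
\[
\sigma^\star(\omega_i)=(-1)^{i-1}\frac{\Delta_{n-1}(\ul{\widehat{Q_i(S)}})}{\Delta_{n}(\ul{Q(S)})}\,S'(X)\,dX=(-1)^{i-1}\frac{\Delta_{n-1}(\ul{\widehat{\eta Q_i(S)}})}{\Delta_{n}(\ul{\eta Q(S)})}\,dX,
\]
which is exactly your core computation; you simply spell out more explicitly the commutation of the Moore determinant with substitution, the scaling identity, the verification that the leading coefficients of the $\widetilde Q_i$ have nonvanishing Moore determinant, and the appeal to the uniqueness statement in Proposition~\ref{prop:QiMii}. (Incidentally, the nonvanishing $\Delta_n(\underline q)\neq 0$ is already part of the hypotheses in Definition~\ref{defn:Qi}/Theorem~\ref{thm:Pagotn}, so you need not rederive it from Lemma~\ref{lem:Pagotpoles}.)
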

\begin{proof}
From the equation $\omega_i=\frac{(-1)^{i-1} \Delta_{n-1}(\ul{\hat{Q}_i})}{\Delta_{n}(\ul{Q})}dX$ it follows that 
\[\sigma ^\star(\omega_i)=(-1)^{i-1}\frac{ \Delta_{n-1}(\ul{\widehat{Q_i(S)}})}{\Delta_{n}(\ul{Q(S)})}S'(X)dX=(-1)^{i-1}\frac{\Delta_{n-1}(\ul{\widehat{\eta Q_i(S)}})}{\Delta_{n}(\ul{\eta Q(S)})}dX
\]
where $\eta^{p^{n-1}}=\frac{1}{S'(X)}$.
\end{proof}

\subsection{Dickson invariants and spaces $L_{\lambda p^{n-1},n}$}\label{subsec:applyingPagotn}

In the previous section, we proved several results using the structural polynomial of the vector space generated by the $ Q_i $'s. 
Dickson invariants (see Appendix~\ref{app:Moore}) provide an elegant framework for describing this polynomial and can be effectively applied to the problem of determining the existence of spaces $ L_{\lambda p^{n-1}, n} $. 
First of all, Dickson invariants can be used to give a shorter and potentially more insightful proof of Theorem~\ref{thm:Pagotn0}.
To show this, let us adopt the same notation as in that theorem and recall that the non-trivial part is to show that if the form $\omega_n$ is logarithmic, then the forms $\omega_i$ are logarithmic for every $i = 1, \dots, n$.
Equivalently, we need to prove that if $\left(\frac{P}{P_n}\right)^{p-1}$ satisfies the system of equations~(\ref{eq:coeff}), then so does $\left(\frac{P}{P_i}\right)^{p-1}$ for every $i$.

From~(\ref{eq:Dickson.i}), it follows that
\[
\left(\frac{P}{P_n}\right)^{p-1}=\left( \frac{\Moore{n}{Q}}{\Delta_{n-1}(\ul{\hat{Q}_n})} \right)^{p-1} = c_{n,n-1} - c_{n-1,n-2}^p \in \F_p[Q_1, \dots, Q_n].
\]
We then consider the element $\tau_{(n,i)} \in \GL_n(\F_p)$, acting on $\F_p[Q_1, \dots, Q_n]$ by exchanging $Q_n$ with $Q_i$ and leaving all other variables unchanged.
Applying the action of $\tau_{(n,i)}$ to $\left(\frac{P}{P_n}\right)^{p-1}$, we obtain:
\[
\left( \frac{\Moore{n}{Q}}{\Delta_{n-1}(\ul{\hat{Q}_i})} \right)^{p-1}
= \left(  \frac{\tau_{(n,i)} \star\Moore{n}{Q}}{\tau_{(n,i)} \star\Delta_{n-1}(\ul{\hat{Q}_n})} \right)^{p-1}
= -(\tau_{(n,i)} \star c_{n,n-1}) + \left( \tau_{(n,i)} \star c_{n-1,n-2} \right)^p.
\]
Since $c_{n,n-1}$ is a Dickson invariant, we have $\tau_{(n,i)} \star c_{n,n-1} = c_{n,n-1}$, and it follows that
\[
\left(\frac{P}{P_i}\right)^{p-1} - \left(\frac{P}{P_n}\right)^{p-1} \in \left( \F_p[Q_1, \dots, Q_n] \right)^p \subset \left( \F_p[X] \right)^p.
\]
In particular, $\left(\frac{P}{P_n}\right)^{p-1}$ satisfies~(\ref{eq:coeff}) if and only if $\left(\frac{P}{P_i}\right)^{p-1}$ does.

More crucially, the identities in Proposition~\ref{prop:Dickson} allow us to achieve two new key results: (1) a construction of a large class of previously unknown spaces $ L_{\lambda 2^{n-1}, n} $ in characteristic 2 and (2) a proof that spaces $ L_{p^{n-1}, n} $ (i.e., with $ \lambda = 1 $) do not exist when $ p > 2 $.
The key step required for these two applications is a proposition that distills Theorems~\ref{thm:Pagotn0} and~\ref{thm:Pagotn} into a new necessary and sufficient condition for the existence of a space $L_{\lambda p^{n-1},n}$ using the property of Dickson invariants.

\begin{proposition}\label{prop:det}
     Let $(Q_1,\cdots,Q_n)\in k[X]^n$ be a $n$-tuple of polynomials of degree $\lambda \geq 1$ with leading coefficients $q_i$ satisfying $\Moore{n}{q}\neq 0$.
     Then, the $n$-tuple $\ul{Q}$ gives rise to a basis of a space $L_{\lambda p^{n-1},n}$ if and only if
\begin{equation*}
         \left(\det(\underline{Q}',\underline{Q},\underline{Q}^p,\cdots,\underline{Q}^{p^{n-2}})\cdot \det(\underline{Q},\underline{Q^p},\cdots,\underline{Q}^{p^{n-2}},\underline{Q}^{p^{n-1}})^{p-2}\right)^{(p-2)}=1, 
\end{equation*}
where the exponent $(p-2)$ denotes the $p-2$-th derivative of a polynomial in $k[X]$.
 \end{proposition}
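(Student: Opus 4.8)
The plan is to reduce the biconditional to a single Jacobson--Cartier condition on the form $\omega_n$ attached to $\ul Q$, and then to identify that condition with the stated one via an explicit computation with Moore determinants. Write $P:=\Delta_n(\ul Q)$, $P_i:=(-1)^{i-1}\Delta_{n-1}(\ul{\widehat{Q_i}})$ and $\omega_i:=\frac{P_i}{P}dX$, so that $\ul Q$ gives rise to the basis $(\omega_1,\dots,\omega_n)$ of $\Omega:=\langle\omega_1,\dots,\omega_n\rangle_{\Fp}$ as in Definition~\ref{defn:Qi}. By Lemma~\ref{lem:MooreP} the assumption $\Delta_n(\ul q)\neq 0$ forces $P\neq 0$ and $\Delta_n(\ul P)=P^{1+p+\cdots+p^{n-2}}\neq 0$, so the $\omega_i$ are $\Fp$-linearly independent and $\dim_{\Fp}\Omega=n$; moreover $P_n\mid P$ by Proposition~\ref{prop:Pe|P}, so $D_n:=P/P_n\in k[X]$ and $\omega_n=\frac{1}{D_n}dX$. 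Now $\ul Q$ gives rise to a basis of a space $L_{\lambda p^{n-1},n}$ if and only if $\Omega$ is one: by Theorem~\ref{thm:Pagotn0} it suffices that some nonzero element of $\Omega$ be logarithmic, and conversely every nonzero element of a space $L_{\lambda p^{n-1},n}$ is logarithmic; so this holds if and only if $\omega_n$ is logarithmic, which by Corollary~\ref{cor:1/P} is equivalent to $\big(D_n^{\,p-1}\big)^{(p-1)}=-1$.

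Next I would reformulate this condition. In characteristic $p$ one has $\big(D_n^{\,p-1}\big)'=-D_n^{\,p-2}D_n'$, hence $\big(D_n^{\,p-1}\big)^{(p-1)}=-\big(D_n^{\,p-2}D_n'\big)^{(p-2)}$, so the condition becomes $\big(D_n^{\,p-2}D_n'\big)^{(p-2)}=1$ (when $p=2$ the exponent $(p-2)$ is $0$ and this just says $D_n'=1$). Setting $E:=\det\!\big(\ul{Q}',\ul Q,\ul Q^p,\dots,\ul Q^{p^{n-2}}\big)$, it therefore suffices to prove the polynomial identity $D_n'=E\,P_n^{\,p-2}$ in $k[X]$: multiplying by $D_n^{\,p-2}=P^{p-2}/P_n^{\,p-2}$ turns it into $D_n^{\,p-2}D_n'=E\,P^{p-2}$, and $P=\Delta_n(\ul Q)$ is exactly the second determinant appearing in the statement, so the condition of the proposition is recovered. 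Clearing denominators, the identity $D_n'=E\,P_n^{\,p-2}$ is equivalent to $P'P_n-PP_n'=P_n^{\,p}\,E$.

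To prove $P'P_n-PP_n'=P_n^{\,p}E$, I would use Laplace expansions. Expanding $\Delta_n(\ul Q)$ along its first row $\ul Q$ and applying the Frobenius to the remaining rows (which consist of $p$-th powers) gives $P=\sum_i Q_iP_i^{\,p}$; differentiating and using $(P_i^{\,p})'=0$ gives $P'=\sum_i Q_i'P_i^{\,p}$. Expanding $E$ along its top row $\ul Q'$ gives $E=\sum_i Q_i'P_i$. Differentiating $P_n=(-1)^{n-1}\Delta_{n-1}(Q_1,\dots,Q_{n-1})$ and expanding along its first row yields $P_n'=\sum_{i<n}(-1)^{n+i}Q_i'\,C_i^{\,p}$, where $C_i:=\Delta_{n-2}\!\big((Q_\ell)_{\ell\neq i,n}\big)$. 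Hence $P'P_n-P_n^{\,p}E=\sum_i Q_i'\big(P_i^{\,p}P_n-P_iP_n^{\,p}\big)=\sum_{i<n}Q_i'\,\Delta_2(P_n,P_i)$, since the $i=n$ summand is $\Delta_2(P_n,P_n)=0$. The key input is the Moore-determinant identity, valid for all $i<n$,
\[\Delta_2(P_n,P_i)=P_nP_i^{\,p}-P_n^{\,p}P_i=(-1)^{n+i}\,P\,C_i^{\,p},\]
which, after unwinding the signs in $P_i=(-1)^{i-1}\Delta_{n-1}(\ul{\widehat{Q_i}})$, is a condensation (Desnanot--Jacobi) identity for the Moore matrix of $\ul Q$, applied after permuting columns so that $Q_i$ and $Q_n$ occupy the first and last positions; it can equally be read off from the factorisation of a Moore determinant through the structural polynomial of a subspace (Corollary~\ref{cor:structuralMoore}). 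Substituting it in, $P'P_n-P_n^{\,p}E=P\sum_{i<n}(-1)^{n+i}Q_i'\,C_i^{\,p}=PP_n'$, which is exactly $P'P_n-PP_n'=P_n^{\,p}E$.

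The step I expect to be the main obstacle is this last Moore-determinant identity for $\Delta_2(P_n,P_i)$, where one must get the condensation bookkeeping and the signs exactly right; this is where the machinery collected in Appendix~\ref{app:Moore} is needed, and everything else is either a direct appeal to the preceding results or a routine Laplace expansion. As a consistency check, for $n=2$ one has $C_1=\Delta_0=1$ and the whole argument collapses to the equivalence of conditions $(ii)$ and $(iii)$ in Proposition~\ref{prop:Pagot}.
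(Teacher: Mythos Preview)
Your argument is correct, and the route is genuinely different from the paper's. The heart of your proof is the polynomial identity $P'P_n-PP_n'=P_n^{\,p}E$, and your Desnanot--Jacobi justification is sound: with the Moore matrix $M=(Q_j^{p^{r-1}})_{r,j}$, removing rows $1,n$ and columns $i,n$ gives exactly
\[
\Delta_{n-1}(\ul{\hat Q_n})\,\Delta_{n-1}(\ul{\hat Q_i})^p-\Delta_{n-1}(\ul{\hat Q_n})^p\,\Delta_{n-1}(\ul{\hat Q_i})=P\,C_i^{\,p},
\]
so after inserting the signs in $P_i=(-1)^{i-1}\Delta_{n-1}(\ul{\hat Q_i})$ one obtains $\Delta_2(P_n,P_i)=(-1)^{n+i}P\,C_i^{\,p}$ as you claim, and the Laplace expansions for $P,\,P',\,P_n',\,E$ are all correct.

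The paper instead works through the Dickson invariant $c_{n,n-1}=\widetilde\Delta_n(\ul Q)/\Delta_n(\ul Q)$, where $\widetilde\Delta_n(\ul Q)=\det(\ul Q,\ul Q^p,\dots,\ul Q^{p^{n-2}},\ul Q^{p^n})$. Since $\Delta_n(\ul Q)\neq 0$, the vectors $\ul Q,\ul Q^p,\dots,\ul Q^{p^{n-1}}$ form a basis of $k(X)^n$, so one can write $\ul Q'=\sum a_i\ul Q^{p^i}$ and $\ul Q^{p^n}=\sum b_i\ul Q^{p^i}$; tracking $a_0,a_{n-1},b_0,b_{n-1}$ through the quotient rule and through $E$ yields $c_{n,n-1}'=-E\cdot P^{p-2}$ in one line of linear algebra. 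The connection to Jacobson--Cartier is then the identity $D_n^{\,p-1}=c_{n,n-1}-c_{n-1,n-2}^p$ of Proposition~\ref{prop:Dickson}(i), which kills the $p$-th power upon differentiation. So both proofs establish the same equality $D_n^{\,p-2}D_n'=E\,P^{p-2}=-c_{n,n-1}'$; yours reaches it by condensation on the Moore matrix and avoids the Dickson formalism altogether, while the paper's change-of-basis trick avoids Desnanot--Jacobi but leans on the invariant-theoretic description of $c_{n,n-1}$.
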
 

 \begin{proof}

 By \ref{eq:Dickson.ii}, we have that $c_{n,n-1}=\frac{\widetilde{\Delta}_n(\ul{Q})}{\Moore{n}{Q}}$, where
   $$ \widetilde{\Delta}_n(\ul{Q}):=\det(\underline{Q},\underline{Q}^p,\cdots,\underline{Q}^{p^{n-2}},\underline{Q}^{p^n}).$$
     
Using this, we can compute the derivative $c_{n,n-1}'$:
    $$c_{n,n-1}'=\left(\frac{\widetilde{\Delta}_n(\ul{Q})}{\Moore{n}{Q}}\right)'
    =\frac{\widetilde{\Delta}_n(\ul{Q})'\Moore{n}{Q}-\widetilde{\Delta}_n(\ul{Q})\Moore{n}{Q}'}{\Moore{n}{Q}^2}.$$

Since  $\Moore{n}{Q}\neq 0$, the set of vectors $\{\underline{Q},\underline{Q}^p,\cdots,\underline{Q}^{p^{n-2}},\underline{Q}^{p^{n-1}} \}$ is a basis of the $k(X)$-vector space $k(X)^n$ and therefore there exist $a_0,\cdots a_{n-1}, b_0,\cdots,b_{n-1}\in k(X)$ such that 
   $$\underline{Q}'=\sum_{i=0}^{n-1}a_i\underline{Q}^{p^{i}} \;\;\;\mathrm{and}\;\;\;\underline{Q}^{p^n}=\sum_{i=0}^{n-1}b_i\underline{Q}^{p^{i}}.$$
   We first compute
   $$ \Moore{n}{Q}^p=\det(\underline{Q}^p,\underline{Q}^{p^2},\cdots,\underline{Q}^{p^{n-1}},\underline{Q}^{p^{n}})=\det(\underline{Q}^p,\underline{Q}^{p^2},\cdots,\underline{Q}^{p^{n-1}},\sum_{i=0}^{n-1}b_i\underline{Q}^{p^{i}})=(-1)^{n-1}b_0\Moore{n}{Q},$$
   and find that the coefficient $b_0$ is equal to $(-1)^{n-1}\Moore{n}{Q}^{p-1}$. 
Then, by multilinearity of the determinant we find that
   $$ \widetilde{\Delta}_n(\ul{Q})'=\det(\underline{Q}',\underline{Q}^p,\cdots,\underline{Q}^{p^{n-2}},\underline{Q}^{p^n})=\det\left(\sum_{i=0}^{n-1}a_i\underline{Q}^{p^{i}},\underline{Q}^p,\cdots,\underline{Q}^{p^{n-2}},\sum_{j=0}^{n-1}b_j\underline{Q}^{p^{j}}\right)=(a_0b_{n-1}-a_{n-1}b_0)\Moore{n}{Q}.$$
   With a similar computation, we get that 
   $$ \widetilde{\Delta}_n(\ul{Q})=\det\left(\underline{Q},\underline{Q}^p,\cdots,\underline{Q}^{p^{n-2}},\sum_{j=0}^{n-1}b_j\underline{Q}^{p^{j}}\right)=b_{n-1}\Moore{n}{Q}$$
   and
   $$ \Moore{n}{Q}'=\det(\underline{Q}',\underline{Q}^p,\cdots,\underline{Q}^{p^{n-2}},\underline{Q}^{p^{n-1}})=\det\left(\sum_{i=0}^{n-1}a_i\underline{Q}^{p^{i}},\underline{Q}^p,\cdots,\underline{Q}^{p^{n-2}},\underline{Q}^{p^{n-1}}\right)=a_0\Moore{n}{Q}.$$
By putting everything together, we conclude that $c_{n,n-1}'=-a_{n-1}b_0=-a_{n-1}(-1)^{n-1}\Moore{n}{Q}^{p-1}$. At the same time, we have 
   $$\det(\underline{Q}',\underline{Q},\underline{Q}^p,\cdots,\underline{Q}^{p^{n-2}})= \det\left(\sum_{i=0}^{n-1}a_i\underline{Q}^{p^{i}} ,\underline{Q},\underline{Q}^p,\cdots,\underline{Q}^{p^{n-2}}\right)=(-1)^{n-1}a_{n-1}\Moore{n}{Q}.$$
We then have
   $$ c_{n,n-1}'=-\det(\underline{Q}',\underline{Q},\underline{Q}^p,\cdots,\underline{Q}^{p^{n-2}})\cdot \Moore{n}{Q}^{p-2}.$$
   By Theorems \ref{thm:Pagotn} and \ref{thm:Pagotn0} the $n$-tuple $\underline{Q}$ gives rise to a basis of a space $L_{\lambda p^{n-1},n}$ if and only if $(P_{n-1}(Q_n)^{p-1})^{(p-1)}=-1$.
   By \ref{eq:Dickson.i}, this is equivalent to $c_{n,n-1}^{(p-1)}=-1$ and the computation of the first derivative above allows us to rewrite the condition as  
   $$ \left[\det(\underline{Q}',\underline{Q},\underline{Q}^p,\cdots,\underline{Q}^{p^{n-2}})\cdot \Moore{n}{Q}^{p-2}\right]^{(p-2)}=1,$$
from which the result follows.
   \end{proof}

\subsubsection{Many new examples of spaces $L_{\lambda 2^{n-1}, n}$}\label{sec:p=2}
As a first application, we assume $p=2$ and give a construction for all values of $n\geq 2$ of new large classes of examples of spaces $L_{\lambda 2^{n-1}, n}$ in characteristic $2$.
In this case, Proposition \ref{prop:det} tells us that the necessary and sufficient condition for the existence of a space $L_{\lambda 2^{n-1}, n}$ is the existence of a $n$-tuple of polynomials $\ul{Q}$ of same degree satisfying
$$\det(\underline{Q}',\underline{Q},\underline{Q}^p,\cdots,\underline{Q}^{2^{n-2}}) = 1$$

\begin{remark}\label{rmk:p=2;n=2}
    Note that when $n=2$ this easily leads to a complete classification of spaces $L_{2\lambda,2}$.
In fact, there are unique polynomials $U_1, U_2, V_1, V_2 
\in k[X]$ such that $Q_1 = U_1^2 +XV_1^2$ and $Q_2 = U_2^2 + XV_2^2$.
With these notations, we have $Q_1' = V_1^2$ and $Q_2' = V_2^2$, and hence
\[ \det \begin{pmatrix}
Q_1' & Q_1 \\
Q_2' & Q_2
\end{pmatrix} = \det \begin{pmatrix}
V_1^2 & U_1^2 +XV_1^2 \\
V_2^2 & U_2^2 + XV_2^2
\end{pmatrix} = \det \begin{pmatrix}
V_1^2 & U_1^2 \\
V_2^2 & U_2^2
\end{pmatrix} = \left( V_1U_2 + U_1V_2 \right)^2.\]

The condition imposed by Proposition \ref{prop:det} then is equivalent to
\[ V_1U_2 + U_1V_2 = 1.\]
If $\lambda$ is odd, then the polynomials $V_1, V_2$ have degree $\frac{\lambda-1}{2}$. By B\'ezout's theorem, if $V_1$ and $V_2$ are coprime, then there exists a unique pair $(U_1, U_2)$ with $\deg(U_i)< \frac{\lambda-1}{2}$ satisfying the above condition.
If we pick $(V_1, V_2)$ such that $V_1+V_2$ has also degree $\frac{\lambda-1}{2}$, and we let $(U_1, U_2)$ be the pair given by B\'ezout's theorem, then by Proposition \ref{prop:det} the polynomials $U_1^2 +XV_1^2$ and $U_2^2 +XV_2^2$ give rise to a basis of a space $L_{2\lambda, 2}$.
If we relax the condition $\deg(U_i) <\frac{\lambda-1}{2}$, then we have other pairs that satisfy Bezout's theorem: these can be obtained from the minimal one $(U_1, U_2)$ as $(U_1+aV_1, U_2+aV_2)$ for $a \in k$.\\
If $\lambda$ is even, then the argument above works by choosing coprime $U_1, U_2 \in k[X]$ of degree $\frac{\lambda}{2}$ with $U_1+U_2$ of degree $\frac{\lambda}{2}$ and applying B\'ezout's theorem to find $V_1, V_2$.
This shows that spaces $L_{2\lambda, 2}$ exist for every $\lambda$ and in large abundance.
While this is already known by \cite[Th\'eor\`eme 8]{Pagot02}, which relies on a study of the properties of the set of zeroes of $Q_1^2Q_2-Q_1Q_2^2$, the approach of Proposition \ref{prop:det} is of a significantly different nature.
\end{remark}

For $n>2$, the approach of Remark \ref{rmk:p=2;n=2} is not enough to give a complete classification, but we can extract sufficient conditions for the existence of spaces $L_{2^{n-1}\lambda, n}$ that lead to the discovery of new large classes of examples for every $n$.

\begin{proposition}\label{prop:UVdet}
Let $p=2$ and $n \geq 3$.
Let $\ul{Q} \in k[X]^n$ be a $n$-tuple giving rise to a basis of a space $\Omega$ as in Definition \ref{defn:Qi}.
For every $i=1, \dots, n$ let $U_i, V_i \in k[X]$ be such that
$Q_i = U_i^2 + X  V_i^2$ and assume that they satisfy the system of equations
\begin{equation}\label{eq:suffcond}
\begin{cases}  \det(\ul{U}, \ul{V}, \ul{U}^2, \dots, \ul{U}^{2^{n-2}}) = 1 &  \\
\det \left(\ul{U}, \ul{V}, ((1+\epsilon_3)\ul{U}+\epsilon_3 \ul{V})^2, \dots, ((1+\epsilon_n)\ul{U}+\epsilon_n \ul{V})^{2^{n-2}}\right) = 0,& 
\end{cases}
\end{equation}
where $(\epsilon_3, \dots, \epsilon_n)$ runs over all elements of $\F_2^{n-2} - \{0\}$. Then $\Omega$ is a space $L_{\lambda 2^{n-1},n}$.
\end{proposition}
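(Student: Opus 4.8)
The plan is to reduce to Proposition~\ref{prop:det} and then unwind the resulting determinant. Since $p=2$, in that proposition both the exponent $p-2$ and the $(p-2)$-th derivative are trivial (and $\Moore{n}{Q}^{p-2}=1$ is legitimate because $\Moore{n}{q}\neq 0$), so the $n$-tuple $\ul Q$ gives rise to a basis of a space $L_{\lambda 2^{n-1},n}$ exactly when
\[
D:=\det(\ul Q',\ul Q,\ul Q^2,\ul Q^4,\dots,\ul Q^{2^{n-2}})=1,
\]
as already noted before the statement. So it suffices to show that the hypotheses~\eqref{eq:suffcond} force $D=1$; since $\ul Q$ gives rise to the basis generating $\Omega$ in the sense of Definition~\ref{defn:Qi}, this yields that $\Omega$ is a space $L_{\lambda 2^{n-1},n}$.

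First I would rewrite $D$ using $Q_i=U_i^2+XV_i^2$. In characteristic $2$ this gives $Q_i'=V_i^2$, hence $\ul Q'=\ul V^2$ and $\ul Q+X\ul Q'=\ul U^2$ (entrywise squares), so replacing the second row by itself plus $X$ times the first leaves $D$ unchanged and gives $D=\det(\ul V^2,\ul U^2,\ul Q^2,\ul Q^4,\dots,\ul Q^{2^{n-2}})$. Next I would expand by multilinearity in the last $n-2$ rows: for $j=1,\dots,n-2$ one has $\ul Q^{2^j}=\ul U^{2^{j+1}}+X^{2^j}\ul V^{2^{j+1}}$, so $D$ splits into $2^{n-2}$ summands indexed by the set $S\subseteq\{1,\dots,n-2\}$ of rows from which the ``$V$-part'' is taken. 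The summand attached to $S$ equals $\bigl(\prod_{j\in S}X^{2^j}\bigr)M_S$, where $M_S$ is the determinant of the matrix whose rows are $\ul V^2$, $\ul U^2$, and, in position $j$, either $\ul U^{2^{j+1}}=(\ul U^{2^j})^2$ (if $j\notin S$) or $\ul V^{2^{j+1}}=(\ul V^{2^j})^2$ (if $j\in S$). Every row of this matrix is therefore an entrywise square, so since raising all entries of a matrix to the square squares its determinant in characteristic $2$ (Frobenius commutes with $\det$), one gets $M_S=N_S^2$ with $N_S:=\det(\ul V,\ul U,c_1,\dots,c_{n-2})$, where $c_j=\ul U^{2^j}$ for $j\notin S$ and $c_j=\ul V^{2^j}$ for $j\in S$.

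The key point is then that, after swapping the first two rows (the sign $-1$ being irrelevant in characteristic $2$), $N_S$ is precisely one of the determinants in~\eqref{eq:suffcond}: writing $\eta_j:=\mathbf 1_{j\in S}$ and identifying $\eta_j$ with $\epsilon_{j+2}$, one has $c_j=\bigl((1+\eta_j)\ul U+\eta_j\ul V\bigr)^{2^j}$, so $N_S=\det\bigl(\ul U,\ul V,((1+\epsilon_3)\ul U+\epsilon_3\ul V)^2,\dots,((1+\epsilon_n)\ul U+\epsilon_n\ul V)^{2^{n-2}}\bigr)$. Thus~\eqref{eq:suffcond} says exactly that $N_\emptyset=1$ (the first equation, $S=\emptyset$) and $N_S=0$ for every $S\neq\emptyset$ (the second family, as $(\epsilon_3,\dots,\epsilon_n)$ ranges over $\F_2^{n-2}-\{0\}$). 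Hence $M_S=N_S^2$ vanishes unless $S=\emptyset$, in which case $M_\emptyset=1$ and its prefactor is the empty product $1$, so $D=M_\emptyset=1$. By Proposition~\ref{prop:det}, $\ul Q$ gives rise to a basis of a space $L_{\lambda 2^{n-1},n}$, which is the basis generating $\Omega$, so $\Omega$ is a space $L_{\lambda 2^{n-1},n}$. The only place that really needs care is the multilinear expansion: keeping the three-way indexing $j\leftrightarrow\ell=j+2\leftrightarrow\epsilon_\ell$ consistent, and checking that every row of the matrix defining $M_S$ is genuinely an entrywise square so that the ``determinant of a Frobenius twist'' identity applies cleanly.
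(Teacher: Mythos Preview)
Your proof is correct and follows essentially the same approach as the paper. Both reduce to Proposition~\ref{prop:det}, rewrite the determinant using $\ul Q'=\ul V^2$ and the row operation $\ul Q+X\ul Q'=\ul U^2$, and then expand by multilinearity in the remaining columns to identify the resulting terms with the determinants in~\eqref{eq:suffcond}; the only cosmetic difference is that the paper takes the square root of the whole determinant before expanding, whereas you expand first and then observe that each summand $M_S$ is a square $N_S^2$.
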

\begin{proof}
We need to show that, if $\ul{U}$ and $\ul{V}$ satisfy (\ref{eq:suffcond}), then $\ul{Q}$ satisfy the condition of Proposition \ref{prop:det}.
This latter is equivalent to $\det \left( \ul{V}^2, \ul{U}^2, \ul{Q^2}, \dots, \ul{Q}^{2^{n-2}}\right) = 1$, which is in turn equivalent to
\[ \det \left( \ul{V}, \ul{U}, \ul{Q}, \dots, \ul{Q}^{2^{n-3}}\right) = \det \left( \ul{V}, \ul{U}, \ul{U}^2 + X  \ul{V}^2, \dots, \ul{U}^{2^{n-2}} + X  \ul{V}^{2^{n-2}}\right) =1. \]
By linearity of determinants, the above condition can be expressed as a polynomial condition in $X$, namely
\[ \sum_{(\epsilon_3, \dots, \epsilon_n) \in \F_2^{n-2}} \det \left(\ul{U}, \ul{V}, ((1+\epsilon_3)\ul{U}+\epsilon_3 \ul{V})^2, \dots, ((1+\epsilon_n)\ul{U}+\epsilon_n \ul{V})^{2^{n-2}}\right) \cdot X^{(\sum_i \epsilon_i 2^{i-3})}=1.\]
If the system (\ref{eq:suffcond}) is satisfied, then the non-constant coefficients of the polynomial above vanish and the constant term is equal to 1. 
Hence we can apply Proposition \ref{prop:det} to get that $\Omega$ is a space $L_{\lambda 2^{n-1},n}$.
\end{proof}

\begin{proposition}\label{prop:UV}
Let $n \geq 3$.
Let $U_1, \dots, U_n$ and $V_1, \dots, V_n$ be polynomials in $k[X]$ and  $\alpha, \beta \in k(X)$ such that:
\begin{enumerate}
\item $\ul{U} = \alpha\ul{V} + \beta \ul{V}^2$
\item $\beta^{2^{n-1}-1}\cdot \Moore{n}{V} =1$.
\end{enumerate}
Then $\ul{U}$ and $\ul{V}$ satisfy the system of equations (\ref{eq:suffcond}).
\end{proposition}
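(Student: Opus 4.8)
The plan is to work throughout in characteristic $2$ and to exploit the additivity of Frobenius. From hypothesis (1) one gets, componentwise,
\[
\ul U^{2^j} = \alpha^{2^j}\,\ul V^{2^j} + \beta^{2^j}\,\ul V^{2^{j+1}}\qquad (j\ge 0).
\]
Writing $W_i := \ul V^{2^i}$, this says that every vector occurring in the two families of determinants of \eqref{eq:suffcond} lies in the $k(X)$-linear span of $W_0,\dots,W_{n-1}$, and that $\det[\,W_0\mid\cdots\mid W_{n-1}\,]=\Moore{n}{V}$, which hypothesis (2) forces to be nonzero (hypothesis (2) also forces $\beta\neq 0$). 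So the idea is: for each $n\times n$ determinant in \eqref{eq:suffcond}, substitute the above, rewrite it as $\Moore{n}{V}$ times the determinant of the ``change-of-basis'' matrix whose rows record the coordinates of the columns in terms of $W_0,\dots,W_{n-1}$, and analyze that matrix in each case.

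For the first equation I would compute the determinant by column reduction. Replacing $\ul U$ by $\ul U+\alpha\ul V=\beta W_1$, and then successively replacing $\ul U^{2^j}$ by $\ul U^{2^j}+(\alpha^{2^j}/\beta^{2^{j-1}})(\text{previous column})$, the matrix $[\,\ul U\mid \ul V\mid \ul U^2\mid\cdots\mid \ul U^{2^{n-2}}\,]$ is transformed, without changing the determinant, into $[\,\beta W_1\mid W_0\mid \beta^2 W_2\mid \beta^4 W_3\mid\cdots\mid \beta^{2^{n-2}}W_{n-1}\,]$. Pulling the scalars out of the columns yields $\beta^{1+2+\cdots+2^{n-2}}=\beta^{2^{n-1}-1}$ times $\det[\,W_1\mid W_0\mid W_2\mid\cdots\mid W_{n-1}\,]=\Moore{n}{V}$ (the swap $W_0\leftrightarrow W_1$ contributing a sign $-1=1$). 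Hence this determinant is $\beta^{2^{n-1}-1}\Moore{n}{V}=1$ by hypothesis (2). Note these column operations are legitimate even when $\alpha=0$, since $\beta\neq 0$.

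For the second family of equations I would first observe that in characteristic $2$ the expression $(1+\epsilon)\ul U+\epsilon\ul V$ is simply $\ul U$ when $\epsilon=0$ and $\ul V$ when $\epsilon=1$; hence the extra column of index $i\in\{3,\dots,n\}$, namely $((1+\epsilon_i)\ul U+\epsilon_i\ul V)^{2^{i-2}}$, equals $\ul U^{2^{i-2}}$ if $\epsilon_i=0$ and $\ul V^{2^{i-2}}=W_{i-2}$ if $\epsilon_i=1$. Let $i_0$ be the smallest index in $\{3,\dots,n\}$ with $\epsilon_{i_0}=1$; it exists since $(\epsilon_3,\dots,\epsilon_n)\ne 0$. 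Then the first $i_0$ columns of the matrix --- namely $\ul U$, $\ul V$, the columns $\ul U^{2^{i-2}}$ for $3\le i<i_0$, and the column $W_{i_0-2}$ --- all lie in the $k(X)$-span of $W_0,W_1,\dots,W_{i_0-2}$: indeed $\ul U=\alpha W_0+\beta W_1$, $\ul V=W_0$, and $\ul U^{2^{i-2}}=\alpha^{2^{i-2}}W_{i-2}+\beta^{2^{i-2}}W_{i-1}$ with $i-1\le i_0-2$. But $i_0$ vectors lying in a subspace spanned by the $i_0-1$ vectors $W_0,\dots,W_{i_0-2}$ must be $k(X)$-linearly dependent, so the full $n\times n$ determinant vanishes. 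This verifies \eqref{eq:suffcond}, and the argument never uses $\alpha\neq 0$, so it is uniform in all cases.

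I do not expect a serious obstacle: the delicate points are merely bookkeeping the indices and the exponents of $\beta$ in the first determinant (so that the power of $\beta$ comes out exactly as $2^{n-1}-1$ and matches hypothesis (2)), and checking that the column-reduction steps remain valid when $\alpha=0$. The conceptual heart is the ``$i_0$ vectors inside an $(i_0-1)$-dimensional span'' observation for the second family, which avoids any casework on the pattern of the $\epsilon_i$.
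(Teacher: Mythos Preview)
Your proof is correct and follows essentially the same approach as the paper's: both arguments handle the first equation by successively substituting $\ul U^{2^j}=\alpha^{2^j}\ul V^{2^j}+\beta^{2^j}\ul V^{2^{j+1}}$ and collecting the factor $\beta^{2^{n-1}-1}$, and both handle the second family by locating the smallest index $i_0$ (the paper calls it $k$) with $\epsilon_{i_0}=1$ and observing that the first $i_0$ columns lie in the $(i_0-1)$-dimensional span of $\ul V,\ul V^2,\dots,\ul V^{2^{i_0-2}}$. The only cosmetic difference is that you phrase the first computation as explicit column operations while the paper writes out the chain of equalities directly.
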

\begin{proof}
The proof consists of two steps:
\begin{itemize}
\item We first check that $\ul{U}$ and $\ul{V}$ satisfy the first line of (\ref{eq:suffcond}): by repeatedly using condition $(i)$ we find that
\begin{align*}
\det(\ul{U}, \ul{V}, \ul{U}^2, \ul{U}^4 \dots, \ul{U}^{2^{n-2}}) & = \det(\beta \ul{V}^2, \ul{V}, \ul{U^2}, \ul{U^4}, \dots, \ul{U}^{2^{n-2}}) \\
& = \beta \det( \ul{V}^2, \ul{V}, \beta^2\ul{V}^4, \ul{U^4}, \dots, \ul{U}^{2^{n-2}}) \\
& = \beta^3 \det( \ul{V}^2, \ul{V}, \ul{V}^4, \beta^4\ul{V}^8,  \dots, \ul{U}^{2^{n-2}}) \\
& = \beta^7 \det( \ul{V}^2, \ul{V},  \ul{V}^4, \ul{V}^8,  \dots, \ul{U}^{2^{n-2}})\\
& = \dots \\
& = \beta^{2^{n-1}-1}\det(\ul{V}^2, \ul{V}, \ul{V}^4, \ul{V}^8, \dots, \ul{V}^{2^{n-2}}) = \beta^{2^{n-1}-1}\cdot \Moore{n}{V} =1.
\end{align*}
\item We then check the equations in the second line of (\ref{eq:suffcond}): for each $(\epsilon_3, \dots, \epsilon_n) \in \F_2^{n-2} - \{0\}$ we let $k\in \{3,\dots, n\}$ be the smallest number such that $\epsilon_k \neq 0$.
Since we need every $\epsilon_i$ with $i<k$ to be equal to 0, the vectors 
$\left(\ul{U}, \ul{V}, ((1+\epsilon_3)\ul{U}+\epsilon_3 \ul{V})^2, \dots, ((1+\epsilon_k)\ul{U}+\epsilon_k \ul{V})^{2^{k-2}}\right)$ forming the first $k$ columns inside the determinant can be rewritten as $\left(\ul{U}, \ul{V}, \ul{U}^2, \dots, \ul{U}^{2^{k-3}},\ul{V}^{2^{k-2}}\right)$.
By condition $(i)$, we have that all these column vectors belong to the $k-1$ dimensional space generated by $\ul{V}, \dots, \ul{V}^{2^{k-2}}$ and hence 
\[\det \left(\ul{U}, \ul{V}, \ul{U}^2, \dots, \ul{U}^{2^{k-3}},\ul{V}^{2^{k-2}}, \dots, ((1+\epsilon_n)\ul{U}+\epsilon_n \ul{V})^{2^{n-2}}\right) = 0.\]
As this is true for every $(\epsilon_3, \dots, \epsilon_n) \in \F_2^{n-2} - \{0\}$, we have that all the equations in the second line of (\ref{eq:suffcond}) are satisfied.
\end{itemize}
\end{proof}

\begin{theorem}\label{thm:p=2generaln}
Let $k$ be an algebraically closed field of characteristic 2 and $n \geq 3$.\\
Let $\ul{W}:=~(W_1, \dots, W_n) \subset k[X]^n$ be a $n$-tuple of polynomials such that all the nonzero elements of $\langle W_1, \dots, W_n \rangle_{\F_2}$ are pairwise coprime and of the same degree, denoted by $d$ (in particular, we have that $\Moore{n}{W} \neq 0$).
If we set $V_i := \Moore{n-1}{\hat{W_i}}$ for every $i=1, \dots, n$, then there exists a rational function $\alpha \in \frac{1}{\Moore{n}{W}}k[X]$ satisfying the following properties:
\begin{enumerate}
\item The rational function \[Q_i := \frac{V_i^4}{\Moore{n}{W}^2} + (X+\alpha^2) V_i^2 \] is a polynomial for every $i=1, \dots, n$.
\item The $n$-tuple $\ul{Q}=(Q_1, \dots, Q_n) \in k[X]^n$ gives rise to a basis of a space $L_{2^{n-1}\lambda, n}$.
\end{enumerate}
Moreover, given such an $\alpha$, the set of all rational functions satisfying $(i)$ and $(ii)$ is $\{ \alpha + R |  R \in k[X] \}$.
\end{theorem}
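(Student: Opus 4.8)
The plan is to produce $\alpha$ in the form $A/\Delta$, where $\Delta:=\Moore{n}{W}$ and $A\in k[X]$ comes from the Chinese Remainder Theorem, and then to deduce (i) and (ii) by matching the construction with Propositions~\ref{prop:UV} and~\ref{prop:UVdet}. Write $W:=\langle W_1,\dots,W_n\rangle_{\F_2}$ and, for each $i$, $W_i':=\langle W_1,\dots,\widehat{W_i},\dots,W_n\rangle_{\F_2}$; since the $W_k$ are $\F_2$-linearly independent we have $V_i\neq 0$ and may set $\widetilde V_i:=\Delta/V_i$. For $\alpha=A/\Delta$ with $A\in k[X]$, one computes in characteristic $2$ that
\[\frac{V_i^4}{\Delta^2}+(X+\alpha^2)V_i^2=\Bigl(\tfrac{V_i^2}{\Delta}+\alpha V_i\Bigr)^2+XV_i^2=\Bigl(\tfrac{V_i(V_i+A)}{\Delta}\Bigr)^2+XV_i^2,\]
so (i) holds for $\alpha=A/\Delta$ precisely when $U_i:=V_i(V_i+A)/\Delta\in k[X]$ for every $i$, and then $Q_i=U_i^2+XV_i^2$ is the canonical even/odd decomposition appearing in Propositions~\ref{prop:UV}--\ref{prop:UVdet}. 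By~\eqref{eq:Mooreprod}, and because $\F_2^\times=\{1\}$, we have $\Delta=\prod_{v\in W-\{0\}}v$, a product of $2^n-1$ pairwise coprime polynomials of degree $d$, while $V_i=\prod_{w\in W_i'-\{0\}}w$; hence $\widetilde V_i=\prod_{v\in W-\{0\},\,v\notin W_i'}v$ is coprime to $V_i$, and $U_i\in k[X]$ if and only if $\widetilde V_i\mid V_i+A$.

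The problem is thereby reduced to finding $A\in k[X]$ with $A\equiv V_i\pmod{\widetilde V_i}$ for all $i$. Writing a nonzero $v\in W$ uniquely as $v=\sum_{k\in T}W_k$ with $\emptyset\neq T\subseteq\{1,\dots,n\}$, one checks that $v\mid\widetilde V_i$ exactly when $i\in T$; since the $v$'s are pairwise coprime, such an $A$ exists and is unique modulo $\Delta$ as soon as, for every such $v$, the residues $V_i\bmod v$ agree for all $i\in T$. This consistency is the main obstacle. To dispose of it I would invoke the computation behind Proposition~\ref{prop:Pe|P}: for $i\neq j$, formula~\eqref{eq:deltae} yields $V_i+V_j=\delta_{e_i+e_j}(\underline W)=\prod_{w\in H_{ij}-\{0\}}w$, where $H_{ij}=\ker\bigl(W_i^\star+W_j^\star\bigr)=\{\,w=\sum a_kW_k\in W : a_i=a_j\,\}$; if $i,j\in T$ then $v$ has $a_i=a_j=1$, so $v\in H_{ij}$ and $v\mid V_i+V_j$, as required. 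Hence $A$ exists, $\alpha:=A/\Delta\in\tfrac1\Delta k[X]$, and (i) is proved.

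For (ii), I would first note that each $Q_i$ has degree $1+2d(2^{n-1}-1)$ with leading coefficient $\Delta_{n-1}(\underline{\hat q_i})^2\neq 0$, and that the Moore determinant of these leading coefficients is $\bigl(\Moore{n}{q}^{2^{n-1}-1}\bigr)^{2}\neq 0$ by Lemma~\ref{lem:MooreP} (using that Frobenius commutes with determinants in characteristic $2$); so $\underline Q$ gives rise to a basis of a space $\Omega$ as in Definition~\ref{defn:Qi}. Since $U_i=\alpha V_i+\tfrac1\Delta V_i^2$, I would apply Proposition~\ref{prop:UV} with $\beta=1/\Delta$: by Lemma~\ref{lem:MooreP} one has $\Moore{n}{V}=\Delta^{2^{n-1}-1}$, hence $\beta^{2^{n-1}-1}\Moore{n}{V}=1$. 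Then $\underline U,\underline V$ satisfy the system~\eqref{eq:suffcond}, and Proposition~\ref{prop:UVdet} gives that $\Omega$ is a space $L_{2^{n-1}\lambda,n}$ with $\lambda=\deg Q_i$, proving (ii).

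Finally, for the ``moreover'' statement: for $R\in k[X]$, replacing $\alpha$ by $\alpha+R$ replaces $Q_i$ by $Q_i+(RV_i)^2\in k[X]$ and $U_i$ by $U_i+RV_i$, leaving $\beta=1/\Delta$ and the above verification intact, so $\alpha+R$ satisfies (i) and (ii). Conversely, if a rational function $\alpha'$ satisfies (i) then $\bigl(\tfrac{V_i^2}{\Delta}+\alpha'V_i\bigr)^2\in k[X]$, hence $\tfrac{V_i^2}{\Delta}+\alpha'V_i\in k[X]$ since $k$ is perfect, so $(\alpha'-\alpha)V_i\in k[X]$ for all $i$; as $\bigcap_i W_i'=\{0\}$ forces $\gcd(V_1,\dots,V_n)=1$, B\'ezout's identity gives $\alpha'-\alpha\in k[X]$. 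This shows the set of valid rational functions is exactly $\{\alpha+R : R\in k[X]\}$, and the argument is complete.
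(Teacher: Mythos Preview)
Your proof is correct and follows essentially the same approach as the paper: reduce (i) to a CRT system for $A$ (the paper's $\gamma$), establish compatibility by showing $v\mid V_i+V_j$ whenever $v$ lies in both $\widetilde V_i$ and $\widetilde V_j$, and conclude (ii) via Propositions~\ref{prop:UV} and~\ref{prop:UVdet}. The only differences are cosmetic---the paper indexes the congruences by the elements $W\in\frakW$ rather than by the index $i$, obtains $v\mid V_i+V_j$ through additivity of the structural polynomial $P_{\frakW_{ij}}$ rather than your direct factorization $V_i+V_j=\delta_{e_i+e_j}(\ul W)$, and handles the converse of the ``moreover'' by CRT uniqueness instead of your B\'ezout argument; your undefined $q_i$ should read $w_i$ (leading coefficients of the $W_i$), and the degree claim $\deg Q_i=1+2d(2^{n-1}-1)$ deserves a line of justification (it follows from $\deg A<\deg\Delta$, which forces $\deg U_i<\deg V_i$).
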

\begin{proof}
We set $\beta = \frac{1}{\Moore{n}{W}}$ and remark that the expression of $Q_i$ given at the point $(i)$ is equal to $U_i^2 + X V_i^2$ where $U_i:=\alpha V_i + \beta V_i^2$.
As a result, to prove the theorem we need to define $\alpha \in k(X)$ such that both the conditions of Proposition \ref{prop:UV} are met. 
In fact, by virtue of that Proposition, it follows that $\ul{Q}$ satisfies Proposition \ref{prop:UVdet} and then gives rise to a basis of a space $L_{2^{n-1}\lambda, n}$.

We begin by observing that the hypothesis that $V_i= \Moore{n-1}{\hat{W_i}}$ for every $i$, combined with Theorem \ref{thm:FM4.1.} implies that
\[ \Moore{n}{V} = \Moore{n}{W}^{2^{n-1}-1}.\]
Hence condition $(ii)$ of Proposition \ref{prop:UV} is met.
It then remains to find an appropriate $\alpha \in k(X)$ such that $\alpha V_i + \beta V_i^2$ is a polynomial for every $i=1, \dots, n$.

We denote by $\frakW$ the space $\langle W_1, \dots, W_n \rangle_{\F_2}$ and for every subspace $\frakW' \subset \frakW$ we consider its structural polynomial $P_{\frakW'}(Y):=\prod_{W\in {\frakW'}} (Y-W) \in k(X)[Y]$ (cf. Definition \ref{defn:strucpoly}).
We then consider the spaces 
\[\frakW_i := \langle W_1, \dots, \widehat{W_i}, \dots, W_n \rangle_{\F_2} \;\; \mbox{for every} \; i=1, \dots, n,\] and note that every polynomial in $W_i + \frakW_i$ divides $P_{\frakW_i}(W_i)$ by definition.
Conversely, if $W \in \frakW$ divides $P_{\frakW_i}(W_i)$ then $W \in W_i + \frakW_i$.
In fact, it is clear that $W$ has a factor in common with a polynomial $W' \in W_i + \frakW_i$ and since any two distinct elements in $\frakW - \{0\}$ are coprime, then $W=W'$.

We then apply Lemma \ref{lem:FM2.2.} to get that $\beta V_i = \frac{\Moore{n-1}{\hat{W_i}}}{\Moore{n}{W}} = \frac{1}{P_{\frakW_i}(W_i)}$.
If we write $\alpha=\beta\gamma$ with $\gamma \in k[X]$, we then have that
\[U_i= \alpha V_i + \beta V_i^2 = \beta V_i (\gamma + V_i) = \frac{\gamma + V_i}{P_{\frakW_i}(W_i)}.\]

We now find the desired $\gamma$ as a solution of a system of congruences in $k[X]$ with coprime moduli:
for every $W \in \langle W_1, \dots, W_n \rangle - \{0\}$, we let $k_W:= \min \{ k\in \N: W \in \langle W_1, \dots, W_{k} \rangle \}$ and we consider the set of congruences
\begin{equation}\label{eq:cong}
 \left\{ \gamma \equiv V_{k_W}  \; \; \mod W \;| \;W \in \frakW- \{0\} \right\}.
 \end{equation}
We claim that this system is equivalent to the condition that $P_{\frakW_i}(W_i)$ divides $\gamma + V_i$ for every $i \in \{1, \dots, n\}$.
We start by proving the following statement:

\begin{align*}
(*)\; \text{If } W \in \langle W_1, \dots, W_n \rangle - \{0\} \text{ divides both } P_{\frakW_i}(W_i) \text{ and } P_{\frakW_j}(W_j), \text{ then } W \text{ divides } V_i + V_j.
\end{align*}

To prove $(*)$, let $i \neq j$ and set $\frakW_{ij} := \frakW_{i} \cap \frakW_{j}$.
If $W \in \langle W_1, \dots, W_n \rangle - \{0\}$ is a common divisor of $P_{\frakW_i}(W_i)$ and $P_{\frakW_j}(W_j)$ for $i \neq j$, then by the observation above $W \in (W_i + \frakW_i) \cap (W_j + \frakW_j)= W_i+W_j + \frakW_{ij}$.
Therefore, $W$ divides also $P_{\frakW_{ij}}(W_i+W_j)$.
Combining the additivity of $P_{\frakW_{ij}}$ with Lemma \ref{lem:FM2.2.}, we have 
\[ P_{\frakW_{ij}}(W_i+W_j) = P_{\frakW_{ij}}(W_i)+P_{\frakW_{ij}}(W_j)= \frac{V_i + V_j}{\Delta_{n-2}(W_1, \dots, \widehat{W_i}, \dots, \widehat{W_j}, \dots, W_n)},\]
which implies that $W$ divides $V_i+V_j$.

We choose now $\gamma$ verifying (\ref{eq:cong}) and $W$ dividing $P_{\frakW_j}(W_j)$. 
We have $W=W_j +Z_j$ with $Z_j\in \frakW_j$ and therefore $k_W\geq j$.
If $k_W=j$ then we have that $\gamma \equiv V_j \; \mod W$ by (\ref{eq:cong}).
If $k_W>j$ then set $i=k_W$. We have that $W=W_{i}+Z_{i}$ with $Z\in \frakW_{i}$ and therefore $W$ divides $P_{\frakW_{i}}(W_{i})$. Using $(*)$, we have that $W$ divides $V_i+V_j$ and by (\ref{eq:cong}) we conclude that 
$\gamma \cong V_i \mod W$  and therefore $\gamma \cong V_j \mod W$.
This shows that every solution $\gamma$ to (\ref{eq:cong}) satisfies 
\[\gamma \equiv V_j \; \mod W \mbox{ for all pairs } (j, W) \mbox{ with } W | P_{\frakW_j}(W_j).\]
As a result, $\gamma \equiv V_i \; \mod P_{\frakW_i}(W_i)$ for every $i \in \{1, \dots, n\}$.

Since the elements of $\frakW - \{0\}$ are pairwise coprime, we can apply the Chinese reminder theorem to find a unique solution $\gamma \in k[X]$  to (\ref{eq:cong}) such that $\deg(\gamma) < d (2^{n}-1)$.
Moreover, all the solutions to (\ref{eq:cong}) are of the form $\gamma + R \Moore{n}{W}$, for $R \in k[X]$. 
As a consequence, $\alpha$ satisfies $(i)$ and $(ii)$ if, and only if, $\alpha + R$ satisfies $(i)$ and $(ii)$.

In summary, for every choice of the polynomials $W_1, \dots, W_n$, and of a solution of (\ref{eq:cong}), this construction gives rise to a unique $n$-tuple of polynomials $U_1, \dots, U_n$ satisfying Proposition \ref{prop:UV}. 
Since the nonzero elements of $\frakW$ are all of the same degree, and $R$ is fixed, then also the resulting $Q_i=U_i^2 + XV_i^2$ are such that all the nonzero polynomials in $\langle Q_1, \dots, Q_n \rangle_{\F_2}$ are all of the same degree, denoted by $\lambda$.
The $n$-tuple $\ul{Q}$ satisfies the conditions of Proposition \ref{prop:UVdet} and then gives rise to a basis of a space $L_{\lambda 2^{n-1}, n}$.
\end{proof}

We want to investigate the parameter space of spaces $L_{\lambda 2^{n-1},n}$ arising from the construction of Theorem \ref{thm:p=2generaln}. For this, we use the notion of equivalence between spaces $L_{\lambda 2^{n-1},n}$ introduced in Definition \ref{defn:equiv}.
For every $d\geq0$, let $\calW_d \subset (k[X])^n$ be the quasi-affine variety consisting of elements $(W_1, \dots, W_n)$ of the same degree $d$ such that all the non-zero elements of $\calW:= \langle W_1, \dots, W_n \rangle_{\F_2}$ are of degree $d$ and pairwise coprime.
This is defined inside the space of coefficients $k^{(d+1)n}$ by the inequation 
\[\Delta_n(w_1, \dots, w_n) \prod_{W, W' \in \calW - \{0\}} \Res(W, W') \neq 0,\]
where, for every $i=1, \dots, n$, $w_i$ is the leading term of $W_i$.
Then the construction of Theorem \ref{thm:p=2generaln} associates with every element of $(W_1, \dots, W_n, R) \in \calW_d \times k[X]$ a $n$-tuple of polynomials $(Q_1, \dots, Q_n)$ giving rise to a basis of a space $L_{\lambda 2^{n-1}, n}$.
More precisely, the construction yields $Q_i=U_i^2 + X V_i^2$ with $V_i=\Delta_{n-1}(\hat{\ul{W_i}})$ and $U_i= (\alpha+R) V_i + \frac{V_i^2}{\Moore{n}{W}}$, where $\alpha$ denotes the unique proper rational function (i.e. of the form $\frac{\gamma}{\Moore{n}{W}}$ with $\gamma \in k[X]$ such that $\deg(\gamma)< \deg(\Moore{n}{W})$) satisfying conditions $(i)$ and $(ii)$ in Theorem \ref{thm:p=2generaln}.

\begin{corollary}\label{cor:WRp=2}
Let $n\geq 3$, $d\geq 0$ and let $\Omega$ and $\Omega'$ be the spaces $L_{\lambda 2^{n-1}, n}$ arising respectively from elements $(W_1, \dots, W_n, R)$ and $(W_1', \dots, W_n', R') \in\calW_d \times k[X]$ as in Theorem \ref{thm:p=2generaln}.
Then the following hold:
\begin{enumerate}
\item We have $\Omega=\Omega'$ if, and only if, $R=R'$ and $\langle W_1', \dots, W_n' \rangle_{\F_2} = \langle W_1, \dots, W_n \rangle_{\F_2}$.
\item We have that $\Omega$ is equivalent to $\Omega'$ if, and only if, there exists $b \in k$ such that
\[\langle W_1'(X), \dots, W_n'(X) \rangle_{\F_2} = \langle W_1(X+b^2), \dots, W_n(X+b^2) \rangle_{\F_2} \; \mbox{and} \; R'(X) = R(X) + b.\] 
\item If $\lambda \equiv 1 \mod (2^n-2)$, then there exist infinitely many equivalence classes of spaces $L_{\lambda 2^{n-1}, n}$. 
They arise from elements $(W_1, \dots, W_n, R) \in \calW_d \times k[X]$ such that $R$ is constant.
\item If $\lambda$ is even, then there exist infinitely many equivalence classes of spaces $L_{\lambda 2^{n-1}, n}$. 
They arise from elements $(W_1, \dots, W_n, R) \in \calW_d \times k[X]$ such that $\deg(R) \geq 1$.
\end{enumerate}
\end{corollary}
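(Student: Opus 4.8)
The plan is to deduce each item from the explicit description of the construction in Theorem \ref{thm:p=2generaln}, together with Corollary \ref{coro:poles} (a space $L_{\lambda 2^{n-1},n}$ is determined by its set of poles) and Proposition \ref{prop:pullbackQi} (how poles transform under étale pullbacks). First I would record that the set of poles of the space $\Omega$ arising from $(W_1,\dots,W_n,R)$ is the zero set of $\Delta_n(\ul{Q})$, where $Q_i = U_i^2 + XV_i^2$, $V_i = \Delta_{n-1}(\ul{\widehat{W_i}})$ and $U_i = (\alpha+R)V_i + \beta V_i^2$ with $\beta = 1/\Delta_n(\ul{W})$; and that, by the construction, $\alpha$ depends only on the span $\langle W_1,\dots,W_n\rangle_{\F_2}$ (indeed on the structural polynomials $P_{\frakW_i}(W_i)$, which are intrinsic to the span since the nonzero elements of $\frakW$ are pairwise coprime and of common degree). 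For item $(i)$, one direction is clear: if $R=R'$ and the spans agree then the associated $n$-tuples $\ul{Q}$ are obtained from one another by a $\GL_n(\F_2)$ change of basis (Proposition \ref{prop:common}(i)), so $\Omega = \Omega'$. For the converse, I would use that $\Omega=\Omega'$ forces $\calP(\Omega)=\calP(\Omega')$ by Corollary \ref{coro:poles}, hence $\Delta_n(\ul{Q})$ and $\Delta_n(\ul{Q'})$ have the same roots; comparing these with the factorisations from Equation \eqref{eq:Mooreprod} and tracking the contribution of $R$ (which shifts $U_i$ by $RV_i$ and hence perturbs $Q_i$ by a controlled amount) should pin down $R=R'$ first, and then $\langle W_i\rangle = \langle W_i'\rangle$.

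For item $(ii)$, recall from the remark after Definition \ref{defn:equiv} that $\Omega$ is equivalent to $\Omega'$ iff $\calP(\Omega') = a\calP(\Omega)+c$ for some $a\in k^\times, c\in k$. I would substitute $X \mapsto aX+c$ into the defining data and see how $W_i(X)$, $V_i$, $\alpha$, $R$ transform. In characteristic $2$ the key simplification is that $Q_i = U_i^2 + XV_i^2$ and the "$X$" here is the affine coordinate: composing with $X\mapsto X+b^2$ turns $XV_i^2$ into $(X+b^2)V_i^2 = XV_i^2 + (bV_i)^2$, which is absorbed into the square term, i.e. into a shift of $U_i$ by $bV_i$, i.e. exactly a shift $R \mapsto R+b$. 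A scaling $X \mapsto aX$ would rescale degrees inconsistently with the normalisation $\beta^{2^{n-1}-1}\Delta_n(\ul{V}) = 1$ unless $a=1$, so only translations survive; and the translation must be by a square $b^2$ to keep the $W_i$ inside $k[X]$ with the same span structure. Carrying out this bookkeeping carefully gives the stated equivalence.

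For items $(iii)$ and $(iv)$ I would combine $(ii)$ with the degree formula $\lambda = 2\deg(Q_i) = 2\max(\deg U_i^2, 1+\deg V_i^2)$. One computes $\deg V_i = d(2^{n-1}-1)$ and, when $R$ is constant, $\deg U_i = \deg(\alpha V_i) = \deg(\beta V_i^2) - \deg\Delta_n(\ul W) = \dots$, yielding $\lambda \equiv 1 \pmod{2^n-2}$ in terms of $d$; letting $d$ vary over the infinitely many admissible values (using that $\calW_d$ is non-empty for $d$ large, which follows from genericity of coprime tuples), and using $(ii)$ to see that distinct $d$ — or even a fixed $d$ with $W_i$ ranging over a positive-dimensional family modulo the finite translation action — give infinitely many inequivalent spaces, one gets $(iii)$. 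For $(iv)$, taking $\deg(R) \geq 1$ adds $R V_i$ as the dominant term of $U_i$, so $\deg U_i = \deg R + \deg V_i$ and $\lambda = 2(\deg R + d(2^{n-1}-1))$ is even; again infinitely many choices of $R$ (or of the $W_i$) produce infinitely many equivalence classes via $(ii)$, since equivalence only identifies $R$ with $R+b$ for $b\in k$, a one-parameter family, while the space of polynomials $R$ of a given positive degree has larger dimension.

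\textbf{Main obstacle.} The technical heart is item $(ii)$ — and, underpinning it, the converse direction of $(i)$: one must show that the pole set $\calP(\Omega) = Z(\Delta_n(\ul Q))$ genuinely remembers both the span $\langle W_1,\dots,W_n\rangle_{\F_2}$ and the polynomial $R$, with no further coincidences. This requires understanding precisely how a translation $X \mapsto X+b^2$ interacts with the (non-linear, characteristic-$2$-specific) passage $W_i \rightsquigarrow V_i \rightsquigarrow U_i \rightsquigarrow Q_i$, and ruling out that two genuinely different $(W,R)$ could accidentally produce the same factorisation of $\Delta_n(\ul Q)$ up to an affine change of coordinate. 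Keeping careful track of degrees and leading coefficients at each stage, and exploiting that the nonzero elements of $\frakW$ are pairwise coprime (so the factorisation \eqref{eq:Mooreprod} of $\Delta_n(\ul Q)$ into the $Q_i + \sum_{j<i}\epsilon_j Q_j$ is into pairwise coprime factors with disjoint zero sets), is what makes the argument go through.
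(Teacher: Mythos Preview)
Your plan is right in outline, and the forward directions of (i)--(ii) together with the degree counts in (iii)--(iv) match the paper. The gaps are in the two converse directions, which is exactly where you flag the obstacle---but your proposed route does not supply the missing mechanism.

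For the converse of (i), you go via Corollary~\ref{coro:poles} to $\calP(\Omega)=\calP(\Omega')$ and then propose to ``compare factorisations'' of $\Delta_n(\ul Q)$ to extract $R$ and the span. The paper does not argue this way, and it is unclear how your approach would succeed: knowing the zero sets agree does not by itself let you read off $R$. The paper instead invokes Proposition~\ref{prop:common}(iii) (which you use only for the forward direction) to get $\ul Q'=\ul Q M$ for some $M\in\GL_n(\F_2)$; by uniqueness of the decomposition $Q_i=U_i^2+XV_i^2$ (and $m_{ij}^2=m_{ij}$ in $\F_2$) this gives $\ul U'=\ul U M$ and $\ul V'=\ul V M$. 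Writing each $U_i$ in terms of $(\alpha+R,\Delta_n(\ul W))$ and in terms of $(\alpha'+R',\Delta_n(\ul W'))$ and subtracting yields
\[
\Big(\tfrac{1}{\Delta_n(\ul W)}-\tfrac{1}{\Delta_n(\ul W')}\Big)\,V_i=(\alpha'+R')-(\alpha+R)\quad\text{for all }i,
\]
and since the $V_i$ are pairwise distinct (this is where $n\ge 3$ is used) both sides vanish. One then still needs to recover the span of the $W_i$ from $\ul V'=\ul V M$; this requires Proposition~\ref{prop:FM5.1.} (the map $\varphi$) and Lemma~\ref{lem:Anew} (cofactor matrices), neither of which appears in your plan.

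For the converse of (ii), your assertion that a scaling $X\mapsto aX$ is ruled out by a degree inconsistency in the normalisation $\beta^{2^{n-1}-1}\Delta_n(\ul V)=1$ is not correct: that identity scales homogeneously. The paper's argument is finer. From $Q_i'(X)=Q_i(aX+b^2)$ one reads off $V_i'(X)=\sqrt{a}\,V_i(aX+b^2)$, hence via Proposition~\ref{prop:FM5.1.} $W_i'(X)=\theta\,a^{1/(2(2^{n-1}-1))}W_i(aX+b^2)$ for some $\theta$ with $\theta^{2^{n-1}-1}=1$. Computing $U_i'$ in two different ways and matching the $V_i'^2/\Delta_n(\ul{W'})$ coefficients gives $\theta^{2^n-1}a^{1/(2^n-2)}=1$; combined with $\theta^{2^n-2}=1$ this forces $a=1$. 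This root-of-unity bookkeeping is the missing step.

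A minor point on (iii): for a fixed $\lambda\equiv 1\pmod{2^n-2}$ there is a unique $d=(\lambda-1)/(2^n-2)$, so ``letting $d$ vary'' does nothing; the infinitude comes entirely from varying $\ul W$ inside $\calW_d$, which you do mention as an aside.
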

\begin{proof}
Let $\ul{Q}=(Q_1,\dots, Q_n)$ be the $n$-tuple arising from $(W_1, \dots, W_n, R)$ and $\ul{Q}'=(Q_1',\dots, Q_n')$ be the $n$-tuple arising from $(W_1', \dots, W_n', R')$. We recall that the writings
\begin{equation}\label{eq:QUV}
Q_i = U_i^2 + X V_i^2 \; \mbox{and} \; Q_i' = U_i'^2 + X V_i'^2
\end{equation}
are unique, and call $\ul{U}, \ul{U}', \ul{V}, \ul{V}'$ the $n$-tuples arising from these.
We now prove separately the statements of the corollary:
\begin{enumerate}
\item 
We first show that to have $\Omega=\Omega'$ it is necessary and sufficient to find a matrix $M \in GL_n(\F_2)$ such that $\ul{V}M = \ul{V}'$ and $\ul{U}M = \ul{U}'$.
In fact, by Proposition \ref{prop:QiMi} and \ref{prop:QiMiii} $\Omega=\Omega'$ if, and only if, there exists $M\in GL_n(\F_2)$ such that $\ul{Q}'=\ul{Q} M$.
By uniqueness of \ref{eq:QUV}, this is equivalent to have $(\ul{U}')^2=(\ul{U}^2)M$ and $(\ul{V}')^2=(\ul V^2)M$, and, since the entries of $M$ are elements of $\F_2$, it is equivalent to have that $\ul U'=\ul U M$ and $\ul{V}'=\ul{V} M$.

We then prove the two implications stated above:
\begin{itemize}
\item Let $R=R'$ and $\langle W_1', \dots, W_n' \rangle_{\F_2} = \langle W_1, \dots, W_n \rangle_{\F_2}$, so that there is $M \in GL_2(\F_2)$ such that $\ul{W}'=\ul{W} M$.
Then by Lemma \ref{lem:Anew}, we have that $\ul{V'} = \ul{V} M^c$.
Since $R=R'$, this implies that we also have that $\ul{U'} = \ul{U} M^c$ and hence $\Omega=\Omega'$.
\item Conversely, suppose that $\ul U'=\ul U M$ and $\ul{V}'=\ul{V} M$ for some $M \in GL_n(\F_2)$.
Then, applying $M^{-1}$ on both sides of the equation
\[U' = (\alpha' + R') \ul{V}' + \frac{(\ul{V}')^2}{\Moore{n}{W'}}\]
results in $\ul{U}=(\alpha'+R') \ul{V} + \frac{\ul{V}^2}{\Moore{n}{W'}}.$
We also have that $\ul{U}= (\alpha+R) \ul{V} + \frac{\ul{V}^2}{\Moore{n}{W}}$,
so that
\[(\alpha +R)+ \frac{\ul V}{\Moore{n}{W}}=(\alpha' +R')+ \frac{\ul V}{\Moore{n}{W'}}.\]

By rearranging the terms, we get that
 \[\left( \frac{1}{\Moore{n}{W}} - \frac{1}{\Moore{n}{W'}} \right) \cdot V_i = (\alpha'+R')-(\alpha+R) \;\; \forall \;\; i=1, \dots, n\]
and since $V_i \neq V_j$ if $i \neq j$, and $n\geq 3$, we deduce that $\Moore{n}{W}=\Moore{n}{W'}$ and $\alpha+R=\alpha'+R'$.

Note that $\ul{V} = \varphi(\ul{W})$ and $\ul{V'} = \varphi(\ul{W'})$, where $\varphi$ is the map defined in Proposition \ref{prop:FM5.1.}. 
Then, by applying this proposition, we have that
\[ \varphi(\ul{V}) = \Moore{n}{W}^{2^{n-2}-1}\ul{W}^{2^{n-2}} \; \mbox{and} \;  \varphi(\ul{V'}) = \Moore{n}{W'}^{2^{n-2}-1}\ul{W'}^{2^{n-2}}.\]
Since $\ul{V}'=\ul{V} M$, then we can apply Lemma \ref{lem:Anew} and get $\varphi(\ul{V}') = \varphi(\ul{V}) M^c$, which, combined with the equality $\Moore{n}{W}=\Moore{n}{W'}$ results in 
\[ \ul{W'}^{2^{n-2}} = \ul{W}^{2^{n-2}}M^c,\]
proving that $\langle W_1, \dots, W_n \rangle_{\F_2} = \langle W_1', \dots, W_n' \rangle_{\F_2}$.
Finally, from $\alpha+R=\alpha'+R'$ and the fact that there is a unique proper rational function in the set $\{ \alpha + R | R \in k[X]\}$, we obtain that $\alpha=\alpha'$ and $R=R'$.
\end{itemize}

\item If $\ul{W'}$ and $R'$ are as in the statement, then it is easy to see that they give rise to $\Omega'$ equivalent to $\Omega$, as we can see by applying the construction that $Q_i'(X)=Q_i(X+b^2)$ for all $i=1, \dots, n$.

Conversely, assume that there exist $a\in k^\times$ and $b \in k$ such that $Q_i(aX+b^2) = Q_i'(X)$.
We can see that
\begin{align*}
Q_i(aX+b^2)&=U_i(aX+b^2)^2+b^2V_i(aX+b^2)^2+aXV_i(aX+b^2)^2\\
 &=[U_i(aX+b^2)+bV_i(aX+b^2)]^2+X[\sqrt{a}V_i(aX+b^2)]^2,
\end{align*}
resulting in the relations
\[\begin{cases}
U_i'(X) = U_i(aX+b^2) + bV_i(aX+b^2) \\
V_i'(X)=\sqrt{a}V_i(aX+b^2).
\end{cases}\]
From the latter of these, combined with Proposition \ref{prop:FM5.1.}, we get that \[W_i'(X)= \theta a^{\frac{1}{2(2^{n-1}-1)}} W_i(aX+b^2)\] 
for $\theta \in k$ such that $\theta^{2^{n-1}-1}=1$.
In particular, we have that
\[\Moore{n}{W'} =\theta^{2^n - 1} a^{\frac{2^n-1}{2(2^{n-1}-1)}} \Moore{n}{W(aX+b^2)}.\]
We now compute $U_i'$ in two different ways.
On the one hand, we have
\begin{align*}
U_i' &= U_i(aX+b^2) + bV_i(aX+b^2) = (\alpha + R + b)V_i(aX +b^2) + \frac{V_i(aX+b^2)^2}{\Moore{n}{W(aX+b^2)}} \\
 &= \frac{(\alpha + R + b)}{\sqrt{a}} V_i'(X) + \frac{a^{-1} V_i'(X)^2}{\theta^{1-2^n} a^{\frac{1-2^n}{2(2^{n-1}-1)}} \Moore{n}{W'}} = \frac{(\alpha + R + b)}{\sqrt{a}} V_i'(X) + \theta^{2^n -1} a^{\frac{1}{2^n-2}} \frac{V_i'(X)^2}{\Moore{n}{W'}}.
\end{align*}
On the other hand, we have
\[  U_i' = (\alpha' + R')V_i'(X) + \frac{V_i'(X)^2}{\Moore{n}{W'}}\]
and since this both computations are true for all $i$, an argument analogue to the one used to prove $(i)$ shows that $\theta^{2^n -1} a^{\frac{1}{2^n-2}}= \theta a^{\frac{1}{2^n-2}}=1$.
But we have that $\theta^{2^n-2}=\theta^{2(2^{n-1}-1)}=1$ and hence $a=1$.
As a result, $\alpha + R + b = \alpha' + R'$ and hence $\alpha=\alpha'$ and $R' = R+b$.

We have shown so far that applying the construction of Theorem \ref{thm:p=2generaln} to the $n+1$-tuple $\left( W_1(X+b^2), \dots, W_n(X+b^2), R+b \right)$ produces $(Q_1', \dots, Q_n')$ giving rise to a basis of $\Omega'$ equivalent to $\Omega$.
By applying part $(i)$ we conclude that the same $\Omega'$ can only arise from $(W_1', \dots, W_n', R')$ with 
\[\langle W_1', \dots, W_n' \rangle_{\F_2} = \langle W_1(X+b^2), \dots, W_n(X+b^2) \rangle_{\F_2} \; \mbox{and} \; R' = R + b.\]
\item If $R$ is constant, we have
\[ \lambda=1+2 \deg V_i=1+2d(2^{n-1}-1) = 1 + d(2^n-2).\]
For every $\lambda \equiv 1 \mod 2^n-2$ we can then construct infinite equivalence classes of spaces $L_{\lambda 2^{n-1}, n}$ by picking $d=\frac{\lambda-1}{2^n-2}$ and all possible $n$-tuples $(W_1, \dots, W_n) \in \calW_d$.
\item If $R$ is not constant, then $\deg(U_i) > \deg(V_i)$ and $\lambda$ is even. 
In fact, any even value of $\lambda$ can be achieved in this way, simply by choosing $W_1, \dots, W_n$ to be constant and $R$ to be of degree $\frac{\lambda}{2}$.
\end{enumerate}
\end{proof}

\begin{remark}\label{rmk:L_2lambda,2}
Even though in Theorem \ref{thm:p=2generaln} we assumed $n \geq 3$, the construction of the $n$-tuple $\ul{Q}$ in its proof makes sense also for $n=2$.
Namely, for every pair $(W_1, W_2) \in \calW_d$ one can set $V_1=W_2$, $V_2=W_1$ and fix a solution $\gamma$ to the congruences
\[\begin{cases}
\gamma \equiv V_1\mod V_2 \\
\gamma \equiv V_1 \mod V_1+ V_2\\
\gamma \equiv V_2 \mod V_1.
\end{cases}\]
By considering 
\[U_1 = \frac{(\gamma + V_1)}{V_2(V_1+V_2)} \; \; \mbox{and} \; \; U_2 = \frac{(\gamma + V_2)}{V_1(V_1+V_2)},\]
one produces a pair $(U_1^2+X V_1^2, U_2^2+XV_2^2)$ giving rise to a space $L_{2\lambda, 2}$.
We can show that this construction recovers all the spaces $L_{2\lambda, 2}$.
In fact, starting from pairs $\ul{U}, \ul{V}$ satisfying Bezout's identity $U_1V_2+V_1U_2=1$ (cf. the discussion following Proposition \ref{prop:det} to see that these define all possible spaces $L_{2\lambda, 2}$), we have that 
\[\begin{cases}
U_1V_2 \equiv 1 \mod V_1 \\
U_2V_1 \equiv 1 \mod V_2.
\end{cases}\]
Using these, we verify that the polynomial 
\[\gamma=V_1^2U_2+V_2^2U_1= V_1+V_2U_1(V_1+V_2)=V_2+U_2V_1(V_1+V_2)\]
solves the congruences (\ref{eq:cong}) and yields
\[U_1 = \frac{(\gamma + V_1)}{V_2(V_1+V_2)} \; \; \mbox{and} \; \; U_2 = \frac{(\gamma + V_2)}{V_1(V_1+V_2)}\]
as above. For every choice of $\ul{U}$ and $\ul{V}$ we can find such a $\gamma$: hence we can recover in this way all the spaces $L_{2\lambda, 2}$.
\end{remark}

\begin{remark}\label{rmk:p=2pullpack}
We remark that, if $\Omega$ a space $L_{2^{n-1}\lambda, n}$ is arising from an element $(W_1, \dots, W_n, R) \in \calW_d \times k[X]$ as in Theorem \ref{thm:p=2generaln}, then every \'etale pullback of $\Omega$ also arise from this construction.
More precisely, let $S(X)\in k[X]$ with $S'(X)=1$ and let $\sigma^\star(\Omega)$ be the pullback of $\Omega$ with respect to the morphism $\sigma \in \mathrm{End}(\PP^1_k)$ induced by $X \mapsto S(X)$ (cf. Lemma \ref{lem:pullbacketale}).
Then, we know by Proposition \ref{prop:pullbackQi} that $(Q_1(S(X)), \dots, Q_n(S(X)))$ is the $n$-tuple arising from Theorem \ref{thm:Pagotn} for $\sigma^\star(\Omega)$.
One verifies that $Q_i(S(X))= (U_i(S(X))+RV_i(S(X))^2 + X V_i(S(X))^2$, and as a result one proves that $\sigma^\star(\Omega)$ is the space arising from $( W_1\circ S, \dots, W_n \circ S, R+S-X)$.
\end{remark}

\subsubsection{Non-existence of spaces $L_{p^{n-1},n}$ in odd characteristic}
Another application of Proposition \ref{prop:det} is the following theorem, which settles the case $\lambda=1$ for every $n$ and every $p$.

\begin{theorem}\label{thm:lambda=1}
Let $p>2$. Then, there exist no space $L_{p^{n-1},n}$.
\end{theorem}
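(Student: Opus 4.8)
Suppose for contradiction that $\Omega$ is a space $L_{p^{n-1},n}$ for some prime $p>2$. By Theorem~\ref{thm:Pagotn} there are polynomials $Q_1,\dots,Q_n\in k[X]$ of degree $1$, say $Q_i=q_iX+r_i$ with $\Delta_n(\underline q)\neq 0$, giving rise to a basis $(\omega_1,\dots,\omega_n)$ of $\Omega$. The single geometric fact I intend to contradict is that, by Remark~\ref{rmk:simplepolesanyn} and Lemma~\ref{lem:Pagotpoles}, $P:=\Delta_n(\underline Q)$ has simple roots and its zero set $\calP(\Omega)$ has $\frac{p^{n}-1}{p-1}>1$ elements; equivalently, the nonzero elements of $W:=\langle Q_1,\dots,Q_n\rangle_{\F_p}$ --- all of degree $1$ --- do not all share a common root.

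The plan is to push the degree-$1$ hypothesis through the criterion of Proposition~\ref{prop:det}: with $A:=\det(\underline Q',\underline Q,\underline Q^{p},\dots,\underline Q^{p^{n-2}})$ it reads $\bigl(A\,P^{p-2}\bigr)^{(p-2)}=1$. Since $Q_i'=q_i$ is a constant, the column operation $\underline Q\mapsto\underline Q-X\underline Q'=\underline r$ followed by $\F_p$-multilinear expansion in the columns $\underline Q^{p},\dots,\underline Q^{p^{n-2}}$ shows that $\deg A$ is a sum of distinct powers $p^{j}$ with $1\le j\le n-2$ (or $0$); in particular $\deg A\le p+p^{2}+\dots+p^{n-2}$ and $\deg A\equiv 0\pmod p$, while $\deg P=1+p+\dots+p^{n-1}$ with leading coefficient $\Delta_n(\underline q)$, so $\deg P\equiv 1\pmod p$. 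Hence $\deg\bigl(A\,P^{p-2}\bigr)\le p^{n-1}(p-1)-2$ and $\deg\bigl(A\,P^{p-2}\bigr)\equiv p-2\pmod p$; since $p^{n-1}(p-1)-2>p-2$, a nonzero leading coefficient of $A\,P^{p-2}$ would produce a nonzero term of positive degree in $\bigl(A\,P^{p-2}\bigr)^{(p-2)}$, which is absurd. Thus the top coefficient of $A$ must vanish; writing $\underline r=\sum_{k=0}^{n-1}\gamma_k\,\underline q^{p^{k}}$ in the $k$-basis $\{\underline q^{p^{k}}\}$ of $k^{n}$, a short Moore-determinant computation identifies it with a nonzero multiple of $\gamma_{n-1}$, so $\gamma_{n-1}=0$.

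Now iterate. Killing the current batch of top coefficients of $A$ lowers $\deg A$, but $\deg\bigl(A\,P^{p-2}\bigr)$ stays $\equiv p-2\pmod p$, so as long as it exceeds $p-2$ the new leading coefficient of $A$ is again forced to vanish. Up to nonzero scalars depending only on $\underline q$, these successive leading coefficients are monomials in $\gamma_1,\dots,\gamma_{n-2}$ involving at least one $\gamma_k$ with $k\ge 1$, so each stage imposes the vanishing of a product of such $\gamma_k$'s. The descent stops only when $A$ has become a constant, necessarily nonzero (otherwise $A\,P^{p-2}=0$ and its $(p-2)$-th derivative is $0\neq 1$); but then $\bigl(cP^{p-2}\bigr)^{(p-2)}=1$ is still impossible, because $\deg(P^{p-2})=(p-2)(1+p+\dots+p^{n-1})\equiv p-2\pmod p$ exceeds $p-2$, so the leading term of $P^{p-2}$ again contributes a positive-degree term to the $(p-2)$-th derivative. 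Therefore the descent always terminates in a contradiction: either in the identity $\underline r=\gamma_0\,\underline q$, i.e.\ $Q_i=q_i(X+\gamma_0)$ for all $i$, which makes all $Q_i$ share the root $-\gamma_0$ and contradicts the first paragraph, or in the impossible identity just described. (For $n=2$ the claim is classical; directly, after normalising $Q_2=q_2X$ one has $\omega_1=\frac{dX}{(q_1^{p}-q_1q_2^{p-1})X^{p}-q_2^{p-1}r_1X^{p-1}+r_1^{p}}$, and Proposition~\ref{prop:Llambda1}(iii) forces the coefficient of $X^{p-1}$ in the $(p-1)$-th power of the denominator to equal $1$ --- hence $r_1\neq 0$ and all three displayed coefficients are nonzero --- while forcing the coefficient of $X^{2p-1}$ to vanish, although a direct expansion gives $-2(q_1^{p}-q_1q_2^{p-1})(q_2^{p-1}r_1)(r_1^{p})^{p-3}\neq 0$ for it.)

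The main obstacle is the combinatorial bookkeeping of this descent for general $n$: one must control precisely how $\deg A$ decreases and which monomials in the $\gamma_k$ appear as its successive leading coefficients, and must handle the branching caused when a product of several $\gamma_k$'s vanishes. What makes everything close up is exactly that $\deg Q_i=1$ --- so that $A$ is assembled only from Moore determinants of the constant vectors $\underline q^{p^{k}}$ carrying boundedly many powers of $X$ --- together with the congruence $1+p+\dots+p^{n-1}\equiv 1\pmod p$, which guarantees that the relevant top-degree terms are never annihilated by the $(p-2)$-th derivative.
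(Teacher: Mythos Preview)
Your core argument is correct, and the gap you worry about is illusory: the $\gamma_k$-tracking and ``branching'' are unnecessary detours. You already observed (via the column reduction and multilinear expansion) that every monomial of $A=\det(\underline Q',\underline Q,\underline Q^{p},\dots,\underline Q^{p^{n-2}})$ has degree divisible by $p$. That single fact closes the argument in one step, not a descent: if $A\neq 0$ then $\deg(AP^{p-2})\equiv p-2\pmod p$ and $\deg(AP^{p-2})\geq (p-2)\deg P>p-2$, so the leading term of $(AP^{p-2})^{(p-2)}$ survives and has positive degree, contradicting $(AP^{p-2})^{(p-2)}=1$; and $A=0$ contradicts it too. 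No bookkeeping in the $\gamma_k$ is required, because you never need to know \emph{why} a given coefficient of $A$ vanishes---only that it must.

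The paper's proof is close in spirit but packages things more cleanly. Since $\underline Q''=0$, one has both $A'=0$ (hence $(AP^{p-2})^{(p-2)}=A\cdot(P^{p-2})^{(p-2)}$) and $P''=0$ (hence, by the easy induction $(P^{p-2})^{(j)}=(-1)^j(j+1)!(P')^jP^{p-2-j}$, one gets $(P^{p-2})^{(p-2)}=(P')^{p-2}$). The criterion of Proposition~\ref{prop:det} becomes $A\cdot(P')^{p-2}=1$, forcing $P'$ to be constant; but $P'=\det(\underline q,\underline Q^{p},\dots,\underline Q^{p^{n-1}})$ has leading term $\Delta_n(\underline q)\,X^{p+\cdots+p^{n-1}}\neq 0$. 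This is slightly more explicit than your mod-$p$ degree count---it identifies $(AP^{p-2})^{(p-2)}$ exactly rather than only its leading behaviour---but both arguments ultimately exploit the same feature of $\lambda=1$: linearity of the $Q_i$ kills all second derivatives.
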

\begin{proof}
Let us prove this by contradiction. Assuming that such a space exist and applying Proposition \ref{prop:det} there exist a $n$-tuple $Q_1, \dots, Q_n$ of polynomials of degree 1 satisfying 
\begin{equation}\label{eq:det}
    \left( \det(\underline{Q}',\underline{Q},\underline{Q}^p,\cdots,\underline{Q}^{p^{n-2}})\cdot \Moore{n}{Q}^{p-2} \right)^{(p-2)}=1.
\end{equation}
We have $Q_i=u_iX+v_i$ with $\Moore{n}{u}\neq 0$ and so clearly $\underline{Q}''=0$, from which one sees that $\det(\underline{Q}',\underline{Q},\underline{Q}^p,\cdots,\underline{Q}^{p^{n-2}})'=0$ and that $\Moore{n}{Q}''=0$. Equation (\ref{eq:det}) then becomes
$$\det(\underline{Q}',\underline{Q},\underline{Q}^p,\cdots,\underline{Q}^{p^{n-2}})\cdot \left( \Moore{n}{Q}^{p-2} \right)^{(p-2)}=1. $$
       Using the fact that  $\Moore{n}{Q}''=0$, we can compute by induction on $j$ that  
       $$\left(\Moore{n}{Q}^{p-2}\right)^{(j)}=(-1)^j(j+1)!(\Moore{n}{Q}')^j\Moore{n}{Q}^{p-2-j} \;\; \mbox{for every } \;\; 1 \leq j \leq p-2.$$
       In particular, we have
       $$ \left(\Moore{n}{Q}^{p-2}\right)^{(p-2)}=(\Moore{n}{Q}')^{p-2}$$
       and Equation (\ref{eq:det}) becomes
       $$ \det(\underline{Q}',\underline{Q},\underline{Q}^p,\cdots,\underline{Q}^{p^{n-2}})\cdot(\Moore{n}{Q}')^{p-2} =1.$$
In particular, the polynomial $(\Moore{n}{Q}')^{p-2}$ is of degree zero. It follows that $\Moore{n}{Q}'$ is also of degree zero since we are assuming that $p\geq 3$.
However, when we express this constant polynomial in terms of $\ul{u}$ and $\ul{v}$ we get
   $$ \Moore{n}{Q}'=\det(\underline{u},\underline{u}^pX^p+\underline{v}^p,\cdots,\underline{u}^{p^{n-1}}X^{p^{n-1}}+\underline{v}^{p^{n-1}})$$
   and by multilinearity of determinant, the term of higher degree of $\Moore{n}{Q}'$ is $\Moore{n}{u}X^{p+p^2+\cdots+p^{n-1}}.$
   Since $\Moore{n}{u}\neq 0$ by hypothesis, we get a contradiction and the result follows.
\end{proof}

\begin{remark}
When $n=2$, Theorem \ref{thm:lambda=1} corresponds to the first part of \cite[Th\'eor\`eme 9]{Pagot02} which says that, for $p>2$, there are no spaces $L_{p,2}$.
This is enough to give an obstruction to lifts of certain $(\Z/p\Z)^2$ extensions, thanks to \cite[Theorem III.3.1]{GreenMatignon99} which implies that any lift of a $(\Z/p\Z)^2$--extension $k[\![z]\!]/k[\![t]\!]$ with a single ramification jump of conductor $m+1=p$ has equidistant branch locus.
Following this strategy, one achieves \cite[Th\'eor\`eme 13]{Pagot02}, which states that for $p>2$ lifts of $(\Z/p\Z)^2$--extensions with a single ramification jump of conductor $m+1=p$ do not exist.

Unfortunately, the generalization of \cite[Theorem III.3.1]{GreenMatignon99} to $m+1>p$ is not true, and one needs to consider also lifts with branch loci having more complicated geometries.
Work in progress by Pagot addresses this issue by considering generalisations of spaces $L_{m+1,n}$ that correspond to these geometries, achieving a proof that lifts of $(\Z/p\Z)^n$--extensions with a single ramification jump of conductor $m+1=p^{n-1}$ do not exist.
\end{remark}



\section{Standard $L_{(p-1) p^{n-1},n}$ spaces and their subspaces}

In this section, we analyze certain spaces $L_{\lambda p^{n-1},n}$ for $\lambda=p-1$ that appear implicitly in \cite{Matignon99}.
In that paper, they are instrumental to lift to characteristic zero certain local actions of elementary abelian groups and they have since been made explicit in \cite{Pagot02} as well as in \cite{FresnelMatignon23}, in a generalized form.
Due to their relevance, and for the fact that they were among the first examples of spaces $L_{\lambda p^{n-1},n}$ to be discovered, we choose to give these spaces a name and we call them \emph{standard $L_{(p-1) p^{n-1},n}$ spaces}.
In the first subsection, we explain how to construct these spaces using Theorem \ref{thm:Pagotn}.
Namely, we find sufficient conditions for a set of polynomial $Q_1, \dots, Q_n $ as in the theorem to produce a standard space.
In the second subsection, we show that we can construct all the subspaces of a given standard space using étale pullbacks of other standard spaces.

\subsection{Standard spaces}\label{sec:standard}

We begin by giving the definition of a standard $L_{p^{n-1}(p-1),n}$ space.
We recall from Corollary \ref{coro:poles} that the set of poles of a space $L_{\lambda p^{n-1},n}$ characterizes the space itself.

\begin{definition}\label{defn:standard}
A \emph{standard $L_{p^{n-1}(p-1),n}$ space} $\Omega$ is a space $L_{p^{n-1}(p-1),n}$ such that there exists a $n$-tuple $\ul{a}:=(a_1,a_2,\cdots, a_n)\in k^n$ with $\Moore{n}{a} \neq 0$ for which $\calP(\Omega)=\langle a_1,a_2,\cdots, a_n \rangle_{\Fp} - \{0\}$.
\end{definition}

For $i \in \{1, \dots, n\}$ and $\ul{a}$ as above, let
\begin{equation}\label{eq:standard} \omega_i:= \sum_{(\epsilon_1, \dots, \epsilon_n) \in {\Fp}^n} \frac{\epsilon_i}{X - \sum_{j=1}^n \epsilon_j a_j} dX.
\end{equation}
Then $\langle \omega_1, \dots, \omega_n\rangle_{\Fp}$ is a standard space by \cite[Proposition 3.1]{FresnelMatignon23}.
This shows that, for every $\ul{a}\in k^n$ whose entries are $\Fp$-linearly independent, there exists a standard space whose set of poles is $\langle a_1,a_2,\cdots, a_n \rangle_{\Fp} - \{0\}$.

We can recover a description of standard spaces using the polynomials $Q_i$'s of Theorem \ref{thm:Pagotn}, as we establish in the following proposition

\begin{proposition}\label{prop:standard}
Let $\omega_1, \dots, \omega_n$ be defined by the identities \eqref{eq:standard}, and let \[Q_i:= \mu \frac{\Delta_{2}(a_i, X)}{X} =\mu(a_iX^{p-1}-a_i^p) \] with $\mu^{p^{n-1}}=\frac{-1}{\Delta_n(\underline a)^{p-1}}$.
Then we have $\omega_i=\frac{P_i}{P}dX$ where $P:=\Delta_n(\underline{Q})$ and $P_i:=~(-1)^{i-1}\Delta_{n-1}(\underline {\hat Q_i})$.
\end{proposition}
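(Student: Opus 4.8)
The plan is to compute both $P := \Delta_n(\underline{Q})$ and the $P_i := (-1)^{i-1}\Delta_{n-1}(\underline{\hat{Q}_i})$ explicitly in terms of Moore determinants in the $a_j$'s, using the multilinearity and the "composition" behaviour of Moore determinants under substitution $X_i \mapsto Q_i$, and then match the resulting $\frac{P_i}{P}\,dX$ against the partial-fraction expression \eqref{eq:standard}. Since by Corollary \ref{coro:poles} a standard space is determined by its poles $\langle a_1,\dots,a_n\rangle_{\Fp}-\{0\}$, it actually suffices to check that the $n$-tuple $\underline{Q}$ of Proposition \ref{prop:standard} satisfies the hypotheses of Theorem \ref{thm:Pagotn0} and that the forms $\frac{P_i}{P}\,dX$ it gives rise to have the correct poles with residues in $\F_p^\times$; but in fact I would aim for the sharper statement, proving the exact equality $\omega_i = \frac{P_i}{P}\,dX$ on the nose.

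First I would observe that $Q_i = \mu(a_iX^{p-1}-a_i^p) = \mu\, a_i(X^{p-1}-a_i^{p-1})$ has leading coefficient $\mu a_i$, and since $\Delta_n(\mu\underline{a}) = \mu^{1+p+\dots+p^{n-1}}\Delta_n(\underline a)\neq 0$ by hypothesis, condition $\Delta_n(\underline q)\neq 0$ of Definition \ref{defn:Qi} holds. Next, the key computational input: the polynomial $Q_i$ equals (up to the scalar $\mu$) the structural expression $\frac{\Delta_2(a_i,X)}{X} = a_iX^{p-1}-a_i^p$, i.e. $Q_i/\mu$ is (minus) the additive polynomial whose kernel is $\F_p a_i$ evaluated at\dots — more usefully, $a_iX^{p-1}-a_i^p = -\prod_{\epsilon\in\F_p}(a_i\epsilon - X)/\!\!\!$ — here I would invoke the identities of Appendix \ref{app:Moore} (Theorem \ref{thm:FM4.1.} and the product formula \eqref{eq:Mooreprod}) relating $\Delta_n$ of a tuple of such "structural" polynomials back to $\Delta_n$ of the original tuple. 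Concretely, applying $\Delta_n$ to the $n$-tuple $\big(\tfrac{\Delta_2(a_1,X)}{X},\dots,\tfrac{\Delta_2(a_n,X)}{X}\big)$ should, by the substitution principle for Moore determinants (the $X_i\mapsto f(X_i)$ behaviour, here with $f$ the degree-$p$ additive map attached to the line $\F_p X$), collapse to $\frac{\Delta_{n+1}(a_1,\dots,a_n,X)}{X}\cdot(\text{power of }\Delta_n(\underline a))$ or similar; and $\Delta_{n+1}(\underline a, X)$ is (up to sign and the leading coefficient $\Delta_n(\underline a)$) the structural polynomial $\prod_{v\in\langle a_1,\dots,a_n\rangle}(X-v)$, whose nonzero roots are exactly $\calP(\Omega)$. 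Tracking the scalar $\mu$ with its normalisation $\mu^{p^{n-1}} = -1/\Delta_n(\underline a)^{p-1}$ is exactly what makes $P = \Delta_n(\underline Q)$ come out monic-up-to-the-right-constant so that $\frac{P_i}{P}dX$ has residues in $\F_p$ rather than in $\F_p^\times\cdot(\text{scalar})$; this is the analogue of the role played by the constant $c$ in the proof of Proposition \ref{prop:Pagot}.

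For the numerators: $P_i = (-1)^{i-1}\Delta_{n-1}(\underline{\hat Q_i})$ is, by the same substitution principle applied to the $(n-1)$-tuple $(Q_1,\dots,\widehat{Q_i},\dots,Q_n)$, expressible through $\Delta_n(a_1,\dots,\widehat{a_i},\dots,a_n,X)$ times powers of $\mu$ and of $\Delta_{n-1}$ of the corresponding sub-tuple of $\underline a$. Then $\frac{P_i}{P}$ telescopes: most structural factors cancel, leaving $\frac{P_i}{P} = \sum_{v\in\langle a\rangle-\{0\}} \frac{c_v}{X-v}$ by partial fractions, and one computes the residue $c_v$ at $v = \sum_j\epsilon_j a_j$ via L'Hôpital / resultants exactly as in the residue computation inside the proof of Theorem \ref{thm:generic}'s lemmas, getting $c_v = \epsilon_i$ after using the normalisation of $\mu$. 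Matching this with \eqref{eq:standard} finishes it; and then $\omega_i = \frac{P_i}{P}dX$ is visibly logarithmic, so Theorem \ref{thm:Pagotn0} confirms $\Omega$ is a space $L_{(p-1)p^{n-1},n}$, with poles $\langle a\rangle - \{0\}$, i.e. standard.

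The main obstacle I anticipate is bookkeeping the exponents of $\mu$ and of $\Delta_n(\underline a)$, $\Delta_{n-1}(\ldots)$ through the repeated use of Theorem \ref{thm:FM4.1.}: the Moore determinant of a tuple of degree-$d$ polynomials multiplies out exponents like $1+p+\dots+p^{n-2}$, and one must check that the specific normalisation $\mu^{p^{n-1}} = -1/\Delta_n(\underline a)^{p-1}$ is precisely what clears these so that the residues land in $\F_p$ (indeed equal $\epsilon_i$) rather than merely in $k^\times$. A secondary, purely technical point is justifying the "substitution principle" for $\Delta_n$ under $X_j \mapsto a_jX^{p-1}-a_j^p$ in the exact form needed — but since $a_jX^{p-1}-a_j^p$ is an $\F_p$-linear (additive) map in the sense relevant to Appendix \ref{app:Moore}, this should follow directly from Theorem \ref{thm:FM4.1.} and the product formula \eqref{eq:Mooreprod}, together with the identity $\Delta_{n+1}(\underline a, X) = \Delta_n(\underline a)\cdot P_{\langle a_1,\dots,a_n\rangle_{\Fp}}(X)$ recorded there.
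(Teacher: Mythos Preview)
Your approach is essentially the paper's: factor out the common scalar $\mu/X$ from the Moore determinants, collapse $\Delta_n(\Delta_2(a_1,X),\dots,\Delta_2(a_n,X))$ and $\Delta_{n-1}(\widehat{\Delta_2(a_i,X)})$ to $\Delta_{n+1}(\underline a,X)$ and $\Delta_n(\underline{\hat a_i},X)$ respectively, and then identify the resulting ratio with $\omega_i$. Two points are worth flagging.

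First, the ``substitution principle'' you need is not Theorem~\ref{thm:FM4.1.} (which computes $\Delta_n$ of the tuple of $(n{-}1)$-minors $\Delta_{n-1}(\underline{\hat a_i})$, a different animal) but Lemma~\ref{lem:Moore} specialized to $m=1$: that lemma gives exactly
\[
\Delta_n\bigl(\Delta_2(a_1,X),\dots,\Delta_2(a_n,X)\bigr)=X^{p+p^2+\cdots+p^{n-1}}\,\Delta_{n+1}(\underline a,X),
\]
and the analogous identity one dimension down. With this in hand, the exponent bookkeeping you were worried about becomes a one-line cancellation: the ratio $P_i/P$ comes out as $\mu^{-p^{n-1}}\cdot(-1)^{i-1}\Delta_n(\underline{\hat a_i},X)/\Delta_{n+1}(\underline a,X)$.

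Second, for the matching step the paper does \emph{not} compute residues by partial fractions as you plan; instead it simply quotes \cite[Proposition~3.1]{FresnelMatignon23}, which already records the closed form
\[
\omega_i=-\Delta_n(\underline a)^{p-1}\,\frac{(-1)^{i-1}\Delta_n(\underline{\hat a_i},X)}{\Delta_{n+1}(\underline a,X)}\,dX,
\]
and then the normalization $\mu^{p^{n-1}}=-1/\Delta_n(\underline a)^{p-1}$ finishes it in one stroke. Your residue-by-residue verification would also work and is more self-contained, but it is the longer road.
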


\begin{proof}
First, we compute 
\[\frac{P_i}{P}=(-1)^{i-1}\left(\frac{X}{\mu}\right)^{p^{n-1}}\frac{\Delta_{n-1}(\ul{\widehat{\Delta_2(a_i, X)}})}{\Delta_{n}(\Delta_2(a_1, X), \dots, \Delta_2(a_n, X))},\] which combined with Lemma
\ref{lem:Moore} (for $m=1$) gives 
\[\frac{P_i}{P}=
\frac{1}{\mu^{p^{n-1}}}\frac{(-1)^{i-1}\Delta_{n}(\ul{\hat{a}}_i, X)}{\Delta_{n+1}(\ul{a}, X)}.\]

Using \cite[Proposition 3.1]{FresnelMatignon23}, we can write
\[\omega_i = -\Moore{n}{a}^{p-1} \frac{(-1)^{i-1}\Delta_{n}(\ul{\hat{a}}_i, X)}{\Delta_{n+1}(\ul{a}, X)}dX \;\; \forall \; \; i=1, \dots, n,\]
and this proves the proposition.
\end{proof}

\begin{example}\label{ex:p(p-1)}
Let $n=2$ and choose $a_1=1$, $a_2 \in \F_{p^2} - \F_p$.

Then we have 
\[\begin{cases} 
\omega_1 &= \displaystyle \sum_{i=0}^{p-1} \left(\frac{1}{X-(1+i a_2)}+ \frac{2}{X-(2+i a_2)} +\dots+\frac{p-1}{X-(p-1+i a_2)} \right) dX \\
\omega_2 &= \displaystyle \sum_{i=0}^{p-1} \left(\frac{1}{X-(i+ a_2)}+ \frac{2}{X-(i+2 a_2)} +\dots+ \frac{p-1}{X-(i+(p-1) a_2)} \right) dX.
\end{cases}\]
Each $\omega_i$ has $p(p-1)$ poles. 
The poles in $\calP(\Omega)$ are all the elements of the multiplicative group $\F_{p^2}^\times$ and those in common between $\omega_1$ and $\omega_2$ are those elements of $\F_{p^2}$ that neither belong to the one-dimensional $\F_p$-vector space generated by $1$ nor to the one generated by $a_2$.
\end{example}

\begin{remark}
Let us fix a $n$-tuple $\ul{a} \in k^n$ such that $\Moore{n}{a} \neq 0$ and let us denote by $A:=\langle a_1, \dots, a_n \rangle_{\Fp}$ the $\Fp$-vector space generated by $\ul{a}$ and by $A^\star$ its dual.
We can make several remarks:
\begin{enumerate}
\item 
If $\Omega$ is the standard space with $\calP(\Omega) = A - \{0\}$, there is a natural pairing
\[
\begin{tikzcd}[row sep=0.05cm]
\psi: \Omega \times A  \arrow[r] & \Fp \\
(\omega, x)   \arrow[r, mapsto] & 
\begin{cases}
        \res_x(\omega)  \; \; \text{if} \;\; x \neq 0, \\
        0  \; \; \text{if} \;\; x = 0.
    \end{cases}
\end{tikzcd}\]

We remark that this is a perfect pairing.

In fact, writing $x=\epsilon_1 a_1 + \dots + \epsilon_n a_n$, we deduce from equation \eqref{eq:standard} that
\[ \res_x \left(\sum_i \alpha_i \omega_i \right)= \sum_i \alpha_i \epsilon_i,\]

which shows that $\psi$ is bilinear and that $\psi(\omega_i,a_j)=\delta_{ij}$, the Kronecker symbol.
Hence $\psi$ is perfect and, as a result, the homomorphism $\omega$ to $x \mapsto \psi(\omega,x)$ defines an isomorphism $\iota: \Omega \cong A^\star$.
We can explicitly describe the inverse of $\iota$ as follows:
for all $\varphi\in A^\star-\{0\}$ define the differential form $\omega_\varphi:=\sum_{a\in A}\frac{\varphi(a)dX}{X-a}$.
Then the isomorphism
 \[
\begin{tikzcd}[row sep=0.05cm]
\iota': A^\star  \arrow[r] & \Omega \\
\varphi  \arrow[r, mapsto] & 
\begin{cases}
        \omega_\varphi  \; \; \text{if} \;\; \varphi \neq 0, \\
        0  \; \; \text{if} \;\; \varphi = 0.
    \end{cases}
\end{tikzcd}\]
satisfies $\iota\circ \iota' = \id$ and $\iota' \circ \iota = \id$.
\item Using Definition \ref{defn:standard} it is easy to show that the Frobenius twist of the standard $L_{\lambda p^{n-1},n}$ space associated to the vector space $A=\langle a_1, \dots, a_n \rangle_{\Fp}$ is the standard space associated to $\Phi(A)=\langle a_1^p, \dots, a_n^p \rangle_{\Fp}$.
\item We can apply a translation to a standard space $\Omega$ to get a space $L_{p^{n-1}(p-1),n}$ \emph{equivalent} to $\Omega$, in the sense of the definition at the beginning of Section \ref{sec:class}.
This will not be in general a standard space.
\end{enumerate}
\end{remark}

\subsection{Subspaces of standard spaces via  \'etale pullbacks}

Let $A$ be a $n$-dimensional $\Fp$-vector subspace of $k$.
The construction of Section \ref{sec:standard} results in a map
\[ A \to \Omega(A) \]
associating with it the standard $L_{p^{n-1}(p-1),n}$ space whose set of poles is $A - \{0\}$.
It is easy to see that every subspace of $\Omega(A)$ of dimension $t<n$ is a space $L_{p^{n-1}(p-1),t}$, and that it can not be equivalent to a standard space.

However, we have seen in Lemma \ref{lem:pullbacketale} how to find spaces $L_{\lambda d p^{n}, n}$ starting from spaces $L_{\lambda p^{n-1}, n}$ via the pullback of the differential forms under suitable morphisms of degree $dp$.
In this section, we characterize the subspaces of the standard spaces constructed in \S \ref{sec:standard} as \'etale pullbacks of standard spaces of lower dimension.
More precisely, let $n>1$, $1\leq t\leq n$ and $A :=\langle a_{1},\cdots, a_n \rangle_{\Fp}\subset k$ with $\underline {a}:=(a_1,a_2,\cdots, a_n)$ such that $\Delta_n(\ul{a})\neq 0$. 
Let $A_{n-t}= \langle a_1,\dots,a_{n-t} \rangle_{\F_p}$, $A_{n-t}^s=\langle a_{n-t+1},\dots,a_n \rangle_{\F_p}$ and let $P_{A_{n-t}}$ be the structural polynomial of $A_{n-t}$ (cf. Definition \ref{defn:strucpoly}).
For every $n-t+1\leq i\leq n$, we set $\widetilde {a}_i:=P_{A_{n-t}}(a_i)$.
Then, by Lemma \ref{lem:structuralMoore} we have that $\Delta_t(\ul{\tilde{a}})\neq 0$ and hence the $\F_p$-vector space $\widetilde{A_{n-t}^s}: = \langle \tilde a_{n-t+1},\dots,\tilde a_{n} \rangle_{\F_p}$ is $t$-dimensional.
Let $\Omega(A)$ be the standard space associated with $A$ and let \[\omega_i= \sum_{(\epsilon_1, \dots, \epsilon_n) \in {\F_p^n}} \frac{\epsilon_i}{X - \sum_{j=1}^n \epsilon_j a_j} dX,\ 1\leq i\leq n\] be the elements of its usual basis.
Let $Q_1, \dots, Q_n$ be the $n$-tuple of polynomials arising from Theorem \ref{thm:Pagotn}, which we know by proposition \ref{prop:standard} to satisfy $Q_i=\mu \frac{\Delta_{2}(a_i, X)}{X}$ with $\mu^{p^{n-1}}=\frac{-1}{\Delta_n(\underline a)^{p-1}}$.
Similarly, we let 
\[\tilde{\omega}_i= \sum_{(\epsilon_{n-t+1}, \dots, \epsilon_n) \in {\F_p^t}} \frac{\epsilon_i}{X - \sum_{j=n-t+1}^n \epsilon_j \tilde{a}_j} dX,\ n-t+1\leq i\leq n\] be the elements of the usual basis of the standard $L_{p^{t-1}(p-1),t}$-space $\Omega(\widetilde{A_{n-t}^s})$.
Finally, we let $\widetilde{Q_{n-t+1}}, \dots, \widetilde{Q_{n}}$ be the $t$-tuple of polynomials arising from Theorem \ref{thm:Pagotn}, which satisfy $\widetilde{Q_i}=\tilde\mu \frac{\Delta_{2}(\tilde{a_i}, X)}{X}$ with $\tilde\mu^{p^{t-1}}=\frac{-1}{\Delta_t(\tilde a_{n-t+1},\dots,\tilde a_{n} )^{p-1}}$.

\begin{proposition}\label{prop:standardsubsp} 
Consider the subspace $\Omega^s= \langle \omega_{n-t+1}, \dots, \omega_n \rangle_{\Fp} \subset\Omega(A)$. 

\begin{enumerate}
\item We have that $\Omega^s=\sigma^\star(\Omega(\widetilde{A_{n-t}^s})).$ More precisely, for every $n-t+1\leq i\leq n$, $\omega_i$ is equal to $\sigma^\star(\tilde\omega_{i})$, the pullback of $\tilde\omega_{i}$ with respect to the morphism $\PP^1_k \overset{\sigma}{\rightarrow} \PP^1_k $ induced by $X \mapsto P_{A_{n-t}}(X)$.

\item The $t$-tuple of polynomials arising from Theorem \ref{thm:Pagotn} for the basis $(\sigma^\star\tilde\omega_{n-t+1},\dots,\sigma^\star\tilde\omega_n)$ of $\Omega^s=\sigma^\star(\Omega(\widetilde{A_{n-t}^s}))$ is given by
\[\eta\widetilde{ Q_{n-t+1}}(P_{A_{n-t}}), \dots, \eta\widetilde{ Q_{n}}(P_{A_{n-t}}),\]
with $\eta$ satisfying $\eta^{p^{t-1}}=\frac{1}{P_{A_{n-t}}'}=\frac{(-1)^{n-t}}{\Delta_{n-t}(a_1, \dots, a_{n-t})^{p-1}}.$

These polynomials satisfy the condition 
\[(-1)^{n-t}\eta\tilde{Q_{i}}(P_{A_{n-t}} )=P_{\calQ_{n-t}}(Q_{i}) \;\; \forall \; n-t+1\leq i\leq n,\]
where $P_{\calQ_{n-t}}$ is the structural polynomial of the $\F_p$-vector space $\calQ_{n-t}:=\langle Q_{1},\dots,Q_{n-t}\rangle_{\F_p}$.
\end{enumerate}
\end{proposition}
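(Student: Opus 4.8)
The plan is to prove part (1) first, since part (2) follows from it by combining Proposition \ref{prop:pullbackQi} with Proposition \ref{prop:subspacestruc}. For part (1), I would start from the explicit formula \eqref{eq:standard} for $\omega_i$ and split the sum over $\underline\epsilon\in\F_p^n$ according to the values of $(\epsilon_1,\dots,\epsilon_{n-t})$ and $(\epsilon_{n-t+1},\dots,\epsilon_n)$ separately. For a fixed choice of $(\epsilon_{n-t+1},\dots,\epsilon_n)$, the inner sum over $(\epsilon_1,\dots,\epsilon_{n-t})\in\F_p^{n-t}$ (which contributes with $\epsilon_i$ fixed, since $n-t+1\le i\le n$) is
\[
\sum_{(\epsilon_1,\dots,\epsilon_{n-t})\in\F_p^{n-t}} \frac{\epsilon_i\,dX}{X-\sum_{j=n-t+1}^n\epsilon_j a_j - \sum_{j=1}^{n-t}\epsilon_j a_j} = \epsilon_i\sum_{a\in A_{n-t}}\frac{dX}{(X-c)-a},
\]
where $c:=\sum_{j=n-t+1}^n\epsilon_j a_j$. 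The key elementary identity I would then invoke is that for any $\F_p$-vector space $V$ with structural polynomial $P_V$ one has $\sum_{a\in V}\frac{1}{Y-a}=\frac{P_V'(Y)}{P_V(Y)}$, and since $P_V$ is additive, $P_V(Y-c)=P_V(Y)-P_V(c)$ and $P_V'(Y-c)=P_V'(Y)$ (in fact $P_V'$ is a nonzero constant, equal to $\prod_{a\in V-\{0\}}(-a)$, by Appendix \ref{app:Moore}). Hence the inner sum equals $\epsilon_i\,\dfrac{P_{A_{n-t}}'\,dX}{P_{A_{n-t}}(X)-P_{A_{n-t}}(c)}$.

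Now $P_{A_{n-t}}(c)=P_{A_{n-t}}\big(\sum_{j=n-t+1}^n\epsilon_j a_j\big)=\sum_{j=n-t+1}^n\epsilon_j\,P_{A_{n-t}}(a_j)=\sum_{j=n-t+1}^n\epsilon_j\,\tilde a_j$ by additivity of $P_{A_{n-t}}$. Summing over $(\epsilon_{n-t+1},\dots,\epsilon_n)\in\F_p^t$ gives
\[
\omega_i = P_{A_{n-t}}'(X)\sum_{(\epsilon_{n-t+1},\dots,\epsilon_n)\in\F_p^t}\frac{\epsilon_i\,dX}{P_{A_{n-t}}(X)-\sum_{j=n-t+1}^n\epsilon_j\tilde a_j} = \sigma^\star(\tilde\omega_i),
\]
where $\sigma$ is induced by $X\mapsto P_{A_{n-t}}(X)$ and the last equality is the definition of pullback together with the fact that $\sigma^\star(dX)=P_{A_{n-t}}'(X)\,dX$. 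Since $P_{A_{n-t}}'\in k^\times$, the morphism $\sigma$ is étale away from $\infty$ of degree $p^{n-t}$, so Lemma \ref{lem:pullbacketale} applies. This proves $\Omega^s=\sigma^\star(\Omega(\widetilde{A_{n-t}^s}))$ and the precise assertion $\omega_i=\sigma^\star(\tilde\omega_i)$.

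For part (2), I would simply feed the polynomials $\widetilde{Q_{n-t+1}},\dots,\widetilde{Q_n}$ (which arise from Theorem \ref{thm:Pagotn} for the standard space $\Omega(\widetilde{A_{n-t}^s})$ by Proposition \ref{prop:standard}) and the substitution $S(X)=P_{A_{n-t}}(X)$ into Proposition \ref{prop:pullbackQi}. That proposition immediately yields that the $t$-tuple arising from Theorem \ref{thm:Pagotn} for $\sigma^\star(\Omega(\widetilde{A_{n-t}^s}))$ is $\big(\eta\widetilde{Q_{n-t+1}}(P_{A_{n-t}}),\dots,\eta\widetilde{Q_n}(P_{A_{n-t}})\big)$ with $\eta^{p^{t-1}}=1/S'=1/P_{A_{n-t}}'$; and $P_{A_{n-t}}'=\prod_{a\in A_{n-t}-\{0\}}(-a)$ is, up to sign, $\Delta_{n-t}(a_1,\dots,a_{n-t})^{p-1}$ by \eqref{eq:Mooreprod}, giving the stated value $\eta^{p^{t-1}}=(-1)^{n-t}/\Delta_{n-t}(a_1,\dots,a_{n-t})^{p-1}$. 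Finally, the identity $(-1)^{n-t}\eta\,\widetilde{Q_i}(P_{A_{n-t}})=P_{\calQ_{n-t}}(Q_i)$ should be checked by comparing both sides as polynomials: they are two $t$-tuples arising from Theorem \ref{thm:Pagotn} for the same basis $(-1)^{n-t}(\omega_{n-t+1},\dots,\omega_n)$ of $\Omega^s$ — the left side via Proposition \ref{prop:pullbackQi}, the right side via Proposition \ref{prop:subspacestruc} applied to $\Omega(A)$ with the $Q_i$ of Proposition \ref{prop:standard} — so by the uniqueness in Proposition \ref{prop:QiMii} they must coincide.

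The main obstacle I anticipate is the bookkeeping of scalar constants: matching $\mu$, $\tilde\mu$, $\eta$, the sign $(-1)^{n-t}$, and the various powers of Moore determinants requires care, and one must verify that the normalizations in Proposition \ref{prop:standard} for $\Omega(A)$ and for $\Omega(\widetilde{A_{n-t}^s})$ are compatible after pulling back. Concretely, one needs the relation $\Delta_n(\underline a)=\pm\,\Delta_{n-t}(a_1,\dots,a_{n-t})^{p^{t}}\cdot\Delta_t(\tilde a_{n-t+1},\dots,\tilde a_n)$ (the multiplicativity of Moore determinants along the filtration, Lemma \ref{lem:structuralMoore}) to see that $\mu$ agrees with $\eta\tilde\mu$ up to the appropriate root of unity; invoking the uniqueness statement of Proposition \ref{prop:QiMii} is what lets one bypass a direct verification of all these constants at once.
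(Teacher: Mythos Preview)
Your proposal is correct, and for part~(2) it follows the paper's argument essentially verbatim: invoke Proposition~\ref{prop:pullbackQi} for the left-hand $t$-tuple, Proposition~\ref{prop:subspacestruc} for the right-hand one, adjust the sign via Proposition~\ref{prop:QiMi} with $M=(-1)^{n-t}I$, and conclude by the uniqueness in Proposition~\ref{prop:QiMii}. Your closing remark that uniqueness lets you bypass the direct constant-matching is exactly how the paper proceeds as well.

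For part~(1), your route differs from the paper's and is in fact more direct. The paper writes $\tilde\omega_i=\frac{dF_i}{F_i}$ in logarithmic form, expands the product $F_i(P_{A_{n-t}}(X))$ using additivity of $P_{A_{n-t}}$, observes that $\sigma^\star(\tilde\omega_i)$ has the same poles and residues as $\omega_i$, and then invokes Lemma~\ref{lem:poles1} to conclude equality. You instead use the logarithmic-derivative identity $\sum_{a\in V}\frac{1}{Y-a}=\frac{P_V'(Y)}{P_V(Y)}$ to collapse the inner sum over $A_{n-t}$ directly, obtaining $\omega_i=\sigma^\star(\tilde\omega_i)$ as an equality of rational functions without passing through pole comparison. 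This avoids the appeal to Lemma~\ref{lem:poles1} and makes the computation self-contained; the paper's approach, on the other hand, keeps the logarithmic structure visible throughout and reuses an identification principle already established. Either way the additivity of $P_{A_{n-t}}$ and the constancy of $P_{A_{n-t}}'$ (Lemma~\ref{lem:FM2.2.}) are the key inputs.
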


\begin{proof}
\begin{enumerate}
    \item By Lemma \ref{lem:FM2.2.}, we have that $P_{A_{n-t}}'(X)= (-1)^{n-t}\Delta_{n-t}(a_1, \dots, a_{n-t})^{p-1}\in k^\times$, so that the morphism $\sigma$ gives rise to an etale pullback as in Section \ref{sec:Frobetale}.
 Since the degree of $P_{A_{n-t}}$ is $p^{n-t}$, it follows that $\sigma^\star(\Omega(\widetilde{A_{n-t}^s}))$ is a space $L_{(p-1) p^{n-1},t}$. 
For every $n-t+1\leq i\leq n$, we remark that $\tilde\omega_i=\frac{dF_i}{F_i}$ with 
\[ F_i(X) =\prod_{(\epsilon_{n-t+1}, \dots, \epsilon_n) \in {\F_p^t}}(X - \sum_{j=n-t+1}^n \epsilon_j \tilde{a}_j)^{\epsilon_i} \\
 =\prod_{(\epsilon_{n-t+1}, \dots, \epsilon_n) \in {\F_p^t}}(X - \sum_{j=n-t+1}^n \epsilon_j P_{A_{n-t}}(a_j))^{\epsilon_i}.\]
By using additivity of $P_{A_{n-t}}$, we have that
\begin{align*}
    F_i(P_{A_{n-t}}(X))& =\prod_{(\epsilon_{n-t+1}, \dots, \epsilon_n) \in {\F_p^t}} \left( P_{A_{n-t}}(X-\sum_{j=n-t+1}^n \epsilon_j a_j )\right)^{\epsilon_i} \\
    & =\prod_{a\in A_{n-t}}\prod_{(\epsilon_{n-t+1}, \dots, \epsilon_n) \in {\F_p^t}}(X-(a+\sum_{j=n-t+1}^n \epsilon_j a_j))^{\epsilon_i}\\
    &=\prod_{(\epsilon_1,\dots,\epsilon_n)\in\F_p^n}(X-\sum_{j=1}^n\epsilon_ja_j)^{\epsilon_i}.
\end{align*}
As a result, $\sigma^\star(\tilde\omega_i)=\frac{dF_i(P_{A_{n-t}})}{F_i(P_{A_{n-t}})}$ has the same set of poles as $\omega_i$, and each pole has the same residue for both forms.
By applying Lemma \ref{lem:poles1}, we conclude that $\sigma^\star(\tilde\omega_i)= \omega_i$.
\item By Proposition \ref{prop:pullbackQi}, the $t$-tuple of polynomials arising from Theorem \ref{thm:Pagotn} for the basis $(\sigma^\star\tilde\omega_{n-t+1},\dots,\sigma^\star\tilde\omega_n)$ of $\sigma^\star(\Omega(\widetilde{A^s_{n-t}}))$ is 
$\eta\widetilde{ Q_{n-t+1}}(P_{A_{n-t}}), \dots, \eta\widetilde{ Q_{n}}(P_{A_{n-t}})$,
with $\eta^{p^{t-1}}=\frac{1}{P_{A_{n-t}}'}=\frac{(-1)^{n-t}}{\Delta_{n-t}(a_1, \dots, a_{n-t})^{p-1}}$, as stated.\\
From Proposition \ref{prop:subspacestruc}, we have that the $t$-tuple of polynomials arising from Theorem \ref{thm:Pagotn} for the basis $\big((-1)^{n-t}\omega_{n-t+1},\dots,(-1)^{n-t}\omega_n\big)$ of $\Omega^s$ is $\big(P_{\calQ_{n-t}}(Q_{n-t+1}), \dots, P_{\calQ_{n-t}}(Q_{n})\big)$.
We can apply Proposition \ref{prop:QiMi} to find that the $t$-tuple of polynomials arising from Theorem \ref{thm:Pagotn} for the basis $\big(\omega_{n-t+1},\dots,\omega_n\big)$ of $\Omega^s$ is $\big((-1)^{n-t}P_{\calQ_{n-t}}(Q_{n-t+1}), \dots, (-1)^{n-t}P_{\calQ_{n-t}}(Q_{n})\big)$.
Since by $(i)$ we know that $\sigma^\star(\tilde\omega_i)= \omega_i$, and Proposition \ref{prop:QiMii} ensures the uniqueness of the $t$-tuples of polynomials arising from Theorem \ref{thm:Pagotn} for the same basis, 
we have that
\[(-1)^{n-t}\eta\tilde{Q_{i}}(P_{A_{n-t}} )=P_{\calQ_{n-t}}(Q_{i}) \;\; \forall \; n-t+1\leq i\leq n.\]
\end{enumerate}

\end{proof}

\begin{example}
Let $n=3$, and $t=2$.
Choose $A':=~\langle \alpha, \beta \rangle_{\Fp}~\subset~k$ of dimension 2 and consider the corresponding standard $L_{p(p-1),2}$ space $\Omega(A')$. Let $a_1 \in k^\times$, and define $S(X):=X^p-a_1^{p-1} X$.

Since $k$ is algebraically closed, we can choose a solution $a_2$ to the equation
$S(X)= \alpha$, 
as well as a solution $a_3$ to the equation
 $S(X) = \beta$.
We remark that 
\[S(f_1 a_1 +f_2 a_2 +f_3 a_3) = f_2 S(a_2) +f_3 S(a_3) = f_2 \alpha+ f_3 \beta \;\;\; \forall \; f_1,f_2,f_3 \in \F_p,\]
and this vanishes only when $f_2= f_3 = 0$.
In particular, if $(f_2,f_3) \neq (0,0)$ then $f_1 a_1 +f_2 a_2 +f_3 a_3 \neq 0$.
We then have that $\Delta_3(a_1,a_2,a_3) \neq 0$, so we can consider the standard space $\Omega(A)$ with $A:=\langle a_1, a_2, a_3 \rangle_{\Fp}$ and Proposition \ref{prop:standardsubsp} tells us that the pullback of $\Omega(A')$ with respect to the morphism induced by $X \mapsto S(X)$ is a two dimensional subspace of $\Omega(A)$.

Note that, by making different choices of $a_2, a_3$ we still end up with the same vector space $A$.
Hence, the datum of the space $A'$ together with the element $a_1 \in k^\times$ is enough to determine the space $A$.
\end{example}

Proposition \ref{prop:standardsubsp} proves that subspaces of standard spaces can be realized as \'etale pullbacks of standard spaces.
While a complete characterization of the spaces that can be obtained as \'etale pullbacks of standard spaces is outside the scope of this paper, we include here a lemma that gives a necessary condition for a space $L_{\lambda p^{n-1}, n}$ to be equivalent to such a pullback.

\begin{lemma}\label{lem:etale}
Let $\Omega$ be a space $L_{\lambda p^{n-1}, n}$ obtained as in Lemma \ref{lem:pullbacketale} via an \'etale pullback of $\Omega_1$ a space $L_{\lambda_1 p^{n-1}, n}$. If $\Omega$ can also be obtained via an \'etale pullback of $\Omega_2$, a $L_{\lambda_2 p^{n-1}, n}$ space  with $\lambda_1 < \lambda_2$ then $p |\lambda_2$. 
\end{lemma}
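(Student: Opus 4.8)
The plan is to extract a numerical obstruction from the étale pullback structure by comparing the two coverings. Suppose $\Omega = \phi_1^\star(\Omega_1) = \phi_2^\star(\Omega_2)$ where $\phi_i : \PP^1_k \to \PP^1_k$ is the compactification of a finite étale morphism $\AA^1_k \to \AA^1_k$ branched only over $\infty$, induced by polynomials $S_i(X)$ with $S_i'(X) \in k^\times$ and $\deg(S_i) = d_i$; then $\lambda = d_i \lambda_i$ (in the normalization of Lemma \ref{lem:pullbacketale} we have $\deg S_i = d_i p$ giving $d_i \lambda_i p^n / p^{n-1}$; in any case the degrees multiply). The key geometric input is Proposition \ref{prop:pullbackQi} together with the description of poles: $\calP(\Omega) = \{a \in k : S_i(a) \in \calP(\Omega_i)\}$, so that the covering $\phi_i$ identifies the $p$-cyclic subextension structure on $\Omega$ with that pulled back from $\Omega_i$. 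First I would fix a hyperplane $H \subset \Omega$ and use Lemma \ref{lem:hyperplanes} to count: $|X_H| = \lambda$, while the corresponding set $X_{H_i} \subset \calP(\Omega_i)$ (the image of $X_H$ under $S_i$, or more precisely the fibre structure) has size $\lambda_i$. Since $S_i$ is a polynomial map of degree $d_i$ unramified away from $\infty$, each point of $\calP(\Omega_i)$ has exactly $d_i$ preimages, so $X_H$ is a union of full fibres: $|X_H| = d_i |X_{H_i}|$, i.e. $\lambda = d_i \lambda_i$ as expected.

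The heart of the argument is then a divisibility statement relating $d_1$ and $d_2$. Because $\Omega$ is simultaneously pulled back via $S_1$ and $S_2$, and $\calP(\Omega)$ is the common refinement of the two fibre partitions over $\calP(\Omega_1)$ and $\calP(\Omega_2)$, I would argue that $S_1$ and $S_2$ must be "compatible" as covering maps: there is an intermediate curve. More concretely, the two additive (or $\F_p$-semilinear) polynomial structures — recall from Remark \ref{rmk:PPi} and Proposition \ref{prop:pullbackQi} that pulling back replaces $Q_i$ by $\eta\, Q_i(S)$ — force a relation between $S_1$ and $S_2$. The cleanest route: compose. Since $S_1' , S_2' \in k^\times$ and $k$ is algebraically closed, consider whether $S_2 = T \circ S_1$ for some polynomial $T$; if $\lambda_1 < \lambda_2$ then $d_1 > d_2$, so $S_1$ is the "finer" covering, and one expects $S_1 = T \circ S_2$ with $\deg T = d_1/d_2 > 1$. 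The point of Lemma \ref{lem:pullbacketale} is that an étale-over-$\AA^1$ polynomial of degree $> 1$ must have degree divisible by $p$ (its derivative, a nonzero constant, forces all monomials $X^j$ with $p \nmid j$ except $j=1$ to be absent, and for $\deg > 1$ this needs $p \mid \deg$). Hence $p \mid \deg T = d_1/d_2$. Then from $\lambda = d_1 \lambda_1 = d_2 \lambda_2$ we get $\lambda_2 = \lambda_1 (d_1/d_2)$, and since $p \mid (d_1/d_2)$ we conclude $p \mid \lambda_2$, as desired.

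The step I expect to be the main obstacle is establishing that $S_1$ genuinely factors through $S_2$ (i.e.\ $S_1 = T \circ S_2$). A priori the two pullback presentations of the same space $\Omega$ need not have their covering maps related by composition — one must use that $\calP(\Omega)$, with its hyperplane/fibre combinatorics, determines the covering up to the allowed ambiguity. I would handle this by invoking Proposition \ref{prop:pullbackQi}: if $\ul{Q}^{(i)}$ is the $n$-tuple arising from Theorem \ref{thm:Pagotn} for $\Omega_i$, then the $n$-tuple for $\Omega$ equals both $(\eta_1 Q^{(1)}_j(S_1))_j$ and $(\eta_2 Q^{(2)}_j(S_2))_j$ up to an element of $\GL_n(\F_p)$ by Proposition \ref{prop:QiMiii}; comparing the splitting fields / the structural polynomials $P_{\calQ}$ of the subspaces $\calQ$ generated by these tuples, and using that $\deg Q^{(1)}_j = \lambda_1 < \lambda_2 = \deg Q^{(2)}_j$ forces each $Q^{(2)}_j(S_2)$ to be a polynomial in $S_1$ of lower degree in $X$ per unit, yields a functional equation that — via, say, comparing which roots of $\Delta_n(\ul{Q})$ lie in a common fibre — pins down $S_1$ as a polynomial in $S_2$. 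Once that factorization is in hand, the degree-divisibility observation from Lemma \ref{lem:pullbacketale} closes the argument immediately.
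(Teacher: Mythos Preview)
Your setup is right up to the point where you obtain (via Proposition~\ref{prop:pullbackQi} and Proposition~\ref{prop:QiMii}/\ref{prop:QiMiii}) the identity
\[
Q_{j1}(S_1(X)) \;=\; \eta\, Q_{j2}(S_2(X)) \qquad (j=1,\dots,n)
\]
for some $\eta\in k^\times$, after adjusting the basis of $\Omega_2$. This is also exactly where the paper arrives. The divergence is in what you do next.

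Your plan is to deduce a factorization $S_1 = T\circ S_2$ and then argue that $T$ must itself be an ``\'etale polynomial'' of degree $>1$, hence of degree divisible by $p$. You correctly flag this factorization as the main obstacle, but the sketch you give does not close it. Two concrete issues: (a) the relation $\lambda_1 d_1 = \lambda_2 d_2$ does not by itself force $d_2 \mid d_1$, so there is no \emph{a priori} integer candidate for $\deg T$; (b) your proposed route via ``each $Q^{(2)}_j(S_2)$ is a polynomial in $S_1$'' would need the map $z\mapsto (Q_{11}(z),\dots,Q_{n1}(z))$ to be injective, and it is not in general (e.g.\ biquadratic $Q_{j1}$ identify $z$ and $-z$). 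So the factorization is a genuine gap, not just a detail.

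The paper bypasses the factorization entirely with a one-line derivative argument. Differentiate the displayed identity: since $S_i'\in k^\times$, the chain rule gives
\[
Q_{j1}'(S_1(X)) \;=\; \eta'\, Q_{j2}'(S_2(X))
\]
for some nonzero constant $\eta'$. Now assume for contradiction that $p\nmid \lambda_2$. Then $\deg Q_{j2}' = \lambda_2 - 1$, so the right-hand side has degree exactly $(\lambda_2-1)\,d_2 p$. The left-hand side has degree at most $(\lambda_1-1)\,d_1 p$. But $\lambda_1 d_1=\lambda_2 d_2$ and $d_1>d_2$ (since $\lambda_1<\lambda_2$) give
\[
(\lambda_1-1)d_1 \;=\; \lambda_2 d_2 - d_1 \;<\; \lambda_2 d_2 - d_2 \;=\; (\lambda_2-1)d_2,
\]
a contradiction. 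That is the whole proof: no factorization of coverings, no fibre combinatorics beyond what is already encoded in the $Q$'s.
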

\begin{proof}
For $i=1,2$ let $S_i$ be the polynomial defining the \'etale pullback of $\Omega_i$.
We write
\[ S_1(X)= a_{d_1}X^{d_1p}+ a_{d_1-1}X^{(d_1-1)p}+ \dots + \gamma_1 X + a_0 \]
\[ S_2(X)= b_{d_2}X^{d_2p}+ b_{d_2-1}X^{(d_2-1)p}+ \dots + \gamma_2 X + b_0 \]
with non-zero leading coefficients.
Let $Q_{11}, \dots, Q_{n1}$ be a $n$-tuple of polynomials giving rise to a basis of $\Omega_1$.
Then by Proposition \ref{prop:pullbackQi}, up to multiplying by a constant, $Q_{11}(S_1(X)), \dots, Q_{n1}(S_1(X))$ give rise to a basis of $\Omega$.
Similarly, if $Q_{12}, \dots, Q_{n2}$ give rise to a basis of $\Omega_2$ then (up to multiplying by a constant) $Q_{12}(S_2(X)), \dots, Q_{n2}(S_2(X))$ give rise to a basis of $\Omega$, as well.
Up to changing the basis of $\Omega_2$, we can assume that the bases of $\Omega$ produced in the two cases are the same, and then apply Proposition \ref{prop:QiMii} to get that $Q_{j1}(S_1(X))=\eta Q_{j2}(S_2(X))$ for every $j=1, \dots, n$ and $\eta \in k^\times$.
By definition we have $\deg(Q_{j1})=\lambda_1$ and $\deg(Q_{j2})=\lambda_2$, and the condition $\lambda_1 < \lambda_2$ implies that $d_1 > d_2$, since we ought to have $\lambda_1 d_1 = \lambda_2 d_2$.

Assume by contradiction that $\gcd(p, \lambda_2)=1$.
Then $\deg(Q_{j2}')=\lambda_2-1$, and since $S_2(X)'$ is a constant we have that \[\deg([Q_{j2}(S_2(X))]')=\deg(Q_{j2}'(S_2(X)))=(\lambda_2-1)d_2p.\]
At the same time, 
\[\deg([Q_{j1}(S_1(X))]')= \deg(Q_{j1}'(S_1(X))) \leq (\lambda_1 -1) d_1 p < (\lambda_2-1)d_2p.\]
But $\deg(Q_{j1}(S_1(X)))=\deg(Q_{j2}(S_2(X)))$ and hence a contradiction arises.
\end{proof}

A direct consequence of Lemma \ref{lem:etale} is that a space obtained as a pullback of a space $L_{\lambda p^{n-1},n}$ with $(p, \lambda)=1$ and $\lambda > p-1$ will never be equivalent to the pullback of a standard space.
For example, the spaces $L_{12,2}$ and $L_{15,2}$ constructed in Section \ref{sec:class} give rise by \'etale pullbacks to certain spaces $L_{36d,2}$ and $L_{45d, 2}$ for every positive integer $d$, and the lemma ensures that these spaces are never equivalent to \'etale pullbacks of standard spaces.

\subsubsection{Standard spaces for $p=2$}
When $p=2$, the standard $L_{2^{n-1},n}$-spaces can be also obtained using the techniques of section \ref{sec:p=2}: by Proposition \ref{prop:standard} we have that any such space is generated by a basis arising from $Q_1, \dots, Q_n$ with
\[ Q_i := \mu a_i X + \mu a_i^2 = V_i^2 X + U_i^2\]
for $V_i = (\mu a_i)^{1/2}$ and $U_i = \mu^{1/2} a_i$.
We then have that $U_i = \mu^{-1/2} V_i^2$ and then the condition $(i)$ of Proposition \ref{prop:UV} is met by setting $\alpha=0$ and $\beta= \mu^{-1/2}$.
Recall that $\mu^{2^{n-1}}=\frac{1}{\Moore{n}{a}}$.
We then have that 
\[\beta^{2^{n-1}-1} \Moore{n}{V}= \frac{\Delta_n((\mu a_1)^{1/2}, \dots, (\mu a_n)^{1/2})}{(\mu^{1/2})^{2^{n-1}-1}}= (\mu^{1/2})^{2^{n-1}} \Moore{n}{a}^{1/2}=1,\]
and hence condition $(ii)$ of Proposition \ref{prop:UV} is also met. 
By using Proposition \ref{prop:FM5.1.}, we know that there exist elements $W_1, \dots, W_n$ with $\Moore{n}{W} \neq 0$ giving rise to this space via the construction of Theorem \ref{thm:p=2generaln}.

Conversely, for every $n$-tuple of elements $W_1, \dots, W_n \in k^\times$ such that $\Moore{n}{W} \neq 0$ one can set $V_i=\Delta_{n-1}(\ul{\hat{W_i}}) \in k^\times$ and $U_i= \frac{V_i^2}{\Moore{n}{W}}\in k^{\times}$, and show that the resulting 
\[Q_i = U_i^2 + XV_i^2 = V_i^2 X+\left(\frac{V_i^2}{\Moore{n}{W}}\right)^2\] give rise to a basis of a space $L_{2^{n-1},n}$ whose set of poles is the set of non-zero vectors in $\langle a_1^2, \dots, a_n^2 \rangle_{\F_p}$ with $a_i:=\frac{V_i}{\Delta_n(\ul{W})}$. By definition, this is a standard space.


Finally, we observe that we can also obtain \'etale pullbacks of standard spaces using the the techniques of section \ref{sec:p=2}.
In fact, by Remark \ref{rmk:p=2pullpack}, we can realise \'etale pullbacks of standard spaces with respect to $X \mapsto S(X)$ with $S'(X)=1$ using the the techniques of section \ref{sec:p=2}.
All \'etale pullbacks of standard spaces are equivalent to an \'etale pullback with $S'(X)=1$, so we can realise in this way at least one member from each equivalence class.

\section{Classification of spaces $L_{12,2}$ and $L_{15,2}$ over $\F_3$}\label{sec:class}
The aim of this section is to completely classify spaces $L_{12,2}$ and $L_{15,2}$ up to equivalence and Frobenius equivalence in the case where $p=3$.
We recall that the spaces $L_{3,2}, L_{6,2}$ and $L_{9,2}$ ($p=3$) are classified by Pagot in \cite{Pagot02}, so the results of this section are a natural prosecution of that work.
By exhibiting the existence of a space $L_{15,2}$, we provide the first known example of a space $L_{\lambda p,2}$ where $p-1$ does not divide $\lambda$.
This section relies on computations of Gr\"obner bases to solve polynomial systems in characteristic 3.
The supporting \emph{Macaulay2} code can be found in a public repository\footnote{available at the url \url{https://github.com/DanieleTurchetti/equidistant}}, in a form that can be easily replicated using other computer algebra systems.

\subsection{Classification of spaces $L_{12,2}$}\label{sec:L12,2}

Let $p=3$ and $\lambda=4$.
We set
\[Q_1 :=X^4-t_1X^3+t_2X^2-t_3X+t_4,\] 
\[Q_2 :=a(X^4-s_1X^3+s_2X^2-s_3X+s_4),\]
and we look for conditions such that the pair $(Q_1,Q_2)$ is a prompt for a space $L_{12,2}$.
To this aim, we consider the expressions $R_k$'s of Convention \ref{conv:Ri} as polynomials with coefficients in $\{a,s_1, \dots, s_4, t_1, \dots, t_4\}$
and look for a solution to the system of equations
\begin{equation}\label{eq:Ri4} \begin{cases}
R_1(a, s_i, t_i) \neq 0, \\
R_k(a, s_i, t_i) = 0 & \text{for} \;\; k=2, \dots, 8.\end{cases}
\end{equation}

The main result of this section is the following:

\begin{theorem}\label{thm:class12,2}
Let $k$ be an algebraically closed field containing $\F_3$ and let $a\in k$ be such that $a^2 \not \in \F_3$.
Then, the pair $(Q_{1,a},Q_{2,a})$ with 
\[Q_{1,a}:=X^4-(a^4+a^2-1)X^2+1,\]
\[Q_{2,a} :=a(X^4+(a^4-a^2-1)X^2+a^8),\] 
is a prompt for a space $L_{12,2}$ in $\Omega(k(X))$ denoted by $\Omega_a$. 
Conversely, if $\Omega \subset \Omega(k(X))$ is a space $L_{12,2}$ then there exists $a \in k$ with $a^2 \not \in \F_3$ such that $\Omega$ is equivalent to $\Omega_a$.
\end{theorem}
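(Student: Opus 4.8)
The plan is to split the statement into its two implications and reduce both to the polynomial system \eqref{eq:Ri4} using Proposition \ref{prop:Pagot} (in the guise of Convention \ref{conv:Ri} and Remark \ref{rmk:givesriseto}).

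\textbf{Forward direction.} First I would observe that by Remark \ref{rmk:givesriseto} it suffices to check that $Q_{1,a}$ and $Q_{2,a}$ have $\F_3$-linearly independent leading coefficients (this is exactly the hypothesis $a^2\notin\F_3$, since the leading coefficients are $1$ and $a$, and $1,a$ being $\F_3$-dependent would force $a\in\F_3$, hence $a^2\in\F_3$) and that $\big((Q_{1,a}^3-Q_{1,a}Q_{2,a}^{2})^{2}\big)^{(2)}$ is a nonzero constant. By Convention \ref{conv:Ri} and Proposition \ref{prop:Llambda1}(iii) the latter amounts to verifying $R_k(a)=0$ for $k=2,\dots,8$ together with $R_1(a)\neq 0$, where the $R_k$ are the specializations of the universal polynomials obtained by setting $t_1=s_1=0$, $t_2=-(a^4+a^2-1)$, $t_3=0$, $t_4=1$, $s_2=a^4-a^2-1$, $s_3=0$, $s_4=a^8$. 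This is a finite, if tedious, computation in $\F_3[a]$: expand $(Q_{1,a}^3-Q_{1,a}Q_{2,a}^2)^2$, extract the coefficients of $X^{3k-1}$ for $1\le k\le 8$, and simplify. I would present this as a direct verification (referencing the \emph{Macaulay2} code in the repository), remarking that $R_1(a)$ turns out to be a nonzero polynomial in $a$ that does not vanish precisely because $a^2\notin\F_3$ (one should double check whether the vanishing locus of $R_1$ is contained in $\F_3$, or whether an extra genericity needs to be folded into the hypothesis; I expect the former). One also records that the resulting $\Omega_a$ is genuinely $2$-dimensional, which follows from condition (i) of Proposition \ref{prop:Pagot} being met, again by $a^2\notin\F_3$.

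\textbf{Converse direction.} Here I would start from an arbitrary space $L_{12,2}$, fix a basis, and invoke Proposition \ref{prop:Pagot} to obtain a prompt $(Q_1,Q_2)$ with $\deg(iQ_1+jQ_2)=4$ for all $[i:j]\in\PP^1(\F_3)$. The task is to normalize $(Q_1,Q_2)$ up to equivalence (i.e.\ up to $X\mapsto \alpha X+\beta$ and rescaling of the $Q_i$, cf.\ Definition \ref{defn:equiv} and the remark after Corollary \ref{coro:poles}) into the stated shape. First I would use the affine change of variable to eliminate the $X^3$ coefficient of $Q_1$; Lemma \ref{lem:s1=t1} then forces the $X^3$ coefficient of $Q_2$ to vanish as well, so both $Q_i$ become biquadratic up to the cubic term — more precisely $Q_1=X^4+t_2X^2-t_3X+t_4$, $Q_2=a(X^4+s_2X^2-s_3X+s_4)$. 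The main obstacle is then to show $t_3=s_3=0$, i.e.\ that after this normalization the polynomials are genuinely biquadratic: I expect this to come out of analyzing the low-degree equations among $R_2,\dots,R_8$ (or from a symmetry argument: the involution $X\mapsto -X$ should be shown to preserve the space, perhaps after a further affine adjustment, forcing the odd coefficients to vanish). Once biquadracity is established, the system \eqref{eq:Ri4} collapses to a small number of equations in $a, t_2, t_4, s_2, s_4$, which a Gröbner basis computation solves explicitly, yielding (after rescaling $X$ and $Q_i$ to normalize, say, $t_4=1$) exactly the one-parameter family $(Q_{1,a},Q_{2,a})$ of the theorem, with the constraint $a^2\notin\F_3$ reappearing as the non-degeneracy condition. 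Finally, by Corollary \ref{coro:poles} the space is determined by its poles, so matching $(Q_1,Q_2)$ with $(Q_{1,a},Q_{2,a})$ up to the allowed transformations gives $\Omega\cong\Omega_a$, completing the proof.

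I expect the hardest part to be the reduction to the biquadratic case in the converse: the forward direction is a mechanical check, and the final Gröbner step is handled by the computer, but ruling out genuinely non-biquadratic prompts requires either a clean symmetry argument or a careful hand-analysis of several of the $R_k$, and getting the bookkeeping right (which affine normalizations have been used up, what freedom remains) is where the real work lies.
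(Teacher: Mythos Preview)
Your overall strategy matches the paper's: both directions are reduced to the polynomial system \eqref{eq:Ri4}, the forward direction is a mechanical verification (the paper records the explicit value $\big((Q_{1,a}^3-Q_{1,a}Q_{2,a}^2)^2\big)''=-(a^3-a)^{10}(a^2+1)^5$, whose nonvanishing is exactly $a^2\notin\F_3$), and the converse proceeds by successive normalizations followed by Gr\"obner computations.

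Two places where your sketch is more optimistic than the paper's actual argument:

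\emph{Biquadracity.} The paper does not find a symmetry argument for $s_3=t_3=0$. Instead it argues by contradiction: if (after possibly swapping $Q_1,Q_2$) one has $s_3\neq 0$, a homothety normalizes $s_3=1$, and then a Gr\"obner computation on the full system shows there is no solution. So the reduction to biquadratics is itself computational, not structural; your proposed $X\mapsto -X$ symmetry would need to be justified and the paper gives no indication it can be.

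\emph{After biquadracity.} The system does not collapse directly to the single family $(Q_{1,a},Q_{2,a})$. The paper first extracts from a Gr\"obner computation the relation $s_4=a^8t_4$, then normalizes $t_4=1$; a further Gr\"obner step then produces \emph{six} possible values of $s_2$. Using the transformations $X\mapsto iX$ (with $i^2=-1$) and $(Q_1,Q_2)\mapsto(-Q_1,Q_2)$ these are reduced to two cases. Case~1 gives $(Q_{1,a},Q_{2,a})$ on the nose, but Case~2 does not: one computes the corresponding $t_2$, observes that the resulting Moore determinant $Q_1^3Q_2-Q_1Q_2^3$ coincides with that of $(Q_{1,a-1},Q_{2,a-1})$, and invokes Corollary~\ref{coro:poles} to conclude the space is $\Omega_{a-1}$. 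So the final identification requires a pole-set comparison, not just a parameter match.
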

\begin{proof}
We prove the two statements separately.
\begin{itemize}
\item Let $(Q_{1,a},Q_{2,a})$ be as in the statement.
We can verify that 
\[\left((Q_{1,a}^3-Q_{1,a}Q_{2,a}^2)^2\right)''= -(a^3-a)^{10}(a^2+1)^5.\]
Since we assumed that $a^2 \not \in \F_3$, this is non-zero. We can then apply Proposition \ref{prop:Pagot} and Remark \ref{rmk:givesriseto} to get that the pair $(Q_{1,a},Q_{2,a})$ is a prompt for a space $L_{12,2}$. In the rest of the proof, we will denote this space by $\Omega_a$.
\item Let $\Omega \subset \Omega(k(X))$ be a space $L_{12,2}$, and let $(A,B)$ be a pair of polynomials in $k[X]$ of the form
\[Q_1 =X^4-t_1X^3+t_2X^2-t_3X+t_4,\]
\[Q_2 =a(X^4-s_1X^3+s_2X^2-s_3X+s_4),\] 
such that $(Q_1, Q_2)$ is a prompt for $\Omega$.
Then, we can apply the following successive reductions to get a situation where the coefficients $s_i$'s and $t_i$'s can be retrieved computationally :
\begin{enumerate}[]
\item[`$s_1=t_1=0$':] We know by Lemma \ref{lem:s1=t1} that $s_1=t_1$ and applying the translation $X\to X+s_1$ allows us to suppose $s_1=t_1=0$.
\item[`$s_3=t_3=0$':] Suppose by contradiction that this is not the case. Then, up to replacing $Q_1$ with $\frac{Q_2}{a}$ and $Q_2$ with $a Q_1$, we can assume that $s_3\neq 0$.
In fact, after applying a homothety, we can assume that $s_3=1$.
Then by resolving system (\ref{eq:Ri4}) computationally, one finds that it has no solution (see \cite[Program 6.1]{GithubRepo}).
\item[`$s_4=a^8$, $t_4=1$':] A computation of Gr\"obner basis under the reductions above (see \cite[Program 6.2]{GithubRepo}) returns the condition $s_4=a^8 t_4$.
Since $t_3=0$ we know that $t_4 \neq 0$ otherwise the polynomial $B$ would have multiple roots. We can then pick $\beta \in k$ such that $\beta^4=t_4^{-1}$ and apply the transformation $X\to \beta X$ to get that $t_4=1$.
\end{enumerate}
The reductions above leave us with polynomials of the form
\[Q_1:=X^4+t_2X^2+1,\]
\[Q_2 :=a(X^4+s_2X^2+a^8).\] 
Let us simplify the notation by setting $s:=s_2$ and $t:=t_2$, and compute these coefficients, again using Gr\"obner bases (see \cite[Program 6.3]{GithubRepo}).
By inspecting the first element of the basis, we conclude that we have at most six possible expressions for $s$:
\[s= \begin{cases} \pm(a^4+a^2-a+1) \\
 \pm(a^4-a^2-1) \\ \pm(a^4+a^2+a+1)\end{cases}\]
Let us show how the situation can be further simplified:
first, we note that the transformation $X\to i X$ with $i^2=-1$ sends $(Q_1, Q_2)$ to the pair of polynomials 
\[\left (X^4-tX^2+1, a(X^4-sX^2+a^8) \right).\]
Further, we note that $(Q_1,Q_2)$ is a prompt for a space $L_{12,2}$ if, and only if, the pair $(-Q_1,Q_2)$ is a prompt for the same space.
As a result of these two observations, if we can find a solution to the system (\ref{eq:Ri4}) for a pair $(a,s)$ we also have a solution for pairs $(-a,s)$,$(a,-s)$, and $(-a,-s)$, which reduces our search to the following two situations:
\begin{enumerate}
\item[Case 1:] $s=a^4-a^2-1$.
In this case, Program \cite[Program 6.4]{GithubRepo} returns $t=-(a^4+a^2-1)$ (under the condition that $s \neq 0$), and then we find that $Q_1=Q_{1,a}$ and $Q_2=Q_{2,a}$, as required. If $s=0$, then the program returns $t^2=a^2+1=a^4$, which gives the two values $t= \pm a^2=\pm(a^4+a^2-1)$.
Note that these two values correspond to equivalent spaces, under the transformation $X \mapsto i X$.
We conclude that, in Case 1, the pair $(Q_1,Q_2)$ is a prompt for a space equivalent to $\Omega_{a}$.
\item[Case 2:] $s=a^4+a^2+a+1$. In this case, Program \cite[Program 6.5]{GithubRepo} returns $t=-(a^4-a^3+a^2+1)$ (under the condition that $s \neq 0$).

We now compare the pair $(Q_1,Q_2)$ obtained with the pair $(Q_{1,a-1},Q_{2,a-1})$ which is a prompt for the space $\Omega_{a-1}$, namely
\[Q_{1,a-1}=X^4-(a^4-a^3+a^2+1)X^2+1,\]
\[Q_{2,a-1} =(a-1)\left(X^4+(a^4-a^3-a^2+a-1)X^2+(a-1)^8\right).\] 

One verifies that $Q_1^3Q_2-Q_1Q_2^3=Q_{1,a-1}^3Q_{2,a-1}-Q_{1,a-1}Q_{2,a-1}^3$. Hence the pair $(Q_1,Q_2)$ is a prompt for a space $L_{12,2}$ with the same set of poles as $\Omega_{a-1}$.
By Corollary \ref{coro:poles}, we have then that $(Q_1,Q_2)$ is a prompt for $\Omega_{a-1}$.
If $s=0$, the program returns $t^2 = -a^3-a^2-a-1$, giving the two values $t=\pm (a^3+a) =\pm (a^4-a^3+a^2+1)$, that correspond to equivalent spaces under the transformation $X \to i X$.
Hence in Case 2 the pair $(Q_1,Q_2)$ is a prompt for a space equivalent to $\Omega_{a-1}$.
\end{enumerate}

\end{itemize}
\end{proof}

We can be more explicit about the spaces $\Omega_a$ classified in the theorem, and compute their residues and their writing in logarithmic form.
This is the content of the following result.
\begin{corollary}
Let $k$ be an algebraically closed field containing $\F_3$.
Let $a\in k$ be such that $(a^3-a)(a^2+1)\neq 0$ and
fix $i,j \in k$ such that $i^2=-1$ and $j^2=a^2+1$. 
Then the space $\Omega_a$ is generated by $\frac{df_1}{f_1}$ and $\frac{df_2}{f_2}$ with
\[f_1=\prod_{\substack{\epsilon_1=\pm 1\\ \epsilon_2=\pm 1}} \left(X-(\epsilon_1 (a-1)j+\epsilon_2 a i)\right)^{-\epsilon_1}\prod_{\substack{\epsilon_1=\pm 1\\ \epsilon_2=\pm 1}}\left(X-(\epsilon_1 (a+1)j+\epsilon_2 a i)\right)^{\epsilon_1}\prod_{\substack{\epsilon_1=\pm 1\\ \epsilon_2=\pm 1}} \left(X-(\epsilon_1 j + \epsilon_2(a^2-1))\right)^{\epsilon_1}\]
and
\[f_2=\prod_{\substack{\epsilon_1=\pm 1\\ \epsilon_2=\pm 1}} \left(X-(\epsilon_1 (a-1)j+\epsilon_2 a i)\right)^{\epsilon_1}\prod_{\substack{\epsilon_1=\pm 1\\ \epsilon_2=\pm 1}}\left(X-(\epsilon_1 (a+1)j+\epsilon_2 a i)\right)^{\epsilon_1}\prod_{\substack{\epsilon_1=\pm 1\\ \epsilon_2=\pm 1}} \left(X-(\epsilon_1 a j+\epsilon_2(a^2-1))\right)^{\epsilon_1}.\]
 
In particular, the set of poles and residues of these generators are:

\begin{center}
\begin{tabular}{|c|c|c|}
\hline
$x$ & $\res_1(x)$ & $\res_2(x)$\\
\hline
\hline
$j+a^2-1$ & $1$ & $0$\\
$j-a^2+1$ & $1$ & $0$\\
$-j+a^2-1$ & $-1$ & $0$\\
$-j-a^2+1$ & $-1$ & $0$\\
$(a-1)j+ai $ & $-1$ & $1$\\
$(a-1)j-ai $& $-1$ & $1$\\
$-(a-1)j+ai $ & $1$ & $-1$\\
 $-(a-1)j-ai $ & $1$ & $-1$\\
 \hline
\end{tabular}
\begin{tabular}{|c|c|c|}
\hline
$x$ & $\res_1(x)$ & $\res_2(x)$\\
\hline
\hline
$(a+1)j+ai $ & $1$ & $1$\\
$(a+1)j-ai $& $1$ & $1$\\
$-(a+1)j+ai $ & $-1$ & $-1$\\
 $-(a+1)j-ai $ & $-1$ & $-1$\\
$aj+(a^2-1) $ & $0$ & $1$\\
$aj-(a^2-1)$& $0$ & $1$\\
$-aj+(a^2-1)$ & $0$ & $-1$\\
 $-aj-(a^2-1)$ & $0$ & $-1$\\
 \hline
\end{tabular}
\end{center}

\end{corollary}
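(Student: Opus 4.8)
The plan is to factor explicitly, over $k$, the polynomials entering the Pagot description of $\Omega_a$ and then to read off poles and residues. By Theorem~\ref{thm:class12,2} the pair $(Q_{1,a},Q_{2,a})$ is a prompt for $\Omega_a$, and hence so is $(Q_{2,a},Q_{1,a})$, since exchanging the two polynomials in Definition~\ref{defn:givesriseto} merely exchanges the two forms it gives rise to. By Proposition~\ref{prop:Pagot}, using $Q_1^3Q_2-Q_1Q_2^3=Q_1Q_2(Q_1-Q_2)(Q_1+Q_2)$, the space $\Omega_a$ is generated by two logarithmic forms: one, which we call $\omega_1=\frac{df_1}{f_1}$, with set of poles $Z\big(Q_{2,a}(Q_{1,a}-Q_{2,a})(Q_{1,a}+Q_{2,a})\big)$, and the other, $\omega_2=\frac{df_2}{f_2}$, with set of poles $Z\big(Q_{1,a}(Q_{1,a}-Q_{2,a})(Q_{1,a}+Q_{2,a})\big)$; the eight common poles are the zeroes of $(Q_{1,a}-Q_{2,a})(Q_{1,a}+Q_{2,a})$. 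So the first task is to factor the four biquadratic polynomials $Q_{1,a},\,Q_{2,a},\,Q_{1,a}-Q_{2,a},\,Q_{1,a}+Q_{2,a}$.

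Each of these is a polynomial in $X^2$, so its roots are found by solving a quadratic in $X^2$ and then taking a square root. A computation in characteristic $3$ shows that the discriminant of the relevant quadratic is a square in $k(a)$ times $a^2+1$ for $Q_{1,a}$ and $Q_{2,a}$, and a square times $-(a^2+1)$ for $Q_{1,a}\pm Q_{2,a}$ once one divides by the leading coefficients $1\pm a$ (here one uses $1+a^9=(1+a)^9$ and $a^9-1=(a-1)^9$). This explains the need for the two auxiliary square roots $j$ with $j^2=a^2+1$ and $i$ with $i^2=-1$: one verifies directly that $(\pm aj\pm(a^2-1))^2$ and $(\pm j\pm(a^2-1))^2$ give the two $X^2$-roots of $Q_{1,a}$ and of $Q_{2,a}$, while $(\pm(a\mp 1)j\pm ai)^2$ give those of $Q_{1,a}\pm Q_{2,a}$. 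This produces the sixteen poles listed in the first columns of the two tables, hence $f_1,f_2$ as products of the corresponding linear factors, once the exponents are known.

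It remains to compute the residues, that is, the exponents $\pm 1$ in $f_1$ and $f_2$. Writing the chosen generator as $\omega=\frac{c\,dX}{Q(Q_{1,a}-Q_{2,a})(Q_{1,a}+Q_{2,a})}$ with $Q\in\{Q_{1,a},Q_{2,a}\}$ and $c\in k^\times$ the normalising constant of Remark~\ref{rmk:givesriseto}, the residue of $\omega$ at a zero $x$ of one of the three factors equals $c$ divided by the value at $x$ of the derivative of that factor times the product of the other two; the factorisations above let one evaluate each such quantity and check it lies in $\F_3^\times$. The automorphism $X\mapsto -X$ of $\PP^1_k$ sends $\omega$ to $-\omega$ (the four polynomials are even), whence $\res_{-x}(\omega)=-\res_x(\omega)$, which halves the computations. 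One then obtains the sign patterns of the two tables; the constant $c$ only affects the overall sign of each residue column, which is irrelevant since $\langle\omega\rangle_{\F_3}=\langle-\omega\rangle_{\F_3}$.

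The only real difficulty here is the bookkeeping: performing the characteristic-$3$ arithmetic for $Q_{1,a}\pm Q_{2,a}$, whose coefficients have higher degree in $a$, and keeping straight which generator is $\omega_1$ and which is $\omega_2$ together with the residue signs. No new idea is needed beyond the factorisations, and the final assertions can in any case be verified by a single direct check: that $Q_{1,a}^3Q_{2,a}-Q_{1,a}Q_{2,a}^3$ is the product of the sixteen listed linear factors, and that the proposed $f_1,f_2$ satisfy $\frac{df_i}{f_i}\in\Omega_a$ with the stated residues at each pole.
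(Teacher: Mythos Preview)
Your approach is correct and essentially the same as the paper's: factor the four biquadratic polynomials $Q_{1,a},Q_{2,a},Q_{1,a}\pm Q_{2,a}$ to locate the sixteen poles, then compute the residues via the derivative formula for $\frac{dX}{P(X)}$. The paper carries this out directly with $c^3=\pm 1/\big(j^5(a^3-a)^5\big)$ and absorbs the remaining sign ambiguity into the choice of $j$. Your two added observations---swapping to $(Q_{2,a},Q_{1,a})$ so that the labeling of $\omega_1,\omega_2$ matches the statement, and exploiting the symmetry $X\mapsto -X$ to halve the residue computations---are not in the paper's proof but are sound and genuinely helpful; in fact the swap is needed to reconcile the table with the formulas for $\omega_1,\omega_2$ as written in the paper's own argument.
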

\begin{proof}
We start by computing the zeroes of the polynomials $Q_{1,a}$, $Q_{1,a}+Q_{2,a}$, $Q_{1,a}-Q_{2,a}$, $Q_{2,a}$, which can be easily done by applying the formulas for solving biquadratic equations.
We find that
\[Z(Q_{1,a})=\{ aj+(a^2-1), aj-(a^2-1), -aj+(a^2-1), -aj-(a^2-1)\},\]
\[Z(Q_{1,a}+Q_{2,a})=\{(a-1)j+ai, (a-1)j-ai, -(a-1)j+ai , -(a-1)j-ai  \} \]
\[Z(Q_{1,a}-Q_{2,a})=\{(a+1)j+ai , (a+1)j-ai, -(a-1)j+ai, -(a-1)j-ai \} \mbox{, and}\]
\[Z(Q_{2,a})=\{ j+a^2-1, j-a^2+1, -j+a^2-1, -j-a^2+1\},\]
as expected.
By Theorem \ref{thm:class12,2} and Remark \ref{rmk:givesriseto} we know that $\Omega_a$ is generated by the differential forms $\omega_1:=\frac{Q_{2,a} dX}{c^3(Q_{1,a}^3Q_{2,a}-Q_{1,a}Q_{2,a}^3)}$ and $\omega_2:=\frac{-Q_{1,a} dX}{c^3(Q_{1,a}^3Q_{2,a}-Q_{1,a}Q_{2,a}^3)}$ with $c^6(a^2+1)^5(a^3-a)^{10}=1$.
To compute the residues of these forms at their poles, we remark that $c^3=\pm\frac{1}{(a^2+1)^{5/2}(a^3-a)^5}=\pm \frac{1}{j^5(a^3-a)^5}$.
For each $x\in Z(Q_{1,a}) \cup Z(Q_{1,a}+Q_{2,a}) \cup Z(Q_{1,a}-Q_{2,a}) \cup Z(Q_{2,a})$ we can compute the quantities $(Q_{1,a}^2Q_{2,a}-Q_{2,a}^3)'(x)$ and $(Q_{1,a}^3-Q_{1,a}Q_{2,a}^2)'(x)$ and check when they are equal to $\pm j^5(a^3-a)^5$.
Up to possibly replacing $j$ with $-j$, and using the formulas $\res_1(x)=\frac{1}{c^3(Q_{1,a}^3-Q_{1,a}Q_{2,a}^2)'(x)}$ and $\res_2(x)=\frac{1}{c^3(Q_{1,a}^2Q_{2,a}-Q_{2,a}^3)'(x)}$ we get the table of residues above, or equivalently that $\omega_1=\frac{df_1}{f_1}$ and $\omega_2=\frac{df_2}{f_2}$ with $f_1, f_2$ as in the statement.
\end{proof}

\begin{remark}
In the light of the results of Section \ref{sec:standard}, it is natural to wonder if the description of subspaces of standard spaces given by Proposition \ref{prop:standardsubsp} has an analogue in the case of spaces that are not standard. Namely, given a space $L_{\lambda p^{n-1},n}$ that is not standard, and assuming that $p | \lambda$, one can ask whether its $t$-dimensional subspaces for $t<n$ can be obtained as \'etale pullbacks of some space $L_{\mu p^{t-1},t}$.

The classification of spaces $L_{12,2}$ achieved in this section shows that the answer to this question is negative, at least at the level of generality stated above.
More precisely, for every value of $a$ no one dimensional subspace of $\Omega_a$ can be obtained as a non-trivial \'etale pullback of some other space.
First, note that we only need to check this for degree 6 pullbacks of spaces $L_{2,1}$, as there are no spaces $L_{4,1}$. Then, we remark that any space $L_{2,1}$ is equivalent to a standard space, whose set of poles is of the form $\{x, -x\}$.

The \'etale pullback $\phi^\star(\omega)$ is then up to a constant of the form $\frac{dX}{S(X)^2-x^2}$ for $S(X)$ of degree 6 such that $S'(X) \in k^\times$.
In particular, $S(X)^2-x^2$ has always a non zero term of degree 7.
If we fix a differential form $\omega \in \Omega_a$, and we consider the polynomial whose zeroes are the poles of $\omega$, we see that it has only terms of even degree.
As a result, it is not possible to obtain $\omega$ via an \'etale pullback of a differential form in a space $L_{2,1}$.
\end{remark}

\subsection{Classification of spaces $L_{15,2}$}

Let $p=3$ and $\lambda=5$.
In the first part of the section, we exhibit explicitly two vector spaces $L_{15,2}$, one whose poles are all in $\F_{27}$ and another one whose poles are all in $\F_{81}$.

\begin{example}\label{ex:F27}
Let $\F_{27}$ be the finite field with 27 elements, and let us write $\F_{27}=\F_3[\mu]$, with $\mu^3-\mu+1$=0.
We have that $\mu^{13}=-1$, and then $\mu$ is a generator of the cyclic multiplicative group $\F_{27}^\times$.
Following the notation used in the proof of Proposition \ref{prop:Pagot}, we define a subset of $\F_{27}$ indexed by the elements of $\PP^1(\F_3)=\{0,1,2,\infty\}$:
\[\begin{cases}
X_{0} = & \{\mu^2-\mu, \mu+1, -\mu^2-\mu-1, \mu, 0 \} =\{\mu^4,\mu^9, \mu^{19}, \mu, 0\}\\
X_{1} = & \{ -\mu^2+\mu-1, -\mu^2, -\mu^2-\mu+1, -\mu^2-1, \mu^2+1\} =\{\mu^5,\mu^{15}, \mu^{24},\mu^8,\mu^{21}\}\\
X_{2} = & \{-\mu^2+1, -1, \mu^2-\mu+1, \mu^2-1, -\mu^2+\mu \} =\{\mu^{25},\mu^{13}, \mu^{18}, \mu^{12}, \mu^{17}\}\\
X_{\infty} = & \{ \mu^2+\mu, \mu-1, \mu^2, \mu^2-\mu-1, -\mu-1\}=\{\mu^{10}, \mu^3, \mu^2, \mu^7,\mu^{22}\}. \\
\end{cases}\]
To these sets, we associate the corresponding polynomials of $\F_{27}[X]$:
\[\begin{cases}
P_{0}(X) =   \prod_{x\in X_0} (X-x)  & = X^5-(\mu^2+\mu+1)X^3-X^2+(\mu^2-\mu-1)X \\
P_{1}(X) =  \prod_{x\in X_1} (X-x) &  = X^5-(\mu^2-1)X^3-(\mu^2-\mu+1 )X^2-(\mu+1)X-(\mu^2+1)\\
P_{2}(X) =  \prod_{x\in X_2} (X-x) & =  X^5+(\mu^2-\mu-1)X^3+(\mu+1)X^2+(\mu^2-1)X-(\mu^2-\mu-1)\\
P_{\infty}(X) =   \prod_{x\in X_\infty} (X-x)  & = X^5-(\mu^2+\mu)X^3-(\mu+1)X^2-(\mu^2+\mu-1)X-(\mu^2-\mu+1).
\end{cases}\]

Now, let
\[Q_1(X) :=-\mu P_\infty(X) \;\; \text{and} \;\; Q_2(X) := (\mu^2-\mu-1) P_0(X).\]

One verifies that the second derivative of the polynomial $(Q_1^3-Q_1Q_2^2)^2$ is equal to $-1$. Moreover, since $(\mu^2-\mu-1)\pm(-\mu) \notin \F_3$, the degree of $iQ_1+jQ_2$ is equal to 5 for every $[i:j]\in \PP^1(\F_3)$.
Then, we can set $\displaystyle \omega_1=\frac{Q_2}{Q_1^3Q_2-Q_1Q_2^3}dX$ and $\displaystyle \omega_2=\frac{-Q_1}{Q_1^3Q_2-Q_1Q_2^3}dX$ and apply Proposition \ref{prop:Pagot} to show that $\Omega_1 := \langle \omega_1,\omega_2 \rangle$ is a $\F_3$-vector space $L_{15,2}$.

To write the generators of this spaces in logarithmic form, we need to compute their residues at the poles.
In order to do this, we denote by $Z_1$ the set of zeroes of $Q_1^3-Q_1Q_2^2$, by $Z_2$ the set of zeroes of $Q_1^2Q_2-Q_2^3$, and we introduce the functions $\res_1: Z_1 \to \F_3^\times$ and $\res_2(x):Z_2 \to \F_3^\times$ defined by $\res_i(x):=\res_x(\omega_i)$.
Then, we compute them thanks to the formulas $\res_1(x)=\frac{1}{(Q_1^3-Q_1Q_2^2)'(x)}$ and $\res_2(x)=\frac{1}{(Q_1^2Q_2-Q_2^3)'(x)}$.
We find the following table of values:\\
\begin{center}
\begin{tabular}{|c|c|c|}
\hline
$x$ & $\res_1(x)$ & $\res_2(x)$\\
\hline
\hline
$\mu^4$& 1 & 0\\
$\mu^9$ & 1 & 0\\
$\mu^{19}$ & 1 & 0\\
$\mu$ & $-1$ & 0\\
$0$ & $-1$ & 0\\
$\mu^5$ & $1$ & -1\\
$\mu^{15}$ & $-1$ & $1$\\
 $\mu^{24}$ & $-1$ & $1$\\
 $\mu^8$ &  $-1$&$1$ \\
 $\mu^{21}$ & $-1$ & $1$\\
 \hline
\end{tabular}
\begin{tabular}{|c|c|c|}
\hline
$x$ & $\res_1(x)$ & $\res_2(x)$\\
\hline
\hline
$\mu^{25}$ &$-1$ & $-1$ \\
$\mu^{13}$ & $1$&$1$ \\
$\mu^{18}$ & $-1$ & $-1$\\
$\mu^{12}$ & $1$& $1$\\
$\mu^{17}$ & $-1$&$-1$ \\
$\mu^{10}$ & 0 &$1$ \\
 $\mu^3$ & 0& $1$\\
 $\mu^2$ & 0 &$-1$ \\
 $\mu^7$ & 0 & $-1$ \\
 $\mu^{22}$ &0 & 1\\
 \hline
\end{tabular}
\end{center}

Then, $\omega_1=\frac{dF_1}{F_1}$ and $\omega_2=\frac{dF_2}{F_2}$ with
\[F_1(X)= \prod_{x\in Z_1} (X-x)^{res_{1}(x)}  \mbox{ and } F_2(X)=\prod_{x\in Z_2} (X-x)^{res_{2}(x)}.\]

\end{example}

\begin{example}\label{ex:F81}
Let $\F_{81}$ be the finite field with 81 elements, and let us write $\F_{81}=\F_3[\mu]$, with $\mu^4+\mu^3-\mu^2-\mu-1=0$.\footnote{This is equivalent to write $\F_{81}=\F_3[\alpha]$ for a choice of $\alpha$ satisfying $\alpha^4+\alpha^2-1=0$ and set $\mu=\alpha^3+\alpha-1$.}
We have that $\mu^{40}=-1$, and then $\mu$ is a generator of the cyclic multiplicative group $\F_{81}^\times$.
In analogy with Example \ref{ex:F27}, we define for every element of $\PP^1(\F_3)$ a subset of $\F_{81}$ as follows:
\[\begin{cases}
X_{0} = & \{\mu^7, \mu^{30}, \mu^{51}, \mu^{59}, \mu^{63} \} \\
X_{1} = & \{ \mu^{26}, \mu^{50}, \mu^{52}, \mu^{68}, \mu^{74}\} \\
X_{2} = & \{ \mu^{34}, \mu^{60}, \mu^{66}, \mu^{70}, 0 \} \\
X_{\infty} = & \{ \mu^{10}, \mu^{11}, \mu^{19}, \mu^{20}, \mu^{40}\} \\
\end{cases}.\]
Then, we associate to these sets the corresponding monic polynomials.
In this case, it turns out that they have coefficients in $\F_9$. 
More specifically, we set $a=-\mu^{20}\in \F_9$ and one can verify that we have:
\[\begin{cases}
P_{0} =   \prod_{x\in X_0} (X-x)  & = X^5 - X^3 -X^2 +a X -(a+1) \\
P_{1} =  \prod_{x\in X_1} (X-x) &  = X^5 - aX^3 + (a-1) X - (a-1) \\
P_{2} =  \prod_{x\in X_2} (X-x) & =  X^5 - (a-1) X^2 - (a-1) X\\
P_{\infty} =   \prod_{x\in X_\infty} (X-x)  & =X^5-(a+1)X^3+(a+1)X^2+X+a
\end{cases}.\]
Then, we set 
\[Q_1(X) := -(a+1)P_\infty(X)  \;\; \text{and} \;\; Q_2(X) := a P_0(X),\]
and we can verify that the second derivative of the polynomial $(Q_1^3-Q_1Q_2^2)^2$ is equal to $-1$. 
Moreover, the degree of $iQ_1+jQ_2$ is equal to 5 for every $[i:j]\in \PP^1(\F_3)$.
Hence, if we set $\displaystyle \omega_1=\frac{Q_2}{Q_1^3Q_2-Q_1Q_2^3} dX$ and $\displaystyle \omega_2=\frac{-Q_1}{Q_1^3Q_2-Q_1Q_2^3} dX$ and apply Proposition \ref{prop:Pagot} this shows that $\Omega_2 := \langle \omega_1,\omega_2 \rangle$ is a space $L_{15,2}$.

As in Example \ref{ex:F27}, we can easily compute the residues of $\omega_1$ and $\omega_2$ at their poles, and get the following table

\begin{center}
\begin{tabular}{|c|c|c|}
\hline
$x$ & $\res_1(x)$ & $\res_2(x)$\\
\hline
\hline
$\mu^7$ & $-1$ & $0$\\
$\mu^{30}$ & $-1$ & $0$\\
$\mu^{51}$ & $-1$ & $0$\\
$\mu^{59}$ & $-1$ & $0$\\
$ \mu^{63}$ & $-1$ & $0$\\
$\mu^{26}$& $-1$ & $1$\\
$\mu^{50}$ & $1$ & $-1$\\
 $\mu^{52}$ & $1$ & $-1$\\
 $\mu^{68}$ &  $1$ &$-1$ \\
 $\mu^{74}$ & $-1$ & $1$\\
 \hline
\end{tabular}
\begin{tabular}{|c|c|c|}
\hline
$x$ & $\res_1(x)$ & $\res_2(x)$\\
\hline
\hline
$\mu^{34}$ &$-1$ & $-1$ \\
$ \mu^{60}$ & $1$&$1$ \\
$\mu^{66}$ & $-1$ & $-1$\\
$\mu^{70}$ & $1$& $1$\\
$0$ & $1$&$1$ \\
$\mu^{10}$ & $0$ & 1 \\
 $\mu^{11}$ & $0$& 1\\
 $\mu^{19}$ & $0$ &1 \\
 $\mu^{20}$ &$0$ & 1\\
 $\mu^{40}$ &$0$ & $-1$\\
 \hline
\end{tabular}
\end{center}
and as in the previous example we have $\omega_1=\frac{dF_1}{F_1}$ and $\omega_2=\frac{dF_2}{F_2}$ with
\[F_1(X)= \prod_{x\in Z_1} (X-x)^{res_{1}(x)}  \mbox{ and } F_2(X)=\prod_{x\in Z_2} (X-x)^{res_{2}(x)}.\]
\end{example}

The main result of this section states that all the spaces $L_{15,2}$ can be obtained from the examples above.
\begin{theorem}\label{thm:class15,2}
Let $k$ be an algebraically closed field containing $\F_3$ and let $\Phi:\Omega(k(X))\to \Omega(k(X))$ be the relative Frobenius operator.
Every $\Omega\subset \Omega(k(X))$ vector space $L_{15,2}$ is equivalent to one of the following spaces, each representing a distinct equivalence class: $\Omega_1$ (defined in Example \ref{ex:F27}), $\Phi(\Omega_1)$, $\Phi^2(\Omega_1)$, $\Omega_2$ (defined in Example \ref{ex:F81}), or $\Phi(\Omega_2)$.
\end{theorem}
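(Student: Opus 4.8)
The strategy mirrors that of Theorem \ref{thm:class12,2}: start with an arbitrary space $L_{15,2}$, represent it by a prompt $(Q_1,Q_2)$ via Proposition \ref{prop:Pagot}, perform a sequence of normalizations using equivalence and Frobenius equivalence to cut down the number of free parameters, and finally run a Gröbner basis computation on the system $R_1 \neq 0$, $R_k = 0$ for $k = 2, \dots, 10$ (Convention \ref{conv:Ri}, with $\lambda=5$, so $\lambda(p-1)=10$ equations) to enumerate all solutions. First I would write $Q_1 = X^5 - t_1 X^4 + t_2 X^3 - t_3 X^2 + t_4 X - t_5$ and $Q_2 = a(X^5 - s_1 X^4 + \cdots - s_5)$. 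By Lemma \ref{lem:s1=t1} we have $s_1 = t_1$, and the translation $X \mapsto X + s_1/(\text{something})$ — note that in characteristic 3 and degree 5 the coefficient of $X^4$ transforms additively by $5 \equiv 2$ times the shift — lets us normalize $s_1 = t_1 = 0$. Then a homothety $X \mapsto \beta X$ normalizes one further coefficient (e.g. $t_5 = 1$ if $t_5 \neq 0$, the generic case, with the degenerate case $t_5 = 0$ to be handled separately, forcing $Q_1$ to have $0$ as a root). This leaves roughly six parameters $a, s_2, s_3, s_4, s_5, t_2, t_3, t_4$ modulo the residual symmetries.

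The key additional ingredient not present in the $\lambda=4$ case is the use of \emph{Frobenius equivalence}: since $\Omega_1$ has all poles in $\F_{27}$, the forms $\Phi(\Omega_1), \Phi^2(\Omega_1)$ are genuinely different spaces (their poles lie in $\F_{27}$ but are Galois conjugates), and similarly $\Omega_2, \Phi(\Omega_2)$ are distinct (poles in $\F_{81}$, and $\Phi^2(\Omega_2) = \Omega_2$ would follow if the prompt coefficients lie in $\F_9$, which Example \ref{ex:F81} shows they do — so the Frobenius orbit of $\Omega_2$ has size $2$, not $3$). So the plan is: (i) establish that $\Omega_1, \Phi(\Omega_1), \Phi^2(\Omega_1), \Omega_2, \Phi(\Omega_2)$ are pairwise inequivalent — this is an invariant-theoretic check, e.g. comparing the subfields of $k$ generated by the poles, or by computing an equivalence-invariant of the configuration $\calP(\Omega)$ such as the $j$-invariant-type data of the pole set, noting that a space with poles generating $\F_{27}$ cannot be equivalent (via $X \mapsto aX+b$ with $a, b \in k$) to one with poles generating $\F_{81}$; and (ii) show every space $L_{15,2}$ is equivalent, after possibly applying $\Phi^{-1}$ or $\Phi^{-2}$, to one of these five. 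Step (ii) is where the Gröbner computation lives: after the normalizations above, one computes a Gröbner basis of the ideal generated by $R_2, \dots, R_{10}$ (saturated at $R_1$ and at $a^{81}-a$ or rather at the conditions ensuring $\deg(iQ_1+jQ_2)=5$), inspects the solution variety, and checks that every component, after undoing the normalizations, yields a pole set $\calP$ equal to $a'\calP(\Omega_\ell) + b'$ for one of the five reference spaces $\Omega_\ell$ — invoking Corollary \ref{coro:poles} to conclude equality of spaces from equality of pole sets.

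The main obstacle I anticipate is twofold. First, the Gröbner basis computation for $\lambda = 5$ over $\F_3$ in six or more variables may be at the edge of feasibility; this is exactly the reason the paper emphasizes ``considerations of symmetry under change of variables'' and Frobenius equivalence as simplifications. One likely needs to exploit an extra symmetry specific to $\lambda=5$ — perhaps the observation (as in the $\lambda=4$ proof) that certain coefficients like $s_3$ or $t_3$ can be forced to special values by a sub-computation showing no solutions exist otherwise, or a clever substitution inspired by the shape of $P_0, P_\infty$ in Examples \ref{ex:F27}, \ref{ex:F81} — before the full system becomes tractable. Second, bookkeeping the residual symmetry group (the stabilizer of the normal form $s_1=t_1=0$, $t_5=1$ inside the group of homotheties and the swap $Q_1 \leftrightarrow Q_2$, together with $\Phi$) and verifying the Frobenius orbits have the claimed sizes $3$ and $2$ requires care: one must check, for instance, that $\Omega_1$ is \emph{not} equivalent to $\Phi(\Omega_1)$ (which would collapse the orbit), presumably by showing the prompt coefficients of $\Omega_1$ genuinely lie in $\F_{27} \setminus \F_9$ while those of $\Omega_2$ lie in $\F_9$, so that the minimal field of definition distinguishes the orbits. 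Once the computational output is in hand, the remaining arguments — matching pole sets, applying Corollary \ref{coro:poles}, tracking equivalences — are routine, exactly as in the proof of Theorem \ref{thm:class12,2}.
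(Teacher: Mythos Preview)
Your plan is broadly the same as the paper's --- normalize via Lemma~\ref{lem:s1=t1} and a homography, run Gr\"obner bases on the system $R_2=\cdots=R_{10}=0$, then match pole sets via Corollary~\ref{coro:poles} --- but you have not identified the specific reduction that makes the computation tractable. The paper normalizes $s_3=1$ rather than $t_5=1$: a preliminary Gr\"obner computation (Lemma~\ref{lem:s3t3}) shows that $s_3=t_3=0$ is impossible, so after possibly swapping $Q_1,Q_2$ one may assume $s_3\neq 0$ and scale to $s_3=1$. More importantly, the paper does \emph{not} attack the full system directly. Instead it first runs a Gr\"obner computation that outputs a factored polynomial constraint on the leading-coefficient ratio $a$ alone (a product of irreducible quadratics and cubics over $\F_3$), and then uses the full $\GL_2(\F_3)$ change-of-basis action on $(Q_1,Q_2)$ --- which acts on $a$ via the M\"obius-type maps $a\mapsto -a$ and $a\mapsto \pm\frac{1}{a\pm 1}$ (Lemma~\ref{lem:transa}), not merely the swap you mention --- to reduce to exactly three cases $a^2+1=0$, $a^3-a^2+1=0$, $a^3-a+1=0$. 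Only after specializing $a$ to each of these does the remaining Gr\"obner computation become feasible, yielding finitely many $(s_i,t_i)$ in each case, which are then matched to $\Omega_1,\Phi(\Omega_1),\Phi^2(\Omega_1),\Omega_2,\Phi(\Omega_2)$ by comparing Moore determinants $\Delta_2(Q_1,Q_2)$. Your anticipated obstacle (``an extra symmetry specific to $\lambda=5$'') is exactly this, but without the ``isolate $a$ first, then exploit the $\GL_2(\F_3)$-orbit on $a$'' structure, your direct six-variable Gr\"obner computation would almost certainly fail to terminate.
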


To prove the theorem, we set
\[Q_1:=X^5-t_1X^4+t_2X^3-t_3X^2+t_4X-t_5\]
and
\[Q_2 :=a(X^5-s_1X^4+s_2X^3-s_3X^2+s_4X-s_5).\] 

Then, we consider the expressions $R_k$'s of Convention \ref{conv:Ri} as polynomials with coefficients in $\{a,s_1, \dots, s_5, t_1, \dots, t_5\}$
and we aim to find a solution to the system of equations
\begin{equation}\label{eq:Ri} \begin{cases}
R_1(a, s_i, t_i) \neq 0, \\
R_k(a, s_i, t_i) = 0 & \text{for} \;\; k=2, \dots, 10.\end{cases}
\end{equation}
The existence of a space $L_{15,2}$ gives rise to a solution to the system (\ref{eq:Ri}) (see Convention \ref{conv:Ri}).
Conversely a solution to (\ref{eq:Ri}) produces a space $L_{15,2}$. 
In fact, such a solution would imply that the second derivative of $(Q_1^3-Q_1Q_2^2)^2$ is equal to the nonzero constant $-R_1(a, s_i, t_i)$.
It follows that $(Q_1, Q_2)$ is a prompt for a space $L_{15,2}$.

Unfortunately, solving (\ref{eq:Ri}) is not an easy task even when assisted by a computer (a brute-force calculation of Gr\"obner basis turns out to be hopeless without further assumptions), so we need to simplify the equations before we go further with our strategy.
To this aim, we apply Lemma \ref{lem:s1=t1} to get that $s_1=t_1$ and we reparametrize $\PP^1_k$ in such a way to have $s_1=t_1=0$.
Then, we obtain the following result.

\begin{lemma}\label{lem:s3t3}
Let $(Q_1,Q_2)$ be a pair of polynomials as above, and suppose that they yield a solution of the system (\ref{eq:Ri}). Then, either $s_3 \neq 0$ or $t_3 \neq 0$.
\end{lemma}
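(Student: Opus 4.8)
I would argue by contradiction: suppose $(Q_1,Q_2)$ yields a solution of the system (\ref{eq:Ri}) and that, moreover, $s_3=t_3=0$. Recall that just before the statement we reparametrized $\PP^1_k$ so that $s_1=t_1=0$; under the extra assumption this leaves
\[Q_1=X^5+t_2X^3+t_4X-t_5,\qquad Q_2=a\bigl(X^5+s_2X^3+s_4X-s_5\bigr),\]
with $a\notin\F_3$ (condition $(i)$ of Proposition \ref{prop:Pagot}). The task is to show that the scalars $a,t_2,t_4,t_5,s_2,s_4,s_5$ cannot be chosen so that the polynomials $R_2,\dots,R_{10}$ of Convention \ref{conv:Ri} all vanish while $R_1\neq0$.

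Before invoking the computer I would perform two further normalizations to reduce the number of parameters. First, by Remark \ref{rmk:simplepolesn=2} the polynomials $Q_1$ and $Q_2$ are coprime, so $0$ cannot be a root of both; hence $t_5\neq0$ or $s_5\neq0$. The exchange $(Q_1,Q_2)\mapsto(a^{-1}Q_2,aQ_1)$ swaps the coefficient vectors $(t_i)$ and $(s_i)$, leaves $a$ unchanged, preserves the normalization $s_1=t_1=s_3=t_3=0$, and is again a prompt for the same space, so I may assume $t_5\neq0$. Second, choosing $\beta\in k$ with $\beta^5=t_5$ and applying the homothety $X\mapsto\beta X$ (which sends $t_i\mapsto\beta^{-i}t_i$, $s_i\mapsto\beta^{-i}s_i$, and in particular preserves $s_3=t_3=0$) I may assume $t_5=1$. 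This leaves the six free parameters $a,t_2,t_4,s_2,s_4,s_5$, subject only to $a\notin\F_3$.

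In this reduced setting the $R_k$ are concrete elements of $\F_3[a,t_2,t_4,s_2,s_4,s_5]$: writing $D_1:=Q_1^3-Q_1Q_2^2$ one has $(D_1)_{14}=0$ (so that $R_{10}=0$ identically, consistently with Lemma \ref{lem:s1=t1}), $(D_1)_{13}=-a^2(t_2-s_2)$, and so on, with $R_k$ the coefficient of $X^{3k-1}$ in $D_1^{2}$. I would then feed the ideal $(R_2,\dots,R_9)$ to a Gr\"obner basis algorithm, after saturating by $a^3-a$ to impose $a\notin\F_3$, and check that the resulting ideal is the unit ideal. This shows that the reduced system has no solution at all, in particular none with $R_1\neq0$, contradicting the standing hypothesis; hence $s_3$ and $t_3$ cannot both be zero. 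The \emph{Macaulay2} code realizing this computation would be placed in the repository alongside the programs used in the classification of $L_{12,2}$.

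The only genuine difficulty is the computational cost: a brute-force Gr\"obner basis attack on the full system (\ref{eq:Ri}) in the ten original coefficients is out of reach, which is precisely why the reductions $s_1=t_1=0$, the $(Q_1,Q_2)$-exchange, and the homothety normalization $t_5=1$ are carried out first; one must also take care to build in the hypothesis $a\notin\F_3$ (via saturation, or by localizing at $a^3-a$) so that the computation does not return spurious ``solutions'' lying over $\F_3$. A by-hand alternative in the spirit of the proof of Lemma \ref{lem:s1=t1} --- computing the top coefficients $R_9,R_8,\dots$ of $D_1^{2}$ explicitly and extracting an inconsistency --- is available in principle, but since $\deg D_1=15$ those expressions grow very rapidly, so the systematic computer-algebra route is the one I would follow.
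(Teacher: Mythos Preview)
Your approach is the same as the paper's: impose $s_1=t_1=s_3=t_3=0$ and hand the resulting polynomial system to a Gr\"obner basis engine. The paper's Program~6.6 does exactly this with the seven free parameters $a,s_2,s_4,s_5,t_2,t_4,t_5$ and no further reduction; your extra normalization to $t_5=1$ is a reasonable optimization but turns out to be unnecessary for the computation to terminate.

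One slip in that extra step: the exchange $(Q_1,Q_2)\mapsto(a^{-1}Q_2,aQ_1)$ is \emph{not} in general a prompt. The relevant condition (Remark~\ref{rmk:givesriseto}) is that the $(p-1)$-th derivative of the Dickson invariant $c_{2,1}(Q_1,Q_2)=Q_1^6+Q_1^4Q_2^2+Q_1^2Q_2^4+Q_2^6$ be a nonzero constant. This invariant is symmetric in $(Q_1,Q_2)$ and rescales homogeneously under a \emph{uniform} scaling $(Q_1,Q_2)\mapsto(\mu Q_1,\mu Q_2)$, but your exchange is a swap followed by the non-uniform scaling $(Q_2,Q_1)\mapsto(a^{-1}Q_2,aQ_1)$, giving
\[
c_{2,1}(a^{-1}Q_2,aQ_1)=a^{-6}Q_2^6+a^{-2}Q_2^4Q_1^2+a^{2}Q_2^2Q_1^4+a^{6}Q_1^6,
\]
which is a scalar multiple of $c_{2,1}(Q_1,Q_2)$ only when $a^4=1$. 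The correct exchange is $(Q_1,Q_2)\mapsto(a^{-1}Q_2,a^{-1}Q_1)$, a uniform rescaling of the swapped pair; this does exchange $(s_i)\leftrightarrow(t_i)$, but it sends $a\mapsto a^{-1}$ rather than leaving $a$ unchanged. Since $a^{-1}\notin\F_3$ whenever $a\notin\F_3$, this is harmless for your argument, and with this fix your reduction to $t_5=1$ goes through. Alternatively, just drop the extra normalization and run the computation with seven parameters as the paper does.
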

\begin{proof}
We can show this using a Gr\"obner basis computation: Program \cite[Program 6.6]{GithubRepo} computes the ideal generated by all the relations in (\ref{eq:Ri}) with $s_1=t_1=0$ and $s_3=t_3=0$, and verifies that the system (\ref{eq:Ri}) has no solution under these assumptions. As a result, we either have $s_3 \neq 0$ or $t_3 \neq 0$.
\end{proof}
Thanks to Lemma \ref{lem:s3t3}, we can suppose that $s_3\neq 0$, and use our last degree of freedom to reparametrize $\PP^1_k$ in such a way that $s_3=1$. This is the content of our next Lemma.
\begin{lemma}\label{lem:s3=1}
Let $(Q_1,Q_2)$ be a pair of polynomials as above and suppose that they yield a solution of the system (\ref{eq:Ri}), hence being a prompt for a space $L_{15,2}$ denoted by $\Omega$.
Then, there exists a pair $(Q_1^\sharp,Q_2^\sharp)$ whose coefficients satisfy $s_1=t_1=0$, $s_3=1$ and (\ref{eq:Ri}).
In particular, $(Q_1^\sharp,Q_2^\sharp)$ is a prompt for a space equivalent to $\Omega$.
\end{lemma}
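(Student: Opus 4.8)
The plan is to obtain $(Q_1^\sharp,Q_2^\sharp)$ from $(Q_1,Q_2)$ through a short chain of elementary moves — a translation of $X$, an interchange of the two polynomials together with a rescaling, and a homothety of $X$ — each of which sends a prompt to a prompt (for the same space, or for an equivalent one) while successively forcing $s_1=t_1=0$ and then $s_3=1$. I will use throughout the equivalence recorded just before Lemma~\ref{lem:s3t3}: a pair of polynomials of the given shape satisfies~(\ref{eq:Ri}) if and only if it is a prompt for a space $L_{15,2}$. In particular, once the final pair is known to be a prompt for a space equivalent to $\Omega$, condition~(\ref{eq:Ri}) is automatic, so I only have to track the coefficients and the equivalence class of the space.

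First I would secure $s_1=t_1=0$. By Lemma~\ref{lem:s1=t1}, $Q_1$ and $Q_2$ have the same coefficient in degree $4$, and since $5\not\equiv 0\bmod 3$ there is a translation $X\mapsto X+c$ killing the degree-$4$ term of both polynomials at once; being an automorphism of $\PP^1_k$ fixing $\infty$, it carries $(Q_1,Q_2)$ to a prompt for a space equivalent to $\Omega$, so after renaming I assume $s_1=t_1=0$. Next, by Lemma~\ref{lem:s3t3} at least one of $s_3,t_3$ is nonzero. If $s_3=0$ (so $t_3\neq 0$), I would replace $(Q_1,Q_2)$ by $(a^{-1}Q_2,a^{-1}Q_1)$: the first entry is monic, so this is again of the standard shape, with $a$ replaced by $a^{-1}$ and the coefficient sequences $(s_i)$ and $(t_i)$ interchanged; in particular the old $t_3$, which is nonzero, becomes the new $s_3$, while $s_1$ and $t_1$ are still $0$. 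This pair is still a prompt for $\Omega$, because $(Q_2,Q_1)$ gives rise to $(\omega_2,\omega_1)$ by Definition~\ref{defn:givesriseto}, and scaling both polynomials by the common factor $a^{-1}$ only multiplies the associated forms by a common constant, so the $\F_3$-span is unchanged once the normalizing constant $c$ of Definition~\ref{defn:givesriseto} is adjusted accordingly. Thus I may assume $s_3\neq 0$.

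Finally I would apply the homothety $\sigma\colon X\mapsto\beta X$, where $\beta$ is a cube root of $s_3$ (it exists since $k$ is algebraically closed and $s_3\neq 0$). After substituting $X\mapsto\beta X$ and then rescaling the pair so that the first polynomial is monic again, the constant $a$ is unchanged and every coefficient $s_j$ — likewise $t_j$ — gets multiplied by $\beta^{-j}$; hence $s_1,t_1$ remain $0$ and $s_3$ becomes $s_3\beta^{-3}=1$. Let $(Q_1^\sharp,Q_2^\sharp)$ be this last pair. Since $\sigma$ fixes $\infty$, it is a prompt for $\sigma^\star\Omega$, which is equivalent to $\Omega$ by Definition~\ref{defn:equiv}; and $\sigma^\star\Omega$ being a space $L_{15,2}$, the pair $(Q_1^\sharp,Q_2^\sharp)$ satisfies~(\ref{eq:Ri}), as required.

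I expect the only point needing any care to be the middle step: one must verify that interchanging and rescaling $Q_1,Q_2$ really returns a prompt for $\Omega$ itself, i.e.\ that with the correct choice of the constant $c$ the resulting differential forms land inside $\Omega$ and not merely among their $k$-multiples; this is a short computation showing that $c$ must absorb the factor $a$. All the remaining steps are direct substitutions into the transformation laws of the coefficients of $Q_1$ and $Q_2$ under affine changes of the variable $X$, together with the observation (already used repeatedly in the paper) that an automorphism of $\PP^1_k$ fixing $\infty$ takes a prompt to a prompt for an equivalent space, via Corollary~\ref{coro:poles}.
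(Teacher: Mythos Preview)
Your proposal is correct and follows essentially the same route as the paper's proof: translate to kill the degree-$4$ coefficients (using Lemma~\ref{lem:s1=t1}), swap $Q_1$ and $Q_2$ if necessary (using Lemma~\ref{lem:s3t3}) so that $s_3\neq 0$, and then apply a homothety to normalise $s_3=1$. The only cosmetic difference is the direction of the homothety: the paper scales the roots by $\alpha$ with $\alpha^3=s_3^{-1}$, while you substitute $X\mapsto\beta X$ with $\beta^3=s_3$ and then renormalise; these are inverse descriptions of the same affine change.
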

\begin{proof} 
We know by Lemma \ref{lem:s1=t1} that $s_1=t_1$ and applying the translation $X\to X+s_1$ allows us to suppose $s_1=t_1=0$.
To get to $s_3=1$, we apply Lemma \ref{lem:s3t3} to get to the situation where $s_3 \neq 0$, which we can do up to possibly swapping $Q_1$ and $Q_2$.
Then, we consider $\alpha \in k$ such that $\alpha^3=s_3^{-1}$ and we apply the transformation $X\to \alpha X$.
Under this transformation we have
\[P_0^\sharp(X) = \prod_{x\in X_{[1:0]}} (X-\alpha x)=X^5+\alpha^2 s_2X^3-\alpha^3 s_3X^2+\alpha^4 s_4X-\alpha^5 s_5.\] 
and
\[P_\infty^\sharp(X) = \prod_{x\in X_{[0:1]}} (X-\alpha x) =X^5+\alpha^2 t_2X^3-\alpha^3 t_3X^2+\alpha^4 t_4X-\alpha^5 t_5,\]
so that the coefficient of $X^2$ in $P_0'(X)$ is -1 as needed.
Since $(Q_1,Q_2)$ is a prompt for a space $L_{15,2}$ and we applied a homothety to the zeroes of $Q_1^3Q_2-Q_1Q_2^3$, then by setting $Q_1^\sharp=P_\infty^\sharp$ and $Q_2^\sharp=a^\sharp P_0^\sharp$ for a suitable nonzero constant $a^\sharp$ we have that $(Q_1^\sharp,Q_2^\sharp)$ is a prompt for a space $L_{15,2}$ that is equivalent to $\Omega$.
In particular, the coefficients of $Q_1^\sharp$ and $Q_2^\sharp$ satisfy the system (\ref{eq:Ri}).
\end{proof}

Lemma \ref{lem:s3=1} allows us then to consider without loss of generality the situation where $s_1=t_1=0$ and $s_3=1$.
Recall that we denoted by $a$ the leading coefficient of the polynomial $Q_1$. 
The following lemma shows that one can obtain the same space by applying a suitable substitution only to the parameter $a$.

\begin{lemma}\label{lem:transa}
Let $(Q_1,Q_2)$ be a pair of polynomials with
\[Q_1:=X^5+t_2X^3-t_3X^2+t_4X-t_5\]
and
\[Q_2 :=a(X^5+s_2X^3-X^2+s_4X-s_5)\] 
which is a prompt for a space $L_{15,2}$, denoted by $\Omega$.
For every $\alpha \in  \{-1, \frac{1}{a+1}, -\frac{1}{a+1}, \frac{1}{a-1}, -\frac{1}{a-1}\}$ there exists a pair $(Q_1^\sharp,Q_2^\sharp)$ which is a prompt for the same space $\Omega$ such that $Q_1^\sharp$ is monic and $Q_2^\sharp=\alpha Q_2$.
\end{lemma}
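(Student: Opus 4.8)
The plan is to realize each prescribed value of $\alpha$ by composing an invertible $\F_3$-linear change of basis of the pair $(Q_1,Q_2)$ with a common rescaling of both polynomials, and to verify that neither operation disturbs the property of being a prompt for the \emph{fixed} space $\Omega$. I will first record two invariance principles. \textbf{(B)} If $(Q_1,Q_2)$ is a prompt for $\Omega$ with normalizing constant $c$ (in the sense of Definition \ref{defn:givesriseto}), then for every $\delta\in k^\times$ the pair $(\delta Q_1,\delta Q_2)$ is again a prompt for $\Omega$, now with constant $c/\delta$: indeed $(c/\delta)(\delta Q_i)=cQ_i$, so the rescaled pair gives rise to the very same basis of $\Omega$. \textbf{(A)} If $(Q_1,Q_2)$ is a prompt for $\Omega$ and $(R_1,R_2)=(Q_1,Q_2)M$ for some $M\in\GL_2(\F_3)$, then $(R_1,R_2)$ is a prompt for $\Omega$.

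For (A) I would argue as follows. The Moore determinant satisfies $\Delta_2((Q_1,Q_2)M)=\det(M)\,\Delta_2(Q_1,Q_2)$, and since $\det(M)\in\F_3^\times$ the two determinants have the same zero set, hence the two pairs yield the same pole set. Moreover, the quantity $\big((Q_1^p-Q_1Q_2^{p-1})^{p-1}\big)^{(p-1)}$ governing condition $(iii)$ of Proposition \ref{prop:Pagot} equals the $(p-1)$-th derivative of the Dickson invariant $c_{2,1}(Q_1,Q_2)$, via the identity \eqref{eq:JCinvariant} and Remark \ref{rmk:Moore}; because $c_{2,1}$ is a Dickson invariant it is unchanged under $\GL_2(\F_3)$, so this quantity takes the same nonzero constant value for $(R_1,R_2)$ as for $(Q_1,Q_2)$. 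By Remark \ref{rmk:givesriseto} the pair $(R_1,R_2)$ is therefore a prompt for \emph{some} space $L_{15,2}$, and since its pole set coincides with $\calP(\Omega)$, Corollary \ref{coro:poles} identifies that space with $\Omega$. (Alternatively one can read off from Proposition \ref{prop:QiMi} that the forms attached to $(Q_1,Q_2)M$ form an $\F_3$-basis of $\Omega$.)

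With (A) and (B) in hand the lemma reduces to an explicit choice in each case. Since $Q_1$ is monic and $Q_2$ has leading coefficient $a$, the three lines $Q_1$, $Q_1+Q_2$, $Q_2-Q_1$ have leading coefficients $1$, $a+1$, $a-1$ respectively, all nonzero because condition $(i)$ forces $a\notin\F_3$ (so $a\neq\pm 1$). I would then take: for $\alpha=-1$, the pair $(Q_1,-Q_2)$; for $\alpha=\pm\frac{1}{a+1}$, the pair $\frac{1}{a+1}\big(Q_1+Q_2,\ \pm Q_2\big)$; and for $\alpha=\pm\frac{1}{a-1}$, the pair $\frac{1}{a-1}\big(Q_2-Q_1,\ \pm Q_2\big)$. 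In every case the first entry is obtained from $(Q_1,Q_2)$ by an invertible $\F_3$-linear change of basis followed by division by its leading coefficient, hence is monic, while the second entry is exactly $\alpha Q_2$; by (A) and (B) the resulting pair $(Q_1^\sharp,Q_2^\sharp)$ is a prompt for $\Omega$.

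The degree and leading-coefficient bookkeeping and the sign matching are immediate. The one genuinely substantive point is the justification of (A): one must be certain that the $\GL_2(\F_3)$ action preserves not merely the pole set but the property of being a prompt for the \emph{same} $\Omega$, rather than for some cognate space, and that a single normalizing constant serves throughout. I expect the $\GL_2(\F_3)$-invariance of the Dickson invariant $c_{2,1}$ combined with Corollary \ref{coro:poles} to settle this cleanly, and this is the step I would write out with the most care.
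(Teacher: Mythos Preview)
Your proposal is correct and follows essentially the same approach as the paper: both combine an $\F_3$-linear change of basis (principle (A)) with a common rescaling (principle (B)), and your explicit choices of $(Q_1^\sharp,Q_2^\sharp)$ match the paper's (the paper only writes out two cases and leaves the rest as ``similarly''). The one difference is cosmetic: the paper justifies (A) by a direct appeal to Proposition~\ref{prop:QiMi}---changing the basis of $\Omega$ corresponds to acting on $(Q_1,Q_2)$ by a matrix in $\GL_2(\F_3)$---whereas you give the longer route through the $\GL_2(\F_3)$-invariance of $c_{2,1}$ and Corollary~\ref{coro:poles}, while noting the shortcut via Proposition~\ref{prop:QiMi} as an alternative. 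Either argument is fine; the paper's is the shorter one.
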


\begin{proof}
By Definition \ref{defn:givesriseto}, there exists a constant $c\in k^\times$ such that the space $\Omega$ is generated by $\omega_1$ and $\omega_2$ with
$\omega_1:= \frac{dX}{c^3(Q_1Q_2^2-Q_1^3)}$ and $\omega_2:= \frac{dX}{c^3(Q_1^{2}Q_2-Q_2^3)}$.
We first consider the following situations:
\begin{enumerate}
\item If we consider the basis $(\omega_1, -\omega_2)$ of $\Omega$ and apply Proposition \ref{prop:QiMi}, we find that the pair $(Q_1,-Q_2)$ is a prompt for $\Omega$.
\item If we consider the basis $(\omega_1, \omega_1+\omega_2)$ of $\Omega$ and apply Proposition \ref{prop:QiMi}, we find that the pair $(Q_1 - Q_2,Q_2)$ is a prompt for $\Omega$.
Hence, the pair $(\frac{Q_1 - Q_2}{1-a},\frac{Q_2}{1-a})$ is also a prompt for $\Omega$ and its first term is monic.
\end{enumerate}
The coefficients $\frac{1}{a-1}, \frac{1}{a+1},$ and $-\frac{1}{a+1}$ can be obtained similarly by considering all the other possible bases of $\Omega$.
\end{proof}

\subsubsection*{Proof of Theorem \ref{thm:class15,2}}
Using the simplifications above, we are now ready to prove the main theorem.
More precisely, we use Lemma \ref{lem:s3=1} to assume that $s_1=t_1=0$ and $s_3=1$.
Then, the program \cite[Program 6.7]{GithubRepo} tells us that any solution to the system (\ref{eq:Ri}) needs to satisfy the following condition on the parameter $a$:
\[\scriptstyle{(a^3-a^2-a-1)(a^3-a+1)(a^2+1)(a^3-a^2+1)(a^3+a^2-a+1)(a^3-a^2+a+1)(a^3-a-1)(a^2+a-1)(a^3+a^2-1)(a^3+a^2+a-1)(a^2-a-1)=0.}\]
By applying Lemma \ref{lem:transa} we can consider only an irreducible polynomial for each orbit under the group action on $(Q_1,Q_2)$ generated by $a\mapsto -a$ and $a\mapsto \frac{a}{a+1}$.
Namely, we are left with studying the following cases:
\begin{enumerate}
\item[Case 1:] $a^2+1=0$
\item[Case 2:] $a^3-a^2+1=0$
\item[Case 3:] $a^3-a+1=0$.
\end{enumerate}

In all these cases, the successive elements of a Gr\"obner basis for the system (\ref{eq:Ri}) can be computed, and show us all the possibilities for the coefficients of $Q_1$ and $Q_2$.
Let $\Delta_2(Q_1,Q_2)=Q_1Q_2^3-Q_1^3Q_2$ be the Moore determinant of $(Q_1,Q_2)$.
We remark that, if $(Q_1,Q_2)$ and $(S_1,S_2)$ are prompts for spaces $L_{15,2}$ and $\Delta_2(Q_1,Q_2)=\Delta_2(S_1,S_2)$, then these pairs are prompts for the same space by virtue of Corollary \ref{coro:poles}.
With this in mind, we denote by $\Delta^{[1]}$ the Moore determinant of the pair $(Q_1, Q_2)$ appearing in Example \ref{ex:F27}, by $\Delta^{[2]}$ the Moore determinant of the pair $(Q_1, Q_2)$ appearing in Example \ref{ex:F81}, and by $\Delta^\bullet$ with an appropriate superscript the Moore determinants appearing in the cases below (e.g. $\Delta^{1.A}$ is the Moore determinant of the polynomials given by the program in case $1.A$).
We show that we can get a complete classification up to equivalence by studying the cases outlined above and comparing the respective Moore determinants.

In \textbf{Case 1}, we let $a$ be a square root of $-1$. 
For simplicity and consistency, we assume that $a=-\mu^{20}$ where $\mu$ is the generator of $\F_{81}^\times$ appearing in Example \ref{ex:F81}.
Then, the program \cite[Program 6.8]{GithubRepo} returns the following subcases:\\[.2cm]
\begin{tabular}{|l||c|c|c|c||c|c|c|c|}
      \hline
      Case $1.A$ &$s_2$ & $s_3$ & $s_4$ & $s_5$ &$t_2$&$t_3$&$t_4$&$t_5$\\ \hline
      & 0 &1 & $a-1$& 0 &$-a+1$&$a$&$-1$&$a+1$ \\ \hline 
    \end{tabular}\\[.2cm]
    \begin{tabular}{|l||c|c|c|c||c|c|c|c|}
      \hline
          Case $1.A'$ &  $s_2$ & $s_3$ & $s_4$ & $s_5$ &$t_2$&$t_3$&$t_4$&$t_5$\\ \hline
    &  0 &1 & $-a-1$& 0 &$a+1$&$-a$&$-1$&$-a+1$\\  \hline 
    \end{tabular}\\[.2cm]
In Case $1.A$ we get a space that is equivalent to $\Omega_2$.
In fact, one can verify that, after applying the homothety $X\mapsto -(a+1)X$ the Moore determinant $\Delta^{1.A}$ is equal to $\Delta^{[2]}$.
    
    Since Case $1.A'$ is obtained from Case $1.A$ by applying the transformation $a \mapsto -a$, and we have that $a^3=-a$, then it results that the polynomials appearing in Case $1.A'$ are prompts for the space $\Phi(\Omega_2)$.\\[.2cm]
    \begin{tabular}{|l||c|c|c|c||c|c|c|c|}
      \hline
          Case $1.B$ &  $s_2$ & $s_3$ & $s_4$ & $s_5$ &$t_2$&$t_3$&$t_4$&$t_5$\\ \hline
       & $-1$ &1 & $a$& $1+a$ &$-a$&0&$a-1$&$a-1$ \\ \hline 
    \end{tabular}\\[.2cm]
\begin{tabular}{|l||c|c|c|c||c|c|c|c|}
      \hline
      Case $1.B'$ &$s_2$ & $s_3$ & $s_4$ & $s_5$ &$t_2$&$t_3$&$t_4$&$t_5$\\ \hline
      & $-1$ &1 & $-a$& $1-a$ &$a$&0&$-(a+1)$&$-(a+1)$ \\ \hline 
    \end{tabular}\\[.2cm]
    In Case $1.B$, one gets the space $\Omega_2$ without the need to reparametrize. 
    In fact, the zeroes of $A$ (resp. of $B$) in this case are the same as those of $P_0$ (resp. $P_1$) in the example. 
    If we apply the transformation $a \mapsto -a$, we land in Case $1.B'$ which corresponds to the space $\Phi(\Omega_2)$.\\[.2cm]
    \begin{tabular}{|l||c|c|c|c||c|c|c|c|}
      \hline
          Case $1.C$ &  $s_2$ & $s_3$ & $s_4$ & $s_5$ &$t_2$&$t_3$&$t_4$&$t_5$\\ \hline
       & $a-1$ &1 & $-1$& $a-1$ &0 &$-a$&$a-1$&0 \\ \hline 
    \end{tabular}\\[.2cm]
\begin{tabular}{|l||c|c|c|c||c|c|c|c|}
      \hline
      Case $1.C'$ &$s_2$ & $s_3$ & $s_4$ & $s_5$ &$t_2$&$t_3$&$t_4$&$t_5$\\ \hline
      & $-(a+1)$ &1 & $-1$& $-(a+1)$&0&$a$&$-(a+1)$&0 \\ \hline 
    \end{tabular}\\[.2cm]
In Case $1.C$, one gets a space that is equivalent to $\Omega_2$, as one can see by applying the transformation $X \mapsto (a-1)X$.
If we apply the transformation $a \mapsto -a$, we land in Case $1.C'$ which then corresponds to the space $\Phi(\Omega_2)$, as above.

In \textbf{Case 2}, let $a$ be a root of $X^3-X^2+1$.
For simplicity and consistency, we assume that $a=\mu^{-1}$ where $\mu$ is the generator of $\F_{27}^\times$ appearing in Example \ref{ex:F27}.
Then we have that $\Phi(a)=\frac{1}{\mu+1}=-a^2-a-1$ and $\Phi^2(a)=\frac{1}{\mu-1}=a^2-1$.
Program \cite[Program 6.9]{GithubRepo} returns the following subcases:\\[.2cm]
\begin{tabular}{|l||c|c|c|c||c|c|c|c|}
      \hline
      Case $2.A$ & $s_2$ & $s_3$ & $s_4$ & $s_5$ &$t_2$&$t_3$&$t_4$&$t_5$\\ \hline
      & $-1$ &1 & $a^2$& $-(a^2+a+1)$ &$-(a+1)$ &$a^2+a+1$&$a^2-a$&$-(a^2-a+1)$ \\ \hline 
    \end{tabular}\\[.2cm]
    \begin{tabular}{|l||c|c|c|c||c|c|c|c|}
      \hline
      Case $2.B$ & $s_2$ & $s_3$ & $s_4$ & $s_5$ &$t_2$&$t_3$&$t_4$&$t_5$\\ \hline
      & $-1$ &1 & $-(a^2+a)$& $a$ &$-(a+1)$&$a^2$&$-(a^2-a)$&$0$\\  \hline 
    \end{tabular}\\[.2cm]
            \begin{tabular}{|l||c|c|c|c||c|c|c|c|}
      \hline
      Case $2.C$& $s_2$ & $s_3$ & $s_4$ & $s_5$ &$t_2$&$t_3$&$t_4$&$t_5$\\ \hline
      & $-1$ &1 & $a+1$& $a^2-1$ &$-(a+1)$&$a^2-1$&$1$&$a^2-a-1$\\  \hline 
    \end{tabular}\\[.2cm]

In Case $2.A$ we get a vector space equivalent to $\Omega_1$.
In fact, one can verify that, after applying the homothety $X\mapsto (\mu^2+\mu+1)X = -(a^2+1) X$, the Moore determinant $\Delta^{2.A}$ is equal to $\Delta^{[1]}$.
We can also verify that $\Phi(\Delta^{2.A})$ has the same zeroes of $\Delta^{2.C}$ and $\Phi^2(\Delta^{2.A})$ has the same zeroes of $\Delta^{2.B}$.
Hence, Case $2.C$ corresponds to $\Phi(\Omega_1)$ and Case $2.B$ corresponds to $\Phi^2(\Omega_1)$.

Finally, in \textbf{Case 3} let $a$ be a root of $X^3-X+1$. 
For simplicity and consistency, we assume that $a=\mu$, the generator of $\F_{27}^\times$ appearing in Example \ref{ex:F27}.
Then we have that $\Phi(a)=a-1$ and $\Phi^2(a)=a+1$.
Program \cite[Program 6.10]{GithubRepo} returns the following subcases:\\[.2cm]
\begin{tabular}{|l||c|c|c|c||c|c|c|c|}
      \hline
      Case $3.A$ & $s_2$ & $s_3$ & $s_4$ & $s_5$ &$t_2$&$t_3$&$t_4$&$t_5$\\ \hline
      & $a$ &1 & 1& $-(a^2-a+1)$ &$-(a^2+a+1)$ &$-(a+1)$&$a^2+a$&$-(a^2+a+1)$ \\ \hline 
    \end{tabular}\\[.2cm]
    \begin{tabular}{|l||c|c|c|c||c|c|c|c|}
      \hline
      Case $3.B$ & $s_2$ & $s_3$ & $s_4$ & $s_5$ &$t_2$&$t_3$&$t_4$&$t_5$\\ \hline
     &  $-(a^2+a)$ &1 & $a^2-a$& $a$ &$-a^2$&$a-1$&$a^2-1$&$a^2$\\  \hline 
    \end{tabular}\\[.2cm]
            \begin{tabular}{|l||c|c|c|c||c|c|c|c|}
      \hline
      Case $3.C$ & $s_2$ & $s_3$ & $s_4$ & $s_5$ &$t_2$&$t_3$&$t_4$&$t_5$\\ \hline
     &  $-(a^2-a+1)$ &1 & $a^2+1$& $0$ &$-(a^2+a)$&$a^2$&$-a$&$a^2+a-1$\\  \hline 
    \end{tabular}\\

In Case $3.A$ we get a vector space equivalent to $\Omega_1$ under the homothety $X\mapsto (1-a)X$.

In Case $3.B$ we get a vector space equivalent to $\Phi^2(\Omega_1)$.
We can see this by applying $\Phi$ and then the homothety $X \mapsto (a-1)X$ to the coefficients of our polynomials, and noticing that the resulting pair is a prompt for the space $\Omega_1$.

Finally, if we apply the transformation $\Phi$ to the coefficients of $P_0$ and $P_1$ of Example \ref{ex:F27}, we see that we get the coefficients of the table of Case $3.C$.
The pair $(Q_1, Q_2)$ in this latter case then is a prompt for the space $\Phi(\Omega_1)$.

This exhausts all possible cases, and since the spaces $\Omega_1$, $\Phi(\Omega_1)$, $\Phi^2(\Omega_1)$, $\Omega_2$, and $\Phi(\Omega_2)$ make up distinct equivalence classes of spaces $L_{15,2}$, this concludes our proof of the classification theorem. \qed

\begin{remark}
We believe that the classification of spaces $L_{12,2}$ and $L_{15,2}$ in this section is interesting in itself, as the nature of these examples is quite different from anything previously known, for example these spaces can not be obtained from standard spaces.
Moreover, we remark that, by applying \'etale pullbacks to these examples, we can generate spaces $L_{36 d,2}$  and $L_{45 d,2}$ for every integer $d\geq 1$, resulting in large classes of examples useful for future investigation.
\end{remark}

\section{Classification of spaces $L_{4p,2}$}\label{sec:L4p,2}

The classification of spaces $L_{8,2}$ in characteristic 2 (cf. Remark \ref{rmk:L_2lambda,2}), of spaces $L_{12,2}$ in characteristic 3 (\ref{sec:L12,2}) and the examples of standard spaces $L_{20,2}$ in characteristic 5 prompted us to wonder if we can achieve a full classification of spaces $L_{4p,2}$ for every $p$.
This section is devoted to complete such a classification, which thanks to Theorem \ref{thm:generic} and the settled classifications for $p=2, 3$ boils down to the following statement.

\begin{theorem}\label{thm:classL4p,2}
    Let $p$ be a prime number and let $\Omega$ be a space $L_{4p, 2}$. Then $p \in \{2,3,5\}$.
    Moreover, if $p=5$, then $\Omega$ is equivalent to a standard $L_{20, 2}$-space.
\end{theorem}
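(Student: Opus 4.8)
The plan is to combine the generic obstruction of Theorem~\ref{thm:generic} with the classifications already established in small characteristic, and then to dispose of the remaining primes. By Theorem~\ref{thm:generic} a space $L_{4p,2}$ can only exist when $p\le 12$, so $p\in\{2,3,5,7,11\}$; spaces $L_{8,2}$ exist by Remark~\ref{rmk:L_2lambda,2}, and spaces $L_{12,2}$ are classified in Theorem~\ref{thm:class12,2}. What remains is to show that no space $L_{4p,2}$ exists for $p=7,11$ and that every space $L_{20,2}$ is equivalent to a standard one.

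Fix $p\in\{5,7,11\}$ and let $\Omega$ be a space $L_{4p,2}$. By Proposition~\ref{prop:Pagot} and Definition~\ref{defn:givesriseto} there is a prompt $(Q_1,Q_2)$ for $\Omega$ consisting of two polynomials of degree $4$; by Lemma~\ref{lem:s1=t1} their $X^3$-coefficients coincide, and after the translation making that common coefficient vanish we may assume
\[
Q_1=X^4+t_2X^2-t_3X+t_4,\qquad Q_2=a\bigl(X^4+s_2X^2-s_3X+s_4\bigr),\qquad a\notin\F_p .
\]
Condition~(iii) of Proposition~\ref{prop:Pagot}, rewritten through Convention~\ref{conv:Ri}, is then the polynomial system $R_1\neq 0$, $R_2=\dots=R_8=0$ in the variables $a,s_2,s_3,s_4,t_2,t_3,t_4$. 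The first, and structurally crucial, step is to prove that $Q_1$ and $Q_2$ are biquadratic, i.e.\ $t_3=s_3=0$. I would do this by splitting the analysis of the coefficients into three subcases according to how many of $t_3,s_3$ vanish: if both vanish there is nothing to prove; if exactly one vanishes, then interchanging $Q_1$ and $Q_2$ (a change of basis of $\Omega$) and rescaling $X$ lets us normalise $t_3=1$, $s_3=0$; if neither vanishes we likewise normalise $t_3=1$. In the last two subcases a Gr\"obner basis computation shows the system $R_2=\dots=R_8=0$ has no solution compatible with $R_1\neq0$ and $a\notin\F_p$, so only the biquadratic configuration survives: $Q_1=X^4+t_2X^2+t_4$ and $Q_2=a(X^4+s_2X^2+s_4)$, with $t_4,s_4\neq0$ since $Q_1,Q_2$ have simple roots by Remark~\ref{rmk:simplepolesn=2}.

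Once $Q_1,Q_2$, and hence every $iQ_1+jQ_2$, are even, the set of poles $\calP(\Omega)=Z(Q_1Q_2^p-Q_1^pQ_2)$ is stable under $x\mapsto -x$, and the basis elements $\omega_i=(-1)^{i+1}Q_{3-i}\,dX/(Q_1Q_2^p-Q_1^pQ_2)$ are odd differential forms, so $\res_{-x}(\omega_i)=-\res_x(\omega_i)$. For $p=7,11$ I would then reach a contradiction either by a further Gr\"obner basis computation on the reduced four-variable system, or by applying the vanishing relations~\eqref{eq:zeroatinfty} (with $\lambda$ there equal to $4p$, using that each nonzero $\omega\in\Omega$ spans a space $L_{4p,1}$): for odd $k\le 4p-2$ these relations collapse, after pairing $x$ with $-x$ and grouping the poles into the biquadratic factors of $Q_1Q_2^p-Q_1^pQ_2$, into weighted power sums in the squares of the roots, and the resulting identities turn out to be incompatible with $a\notin\F_p$. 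For $p=5$ the system is consistent; solving it via Gr\"obner bases shows that, up to the residual scaling and change-of-basis freedom, $Q_1=X^4-1$ and $Q_2=a(X^4-a^4)$ with $a\notin\F_5$. For such a pair one checks by direct computation that $Z(Q_1Q_2^5-Q_1^5Q_2)=\langle 1,a\rangle_{\F_5}-\{0\}$, matching Proposition~\ref{prop:standard}; so by Definition~\ref{defn:standard} and Corollary~\ref{coro:poles} this space is standard, and since any space equivalent to a standard space is obtained from a standard one by translation, every space $L_{20,2}$ is equivalent to a standard one.

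The main obstacle is twofold. First, the reduction to biquadratic polynomials: the Gr\"obner basis computations ruling out the two non-biquadratic subcases are genuinely large, which is precisely the reason for introducing the preliminary case split. Second, the $p=5$ analysis: one must not only solve the resulting system but also recognise its solution locus as the one-parameter family of standard $L_{20,2}$-spaces, which is where the explicit computation of $\calP(\Omega)$ and Corollary~\ref{coro:poles} are indispensable. The $p=7,11$ cases, by contrast, should be routine once biquadraticity is in place.
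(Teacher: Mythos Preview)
Your overall plan matches the paper's: reduce to $p\in\{5,7,11\}$ via Theorem~\ref{thm:generic} and then analyse each prime. But two points need flagging. First, a slip: the system is $R_2=\cdots=R_{4(p-1)}=0$, not $R_2=\cdots=R_8=0$ (the latter is the $p=3$ count from Section~\ref{sec:L12,2}). Second, and more seriously, your three subcases (governed by which of $s_3,t_3$ vanish) and your method (Gr\"obner bases to force biquadraticity) differ substantially from the paper's, and the Gr\"obner step you propose is almost certainly infeasible: for $p=11$ the $R_k$'s are polynomials of degree $p(p-1)=110$ in six variables, and the paper already reports that even the far smaller system for $p=3,\lambda=5$ is intractable by direct Gr\"obner computation without extensive preliminary reduction.

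The paper circumvents this with a different decomposition and a different engine. It splits by $\deg\Theta\in\{0,1,2\}$ where $\Theta:=P_\infty-P_0$, and in each case exploits the Newton-sum machinery of Lemma~\ref{lem:hji} and Proposition~\ref{prop:Sum}: the weighted power sums $N_j(r)=\sum_i h_{j,i}x_{j,i}^r$ satisfy the four-term recursion~\eqref{eq:NewtonSums} together with vanishings supplied by Proposition~\ref{prop:Sum} (e.g.\ $N_j(p-1)=N_j(p)=N_j(p+1)=0$ and $N_j(1)=0$ when $\Theta=X$). Running the recursion backward produces a univariate polynomial of degree $<p$ in the single varying coefficient $e_{j,\bullet}$ that must vanish at all $p$ values of $j$, forcing its coefficients to zero; this yields the contradictions for $p=7,11$ (and the reduction to $P_j=X^4+e_{j,4}$ for $p=5$ in the $\deg\Theta=0$ case) with essentially no computation. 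Only in the final subcase ($\deg\Theta=2$ with $\Theta$ a genuine binomial, Proposition~\ref{prop:degtheta=2binomial}) does the paper invoke an extended-gcd identity (Lemma~\ref{lem:gcdex}) followed by a \emph{small} Gr\"obner computation, after the Newton-sum machinery has already cut the problem down to size. Your power-sum remark for the biquadratic endgame gestures in the right direction, but the crucial point is that the paper deploys these sums \emph{before} biquadraticity, as the primary tool replacing the intractable Gr\"obner step.
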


The strategy for proving the theorem can be summarized as follows: starting from $\Omega$, we construct a non-zero polynomial of small degree and establish that it has more roots than its degree when $p \in \{7, 11\}$ and when $p = 5$ and the space under consideration is not equivalent to a standard space.
Since no single polynomial construction works uniformly for a given value of $p$, we partition the analysis into three subcases 
and employ a tailored strategy for each of them.

Let $\Omega=\langle \omega_1, \omega_2 \rangle_{\F_p}$ be a space $L_{4p,2}$ and recall that it can be built from suitable polynomials $Q_1, Q_2$ of degree 4 such that $\displaystyle \omega_1=\frac{Q_2 dX}{Q_1Q_2^p-Q_1^pQ_2}$ and $\displaystyle \omega_2=\frac{-Q_1 dX}{Q_1Q_2^p-Q_1^pQ_2}$.
We write $Q_1=-c P_\infty$ and $Q_2=a c P_0$ with $P_0, P_\infty \in k[X]$ monic, $c \in k^\times$ and $a \in k - \F_p$.
For every $j=0, \dots, p-1 $ we consider $P_j:= \frac{aP_0+jP_\infty}{a+j}$, the monic polynomial whose zeroes are the elements of $\calP(\Omega) - \calP(\omega_1 +j \omega_2)$.
Recall from Remark \ref{rmk:simplepolesn=2} that $Q_1$ and $Q_2$ have no common root. As a result, all the $P_j$'s are pairwise coprime, a fact that will be used repeatedly in the proofs that follow.
We denote its zeroes and coefficients by writing
\[P_j=X^4 -e_1X^{3}+e_{j,2}X^{2}-e_{j,3}X+e_{j,4}= \prod_{i=1}^4 (X-x_{ij}),\] 
recalling that the $e_1$ is independent of $j$ by Lemma \ref{lem:s1=t1}).

Finally, we denote the residues of $\omega_2$ at $x_{i,j}$ by writing
\begin{equation}\label{eq:omega2}
\displaystyle \omega_2=\frac{-dX}{Q_2^p-Q_1^{p-1}Q_2}=\frac{ -\;dX}{c^p(a^p-a)\prod_{j=0}^{p-1} P_j}=\sum_{j=0}^{p-1}\sum_{i=1}^4\frac{h_{j,i}}{X-x_{j,i}}dX.
 \end{equation}

The relationship between these residues and the polynomial 
\[ \Theta:=P_{\infty}-P_{0}=(e_{\infty,2}-e_{0,2}) X^2-(e_{\infty,3}-e_{0,3})X+(e_{\infty,4}-e_{0,4}).\]
is shown in the following lemma

\begin{lemma}\label{lem:hji}
    For every $0 \leq j \leq p-1$ and $1 \leq i \leq 4$ we have
    \[ h_{j,i} = a^{-(p-1)}c^{-p}\frac{(a+j)^{p-2}}{P_j'(x_{j,i})\Theta^{p-1}(x_{j,i})}.\]
\end{lemma}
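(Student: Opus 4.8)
The plan is to read off $h_{j,i}$ directly from the partial-fraction expansion \eqref{eq:omega2}, keeping careful track of all the scalar factors. First I would rewrite the denominator of $\omega_2$ as a product over the $P_j$'s. Since $Q_2^p - Q_1^{p-1}Q_2 = \prod_{j\in\F_p}(Q_2-jQ_1)$, and since the proof of Proposition \ref{prop:Pagot} gives $Q_2-jQ_1 = c(a+j)P_j$, while $\prod_{j\in\F_p}(a+j) = a^p-a$, one recovers precisely the factorization used in \eqref{eq:omega2}. Now $x_{j,i}$ is a simple zero of $P_j$ and the $P_l$'s are pairwise coprime (Remark \ref{rmk:simplepolesn=2}); therefore taking the residue of $\omega_2$ at $x_{j,i}$ gives
\[ h_{j,i} = \frac{-1}{c^p(a^p-a)\,P_j'(x_{j,i})\prod_{l\neq j}P_l(x_{j,i})}, \]
so the whole problem reduces to evaluating $\prod_{l\neq j}P_l(x_{j,i})$.

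For that I would use the single relation $P_j(x_{j,i})=0$, i.e. $aP_0(x_{j,i})+jP_\infty(x_{j,i})=0$, to express both $P_0$ and $P_\infty$ at $x_{j,i}$ in terms of $\Theta = P_\infty-P_0$. One finds $P_\infty(x_{j,i}) = \tfrac{a}{a+j}\Theta(x_{j,i})$ and $P_0(x_{j,i}) = -\tfrac{j}{a+j}\Theta(x_{j,i})$; note here that $a\neq 0$ and $a+j\neq 0$ because $a\notin\F_p$, and that $\Theta(x_{j,i})\neq 0$, since otherwise $P_\infty$ and $P_j$ would share the root $x_{j,i}$. Substituting into $P_l = \frac{aP_0+lP_\infty}{a+l}$ then yields the clean identity
\[ P_l(x_{j,i}) = \frac{a\,(l-j)}{(a+j)(a+l)}\,\Theta(x_{j,i}) \qquad (l\neq j). \]

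Finally I would multiply this over all $l\in\F_p\setminus\{j\}$ ($p-1$ factors): for $\prod_{l\neq j}(l-j)$ the substitution $m=l-j$ gives $\prod_{m\in\F_p^\times}m = (p-1)! = -1$ by Wilson's theorem, and $\prod_{l\neq j}(a+l) = \frac{a^p-a}{a+j}$. Hence
\[ \prod_{l\neq j}P_l(x_{j,i}) = \frac{-\,a^{p-1}\,\Theta(x_{j,i})^{p-1}}{(a+j)^{p-2}\,(a^p-a)}. \]
Plugging this back into the formula for $h_{j,i}$ makes the factor $a^p-a$ cancel and produces $h_{j,i} = a^{-(p-1)}c^{-p}\,\dfrac{(a+j)^{p-2}}{P_j'(x_{j,i})\,\Theta^{p-1}(x_{j,i})}$, which is exactly the claimed expression. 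There is no conceptual obstacle in this argument; the only point requiring attention is the bookkeeping of the constants $c$, $a$, $a+j$ together with the combinatorial factors coming from the products indexed by $\F_p$.
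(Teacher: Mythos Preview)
Your proof is correct and follows essentially the same route as the paper: both compute the residue from the partial-fraction expansion and evaluate $\prod_{l\neq j}P_l(x_{j,i})$ via the single relation $aP_0(x_{j,i})+jP_\infty(x_{j,i})=0$. The only cosmetic difference is that the paper first writes $P_k(x_{j,i})=\tfrac{(k-j)P_\infty(x_{j,i})}{a+k}$, takes the product in terms of $P_\infty^{p-1}$, and only then substitutes $P_\infty(x_{j,i})=\tfrac{a}{a+j}\Theta(x_{j,i})$, whereas you pass to $\Theta$ before taking the product; the bookkeeping with Wilson's theorem and $\prod_{l}(a+l)=a^p-a$ is the same either way.
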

\begin{proof}
From the writing (\ref{eq:omega2}), we have that $\displaystyle h_{j,i}=\frac{-1}{c^p(a^p-a) P_j'(x_{j,i})\prod_{k\neq j}P_k(x_{j,i})}$.
Using that $0=(a+j)P_j(x_{j,i})=aP_0(x_{j,i})+jP_\infty(x_{j,i})$ we deduce for every $k\neq j$ the relation
\[P_k(x_{j,i})=\frac{aP_0(x_{j,i})+kP_\infty(x_{j,i})}{a+k}=\frac{(k-j)P_\infty(x_{j,i})}{a+k}\]
which plugged into the expression for the residues yields $\displaystyle h_{j,i}=\frac{-1}{c^p(a+j) P_j'(x_{j,i})P_\infty^{p-1}(x_{j,i})}$.
We can then conclude by remarking that $P_\infty(x_{j,i})=(P_\infty-P_j)(x_{j,i})=\frac{a}{a+j}(P_\infty-P_0)(x_{j,i})$.
\end{proof}

\begin{proposition}\label{prop:Sum}
For every $0\leq j\leq p-1$ we have the following equations:
\begin{enumerate}[label=\ref{prop:Sum}(\roman*)]
    \item 
\[ \sum_{i=1}^4 h_{j,i} \Theta^{p-1}(x_{j,i}) x_{j,i}^k=0 \; \; \mbox{for all} \; \; 0\leq k\leq 2\]\label{prop:Sumi}
\item 
\[ \sum_{i=1}^4 h_{j,i} \Theta^{p-1}(x_{j,i})x_{j,i}^3=\frac{(a+j)^{p-2}}{a^{p-1}c^p}\]\label{prop:Sumii}
\item 
\[ \sum_{i=1}^4 h_{j,i} \Theta(x_{j,i})=0\]\label{prop:Sumiii}
\item 
\[ \sum_{i=1}^4 \frac{1}{P_j'(x_{j,i})\Theta^{p-2}(x_{j,i})}=0.\]\label{prop:Sumiv}
\end{enumerate}
\end{proposition}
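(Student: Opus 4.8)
The plan is to extract all four identities from Lemma~\ref{lem:hji} together with the residue theorem on $\PP^1_k$: in each case the left-hand side will be identified, up to a nonzero factor independent of $i$, with the sum over $i$ of the residues of an explicit rational differential whose only other pole lies at $\infty$, where the residue will be seen to vanish by a short degree count. Two preliminary observations will be used repeatedly. First, since $a\notin\F_p$ we have $a\neq 0$, $a+j\neq 0$ and $a^p-a\neq 0$ for every $j$; moreover $P_\infty-P_j=\frac{a}{a+j}\Theta$, and since $P_0$ and $P_\infty$ are monic of degree $4$ with the same degree-$3$ coefficient (Lemma~\ref{lem:s1=t1}, as recorded in the displayed form of $\Theta$), we have $\deg\Theta\le 2$. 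Second, $\Theta(x_{j,i})\neq 0$ for all $j,i$: if $\Theta(x_{j,i})=0$, then $P_\infty(x_{j,i})=P_0(x_{j,i})$, and combining this with $aP_0(x_{j,i})+jP_\infty(x_{j,i})=(a+j)P_j(x_{j,i})=0$ gives $P_0(x_{j,i})=P_\infty(x_{j,i})=0$, contradicting the coprimality of $Q_1$ and $Q_2$ (Remark~\ref{rmk:simplepolesn=2}). In particular $P_\infty(x_{j,i})=\frac{a}{a+j}\Theta(x_{j,i})$, as in the proof of Lemma~\ref{lem:hji}.

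For (i) and (ii): Lemma~\ref{lem:hji} gives $h_{j,i}\,\Theta^{p-1}(x_{j,i})=a^{-(p-1)}c^{-p}(a+j)^{p-2}\,P_j'(x_{j,i})^{-1}$, so both left-hand sides equal $a^{-(p-1)}c^{-p}(a+j)^{p-2}\sum_{i=1}^4 x_{j,i}^k/P_j'(x_{j,i})$. Since $P_j$ is monic of degree $4$ with simple roots $x_{j,1},\dots,x_{j,4}$, the differential $\frac{X^k\,dX}{P_j(X)}$ has residues $x_{j,i}^k/P_j'(x_{j,i})$ at these points and its residue at $\infty$ is $0$ for $0\le k\le 2$ and $-1$ for $k=3$; the residue theorem then yields $\sum_i x_{j,i}^k/P_j'(x_{j,i})=\delta_{k,3}$ (equivalently, this is the special case of equations~\eqref{eq:zeroatinfty2} applied to the form $\frac{dX}{P_j}$). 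Substituting gives (i) and (ii).

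For (iii) --- the heart of the argument --- I would introduce the rational differential $\eta:=\frac{-P_\infty(X)\,dX}{c^p(a^p-a)\,P_j(X)}$. Because $\gcd(P_\infty,P_j)=1$, the finite poles of $\eta$ are precisely the simple roots of $P_j$, with $\mathrm{Res}_{x_{j,i}}\,\eta=\frac{-P_\infty(x_{j,i})}{c^p(a^p-a)P_j'(x_{j,i})}$; substituting $P_j'(x_{j,i})^{-1}$ from Lemma~\ref{lem:hji} and then $P_\infty(x_{j,i})=\frac{a}{a+j}\Theta(x_{j,i})$ rewrites this residue as $\frac{-a^{p}}{(a^p-a)(a+j)^{p-1}}\,h_{j,i}\,\Theta(x_{j,i})^{p}$, i.e.\ a nonzero constant (independent of $i$) times $h_{j,i}\,\Theta(x_{j,i})^p$. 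On the other hand $\frac{P_\infty}{P_j}=1+\frac{a}{a+j}\cdot\frac{\Theta}{P_j}$ with $\deg\Theta\le 2<4=\deg P_j$, so the Laurent expansion of $\eta$ at $\infty$ has vanishing $X^{-1}$-coefficient and $\mathrm{Res}_\infty\,\eta=0$. The residue theorem on $\PP^1_k$ then forces $\sum_{i=1}^4 h_{j,i}\,\Theta(x_{j,i})^p=0$. Finally, since $\omega_2$ is logarithmic each $h_{j,i}$ lies in $\F_p$, so $h_{j,i}^p=h_{j,i}$, and additivity of the Frobenius gives $\big(\sum_i h_{j,i}\,\Theta(x_{j,i})\big)^p=\sum_i h_{j,i}^p\,\Theta(x_{j,i})^p=\sum_i h_{j,i}\,\Theta(x_{j,i})^p=0$, whence $\sum_i h_{j,i}\,\Theta(x_{j,i})=0$, which is (iii).

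Part (iv) is then formal: multiplying the identity of Lemma~\ref{lem:hji} by $\Theta(x_{j,i})$ gives $\frac{1}{P_j'(x_{j,i})\,\Theta^{p-2}(x_{j,i})}=a^{p-1}c^p(a+j)^{2-p}\,h_{j,i}\,\Theta(x_{j,i})$, and summing over $i$ and invoking (iii) yields $0$. The genuine obstacle in this proposition is the passage to the \emph{per-$j$} relations: applied to $\omega_2$ directly, the residue theorem only produces the global identities $\sum_{j,i}h_{j,i}(\cdots)=0$, since the poles of $\omega_2$ are distributed over all $p$ blocks $Z(P_0),\dots,Z(P_{p-1})$. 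The device that isolates a single block is to weight $\omega_2$ by $P_\infty\prod_{k\neq j}P_k$ --- equivalently, to work with the differential $\eta$ above, in which the factors $P_k$ ($k\neq j$) of the weight cancel the corresponding factors in the denominator of $\omega_2$; the price to pay is the verification that $\mathrm{Res}_\infty\,\eta=0$ (which is exactly where Lemma~\ref{lem:s1=t1}, forcing $\deg(P_\infty-P_j)\le 2$, enters) together with the observation that the resulting residues involve $\Theta(x_{j,i})^p$ rather than $\Theta(x_{j,i})$, so that the logarithmic hypothesis and the Frobenius are needed to conclude.
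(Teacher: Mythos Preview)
Your proof is correct. Parts (i), (ii), and (iv) are essentially identical to the paper's argument (the paper phrases the identity $\sum_i x_{j,i}^k/P_j'(x_{j,i})=\delta_{k,3}$ via partial fractions rather than residues, but this is the same computation).

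For (iii) the paper takes a shorter route: rather than introducing the auxiliary differential $\eta$, it simply notes that $\sum_i h_{j,i}\Theta(x_{j,i})^p=\sum_i h_{j,i}\Theta^{p-1}(x_{j,i})\cdot\Theta(x_{j,i})$, and since $\Theta$ has degree at most $2$ this last sum is a $k$-linear combination of the three sums already shown to vanish in (i). Your detour through $\eta$ and the residue theorem reaches the same intermediate identity $\sum_i h_{j,i}\Theta(x_{j,i})^p=0$ correctly, and both arguments then finish with the Frobenius step; the paper's version is more economical because it recycles (i), whereas yours has the minor conceptual advantage of making (iii) logically independent of (i) and of explaining where the ``per-$j$ isolation'' comes from geometrically.
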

\begin{proof}
In order to prove $(i)$ and $(ii)$, we let $0 \leq k \leq 3$ and compute the partial fraction decomposition of $\frac{X^k}{P_j}$.
Since the $x_{j,i}$'s are all distinct, this is
$ \frac{X^k}{P_j}=\sum_{i=1}^4\frac{x_{j,i}^k}{P'_j(x_{j,i})}\frac{1}{(X-x_{j,i})}$, which gives the formula 
\[X^k= \sum_{i=1}^4 \left( \frac{x_{j,i}^k}{P_j'(x_{j,i})}\prod_{\ell \neq i} (X-x_{j,\ell}) \right).\]
By comparing the coefficients of degree $3$ on the two sides of this equation, we get that $\sum_{i=1}^4 \frac{x_{j,i}^k}{P_j'(x_{j,i})}=0$ when $k=0,1,2$ as well as $\sum_{i=1}^4 \frac{x_{j,i}^3}{P_j'(x_{j,i})}=1$.
At the same time, by Lemma \ref{lem:hji} we have
\[\sum_{i=1}^4 h_{j,i} \Theta^{p-1}(x_{j,i}) x_{j,i}^k = \sum_{i=1}^4 a^{-(p-1)}c^{-p}(a+j)^{p-2}\frac{x_{j,i}^k}{P_j'(x_{j,i})}, \]
which is 0 if $k\leq 2$ and $a^{-(p-1)}c^{-p}(a+j)^{p-2}$ for $k=3$, hence proving parts $(i)$ and $(ii)$.

In order to prove $(iii)$, we write 
\[ \left(\sum_{i=1}^4 h_{j,i} \Theta(x_{j,i})\right)^p= \sum_{i=1}^4 h_{j,i} \Theta^{p}(x_{j,i}) = \sum_{i=1}^4 h_{j,i} \Theta^{p-1}(x_{j,i})\Theta(x_{j,i})\]
and since $\Theta$ is a polynomial of degree at most 2, we have by part $(i)$ that 
\[\sum_{i=1}^4 h_{j,i} \Theta^{p-1}(x_{j,i})\Theta(x_{j,i})= \sum_{i=1}^4 h_{j,i} \Theta^{p-1}(x_{j,i}) \left((e_{\infty,2}-e_{0,2})x_{j,i}^2 +(e_{\infty,3}-e_{0,3})x_{j,i} +(e_{\infty,4}-e_{0,4})\right)=0.\]

Finally, part $(iv)$ is obtained by combining Lemma \ref{lem:hji} with part $(iii)$.
\end{proof}

Our main theorem is proved by separately studying three subcases, according to the degree of $\Theta$.
The most relevant quantities that we will be using in our proofs are the Newton sums
\[ N_j(r):= \sum_{i=1}^4 h_{j,i}x_{j,i}^r\]
which satisfy the equation
 \begin{equation}\label{eq:NewtonSums}
  N_j(k+4)-e_{1}N_j(k+3)+e_{j,2}N_j(k+2)-e_{j,3} N_j(k+1) + e_{j,4}N_j(k)=0
 \end{equation}
for all $k\in \N$.

\begin{remark}
All the results of this section are valid more generally when the degree of the $Q_j$'s (and hence of the $P_j$'s) is any $\lambda>0$.
\end{remark}

\subsubsection*{The case $\deg(\Theta)=0$}

If we assume that $\deg(\Theta)=0$, the classification of spaces $L_{4p,2}$ is provided by the following proposition.
\begin{proposition}\label{prop:degtheta=0}
Let $\deg(\Theta)=0$. Then
\begin{itemize}
    \item If $p=5$ then $\Omega$ is equivalent to a standard $L_{20,2}$ space
    \item If $p\in \{7,11\}$ then there is no space $L_{20,2}$.
\end{itemize}
\end{proposition}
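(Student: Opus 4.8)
The hypothesis $\deg(\Theta)=0$ means $\Theta=P_\infty-P_0$ is a nonzero constant $\theta\in k^\times$, so the polynomials $P_0,\dots,P_{p-1}$ differ only in their constant term: we have $e_{j,2}=e_2$ and $e_{j,3}=e_3$ independent of $j$, while $e_{j,4}=e_{0,4}+\frac{j}{a+j}\theta$ (using $P_j=\frac{aP_0+jP_\infty}{a+j}=P_0+\frac{j}{a+j}\theta$). The first move is to exploit Proposition~\ref{prop:Sum} in this degenerate situation. Since $\Theta$ is constant, part $(i)$ gives $\sum_i h_{j,i}x_{j,i}^k=0$ for $0\le k\le 2$ and part $(ii)$ gives $\sum_i h_{j,i}x_{j,i}^3 = \theta^{-(p-1)}a^{-(p-1)}c^{-p}(a+j)^{p-2}$; i.e. the Newton sums satisfy $N_j(0)=N_j(1)=N_j(2)=0$ and $N_j(3)=\theta^{1-p}a^{1-p}c^{-p}(a+j)^{p-2}$. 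I would then run the recursion \eqref{eq:NewtonSums}, which because $e_1, e_2, e_3$ do not depend on $j$ and only $e_{j,4}$ does, expresses every $N_j(r)$ for $r\ge 4$ as $N_j(3)$ times a universal polynomial (in $e_1,e_2,e_3,e_{j,4}$) plus lower terms that vanish; concretely $N_j(r) = N_j(3)\cdot\Phi_r(e_{j,4})$ where $\Phi_r$ is a polynomial whose coefficients are themselves polynomials in $e_1,e_2,e_3$, and $\Phi_r$ has degree governed by how many times $e_{j,4}$ can be fed back through the length-4 recursion.

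\textbf{Key steps.} The logarithmic condition forces $h_{j,i}\in\F_p^\times$, so $\sum_i h_{j,i}x_{j,i}^r$ is constrained: in particular $N_j(r)^p = \sum_i h_{j,i}x_{j,i}^{rp} = N_j(rp)$ for every $r$, since $h_{j,i}^p=h_{j,i}$. The plan is to pick a well-chosen value of $r$ and compare $N_j(rp)$ computed two ways — directly from the recursion as $N_j(3)\Phi_{rp}(e_{j,4})$ and as $(N_j(3)\Phi_r(e_{j,4}))^p = N_j(3)^p \Phi_r(e_{j,4})^p$ — and then use $N_j(3)^p = N_j(3)\cdot N_j(3)^{p-1}$ together with the explicit formula $N_j(3)=\theta^{1-p}a^{1-p}c^{-p}(a+j)^{p-2}$ to turn the identity $N_j(3)\Phi_{rp}(e_{j,4}) = N_j(3)^p\Phi_r(e_{j,4})^p$ into a polynomial identity in $e_{j,4}$, valid for the $p$ distinct values $e_{0,4},\dots,e_{p-1,4}$. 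Since these are $p$ distinct elements of $k$ (the $P_j$ being pairwise coprime, their constant terms are distinct) and the polynomial in question will have degree $<p$ when $p\in\{7,11\}$ (one must bound $\deg\Phi_{rp}$ and $\deg\Phi_r^p$ carefully for the chosen $r$), one concludes the polynomial vanishes identically, which yields relations among $e_1,e_2,e_3$ and $\theta,a,c$. I expect that carrying this out for $p=7$ and $p=11$ (perhaps with the aid of the Macaulay2 computations in the repository, in the spirit of Sections \ref{sec:L12,2}) forces a contradiction, while for $p=5$ the same relations become consistent only when $e_2=e_3=0$ (and $e_1=0$ after the standard normalization of Lemma~\ref{lem:s1=t1} plus a translation), i.e. $P_j=X^4+e_{j,4}$ is biquadratic with the $e_{j,4}$ forming — after scaling — the set $\{b^4 : b\}$ coming from a $2$-dimensional $\F_5$-subspace, which is exactly the shape of a standard $L_{20,2}$ space. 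I would confirm this last identification by checking that $\calP(\Omega)=\bigcup_j Z(P_j)$ is then the set of nonzero elements of an $\F_5$-vector space of dimension $2$, using Corollary~\ref{coro:poles} and the characterization in Proposition~\ref{prop:standard}.

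\textbf{Main obstacle.} The delicate point is the degree bookkeeping: one needs an $r$ for which $\Phi_{rp}(e_{j,4})$ and $\Phi_r(e_{j,4})^p$ have total degree in $e_{j,4}$ strictly below $p$, so that having $p$ common roots forces the identity — and simultaneously the resulting polynomial identity must be nontrivial enough to pin down $e_2, e_3$. A cleaner alternative, which I would try first, is to use part $(iv)$ of Proposition~\ref{prop:Sum}: with $\Theta$ constant it reads $\sum_i P_j'(x_{j,i})^{-1}=0$, which is automatic (it is the $k=2$ case of the partial-fraction identity), so $(iv)$ gives nothing new and one is genuinely forced into the Newton-sum/Frobenius argument above. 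I would also keep in mind the fallback of directly expanding $\omega_2 = \frac{-dX}{c^p(a^p-a)\prod_j P_j}$ and imposing that its residues lie in $\F_p$, extracting enough coefficient equations to feed into a Gröbner basis computation as in Section~\ref{sec:L4p,2}; the theoretical Newton-sum approach is preferable for $p=5$ (to recognize the standard space) but the computational one is a safe net for $p=7,11$.
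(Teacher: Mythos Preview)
Your proposal is correct and follows essentially the same approach as the paper: use $N_j(r)^p=N_j(rp)$ together with the recursion \eqref{eq:NewtonSums} to produce a low-degree polynomial in $e_{j,4}$ vanishing at $p$ distinct values. The paper simply takes $r=1$ for $p\in\{7,11\}$ (so $N_j(p)=N_j(1)^p=0$, and the recursion gives $N_j(p)=N_j(3)\cdot\Phi_p(e_{j,4})$ with $\deg\Phi_7=1$, $\deg\Phi_{11}=2$, immediately yielding the contradiction by hand), and for $p=5$ it uses $r=1,2,3$ in succession to force $e_2=e_3=0$ and then $e_{0,4}=a^4e_{\infty,4}$, so no computer algebra is needed in this case.
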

\begin{proof}
Since $\deg(\Theta)=0$, we have $e_{\infty,2}=e_{0,2}$ and $e_{\infty,3}=e_{0,3}$. We will denote these by $e_2$ and $e_3$ respectively. Recalling that $(a+j)P_j=aP_0+jP_\infty$, we can write
\[P_j = X^4 -e_1 X^3 +e_2 X^2 - e_3 X +e_{j,4}.\]
After applying a suitable affine transformation, we can assume that $e_1=0$ and $\Theta=e_{\infty,4}-e_{0,4}=1$.
In this way, Proposition \ref{prop:Sumi} rewritten in terms of Newton sums gives $N_j(0)=N_j(1)=N_j(2)=0$, and similarly Proposition \ref{prop:Sumii} gives $N_j(3)=\gamma (a+j)^{p-2}$, with $\gamma\in k^\times$ independent of $j$.
In particular, $N_j(3)\neq 0$ for every $j$.
The higher Newton sums can be obtained using the relation (\ref{eq:NewtonSums}): we have 
\[ \begin{cases} N_j(4)=0\\ N_j(5)=-e_2N_j(3)\\ N_j(6)=e_3 N_j(3)\\  N_j(7)=-e_2 N_j(5)-e_{j,4} N_j(3)= (-e_{j,4}+e_2^2)N_j(3)\\
\vdots \\N_j(10)=(- 2e_3e_{j,4} +3e_2^2e_3)N_j(3), \\N_j(11)= 
(e_{j,4}^2+8e_2^2e_{j,4}+e_2^4+8e_2e_3^2)N_j(3).
\end{cases}\]
We now discuss, according to the value of $p$:
\begin{itemize}
    \item Let $p=5$. Then $N_j(5)=N_j(1)^5=0$ and from it follows that $e_2=0$. Similarly, $N_j(10)=N_j(2)^5=0$ and hence $e_3=0$. 
    Using this, we find that $N_j(15)=-e_{j,4}N_j(11)=-e^3_{j,4}N_j(3)$, and we know at the same time that we have $N_j(15)=N_j(3)^5$, so comparing the two expressions we get that $N_j(3)^4=-e_{j,4}^3$. 
    We can rewrite this equation explicitly to obtain that $\frac{(a+j)^{12}}{a^{16}c^{20}} = - \frac{(a e_{0,4} +j e_{\infty,4})^3}{(a+j)^3}$, which after reordering becomes
    \[ \frac{(a+j)^{15}}{a^{16}c^{20}} = - (a e_{0,4} +j e_{\infty,4})^3 \;\; \forall \;j=0, \dots, 4. \]
    This expression when $j=0$ returns $e_{0,4}^3=-\frac{1}{a^4 c^{20}}$, which plugged in the left hand side of the same expression gives
    \[ \frac{(a^5+j)^{3}}{a^{12}} (-e_{0,4}^3) = - (a e_{0,4} +j e_{\infty,4})^3 \;\; \forall \;j=0, \dots, 4. \]
    We then notice that the two sides of the last equation are polynomials of degree $3$ in $j$ and that this relation is satisfied for 5 distinct values of $j$, hence the two expressions must coincide as polynomials in $j$. In particular, the degree 1 coefficients coincide and we have $-\frac{3 e_{0,4}^3}{a^2}=-3 a^2 e_{0,4}^2 e_{\infty,4}$, which implies that $e_{0,4}= a^4 e_{\infty,4}$.\\
    To conclude, we pick $a_1, a_2 \in k^\times$ such that $a_1^4 = -e_{\infty,4}$ and $a_2=-a a_1$ so that we have that $a_2^4 = -a^4 e_{\infty,4}=-e_{0,4}$.
    We can then choose $\mu = -\frac{c}{a_1}$ and verify that the polynomials $Q_1, Q_2$ satisfy 
    \[\begin{cases} 
    Q_1=-c P_\infty = \mu(a_1 X^4 - a_1^5) \\
    Q_2 = ac P_0 = \mu(a_2 X^4 -a_2^5).
    \end{cases}\]
    By Proposition \ref{prop:standard} and \cite[Proposition 3.1]{FresnelMatignon23}, we conclude that $\Omega$ is equivalent to a standard $L_{20,2}$-space.
    \item Let $p=7$. Then $N_j(7)=N_j(1)^7=0$ and from it follows that $-e_{j,4}+e_2^2=0$ for every $j=0,\dots, 6$.
    But the set $\{e_{j,4}: 0 \leq j \leq 6\}$ has 7 elements because otherwise there would be $i \neq j$ with $P_i=P_j$ and that is not possible. It follows that the polynomial $-Z+e_2^2$ has 7 distinct roots, and hence a contradiction.
    \item Let $p=11$. Then the argument is completely analogous: we have $N_j(11)=0$ and then we find that the polynomial $Z^2+8e_2^2Z+e_2^4+8e_2e_3^2$ has 11 distinct roots and hence a contradiction.
\end{itemize}
\end{proof}

\subsubsection*{The case $\deg(\Theta)=1$}
\begin{proposition}\label{prop:degtheta=1}
    Let $\deg(\Theta)=1$ and $p\in \{5,7,11\}$. Then there are no spaces $L_{4p,2}$.
\end{proposition}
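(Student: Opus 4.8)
The plan is to derive the contradiction from Proposition~\ref{prop:Sumiv} alone, which — unlike in the case $\deg(\Theta)=0$, where it is vacuous — carries genuine content precisely because $\Theta$ is not constant. Write $\Theta(X)=\beta(X-\delta)$ with $\beta\in k^\times$; this is exactly what $\deg(\Theta)=1$ means, since the $X^3$‑coefficient of $\Theta$ vanishes automatically (the $e_1$ are equal for all $j$) and the $X^2$‑coefficient vanishes by hypothesis. First I would record that $\delta$ is not a pole of $\Omega$: from $P_j=P_\infty-\frac{a}{a+j}\Theta$ one gets $P_j(\delta)=P_\infty(\delta)=P_0(\delta)$ for every $j$, so if this common value were $0$ the coprime polynomials $P_0,P_\infty$ (Remark~\ref{rmk:simplepolesn=2}) would share the root $\delta$ — impossible. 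Hence $P_j(\delta)\neq 0$ and $\Theta(x_{j,i})\neq 0$ for all $j,i$, so all the quantities below are well defined.

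Next I would reinterpret Proposition~\ref{prop:Sumiv} via the residue theorem on $\PP^1_k$. Put $Y=X-\delta$, set $\widetilde P_j(Y):=P_j(\delta+Y)$, and let $b_{j,m}$ denote the coefficient of $Y^m$ in the power series $\frac{1}{\widetilde P_j(Y)}\in k[[Y]]$ (this makes sense since $\widetilde P_j(0)=P_j(\delta)\neq 0$). Since $\Theta(X)^{p-2}=\beta^{p-2}Y^{p-2}$, the rational $1$‑form $\dfrac{dX}{P_j(X)\Theta(X)^{p-2}}=\dfrac{dY}{\beta^{p-2}Y^{p-2}\widetilde P_j(Y)}$ has simple poles at the $x_{j,i}$ with residues $\dfrac{1}{P_j'(x_{j,i})\Theta(x_{j,i})^{p-2}}$, a single further pole at $\delta$ with residue $\dfrac{b_{j,p-3}}{\beta^{p-2}}$, and no pole at infinity (the denominator has degree $4+p-2\geq 2$). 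Summing residues and invoking Proposition~\ref{prop:Sumiv} gives, for every $j\in\{0,\dots,p-1\}$,
\[
 b_{j,p-3}=0 .
\]

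Now I would analyse the $j$‑dependence of this single scalar. From $\widetilde P_j(Y)=\widetilde P_\infty(Y)-\frac{a\beta}{a+j}\,Y$ one sees that \emph{all} coefficients of $\widetilde P_j$ are independent of $j$ except the linear one; write $\widetilde P_j(Y)=Y^4+c_3Y^3+c_2Y^2+c_{j,1}Y+c_0$ with $c_0=P_\infty(\delta)\in k^\times$, $c_2,c_3$ fixed, and $c_{j,1}=c_\infty-\frac{a\beta}{a+j}$, where $c_\infty$ is the $Y$‑coefficient of $\widetilde P_\infty$. Since $a\notin\F_p$, the elements $a+j$ ($j\in\F_p$) are $p$ distinct nonzero scalars, so $c_{0,1},\dots,c_{p-1,1}$ are $p$ \emph{pairwise distinct} elements of $k$. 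Expanding $\frac{1}{\widetilde P_j(Y)}=\frac{1}{c_0}\sum_{k\geq 0}(-1)^k\big(\tfrac{c_{j,1}Y+c_2Y^2+c_3Y^3+Y^4}{c_0}\big)^k$ and extracting the coefficient of $Y^{p-3}$ expresses $b_{j,p-3}$ as $F(c_{j,1})$ for a polynomial $F\in k[Z]$ of degree exactly $p-3$ whose leading coefficient is $c_0^{-(p-2)}\neq 0$ (the top term comes from $k=p-3$, picking $c_{j,1}Y$ in every factor, and $(-1)^{p-3}=1$ as $p$ is odd). Thus $F$ is a nonzero polynomial of degree $p-3$ vanishing at the $p>p-3$ distinct points $c_{0,1},\dots,c_{p-1,1}$, a contradiction. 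For $p=5,7,11$ this is $\deg F=2,4,8$ against $5,7,11$ roots respectively; concretely for $p=5$ one has $F(Z)=(Z^2-c_0c_2)/c_0^3$, already absurd with five roots. Hence no space $L_{4p,2}$ with $\deg(\Theta)=1$ exists, and the argument is in fact insensitive to replacing the degree $4$ by any $\lambda\geq 1$.

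The step I expect to require the most care is the bookkeeping of the last paragraph: checking that the dependence on $j$ of $\widetilde P_j$ is genuinely concentrated in the single coefficient $c_{j,1}$, that $j\mapsto c_{j,1}$ is injective on $\F_p$ (this is where $a\notin\F_p$ and $\beta\neq 0$ are both used), and — most importantly — that the coefficient of $Y^{p-3}$ in $1/\widetilde P_j(Y)$, viewed as a polynomial in $c_{j,1}$, really has degree $p-3$ with no characteristic‑$p$ cancellation of its leading term. Everything else is a direct application of the residue theorem together with Lemma~\ref{lem:hji} and Proposition~\ref{prop:Sum}, which are already in place.
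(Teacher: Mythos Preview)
Your argument is correct. The residue-theorem interpretation of Proposition~\ref{prop:Sumiv} is clean: the sole extra pole of $\frac{dX}{P_j\Theta^{p-2}}$ is at $\delta$, its residue is $b_{j,p-3}/\beta^{p-2}$, and the expansion of $1/\widetilde P_j(Y)$ makes the dependence on the single $j$-varying coefficient $c_{j,1}$ transparent, with leading term $c_{j,1}^{p-3}/c_0^{p-2}$ surviving intact in characteristic $p$. The injectivity of $j\mapsto c_{j,1}$ is exactly what $a\notin\F_p$ and $\beta\neq 0$ give you, and the degree count $p-3<p$ finishes it.

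The paper arrives at the same contradiction but by a different route. After normalising so that $\Theta=X$ (i.e.\ $\delta=0$, $\beta=1$), it works with the weighted power sums $N_j(r)=\sum_i h_{j,i}x_{j,i}^r$, reads off $N_j(1)=0$ from Proposition~\ref{prop:Sumiii} and $N_j(p-1)=N_j(p)=N_j(p+1)=0$, $N_j(p+2)\neq 0$ from Proposition~\ref{prop:Sum}(i)--(ii), and then runs the linear recurrence \eqref{eq:NewtonSums} backward from index $p-2$ down to $1$, obtaining for each $p\in\{5,7,11\}$ an explicit monic polynomial in $e_{j,3}$ of degree $p-3$ (computed in \cite[Program~1]{GithubRepo2}) with $p$ distinct roots. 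In your normalisation this polynomial is exactly $c_0^{\,p-2}F(Z)$; for $p=5$ the paper's relation $e_{j,3}^2-e_2e_4=0$ is your $F(Z)=(Z^2-c_0c_2)/c_0^3$. So the two proofs reach the same degree-$(p-3)$ obstruction, but your power-series reading extracts the leading coefficient in one line and is uniform in $p$, whereas the paper's Newton-sum recursion is carried out separately for each prime with computer assistance. Your approach also makes it visible that only Proposition~\ref{prop:Sumiv} is needed, whereas the paper's recursion consumes parts~(i)--(iii) as boundary data.
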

\begin{proof}
As in the previous case, we start by applying an affine transformation of $k$ so that we can assume $e_{0,4}=e_{\infty,4}$ (which we denote by $e_4$) and $e_{\infty,3}-e_{0,3}=-1$.
We have then that $\Theta=X$ as well as
\[ P_j = X^4-e_1X^3+e_2X^2-e_{j,3}X+e_4.\]
The $P_j$'s are coprime, so that $e_4 \neq 0$, and that $e_{j,i}=\frac{ae_{0,i}+je_{\infty,i}}{a+j}$, hence $e_{\infty,3}\neq e_{0,3}$ ensures that $e_{j,3} \neq e_{i,3}$ for every $i\neq j$.

We now look at the Newton sums: by Proposition \ref{prop:Sumiii} we have that $N_j(1)=0$, by Proposition \ref{prop:Sumi} we have that $N_j(p-1)=N_j(p)=N_j(p+1)=0$, and by Proposition \ref{prop:Sumii} we have that $N_j(p+2)\neq 0$ for all $0\leq j \leq p-1$.

Using equation (\ref{eq:NewtonSums}), we first get that $N_j(p+2)=-e_4 N_j(p-2)$ and hence Proposition \ref{prop:Sumii} that $N_j(p-2) \neq 0$.
The same equation for other values of $k$ gives the system
\begin{equation}\label{eq:recursiveNewt}
    \begin{cases}
    -e_{j,3} N_j(p-2) +e_4 N_j(p-3)=0\\
    e_2N_j(p-2)-e_{j,3}N_j(p-3)+e_4 N_j(p-4)=0\\
       -e_1N_j(p-2)+e_{2}N_j(p-3)-e_4 N_j(p-4)+e_5 N_j(p-5)=0\\
    \vdots \\
    N_j(5)-e_1 N_j(4)+e_2 N_j(3)-e_{j,3} N_j(2)+e_4N_j(1)=0,
\end{cases}
\end{equation}
from which we can recursively retrieve the expression of $N_j(1)$ in terms of $N_j(p-2)$ (see detailed calculations in \cite[Program 1]{GithubRepo2}).
This is discussed according to the different values of $p$:
\begin{itemize}
    \item Let $p=5$. Then from (\ref{eq:recursiveNewt}) we get that \[0=e_4^2N_j(1)=(e_{j,3}^2-e_2 e_4)N_j(3).\] Since $N_j(3) \neq 0$, we have $e_{j,3}^2-e_2e_4=0$ for $0\leq j \leq 4$. As the set $\{ e_{j,3} : 0\leq j \leq 4\}$ is of cardinality $5$, this leads to a contradiction and hence there is no space $L_{20,2}$
    \item Let $p=7$. Then from (\ref{eq:recursiveNewt}) we get that \[0=e_4^4 N_j(1)=(e_{j,3}^4-3e_2e_4e_{j,3}^2+2e_1e_4^2 e_{j,3}-e_2^2e_4^2+e_4^3 )N_j(5).\] Since $N_j(5) \neq 0$, we have that the expression in parenthesis vanishes for $0\leq j \leq 6$. This expression is a unitary polynomial of degree 4 in $e_{j,3}$. As the set $\{ e_{j,3} : 0\leq j \leq 6\}$ is of cardinality $7$, this leads to a contradiction and hence there is no space $L_{28,2}$
    \item Let $p=11$. Then from (\ref{eq:recursiveNewt}) we get that \begin{align*}
    0=e_4^8 N_j(1)=(e_{j,3}^8+4e_2e_4e_{j,3}^6-5e_1e_4^2e_{j,3}^5+(4e_2^2e_4^2-5e_4^3)e_{j,3}^4+2e_1e_2e4^3e_{j,3}^3+ \\
    +(e_2^3e_4^3-5e_1^2e_4^4+e_2e_4^4)e_{j,3}^2+(e_1e_2^2e_4^4+5e_1e_4^5)e_{j,3}+e_2^4e_4^4-3e_1^2e_2e_4^5-3e_2^2e_4^5+e_4^6)N_j(9).
    \end{align*}
    As in the cases above, this entails the vanishing of a unitary polynomial of degree 8 when evaluated in the $e_{j,3}$'s. As the set $\{ e_{j,3} : 0\leq j \leq 10\}$ is of cardinality $11$, this leads to a contradiction and hence there is no space $L_{44,2}$.
\end{itemize}
 \end{proof}

\subsubsection*{The case $\deg(\Theta)=2$}\label{sec:degtheta=2}

Treating this case is more involved, as we can not assume that the polynomial $\Theta$ is a monomial anymore. 
However, after possibly translating the set of poles, we can assume that $e_{0,3}=e_{\infty,3}$, and we can then discuss two separate cases.

\begin{proposition}\label{prop:degtheta=2monomial}
    Let $p\in \{5,7,11\}$ and $\deg(\Theta)=2$. Assume moreover that $e_{0,3}=e_{\infty,3}$ and $e_{0,4}=e_{\infty,4}$. Then there are no spaces $L_{4p,2}$.
\end{proposition}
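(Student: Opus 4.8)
I would keep the running notation of the section: $\Omega=\langle\omega_1,\omega_2\rangle_{\F_p}$, $Q_1=-cP_\infty$, $Q_2=acP_0$, the monic degree-$4$ polynomials $P_j$ for $j\in\PP^1(\F_p)$, and $\Theta=P_\infty-P_0$. Under the present hypotheses $e_{0,3}=e_{\infty,3}$ and $e_{0,4}=e_{\infty,4}$, while $\deg\Theta=2$ forces $e_{\infty,2}\neq e_{0,2}$; hence $\Theta=(e_{\infty,2}-e_{0,2})X^2$, and after a homothety $X\mapsto\lambda X$ I may assume $\Theta=X^2$. Then $P_j=X^4-e_1X^3+e_{j,2}X^2-e_3X+e_4$, where (using $e_{j,i}=(ae_{0,i}+je_{\infty,i})/(a+j)$) the coefficients $e_1,e_3,e_4$ are independent of $j$, $e_4\neq0$ by coprimality of the $P_j$, and the $p$ values $e_{0,2},\dots,e_{p-1,2}$ are pairwise distinct. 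Specializing Proposition \ref{prop:Sum}(iv) to $\Theta=X^2$ gives $\sum_i \frac{1}{x_{j,i}^{2p-4}P_j'(x_{j,i})}=0$; since $\frac{1}{X^{2p-4}P_j(X)}$ has no residue at $\infty$, the sum of its residues at the $x_{j,i}$ equals $-\Res_{X=0}$, so this identity says exactly that the coefficient of $X^{2p-5}$ in the power series expansion of $\frac1{P_j(X)}$ at $X=0$ vanishes, for every $j=0,\dots,p-1$.

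\textbf{The generic sub-case.} Write $P_j(X)=A(X)+e_{j,2}X^2$ with $A(X)=X^4-e_1X^3-e_3X+e_4$, and expand $\frac1{A+e_{j,2}X^2}=\sum_{n\ge0}(-1)^ne_{j,2}^nX^{2n}A^{-n-1}$. The condition above then reads $\psi(e_{j,2})=0$, where $\psi(Z):=\sum_{n\ge0}(-1)^nZ^n\,[X^{2p-5-2n}]A(X)^{-n-1}$ is a polynomial of degree $\le p-3$ in $Z$ whose coefficients depend only on $e_1,e_3,e_4$. As $\psi$ has the $p$ distinct roots $e_{0,2},\dots,e_{p-1,2}$ and $p-3<p$, we must have $\psi\equiv0$. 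A direct inspection of the top coefficients shows that the coefficient of $Z^{p-3}$ in $\psi$ is a nonzero multiple of $e_3$ (namely $\pm (p-2)e_3/e_4^{p-1}$), and, when $e_3=0$, the coefficient of $Z^{p-4}$ is a nonzero multiple of $e_1$ (namely $\pm(p-3)e_1/e_4^{p-2}$), using $p-2,p-3\not\equiv0\pmod p$. Thus $\psi\equiv0$ forces $e_1=e_3=0$, i.e.\ every $P_j$ is biquadratic.

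\textbf{The biquadratic sub-case.} It remains to exclude $e_1=e_3=0$. Now all poles of $\omega_2$ occur in pairs $\{y,-y\}$ with opposite residues, so every even-order Newton sum $N_j(2m)$ vanishes identically and Proposition \ref{prop:Sum}(iii)–(iv) carries no information; I would instead use the odd-order sums $\bar N_j(m):=N_j(2m+1)$. With $\tilde P_j(W):=W^2+e_{j,2}W+e_4$ (so $P_j(X)=\tilde P_j(X^2)$) and $w_{j,1},w_{j,2}$ its roots, one has $\bar N_j(m)=c_{j,1}w_{j,1}^m+c_{j,2}w_{j,2}^m$ for suitable $c_{j,\ell}$, and the $2p$ values $w_{j,\ell}$ are the simple roots of $\tilde P(W):=\prod_{j=0}^{p-1}\tilde P_j(W)$. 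Because $\omega_2$ has a zero of order $4p-2$ at $\infty$, all its Newton sums up to that order vanish, whence $\sum_j\bar N_j(m)=0$ for $0\le m\le 2p-2$; this $(2p-1)$-fold Vandermonde condition on the $2p$ coefficients $c_{j,\ell}$ forces $c_{j,\ell}=\gamma/\tilde P'(w_{j,\ell})$ for a single $\gamma\in k^\times$, nonzero since $\bar N_j(p)=N_j(2p+1)\neq0$ by Proposition \ref{prop:Sum}(ii). On the other hand Proposition \ref{prop:Sum}(i) with $k=1$ gives $\bar N_j(p-1)=N_j(2p-1)=0$, i.e.\ $\sum_\ell\Res_{w_{j,\ell}}\tfrac{W^{p-1}}{\tilde P(W)}=0$ for each $j$; equivalently, in the partial fraction decomposition $\tfrac{W^{p-1}}{\tilde P(W)}=\sum_j\tfrac{A_j(W)}{\tilde P_j(W)}$ (with $\deg A_j\le1$) every $A_j$ is a constant, say $b_j$, so that $\sum_j b_j\prod_{j'\neq j}\tilde P_{j'}(W)=W^{p-1}$.

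\textbf{Finishing the biquadratic case, and the main obstacle.} The point that makes this tractable is that $\tilde P$ is computable in closed form: using $e_{j,2}=e_{0,2}+\frac{j}{a+j}$ and $\prod_{j\in\F_p}(X+j)=X^p-X$, one gets, with $B(W):=W^2+e_{0,2}W+e_4$ and $C(W):=B(W)+W$,
\[ \tilde P(W)=\frac{1}{a^{p-1}-1}\,B(W)\bigl(a^{p-1}B(W)^{p-1}-C(W)^{p-1}\bigr), \]
and the same manipulation yields $P_a(Z):=\prod_j(Z-e_{j,2})=\frac{(Z-e_{0,2})\bigl(a^{p-1}(Z-e_{0,2})^{p-1}-(Z-e_{0,2}-1)^{p-1}\bigr)}{a^{p-1}-1}$. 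Matching coefficients of $W^{2p-2},\dots,W^0$ in $\sum_j b_j\prod_{j'\neq j}\tilde P_{j'}=W^{p-1}$, the top $p$ relations (a Vandermonde system) force $b_j=1/P_a'(e_{j,2})$, and the remaining $p-1$ relations become explicit polynomial equations in $a,e_{0,2},e_4$ with $a\notin\F_p$. I expect this overdetermined system to be inconsistent for each $p\in\{5,7,11\}$, which I would confirm by a Gröbner basis computation of the associated ideal, in the style of (and recorded alongside) the computations used for Propositions \ref{prop:degtheta=0} and \ref{prop:degtheta=1}. This biquadratic sub-case is the genuine obstacle: the even/odd decoupling of the Newton-sum recursion makes the ``automatic'' identities vacuous, so one has strictly less leverage than in the generic sub-case and must bring in the global residue normalization together with the closed form for $\tilde P$; pushing the resulting elimination through for $p=11$ is the part I would expect to be hardest and would delegate to computer algebra.
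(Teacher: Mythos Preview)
Your treatment of the generic sub-case is correct and parallels the paper's: you extract the coefficient of $X^{2p-5}$ in $1/P_j$ (the paper runs the Newton-sum recursion \eqref{eq:NewtonSums} down from $N_j(2p-3)$ to $N_j(2)$), and in both cases the resulting degree-$(p-3)$ polynomial in $e_{j,2}$ must vanish identically, forcing $e_3=0$ and then $e_1=0$.

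The biquadratic sub-case, however, has a genuine gap. The system you set up is \emph{automatically satisfied} and therefore vacuous. Concretely, for the pencil $\tilde P_j=(aB+jC)/(a+j)$ one computes (using $aB(w_{j,\ell})+jC(w_{j,\ell})=0$) that
\[
\Res_{w_{j,\ell}}\frac{W^{p-1}}{\tilde P(W)}=-\frac{(a^p-a)(a+j)^{p-2}}{a^{p-1}}\cdot\frac{1}{\tilde P_j'(w_{j,\ell})},
\]
and since $\tilde P_j'(w_{j,1})+\tilde P_j'(w_{j,2})=2(w_{j,1}+w_{j,2})+2e_{j,2}=0$, the two residues cancel. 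So $A_j$ is constant \emph{for every} choice of $a,e_{0,2},e_4$, the identity $\sum_j b_j\prod_{j'\neq j}\tilde P_{j'}=W^{p-1}$ holds identically, and your ``remaining $p-1$ relations'' impose no constraint. This is not an accident: every ingredient you use in the biquadratic case (the zero at $\infty$, \ref{prop:Sumi}, \ref{prop:Sumii}, \ref{prop:Sumiv}) follows from Lemma~\ref{lem:hji} and partial fractions alone, i.e.\ from the pencil structure, and never uses that $h_{j,i}\in\F_p^\times$. Since \ref{prop:Sumiii} is the only place where the logarithmic condition enters and you correctly note it is vacuous here, your argument carries no logarithmic information at all.

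The paper supplies the missing idea: use the Frobenius identity $N_j(p)=N_j(1)^p$, which holds precisely because $h_{j,i}^p=h_{j,i}$. Running the recursion \eqref{eq:NewtonSums} (now with $e_1=e_3=0$) gives $N_j(1)=W_1(e_{j,2})\cdot\kappa(a+j)^{p-2}$ and $N_j(p)=W_2(e_{j,2})\cdot\kappa(a+j)^{p-2}$ for explicit $W_1,W_2\in k(e_4)[Z]$; substituting $e_{j,2}=e_{\infty,2}-\frac{a}{a+j}$ turns both sides of $N_j(1)^p=N_j(p)$ into polynomials of degree $p-2$ in $j$, which must therefore coincide. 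But the $N_j(1)^p$-side has only simple roots (a discriminant check), whereas the $N_j(p)$-side has a root of order $\ge 2$ at $j=-a$ (since $\deg W_2<p-3$), a contradiction.
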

\begin{proof}
    In this case, we can apply a homothety to the set of poles to get that $e_{\infty,2}-e_{0,2}=1$ (notice that this does not affect the conditions that $e_{0,3}=e_{\infty,3}$ and $e_{0,4}=e_{\infty,4}$). It results that $\Theta=X^2$ and we can compute Newton sums as before: by Proposition \ref{prop:Sumiii} we have that $N_j(2)$=0, by Proposition \ref{prop:Sumi} we have that $N_j(2p-2)=N_j(2p-1)=N_j(2p)=0$ and Proposition \ref{prop:Sumii} gives that $N_j(2p+1)= \frac{(a+j)^{p-2}}{a^{p-1}c^p}\neq 0$ for all $0 \leq j \leq p-1$.
    As in the previous proof, we can apply equation (\ref{eq:NewtonSums}) to get that $N_j(2p-3)\neq 0$, and we apply (\ref{eq:NewtonSums}) recursively to express $N_j(2)$ as a function of $N_j(2p-3)$ (see detailed calculations in \cite[Program 2]{GithubRepo2}). 
    More specifically, this recursion gives the formula
    \[N_j(2)= V(e_{2,j}) N_j(2p-3), \]
        where $V\in k(e_4)[e_1,e_3][Z]$ is a polynomial of degree $p-3$.
        Since $N_j(2)=0$ and $N_j(2p-3)\neq 0$, the formula above implies that $V(e_{2,j})=0$ for every $j=0,\dots, p-1$, and since this is satisfied for $p$ distinct values of $e_{2,j}$, we have that all the coefficients of $V$ vanish. 
        The computation of the two coefficients of higher degree done in \cite[Program 2]{GithubRepo2} returns the following table.
        
        \begin{center}
\begin{tabular}{|c|c|c|c|}
\hline
$p$ & $\deg(V)$ & $\mathrm{coeff}(V,Z^{p-3})$ & $\mathrm{coeff}(V,Z^{p-4})$\\
\hline
\hline
$5$ & $2$ & $-2  e_4^{-3} e_3$ & $e_4^{-4}e_3^3  + 3e_4^{-2} e_1 $ \\
$7$ & $4$ & $-2 e_4^{-5}e_3$ & $e_4^{-6}e_3^3+3e_4^{-4}e_1 $\\
$11$ & $8$ & $-2e_4^{-9}e_3$ & $e_4^{-10}e_3^3+ 3e_4^{-8}e_1$\\

 \hline
\end{tabular}
\end{center}
Since $0$ can not be a common root of the $P_j$'s, we have that $e_4\neq 0$, and then the vanishing of the coefficients of $V$ implies that $e_1=e_3=0$. In other words, we have that
        \[P_j = X^4+e_{j,2}X^2+e_4\]
is a biquadratic polynomial for every $j$. It easily follows that $N_j(k)=0$ for every even $k$, while for odd $k$ we can compute these from $N_j(2p-3)$ using the same recursive process: we obtain formulas \begin{equation}\label{eq:W1W2}
    \begin{cases}
        N_j(1)= W_{1}(e_{2,j}) N_j(2p-1) = -\frac{W_{1}(e_{2,j})}{e_4} \frac{(a+j)^{p-2}}{a^{p-1}c^p} \\
        N_j(p)= W_2(e_{2,j}) N_j(2p-1) = -\frac{W_{2}(e_{2,j})}{e_4} \frac{(a+j)^{p-2}}{a^{p-1}c^p},
    \end{cases}
\end{equation}

with $W_{1}$ and $W_{2}$ in $k(e_4)[Z]$ 
as in the following table (obtained in \cite[Program 3]{GithubRepo2}):
            \begin{center}
\begin{tabular}{|c|c|c|}
\hline
$p$ & $W_1(Z)$ & $W_2(Z)$\\
\hline
\hline
 $5$ &$ \frac{Z^3}{e_4^3} + 3\frac{Z}{e_4^2}$ &$\frac{Z}{e_4}$\\
   $7$ & $\frac{Z^5}{e_4^5} + 3\frac{Z^3}{e_4^4} + 3\frac{Z}{e_4^3} $&$ 6\frac{Z^2}{e_4^2} + \frac{1}{e_4}$ \\
  $11$ & $\frac{Z^9}{e_4^{9}} + 3\frac{Z^7}{e_4^8} + 10\frac{Z^5}{e_4^7} + 2\frac{Z^3}{e_4^6} + 5\frac{Z}{e_4^5}  $&$10\frac{Z^4}{e_4^4} + 3\frac{Z^2}{e_4^3} + \frac{10}{e_4^2}$ \\

 \hline
\end{tabular}
\end{center}

From formulas (\ref{eq:W1W2}), using the fact that $e_{j,2} = e_{\infty,2}-\frac{a}{a+j}$, and raising the first line to the power $p$, we get that

\[ \begin{cases}
N_j(1)^p= \widehat{W}_1(j)\\
N_j(p)= \widehat{W}_2(j),
\end{cases} \]
where $\widehat{W}_1, \widehat{W}_2 \in k(e_4)[Z]$ are polynomials of degree $p-2$.
As $N_j(1)^p=N_j(p)$ for every $j\in\{0, \dots, p-1\}$, the polynomials $\widehat{W}_1$ and $\widehat{W}_2$ must coincide.
However, $\widehat{W}_1$ has simple roots, as one can verify through a computation of the discriminant (see \cite[Program 3]{GithubRepo2}), while $\widehat{W}_2$ has a root in $-a$ with multiplicity at least 2 (because $\deg(W_2)<p-3$ for $p\in \{5,7,11\}$).
This leads to a contradiction.
\end{proof}

It remains to study the case where $\Theta$ is a binomial.
To do this, we first establish a result that will help us with our strategy.
\begin{lemma}\label{lem:gcdex}
    Let $A, B \in k[X]$ be coprime polynomials and assume that $B$ has simple roots $x_1, \dots, x_b$. In particular $b=\deg(B)$.
    Let $(U, V) \in k[X]^2$ be the unique pair satisfying $\deg(U)<b$, $\deg(V)<\deg(A)$ and $AU+BV=1$ and write $U(X)=\sum_{i=0}^{b-1} u_i X^i$. Then
    \[ u_{b-1}= \sum_{i=1}^{b} \frac{1}{A(x_i)B'(x_i)}.\]
\end{lemma}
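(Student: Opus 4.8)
The plan is to compute the leading coefficient $u_{b-1}$ of $U$ by extracting it from the Bézout identity $AU+BV=1$ via a partial fraction / residue argument. Since $B$ has simple roots $x_1,\dots,x_b$, the standard partial fraction expansion of $\frac{1}{AB}$ reads
\[\frac{1}{A(X)B(X)} = \frac{R(X)}{A(X)} + \sum_{i=1}^b \frac{1}{A(x_i)B'(x_i)}\cdot\frac{1}{X-x_i}\]
for a suitable $R\in k[X]$ with $\deg R < \deg A$ (here I use that $A$ and $B$ are coprime so none of the $x_i$ is a zero of $A$, and that $\operatorname{Res}(A,B)\neq 0$ makes all the denominators nonzero). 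First I would clear denominators in the Bézout identity to rewrite $\frac{1}{B} = AU\cdot\frac1B + V = \dots$; more directly, dividing $AU+BV=1$ by $AB$ gives $\frac{U}{B}+\frac{V}{A} = \frac{1}{AB}$, and since $\deg U < b = \deg B$, the term $\frac{U}{B}$ is exactly the principal part of $\frac{1}{AB}$ at the roots of $B$, i.e. $\frac{U}{B} = \sum_{i=1}^b \frac{1}{A(x_i)B'(x_i)}\cdot\frac{1}{X-x_i}$ by uniqueness of partial fraction decomposition (and $\frac VA = \frac RA$).

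From this identity the claim follows by comparing behaviour as $X\to\infty$: multiplying through by $X$ and letting $X\to\infty$, the left side $\frac{XU(X)}{B(X)}$ tends to $u_{b-1}$ (because $\deg U = b-1$ generically, or more precisely $u_{b-1}$ is the coefficient of $X^{b-1}$ and $B$ is monic up to its leading term — I should be slightly careful and instead say: write $U(X) = u_{b-1}X^{b-1}+\dots$ and $B(X) = \beta_b X^b + \dots$, so $\frac{XU(X)}{B(X)}\to u_{b-1}/\beta_b$), while the right side $\sum_i \frac{X}{A(x_i)B'(x_i)(X-x_i)}$ tends to $\sum_{i=1}^b \frac{1}{A(x_i)B'(x_i)}$. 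Hence $u_{b-1}/\beta_b = \sum_{i=1}^b \frac{1}{A(x_i)B'(x_i)}$. When $B$ is monic ($\beta_b=1$, which is the case in all intended applications, since the $P_j$ are monic) this is exactly the stated formula; I would either assume $B$ monic in the statement or carry the factor $\beta_b$ through. Alternatively, and perhaps cleaner to write: evaluate the rational function identity $\frac{U}{B} = \sum_i \frac{1}{A(x_i)B'(x_i)(X-x_i)}$ by multiplying both sides by $\prod_j (X-x_j) = B(X)/\beta_b$, obtaining $U(X)/\beta_b = \sum_i \frac{1}{A(x_i)B'(x_i)}\prod_{j\neq i}(X-x_j)$, and then read off the coefficient of $X^{b-1}$: on the right each product $\prod_{j\neq i}(X-x_j)$ is monic of degree $b-1$, so the coefficient of $X^{b-1}$ is $\sum_i \frac{1}{A(x_i)B'(x_i)}$, giving $u_{b-1}/\beta_b = \sum_i \frac{1}{A(x_i)B'(x_i)}$.

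The only genuine point requiring care — the "main obstacle", though it is minor — is justifying the partial fraction decomposition of $\frac{U}{B}$ with $\deg U < \deg B$ and the uniqueness that lets me identify it with the principal-part sum: this uses that $k$ is a field, $B$ squarefree with the listed simple roots, and that $A(x_i)\neq 0$ for all $i$ (coprimality). I would also double-check the normalization: the intended use (e.g. in Lemma~\ref{lem:gcdex}'s sequel, with $A,B$ among the $P_j$'s or $\Theta^{p-1}$ etc.) has $B$ monic, so I will either state the lemma for monic $B$ or simply note $b = \deg B$ and that "the unique pair" is well-defined once $\gcd(A,B)=1$; the residue computation then goes through verbatim. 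No deep input is needed — it is essentially the classical formula for the top coefficient of the polynomial interpolating $1/A$ at the roots of $B$, combined with the Bézout characterization of $U$.
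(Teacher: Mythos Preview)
Your approach is essentially the same as the paper's: both identify $\frac{U}{B}=\sum_{i=1}^b\frac{1}{A(x_i)B'(x_i)}\cdot\frac{1}{X-x_i}$ via partial fractions (the paper does this directly by writing $\alpha_i=\frac{U(x_i)}{B'(x_i)}$ and then using $U(x_i)=\frac{1}{A(x_i)}$ from the B\'ezout relation, which is slightly more streamlined than your detour through $\frac{1}{AB}$), then clear denominators and read off the coefficient of $X^{b-1}$---exactly your ``alternative, cleaner'' version. Your caution about monicness is well-placed: the paper's proof also silently assumes $B$ monic when it cancels $B(X)$ against $\prod_j(X-x_j)$, and in the intended application $B=P_j$ is indeed monic.
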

\begin{proof}
By partial fraction decomposition, we have 
$\frac{U(X)}{B(X)} = \sum_{i=1}^b \frac{\alpha_i}{X-x_i},$
with $\alpha_i= \frac{U(x_i)}{B'(x_i)} = \frac{1}{A(x_i)B'(x_i)},$ and expanding this decomposition gives 
\[ \frac{U(X)}{B(X)}=\sum_{i=1}^{b}\frac{\alpha_i}{X-x_i}=\frac{\sum_{i=1}^{b}\alpha_i\prod_{j\neq i}(X-x_j)}{B(X)}. \]
As a result, $U(X)=\sum_{i=1}^b \alpha_i\prod_{j\neq i}(X-x_j)$, and its coefficient of degree $b-1$ is
\[ u_{b-1}= \sum_{i=1}^b\alpha_i=\sum_{i=1}^{b} \frac{1}{A(x_i)B'(x_i)}.\]
\end{proof}

We are now going to prove that there are no spaces $L_{4p,2}$ in the last remaining case.
Our conclusion relies on computations carried out in \cite{GithubRepo3}, which could not be performed with \emph{Macaulay2} due to lack of a suitable function computing the coefficients of a B\'ezout identity in polynomial rings. 
As a result, we chose to employ the software \emph{Maple}.

\begin{proposition}\label{prop:degtheta=2binomial}
    Let $p\in \{5,7,11\}$, $\deg(\Theta)=2$, and assume that $e_{0,3}=e_{\infty,3}$ and $e_{0,4} \neq e_{\infty,4}$. Then there are no spaces $L_{4p,2}$.
\end{proposition}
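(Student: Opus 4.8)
The plan is to mirror the strategy of the two preceding propositions: after normalising coordinates, extract from the logarithmic hypothesis enough vanishing of Newton sums $N_j(r)$ to force a polynomial identity of bounded degree that must hold at $p$ distinct values $e_{j,\bullet}$, then derive a contradiction because the degree of that polynomial is less than $p$. The difference from Proposition~\ref{prop:degtheta=2monomial} is that now $\Theta = e_{\infty,2}X^2 - e_{\infty,2}X \cdots$ — more precisely, since $e_{0,3}=e_{\infty,3}$ but $e_{0,4}\neq e_{\infty,4}$, we have $\Theta = (e_{\infty,2}-e_{0,2})X^2 + (e_{\infty,4}-e_{0,4})$, a genuine binomial rather than a monomial. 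After a homothety we may assume $\Theta = X^2 + \theta$ for a single nonzero constant $\theta := \frac{e_{\infty,4}-e_{0,4}}{e_{\infty,2}-e_{0,2}}$ (and if $e_{\infty,2}=e_{0,2}$ then $\Theta$ is itself constant, contradicting $\deg\Theta=2$, so this normalisation is legitimate).

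First I would record the consequences of Proposition~\ref{prop:Sum}: part~(iii) gives $\sum_i h_{j,i}(x_{j,i}^2+\theta)=0$, i.e.\ $N_j(2) = -\theta N_j(0)$, while part~(i) for $k=0,1,2$ combined with $\Theta^{p-1} = (X^2+\theta)^{p-1}$ gives a family of linear relations among $N_j(0),\dots,N_j(2p-2)$ with binomial-coefficient weights $\binom{p-1}{\ell}$; and part~(ii) gives $\sum_i h_{j,i}(x_{j,i}^2+\theta)^{p-1}x_{j,i}^3 = \gamma(a+j)^{p-2} \neq 0$, which after expansion is a nonzero combination of $N_j(3),N_j(5),\dots,N_j(2p+1)$. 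Together with the recursion~\eqref{eq:NewtonSums} (which has coefficients $e_1,e_{j,2},e_{j,3}=e_3,e_{j,4}$, where only $e_{j,2}$ and $e_{j,4}$ vary with $j$), these relations let one solve backwards, expressing low Newton sums as $k(e_4,\dots)[e_{j,2}]$-combinations of a single nonvanishing high Newton sum, exactly as in the monomial case but now with the extra parameter $\theta$ floating through. As in \cite[Program 2]{GithubRepo2}, the cleanest route is to let the computer algebra system carry out this elimination for $p=5,7,11$ separately and produce the polynomial $V(e_{j,2})$ of degree $p-3$ whose vanishing at all $p$ values $e_{j,2}$ (distinct, since otherwise two $P_j$ coincide) forces all its coefficients to vanish; the top coefficients will again be, up to units and powers of $e_4$, simple polynomials in $e_1,e_3,\theta$ whose vanishing pins down $e_1$ and $e_3$ and leaves a biquadratic-type family $P_j = X^4 + e_{j,2}X^2 + e_{j,3}X + e_4$ — though here $e_3$ may survive, so one should be prepared for $P_j = X^4 + e_{j,2}X^2 - e_3 X + e_4$.

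At that point I would bring in Lemma~\ref{lem:gcdex} and Proposition~\ref{prop:Sumiv}. Indeed $\sum_i \frac{1}{P_j'(x_{j,i})\Theta^{p-2}(x_{j,i})} = 0$ says exactly that the top coefficient $u_{3}$ of the B\'ezout cofactor $U$ in $\Theta^{p-2}U + P_j V = 1$ vanishes (here $B = P_j$ has simple roots by Remark~\ref{rmk:simplepolesn=2}, $A = \Theta^{p-2}$, and $A\wedge B = 1$ since $\Theta = X^2+\theta$ is coprime to each $P_j$ as $\Theta = P_\infty - P_0$ shares no root with any $P_j$). Computing this B\'ezout identity over $k(e_4,\theta,e_3)[e_{j,2}]$ — which is why \emph{Maple} is needed, as \emph{Macaulay2} lacks the relevant routine — yields a further polynomial condition $\widehat{V}(e_{j,2}) = 0$ of degree $< p$ that must again hold at all $p$ values. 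Comparing $\widehat{V}$ with $V$ (or with the analogues of $\widehat{W}_1 = \widehat{W}_2$ from the monomial case: the identity $N_j(1)^p = N_j(p)$ still forces two polynomials in $e_{j,2}$ to agree, one with simple roots by a discriminant computation and the other with a double root at $-a$ coming from $e_{j,2} = e_{\infty,2} - \frac{a}{a+j}$) gives the contradiction. The main obstacle I anticipate is purely computational bookkeeping: keeping $\theta$ as a free parameter roughly doubles the size of the elimination, and for $p=11$ the polynomial $V$ has degree $8$ with coefficients that are rational functions in $e_4$ and $\theta$, so the delicate point is organising the recursion~\eqref{eq:NewtonSums} and the B\'ezout computation so that the final degree-$(<p)$ obstruction is visibly nonzero; once $e_1,e_3$ are eliminated the structure should collapse to the same simple-roots-versus-double-root clash used in Proposition~\ref{prop:degtheta=2monomial}, with $\theta$ entering only as a harmless unit.
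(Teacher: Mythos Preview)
Your proposal has the right ingredients (Proposition~\ref{prop:Sum}, Lemma~\ref{lem:gcdex}, computer algebra) but assembles them in a different order and ends with a step that the paper does not attempt and that is genuinely underspecified.

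\textbf{Order of the two eliminations.} You propose to run the Newton-sum recursion first (as in the monomial case) to get a degree-$(p-3)$ obstruction $V(e_{j,2})$, and only afterwards to invoke the B\'ezout lemma. The paper does the opposite: it uses Lemma~\ref{lem:gcdex} and \ref{prop:Sumiv} as the \emph{primary} device. The reason is that with $\Theta$ a binomial the relations from \ref{prop:Sumi} no longer isolate single Newton sums $N_j(2p-2),N_j(2p-1),N_j(2p)$; they are full linear combinations $\sum_\ell \binom{p-1}{\ell}\theta^{p-1-\ell}N_j(2\ell+k)=0$, so the clean back-substitution of the monomial case is lost and there is no a~priori reason your $V$ should land in degree $p-3$. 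By contrast, the leading coefficient $u_3$ of the B\'ezout cofactor is automatically a polynomial of degree $\le p-3$ in $e_{j,2}$, which is exactly why Lemma~\ref{lem:gcdex} was introduced. The paper also chooses the normalisation $\Theta=\nu(X^2-1)$ rather than $X^2+\theta$; this makes $e_{j,2}+e_{j,4}=\mu$ constant and keeps the B\'ezout computation in $k[\nu,\mu,e_1,e_3][e_{j,2}]$ with a single varying parameter.

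\textbf{The final contradiction.} After the B\'ezout step the paper obtains $e_1=e_3=0$ for $p\in\{7,11\}$ but only $e_1=e_3$ with $e_1(e_1^2-3)=0$ for $p=5$ --- so the biquadratic reduction does \emph{not} hold uniformly. More importantly, the paper does not try to resurrect the $N_j(1)^p=N_j(p)$ simple-versus-double-root argument from Proposition~\ref{prop:degtheta=2monomial}. Instead it feeds the now-simplified $(Q_1,Q_2)$ directly into the full system $R_k=0$ of Convention~\ref{conv:Ri} and lets the machine verify there are no solutions. Your speculation that the monomial-case endgame ``should collapse to the same simple-roots-versus-double-root clash, with $\theta$ entering only as a harmless unit'' is not supported: the paper abandons that route here, and with $\theta$ present the expressions $W_1,W_2$ would genuinely depend on $\theta$, so there is no reason the discriminant argument survives. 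This is the real gap in your sketch.
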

\begin{proof}
We first apply a homothety to the set of poles in such a way that $e_{\infty,2}-e_{0,2}=-(e_{\infty,4}-e_{0,4})$ and we set $\nu:=e_{\infty,2}-e_{0,2}$.
Then $\Theta=\nu (X^2-1)$.
We note that 
\[e_{j,4}+e_{j,2} = \frac{a (e_{0,4}+ e_{0,2})+j(e_{\infty,4} +e_{\infty,2})}{a+j} = (e_{0,4}+e_{0,2})\] and in particular it is independent of $j$, and so there exists an element $\mu \in k$ such that $\mu=e_{j,4}+e_{j,2}$. 

In the first part of the proof, we show that we have $e_1=e_3=0$.
Since $\Theta$ is a binomial, the methods used so far can not be applied in the same way, and we need to use Lemma \ref{lem:gcdex}:
for every $j \in \{0,\dots,p-1\}$ we let $A=\Theta^{p-2}$ and $B=P_j$.
Then $A$ and $B$ are coprime, because by definition $\Theta= P_\infty - P_0$ and hence any common root between $\Theta$ and $P_j=\frac{aP_0 + j P_\infty}{a+j}$ must also be a root of $P_0$ and $P_\infty$, contradicting the fact that the $P_0$ and $P_\infty$ have distinct roots.
By applying the lemma we find that 
\[u_{3} = \sum_{i=1}^4 \frac{1}{\Theta^{p-2}(x_{ji})P_j'(x_{ji})}=0\]
where $u_3$ is the leading term of the polynomial $U$ appearing in the Lemma and the vanishing of the sum on the right is given by \ref{prop:Sumiv}.
The polynomial $U$ can be computed using \emph{Maple}'s function ``gcdex'' \footnote{We note that a similar function, called \emph{gcdCoefficients} exists in \emph{Macaulay2} but in its current state experiences issues when applied to polynomials with coefficients in the field of fractions of a polynomial ring. As a result, the use of Maple was deemed necessary by the authors for this step.}.
As a result, we obtain the coefficient $u_3$ (cf. Program \cite{GithubRepo3}) as an element of $k[\nu, \mu, e_1, e_3][e_{j,2}]$ of degree at most $p-3$ in $e_{j,2}$.
Since this vanishes for $p$ distinct values of $e_{j,2}$, then the coefficients of such polynomial (which are element of $k[\nu, \mu, e_1, e_3]$ must all vanish, giving a system of $p-2$ equations).
Solving this system computationally (cf. Program \cite[Program 7.1]{GithubRepo3}) gives the following conditions:

\begin{itemize}
    \item For $p=5$, we have $e_1=e_3$ and $e_1(e_1^2-3)=0$.
    \item for $p \in \{7,11\}$, we have $e_1=e_3=0$, i.e. $P_j$ is a biquadratic polynomial for all $j$.
\end{itemize}

Recall that, if we consider the expressions $R_k$'s of Convention \ref{conv:Ri} as polynomials whose variables correspond to the coefficients of $P_0$ and $P_\infty$, then the existence of a solution to the system 
\[ \begin{cases}
    R_1 = 1\\
    R_i = 0 & \mbox{for} \;\;2 \leq i \leq 4 (p-1)
\end{cases}\]
corresponds to the existence of a space $L_{4 p, 2}$.
Thanks to the simplifications obtained so far, this system becomes computationally tractable and its resolution leads us to the following: 

\begin{itemize}
    \item Let $p=5$. Program \cite[Program 7.2]{GithubRepo3} shows that this system has no solutions. Hence there are no spaces $L_{20,2}$ under the assumptions of the Proposition.
    \item Let $p \in \{7,11\}$. As $P_0$ is biquadratic, we can apply a homothety to the set of poles to have $s_{0,4}=1$. This does not change the fact that $P_j$ is biquadratic for all $j$. We then find a system in the variables $s_{0,2}, s_{\infty,2}$ and $s_{\infty,4}$, that Program \cite[Program 7.3]{GithubRepo3} shows to have no solution. Hence there are no spaces $L_{28,2}$ when $p=7$ and no spaces $L_{44,2}$ when $p=11$.
\end{itemize}
\end{proof}

\appendix

\section{Moore determinants and invariant theory}\label{app:Moore}

In this section, we collect several results on Moore determinants.
With the exception of Lemma \ref{lem:Moore} and Lemma \ref{lem:Anew}, for which we provide a proof, these results are already known (from work of Dickson \cite{Dickson11}, Elkies in \cite{Elkies99} and Fresnel-Matignon in \cite{FresnelMatignon23}), and are therefore recalled with minimal or no proof.

Let $k$ be a field of characteristic $p>0$, and denote by $F:k\to k$ the Frobenius automorphism $x \mapsto x^p$.
The \emph{Moore determinant} of an $n$-tuple $\underline{a}:=(a_1, \dots, a_n) \in k^n$ is defined as
\[\Moore{n}{a}:= \begin{vmatrix}
a_1 & a_2 & \dots & a_n \\
a_1^p & a_2^p & \dots & a_n^p \\
\vdots & \vdots & \dots & \vdots \\
a_1^{p^{n-1}} & a_2^{p^{n-1}} & \dots & a_n^{p^{n-1}} \\
\end{vmatrix}.  \]

\begin{remark}\label{rmk:elementaryMoore}
Here we list the first elementary results on Moore determinants.
By multilinearity of determinants, for every $\alpha \in k$, we have that
\[\Delta_{n}(\alpha \underline{a})=\alpha^{1+p+\dots+p^{n-1}} \Moore{n}{a}.\] 
Moreover, given an invertible matrix $M \in GL_n(\F_p)$, we have (\cite[p. 80]{Elkies99}) that
\[\Delta_{n}(\underline{a} M) =\Delta_{n}(\underline{a}) \det(M).\]
These relations are used in the proof of Proposition \ref{prop:QiMiii}.
Finally, Moore shows the following identity:
\begin{equation}\label{eq:Mooreprod}
\Moore{n}{a}= \prod_{i=1}^n \prod_{\epsilon_{i-1}\in \F_p} \dots \prod_{\epsilon_{1}\in \F_p} (a_i+\epsilon_{i-1}a_{i-1}+\dots+\epsilon_1a_1).
\end{equation}
As a result, we have that $\Moore{n}{a}\neq 0$ if, and only if, the $a_i$'s are $\F_p$-linearly independent.
\end{remark}

\begin{theorem}\label{thm:FM4.1.}
For every $n$-tuple $\underline{a} \in k^n$, and for every $1\leq i \leq n$, we define the $n-1$-tuple $\underline{\hat{a_i}} := (a_1, \dots, a_{i-1}, a_{i+1}, \dots, a_n)$.
Then, we have the formula
\[\Delta_n\left(\Delta_{n-1}(\underline{\hat{a}_1}), \dots,(-1)^{i-1}\Delta_{i-1}(\underline{\hat{a}_i}), \dots, (-1)^{n-1}\Delta_{n-1}(\underline{\hat{a}_n})\right) = \Moore{n}{a}^{1+p+\dots+p^{n-2}}. \]
\end{theorem}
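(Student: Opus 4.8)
Write $b_i:=(-1)^{i-1}\Delta_{n-1}(\underline{\hat a_i})$ for $1\le i\le n$, so that the left–hand side of the statement is $\Moore{n}{b}$, and let $A=(a_i^{p^{k-1}})_{1\le k,i\le n}$ and $B=(b_i^{p^{k-1}})_{1\le k,i\le n}$ be the Moore matrices of $\underline a$ and $\underline b$. The plan is to compute $\det(BA^{t})=\Moore{n}{b}\cdot\Moore{n}{a}$ in a second, more structured way, and then cancel a single factor of $\Moore{n}{a}$ in the polynomial ring $k[a_1,\dots,a_n]$.

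First I would record the linear relations coming from Laplace expansion. Expanding $\det A=\Moore{n}{a}$ along its last row identifies, up to the global sign $(-1)^{n+1}$, the entries $b_i$ with the cofactors of that row, so the standard cofactor identities give the polynomial identities
\[
\sum_{i=1}^n b_i\,a_i^{p^{k-1}} \;=\; (-1)^{n+1}\,\Moore{n}{a}\cdot\delta_{k,n},\qquad 1\le k\le n .
\]
Since the Frobenius $x\mapsto x^{p}$ is a ring endomorphism it commutes with $\det$, hence with forming any Moore determinant; in particular $\Delta_{n-1}(\underline{\hat a_i})^{p}$ equals the Moore determinant of the $(n-1)$-tuple obtained from $(a_1^{p},\dots,a_n^{p})$ by deleting its $i$-th entry. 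Raising the displayed identities to the $p^{j}$-th power therefore yields $\sum_i b_i^{p^{j}}a_i^{p^{k-1}}=0$ whenever $j+1\le k\le j+n-1$.

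Now the $(j{+}1,k)$-entry of $BA^{t}$ is $\sum_i b_i^{p^{j}}a_i^{p^{k-1}}$. By the relations above its top row ($j=0$) is $(0,\dots,0,(-1)^{n+1}\Moore{n}{a})$, and for $j\ge 1$ the entries in columns $k=j+1,\dots,n$ vanish. Expanding $\det(BA^{t})$ along the top row, the complementary minor is a lower-triangular matrix whose $j$-th diagonal entry ($j=1,\dots,n-1$) is $\sum_i b_i^{p^{j}}a_i^{p^{j-1}}=\bigl(\sum_i b_i^{p}a_i\bigr)^{p^{j-1}}$. Hence
\[
\det(BA^{t})=(-1)^{n+1}(-1)^{1+n}\,\Moore{n}{a}\cdot\Bigl(\textstyle\sum_i b_i^{p}a_i\Bigr)^{1+p+\cdots+p^{n-2}},
\]
and the whole computation is reduced to evaluating the single quantity $\sum_i b_i^{p}a_i$.

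The remaining point is the identity $\sum_i b_i^{p}a_i=\Moore{n}{a}$: writing $b_i^{p}=(-1)^{i-1}\Delta_{n-1}\big(\text{delete }a_i^{p}\text{ from }(a_1^{p},\dots,a_n^{p})\big)$, the sum $\sum_i(-1)^{i-1}a_i\,\Delta_{n-1}(\cdots)$ is exactly the Laplace expansion along the first row of the determinant of the matrix with successive rows $\underline a,\underline a^{p},\dots,\underline a^{p^{n-1}}$ — that is, of $\Moore{n}{a}$ itself. Substituting back gives $\det(BA^{t})=\Moore{n}{a}^{\,1+(1+p+\cdots+p^{n-2})}$; comparing with $\det(BA^{t})=\Moore{n}{b}\cdot\Moore{n}{a}$ and cancelling the nonzero polynomial $\Moore{n}{a}$ in the integral domain $k[a_1,\dots,a_n]$ yields $\Moore{n}{b}=\Moore{n}{a}^{\,1+p+\cdots+p^{n-2}}$, as claimed. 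The only delicate step — really the only place anything can go wrong — is the sign and shape bookkeeping for $BA^{t}$: one must check that the a priori nonzero off-diagonal entries genuinely do not contribute to the determinant, and that the two occurrences of $(-1)^{n+1}$ cancel. I would sanity-check this on $n=2$, where $b_1=a_2$, $b_2=-a_1$, $\sum_i b_i^{p}a_i=a_1a_2^{p}-a_1^{p}a_2=\Moore{2}{a}$, and $\Moore{2}{b}=\Moore{2}{a}=\Moore{2}{a}^{1}$ with exponent $1+p+\cdots+p^{n-2}=1$.
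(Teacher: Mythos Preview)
Your argument is correct. The cofactor identities give exactly the shape you claim for $BA^{t}$: the top row is $(0,\dots,0,(-1)^{n+1}\Moore{n}{a})$, and for $j\ge 1$ the entries in columns $j{+}1,\dots,n$ vanish, so the complementary $(n{-}1)\times(n{-}1)$ minor is genuinely lower triangular with diagonal entries $\bigl(\sum_i b_i^{p}a_i\bigr)^{p^{j-1}}$. The identification $\sum_i b_i^{p}a_i=\Moore{n}{a}$ via Laplace expansion along the first row of the Moore matrix is clean, and working in the polynomial ring $\F_p[a_1,\dots,a_n]$ to cancel $\Moore{n}{a}$ is the right way to make the statement hold for all $\underline a\in k^n$ (including the degenerate case $\Moore{n}{a}=0$).

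As for comparison with the paper: the paper does not actually prove this theorem, but simply records it as the special case $m=0$ of \cite[Theorem~4.1]{FresnelMatignon23}. So your self-contained linear-algebra argument via $\det(BA^{t})$ is a genuine addition; it is short, elementary, and avoids importing the more general machinery of that reference.
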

\begin{proof}
This is the special case $m=0$ of \cite[Theorem 4.1]{FresnelMatignon23}. 
\end{proof}

\begin{definition}\label{defn:strucpoly}
For every $V \subset k$ is a $\F_p$-vector space of dimension $n$, the \emph{structural polynomial} of $V$ is
\[ P_V(X):=\prod_{v\in V} (X-v) \in k[X].\]
It is the unique monic polynomial of degree $p^n$ such that $V$ is the set of zeroes of $P_V$.
\end{definition}

\begin{lemma}[Proposition 2.2. of \cite{FresnelMatignon23}]\label{lem:FM2.2.}
Let $V \subset k$ be a $\F_p$-vector space of dimension $n$.
The structural polynomial $P_V$ is additive and for every choice of basis $\ul{v}=(v_1, \dots, v_n)$ of $V$ it satisfies the identity
\[P_V(X)=\frac{\Delta_{n+1}(\ul{v},X)}{\Moore{n}{v}}=X^{p^n}+\cdots + (-1)^n \Moore{n}{v}^{p-1}X.\]
\end{lemma}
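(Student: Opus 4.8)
The plan is to deduce the whole statement from Moore's product formula \eqref{eq:Mooreprod}. First I would record the two elementary facts about $P_V$ that make the identification possible: the polynomial $P_V$ is monic of degree $p^n$, and, fixing a basis $\underline v=(v_1,\dots,v_n)$ of $V$, the $\F_p$-linear independence of the $v_i$ forces the $p^n$ elements $\epsilon_1 v_1+\dots+\epsilon_n v_n$ with $\underline\epsilon\in\F_p^n$ to be pairwise distinct; hence $P_V$ has simple roots and is the unique monic polynomial of degree $p^n$ vanishing on $V$.

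Next I would apply \eqref{eq:Mooreprod} to the $(n+1)$-tuple $(v_1,\dots,v_n,X)$. Splitting the iterated product according to whether the outer index $i$ is at most $n$ or equals $n+1$, the factors with $i\le n$ reassemble — again by \eqref{eq:Mooreprod} — into $\Delta_n(\underline v)$, while the factors with $i=n+1$ give $\prod_{\underline\epsilon\in\F_p^n}\bigl(X+\epsilon_1 v_1+\dots+\epsilon_n v_n\bigr)=\prod_{w\in V}(X+w)=\prod_{w\in V}(X-w)=P_V(X)$, the penultimate equality holding because $w\mapsto -w$ is a bijection of $V$. This yields $\Delta_{n+1}(\underline v,X)=\Delta_n(\underline v)\,P_V(X)$, and since $\Delta_n(\underline v)\neq 0$ by linear independence, $P_V(X)=\Delta_{n+1}(\underline v,X)/\Delta_n(\underline v)$, which is the first claimed equality.

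It then remains to expand $\Delta_{n+1}(\underline v,X)$ along its last column $(X,X^p,\dots,X^{p^n})^{\mathrm t}$. Cofactor expansion gives $\Delta_{n+1}(\underline v,X)=\sum_{j=0}^{n}(-1)^{\,n+j}\,X^{p^j}\,D_j$, where $D_j$ is the minor obtained by deleting the last column and the $(j+1)$-th row. The top term ($j=n$) has $D_n=\Delta_n(\underline v)$, matching monicity after division; the bottom term ($j=0$) has $D_0=\Delta_n(v_1^p,\dots,v_n^p)=\Delta_n(\underline v)^p$, so the coefficient of $X$ in $P_V$ is $(-1)^n\Delta_n(\underline v)^{p-1}$, and there is no constant term. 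Finally, since $P_V$ has the shape $\sum_j c_j X^{p^j}$, it is additive by the identity $(a+b)^{p^j}=a^{p^j}+b^{p^j}$ valid in characteristic $p$. The only delicate points are the bookkeeping of signs in the cofactor expansion and the correct reindexing of the multi-product in \eqref{eq:Mooreprod}; there is no genuine obstacle, the lemma being essentially a direct consequence of Moore's product formula together with a column expansion.
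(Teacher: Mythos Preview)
Your proof is correct. It is close in spirit to the paper's, with one notable difference in how the first equality $P_V(X)=\Delta_{n+1}(\ul v,X)/\Moore{n}{v}$ is obtained: the paper observes that $\Delta_{n+1}(\ul v,v)=0$ for every $v\in V$ (by $\F_p$-linear dependence of the columns) and then matches degrees, whereas you apply Moore's product formula \eqref{eq:Mooreprod} to $(v_1,\dots,v_n,X)$ and split off the factor $\Moore{n}{v}$ directly. Both arguments use the same cofactor expansion for the second equality. For additivity, your argument (the $p$-polynomial shape makes it automatic) is more self-contained than the paper's appeal to the fundamental theorem of additive polynomials; in effect you are proving that direction of the theorem on the spot.
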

\begin{proof}
Consider $\Delta_{n+1}(\ul{v},X)$ as a polynomial in $k[X]$. 
The development of the determinant along the last column gives 
\[ \Delta_{n+1}(\ul{v},X)= \Moore{n}{v} X^{p^n}+\cdots + (-1)^n \Moore{n}{v}^{p}X,  \]
which results in the second equality of the lemma.
On the other hand, we have by definition that $\Delta_{n+1}(\ul{v},v)=0$ for every $v\in V$, which proves the first equality.
Additivity then follows from the fundamental theorem of additive polynomials, as $V$ is an additive subgroup of $k$.
\end{proof}

The structural polynomial is tightly related to the invariant theory of finite groups, via its tight connections to Dickson invariants: if $\F_p[X_1, \dots, X_n]$ is a polynomial ring on $n$ variables over $\F_p$, and $V_n$ is the $\F_p$-vector space $\langle X_1, \dots, X_n \rangle_{\Fp}$, one defines the Dickson invariants $c_{n,i} \in \F_p[X_1, \dots, X_n]$ for $0 \leq i\leq n$ as the coefficients of the structural polynomial of $V_n$.
More precisely, we have

\[P_{V_n}(T)=\frac{\Delta_{n+1}(\underline X,T)}{\Delta_n(\ul{X})}=\prod_{v\in V_n}(T-v)=\sum_{i=0}^{i=n}(-1)^{n-i}c_{n,i}T^{p^i}\in \F_p[X_1,\dots,X_n][T].\]

Note that the $c_{n,i}$'s are invariant under the natural action of $\GL_n(\F_p)$ on $\F_p[X_1, \dots, X_n]$ and Dickson proved that they generate the algebra of polynomials that are invariant by this action.
In the following proposition, we collect the properties of Dickson invariants that will be used in Section \ref{subsec:applyingPagotn}.

\begin{proposition}\label{prop:Dickson}
    Let $\F_p[X_1, \dots, X_n]$ be the polynomial ring on $n$ variables over $\F_p$, and for every $1\leq j \leq n$ let $P_{V_j}$ be the structural polynomial of the $\F_p$-vector space $V_j= \langle X_1, \dots, X_j \rangle_{\Fp}$.
    For $0 \leq i < j \leq n$ denote by $c_{j,i}$ the $i$-th Dickson invariant in the subring $\F_p[X_1, \dots, X_j]$. Then, we have the following identities
   \begin{enumerate}[label=(\roman*),ref=\ref{prop:Dickson}(\roman*)]
\item \label{eq:Dickson.i}\[P_{V_{n-1}}(X_n)^{p-1}=\left(\frac{\Delta_n(X_1,\dots,X_n)}{\Delta_{n-1}(X_1,\dots,X_{n-1})}\right)^{p-1}=c_{n,n-1}-
c_{n-1,n-2}^p.\]
\item \label{eq:Dickson.ii} \[c_{n,i}= \frac{\det (\underline X,\underline{X}^p,\cdots,\widehat{\underline {X}^{p^i}},\cdots,{\underline X}^{p^n})}{ \Delta_n(X_1,\dots,X_n) }.\]
\item \label{eq:Dickson.iii}\[c_{n,n-1}= \sum_{\lceil\underline  x\rceil \in \mathbb P(\F_p^n)}\ \prod_{\lceil\underline  \epsilon\rceil \in \mathbb P (\F_p^n)|\sum_i\epsilon_ix_i\neq 0} (\sum_i\epsilon_iX_i)^{p-1}.\]
    \end{enumerate}
\end{proposition}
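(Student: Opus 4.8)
The plan is to extract all three identities from the defining expansion
\[
P_{V_n}(T)=\frac{\Delta_{n+1}(\ul{X},T)}{\Delta_n(\ul{X})}=\sum_{i=0}^{n}(-1)^{n-i}c_{n,i}T^{p^i},
\]
keeping in mind that $c_{n,n}=1$ and that $P_{V_n}$ is $\F_p$-additive by Lemma~\ref{lem:FM2.2.}. Note at the outset that the leftmost equality in \ref{eq:Dickson.i} is immediate from Lemma~\ref{lem:FM2.2.}, which gives $P_{V_{n-1}}(X_n)=\Delta_n(X_1,\dots,X_n)/\Delta_{n-1}(X_1,\dots,X_{n-1})$; so the content of \ref{eq:Dickson.i} is really the rightmost equality.

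For \ref{eq:Dickson.ii} I would expand $\Delta_{n+1}(\ul{X},T)$ along its last column, whose entries are $T,T^p,\dots,T^{p^n}$. The cofactor of $T^{p^i}$ is $(-1)^{(i+1)+(n+1)}\det(\ul{X},\ul{X}^p,\dots,\widehat{\ul{X}^{p^i}},\dots,\ul{X}^{p^n})$, and the sign simplifies to $(-1)^{n-i}$. Dividing by $\Delta_n(\ul X)$ and comparing term by term with the Dickson expansion of $P_{V_n}(T)$ gives \ref{eq:Dickson.ii} directly.

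For \ref{eq:Dickson.i} I would use the coset decomposition $V_n=\bigsqcup_{j\in\F_p}(jX_n+V_{n-1})$, which yields $P_{V_n}(T)=\prod_{j\in\F_p}P_{V_{n-1}}(T-jX_n)$. Since $P_{V_{n-1}}$ is $\F_p$-additive this equals $\prod_{j\in\F_p}\bigl(P_{V_{n-1}}(T)-jP_{V_{n-1}}(X_n)\bigr)$, and the elementary identity $\prod_{j\in\F_p}(Y-jZ)=Y^p-Z^{p-1}Y$ turns it into
\[
P_{V_n}(T)=P_{V_{n-1}}(T)^p-P_{V_{n-1}}(X_n)^{p-1}\,P_{V_{n-1}}(T).
\]
Substituting the Dickson expansions of $P_{V_n}$ and $P_{V_{n-1}}$ and comparing the coefficient of $T^{p^{n-1}}$ on both sides — using $c_{n-1,n-1}=1$, so that the contribution of the middle term is exactly $-P_{V_{n-1}}(X_n)^{p-1}$ and that of $P_{V_{n-1}}(T)^p$ is $-c_{n-1,n-2}^p$, while on the left it is $-c_{n,n-1}$ — gives $c_{n,n-1}=P_{V_{n-1}}(X_n)^{p-1}+c_{n-1,n-2}^p$, which together with the opening remark is \ref{eq:Dickson.i}.

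For \ref{eq:Dickson.iii} I would group the factors of $P_{V_n}(T)=T\prod_{v\in V_n-\{0\}}(T-v)$ according to the lines of $V_n$: choosing for each $\ell\in\PP(V_n)$ the representative $X_\ell$ with leading coordinate $1$ (so that $\prod_\ell X_\ell=\Delta_n(\ul X)$ by \eqref{eq:Mooreprod}), the identity $\prod_{\lambda\in\F_p^\times}(T-\lambda X_\ell)=T^{p-1}-X_\ell^{p-1}$ gives $P_{V_n}(T)=T\prod_{\ell\in\PP(V_n)}(T^{p-1}-X_\ell^{p-1})$. Expanding and extracting the coefficient of $T^{p^{n-1}}$ — here one uses $p^{n-1}-1=(p-1)\cdot\frac{p^{n-1}-1}{p-1}$ and $\frac{p^n-1}{p-1}-\frac{p^{n-1}-1}{p-1}=p^{n-1}$ — identifies $c_{n,n-1}$ with the elementary symmetric function $e_{p^{n-1}}\bigl(\{X_\ell^{p-1}\}_{\ell\in\PP(V_n)}\bigr)$ (the resulting sign $(-1)^{p^{n-1}+1}$ equals $1$ in $\F_p$ in all cases). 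To recognise this as the right-hand side of \ref{eq:Dickson.iii}, one observes that $\lceil\ul x\rceil\mapsto\{\lceil\ul\epsilon\rceil:\sum_i\epsilon_ix_i=0\}$ is a bijection between $\PP(\F_p^n)$ and the set of hyperplanes of $\PP(V_n)$, so the right-hand side of \ref{eq:Dickson.iii} equals $\sum_{H}\prod_{\ell\in\PP(V_n)-H}X_\ell^{p-1}$, a sum over the $\frac{p^n-1}{p-1}$ complements of hyperplanes, each of cardinality $p^{n-1}$. The one remaining point — that modulo $p$ the only $p^{n-1}$-subsets of $\PP(V_n)$ contributing to $e_{p^{n-1}}$ are these complements of hyperplanes — is the classical evaluation of the Dickson invariant $c_{n,n-1}$, and this combinatorial reduction is the only genuinely nontrivial step; alternatively one may simply invoke \cite{Dickson11} or \cite{Elkies99} for \ref{eq:Dickson.iii}, everything else being bookkeeping with determinants. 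This last identification is thus the main obstacle to a fully self-contained argument, and in the spirit of this appendix I would likely cite it rather than reprove it.
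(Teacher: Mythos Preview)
Your proofs of \ref{eq:Dickson.ii} and \ref{eq:Dickson.i} are correct and complete: the cofactor expansion along the last column of $\Delta_{n+1}(\ul X,T)$ gives \ref{eq:Dickson.ii} immediately, and the recursion $P_{V_n}(T)=P_{V_{n-1}}(T)^p-P_{V_{n-1}}(X_n)^{p-1}P_{V_{n-1}}(T)$ followed by comparison of $T^{p^{n-1}}$-coefficients is exactly the standard derivation of \ref{eq:Dickson.i}. Your reduction of \ref{eq:Dickson.iii} to $c_{n,n-1}=e_{p^{n-1}}\bigl(\{X_\ell^{p-1}\}_{\ell\in\PP(V_n)}\bigr)$ and the identification of the right-hand side with $\sum_H\prod_{\ell\notin H}X_\ell^{p-1}$ are also right, and you are honest that the remaining identity between these two expressions in $\F_p[X_1,\dots,X_n]$ is the genuinely nontrivial part.

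By contrast, the paper's own ``proof'' of this proposition consists entirely of citations: \ref{eq:Dickson.i} and \ref{eq:Dickson.ii} are referred to \cite[Proposition~1.3]{Wilkerson83}, and \ref{eq:Dickson.iii} to the Stong--Tamagawa theorem as recorded in \cite[Theorem~6.1.7]{NeuselSmith10}. So your approach is strictly more self-contained: you supply the (short) arguments for (i) and (ii) that the paper outsources, and for (iii) you carry the argument further before reaching the point where a citation is genuinely needed. The trade-off is that the paper's version keeps the appendix short and points the reader to canonical sources, while yours makes the elementary parts visible at the cost of a longer writeup; either choice is defensible for an appendix of this kind.
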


\begin{proof}
    \begin{enumerate}
        \item See \cite[Proposition 1.3 b)]{Wilkerson83}
        \item See \cite[Proposition 1.3 a)]{Wilkerson83}
        \item This is a special case of a theorem by Stong and Tamagawa as stated in \cite[Theorem 6.1.7.]{NeuselSmith10}
    \end{enumerate}
\end{proof}

\begin{lemma} \label{lem:Moore} Let $n\geq 1,\ m\geq 1$ and $Y_1,\dots,Y_n$, $X_1,\dots,X_{m}$ be variables over $\F_p$. Then
\begin{align} \Delta_{n}(\Delta_{m+1}(Y_1,X_1,X_2,\dots,X_{m}),\dots,\Delta_{m+1}(Y_n,X_1,X_2,\dots,X_{m}))= \nonumber \\ = \Delta_{m}(X_1,\dots,X_{m})^{p+p^2+\dots+p^{n-1}}\Delta_{n+m}(Y_1,\dots,Y_n, X_1,\dots, X_{m}).\label{F}
\end{align}

\end{lemma}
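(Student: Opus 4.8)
The plan is to express $\Delta_{m+1}(Y_i, X_1, \dots, X_m)$ in terms of the structural polynomial of $V := \langle X_1, \dots, X_m\rangle_{\F_p}$ and then reduce \eqref{F} to two applications of the product formula \eqref{eq:Mooreprod}. Since \eqref{F} is an identity of polynomials in $\F_p[Y_1, \dots, Y_n, X_1, \dots, X_m]$, it suffices to prove it with $X_1, \dots, X_m$ generic, so one may assume $D := \Delta_m(X_1, \dots, X_m) \neq 0$; then $V$ is $m$-dimensional and carries a structural polynomial $P_V$ (Definition \ref{defn:strucpoly}).

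First I would handle a single column. By Lemma \ref{lem:FM2.2.} applied to the basis $(X_1, \dots, X_m)$ of $V$ one has $\Delta_{m+1}(X_1, \dots, X_m, Y) = D\, P_V(Y)$, and moving the column $Y$ to the front contributes a sign $(-1)^m$, giving $\Delta_{m+1}(Y, X_1, \dots, X_m) = (-1)^m D\, P_V(Y)$. Substituting this into the outer Moore determinant and using the homogeneity relation $\Delta_n(\alpha\,\underline a) = \alpha^{1+p+\dots+p^{n-1}}\Delta_n(\underline a)$ of Remark \ref{rmk:elementaryMoore} turns the left-hand side of \eqref{F} into $\big((-1)^m D\big)^{1+p+\dots+p^{n-1}}\,\Delta_n\big(P_V(Y_1), \dots, P_V(Y_n)\big)$.

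Next I would compute $\Delta_{n+m}(X_1, \dots, X_m, Y_1, \dots, Y_n)$ via \eqref{eq:Mooreprod}, with the $X_j$ listed first. The factors indexed by $X_1, \dots, X_m$ multiply to $D$; each factor indexed by $Y_j$ is $\prod_{\underline\epsilon \in \F_p^m}(Z_j + \epsilon_1 X_1 + \dots + \epsilon_m X_m)$ for some $\F_p$-linear combination $Z_j$ of $Y_1, \dots, Y_j$, and since $\{\epsilon_1 X_1 + \dots + \epsilon_m X_m\}$ runs over $V = -V$ this equals $\prod_{v\in V}(Z_j - v) = P_V(Z_j)$; additivity of $P_V$ (Lemma \ref{lem:FM2.2.}) then rewrites it as $P_V(Y_j)$ plus an $\F_p$-combination of $P_V(Y_1), \dots, P_V(Y_{j-1})$. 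Multiplying over $j$ and invoking \eqref{eq:Mooreprod} once more yields $\Delta_{n+m}(X_1, \dots, X_m, Y_1, \dots, Y_n) = D\,\Delta_n\big(P_V(Y_1), \dots, P_V(Y_n)\big)$; reordering the $n+m$ entries into $(Y_1, \dots, Y_n, X_1, \dots, X_m)$ costs a sign $(-1)^{nm}$.

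Combining the two steps, the left-hand side of \eqref{F} becomes $(-1)^{m(1+p+\dots+p^{n-1}) + nm}\, D^{\,p + p^2 + \dots + p^{n-1}}\,\Delta_{n+m}(Y_1, \dots, Y_n, X_1, \dots, X_m)$, since $(1+p+\dots+p^{n-1}) - 1 = p + p^2 + \dots + p^{n-1}$. It then remains to check that the sign is $+1$: this is automatic in characteristic $2$, and for $p$ odd one has $1 + p + \dots + p^{n-1} \equiv n \pmod 2$, so the exponent of $-1$ is congruent to $mn + nm = 2mn \equiv 0$. The argument is purely formal; I do not expect a genuine obstacle, the only points demanding care being the sign bookkeeping in the first and last steps and the observation that $V = -V$ (so that a product over a system of coset representatives collapses to $P_V$) in the middle step.
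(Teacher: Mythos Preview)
Your proof is correct. Both you and the paper begin identically, rewriting $\Delta_{m+1}(Y_i,X_1,\dots,X_m)=(-1)^m D\,P_V(Y_i)$ via Lemma~\ref{lem:FM2.2.} and extracting the scalar $(-1)^mD$ from the outer Moore determinant; this reduces the claim to the identity
\[
\Delta_{n+m}(X_1,\dots,X_m,Y_1,\dots,Y_n)=D\cdot\Delta_n\big(P_V(Y_1),\dots,P_V(Y_n)\big),
\]
which is the paper's equation~\eqref{G} up to the sign $(-1)^{nm}$. The paper then proves~\eqref{G} by induction on $n$, invoking an external reference (\cite[Corollary~4.2]{FresnelMatignon23}) for the base case $n=2$. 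You instead prove~\eqref{G} directly and for all $n$ at once, by expanding $\Delta_{n+m}(\underline X,\underline Y)$ via the product formula~\eqref{eq:Mooreprod} and using additivity of $P_V$ to collapse the inner products. This direct argument is in fact precisely the one the paper uses later to prove Lemma~\ref{lem:structuralMoore}, so in effect you have recognised that Lemma~\ref{lem:structuralMoore} immediately implies the present lemma, making the inductive detour unnecessary. Your route is shorter and self-contained; the paper's route has the (minor) advantage of isolating the $n=2$ case as a standalone result already in the literature.
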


\begin{proof}
We proceed by induction on $n$.
If $n=1$, we interpret the expression $p+p^2+\dots+p^{n-1}$ as 0. 
If $n=2$, then the Lemma coincides with \cite[Corollary 4.2]{FresnelMatignon23} (also specialized to the case $n=2$ in the notation of the Corollary).
Let us then assume $n>2$ and that the Lemma is satisfied when replacing $n$ with $n-1$.

We denote by $V_m$, the $\F_p$-vector space $\langle X_1,X_2,\dots, X_m\rangle_{\Fp}$, and by $P_{V_m}$ the structural polynomial of $V_m$.
It then follows from Lemma \ref{lem:FM2.2.} that
\[\Delta_{n}(\Delta_{m+1}(Y_1,X_1,X_2,\dots,X_{m}),\dots,\Delta_{m+1}(Y_n,X_1,X_2,\dots,X_{m}))=\]\[\Delta_{n}((-1)^m \Delta_{m}(\underline X)  P_{V_m}(Y_1),\cdots,(-1)^m \Delta_{m}(\underline X)P_{V_m}(Y_n))=\]\[(-1)^{mn} \Delta_{m}(\underline X)^{1+p+\cdots+p^{n-1}}\Delta_{n}(P_{V_m}(Y_1),\cdots,P_{V_m}(Y_n)). \]

From this, we deduce that the identity \eqref{F} is equivalent to
\begin{align}
 \Delta_{n+m}(Y_1,\dots,Y_n, X_1,\dots, X_{m})=(-1)^{mn} \Delta_{m}(\underline X)\Delta_{n}(P_{V_m}(Y_1),\cdots,P_{V_m}(Y_n)).\label{G}
\end{align}

Moreover, we remark that 
\[ \Delta_{n+m}(Y_1,\dots,Y_n, X_1,\dots, X_{m})=(-1)^{m} \Delta_{n+m}(Y_1,\dots,Y_{n-1}, X_1,\dots, X_{m},Y_n)=\] \[(-1)^{m}\Delta_{n-1+m}(Y_1,\dots,Y_{n-1}, X_1,\dots,X_{m})\prod_{v \in \langle Y_1,\dots,Y_{n-1}, X_1,\dots,X_{m}\rangle_{\Fp}}(Y_n+v),\]
and then we can apply the identity \eqref{G} replacing $n$ with $n-1$ (satisfied by the inductive hypothesis) to show that \eqref{F} is equivalent to
\[\Delta_{n}(P_{V_m}(Y_1),\cdots,P_{V_m}(Y_n))=\Delta_{n-1}(P_{V_m}(Y_1),\cdots,P_{V_m}(Y_{n-1}))\prod_{v \in \langle Y_1,\dots,Y_{n-1}, X_1,\dots,X_{m} \rangle}(Y_n+v)\]
which, by applying again Lemma \ref{lem:FM2.2.} is equivalent to 
\[\prod_{w\in \langle P_{V_m}(Y_1),\cdots,P_{V_m}(Y_{n-1})\rangle_{\Fp}}(P_{V_m}(Y_n)+w) =\prod_{v\in \langle Y_1,\dots,Y_{n-1}, X_1,\dots,X_{m}\rangle_{\Fp}}(Y_n+v).\]
\end{proof}

\begin{remark}
When $m=1$ the identity of Lemma \ref{lem:Moore} is already known. It appears for example in work of Elkies \cite[p.81]{Elkies99} and it is used in the proof of Proposition \ref{prop:standard}.
\end{remark}

To produce certain formulas that we need in Section \ref{sec:Pagotn}, let us introduce the following determinants.
For every non-zero $n$-tuple $\underline{\epsilon}=(\epsilon_1, \dots, \epsilon_n) \in \F_p^n - \underline{0}$ we define 

\[ \Delta_{\underline{\epsilon}} (\underline{a},X):= \begin{vmatrix}
\epsilon_1& \epsilon_2 & \dots & \epsilon_n & 0\\
a_1 & a_2 & \dots & a_n &X\\
a_1^p & a_2^p & \dots & a_n^p &X^p\\
\vdots & \vdots & \ddots & \vdots & \vdots \\
a_1^{p^{n-1}} & a_2^{p^{n-1}} & \dots & a_n^{p^{n-1}} &X^{p^{n-1}}\\

\end{vmatrix} \]
and
\[ \delta_{\underline{\epsilon}} (\underline{a}):= \begin{vmatrix}
\epsilon_1& \epsilon_2 & \dots & \epsilon_n\\
a_1 & a_2 & \dots & a_n \\
a_1^p & a_2^p & \dots & a_n^p\\
\vdots & \vdots & \ddots & \vdots  \\
a_1^{p^{n-2}} & a_2^{p^{n-2}} & \dots & a_n^{p^{n-2}} \\
\end{vmatrix} \]

\begin{proposition}[Proposition 2.3. of \cite{FresnelMatignon23}]\label{prop:FM2.3.}
Let $W\subset k$ be a $\F_p$ vector space of dimension $n$.
Let $\underline{a}$ be an $\F_p$-basis of $W$ and $\underline{a}^\star=(a_1^\star, \dots, a_n^\star)$ be its dual basis.
For every non-zero $n$-tuple $\underline{\epsilon}=(\epsilon_1, \dots, \epsilon_n) \in \F_p^n - \underline{0}$, we denote by $\varphi_{\underline{\epsilon}}$ the linear homomorphism $W \to \F_p$ given by $\varphi_{\underline{\epsilon}}:=\sum_{i=1}^n \epsilon_i a_i^\star$.
Recall that we have defined $P_{\ker \varphi_{\underline{\epsilon}}}(X) := \prod_{w \in \ker \varphi_{\underline{\epsilon}}} (X-w) $.

Then $\delta_{\underline{\epsilon}} (\underline{a})\neq 0$ and we have the identity
\begin{equation}\label{eq:deltae}
P_{\ker \varphi_{\underline{\epsilon}}}(X) =\frac{\Delta_{\underline{\epsilon}} (\underline{a},X)}{\delta_{\underline{\epsilon}} (\underline{a})}=X^{p^{n-1}}+\dots+(-1)^{n-1}\delta_{\underline{\epsilon}}(\underline{a})^{p-1} X.
\end{equation}
Moreover, denoting by $\underline{e_i}$ the $i$-th element of the standard basis of $\F_p^n$, we have the following formulas:
\begin{eqnarray}
\Delta_{\underline{e_i}}(\underline{a},X)=(-1)^{i-1}\Delta_{n}(\underline{\hat{a_i}}, X) & \;\; \forall \; \; i=1, \dots, n\\
\Delta_{\underline{\epsilon}} (\underline{a},X)=\sum_{i=1}^n \epsilon_i \Delta_{\underline{e_i}}(\underline{a},X) = \sum_{i=1}^n (-1)^{i-1} \epsilon_i \Delta_{n}(\underline{\hat{a_i}}, X) \\
\delta_{\underline{\epsilon}} (\underline{a}) = \sum_{i=1}^n (-1)^{i-1} \epsilon_i\Moore{n-1}{\hat{a_i}}. \label{eq:deltaepsilona}
\end{eqnarray}
\end{proposition}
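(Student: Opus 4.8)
The plan is to deduce the whole statement from the behaviour of the two determinants $\Delta_{\underline{\epsilon}}(\underline{a},X)$ and $\delta_{\underline{\epsilon}}(\underline{a})$ under Laplace expansion, together with the single observation that the entries $\epsilon_i$ lie in $\F_p$ and are therefore fixed by the Frobenius. First I would dispose of the three ``moreover'' identities, which are pure linear algebra: expanding the $(n+1)\times(n+1)$ determinant $\Delta_{\underline{\epsilon}}(\underline{a},X)$ along its first row $(\epsilon_1,\dots,\epsilon_n,0)$ gives $\Delta_{\underline{\epsilon}}(\underline{a},X)=\sum_{i=1}^{n}(-1)^{i-1}\epsilon_i M_i$, where $M_i$ is the $n\times n$ minor obtained by deleting the first row and the $i$-th column. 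This minor is exactly the Moore determinant $\Delta_n(\underline{\hat{a_i}},X)$, so $\Delta_{\underline{e_i}}(\underline{a},X)=(-1)^{i-1}\Delta_n(\underline{\hat{a_i}},X)$, and the second displayed identity follows by linearity of the determinant in its first row. The same row expansion applied to the $n\times n$ determinant $\delta_{\underline{\epsilon}}(\underline{a})$ yields $\delta_{\underline{\epsilon}}(\underline{a})=\sum_{i=1}^{n}(-1)^{i-1}\epsilon_i\Delta_{n-1}(\underline{\hat{a_i}})$, which is the last identity.

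Next I would show $\delta_{\underline{\epsilon}}(\underline{a})\neq 0$. Since $\underline{\epsilon}\neq\underline{0}$, choose $M\in\GL_n(\F_p)$ with $\underline{\epsilon}M=\underline{e_1}$. Right multiplying the rows $\underline{\epsilon},\underline{a},\underline{a}^{p},\dots,\underline{a}^{p^{n-2}}$ by $M$ multiplies $\delta_{\underline{\epsilon}}(\underline{a})$ by $\det M\in\F_p^{\times}$, and because $M$ has entries in $\F_p$ we have $\underline{a}^{p^{j}}M=(\underline{a}M)^{p^{j}}$; hence the result equals $\delta_{\underline{e_1}}(\underline{b})$ with $\underline{b}:=\underline{a}M$ an $\F_p$-basis of $W$. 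Expanding $\delta_{\underline{e_1}}(\underline{b})$ along its first row $(1,0,\dots,0)$ gives $\Delta_{n-1}(b_2,\dots,b_n)$, which is nonzero by Remark~\ref{rmk:elementaryMoore} since $b_2,\dots,b_n$ are $\F_p$-linearly independent. In particular $\varphi_{\underline{\epsilon}}$ is a nonzero $\F_p$-linear functional on $W$, hence surjective onto $\F_p$, so $\ker\varphi_{\underline{\epsilon}}$ has exactly $p^{n-1}$ elements.

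The heart of the proof is to identify $\Delta_{\underline{\epsilon}}(\underline{a},X)$, regarded as a polynomial in $X$, with $\delta_{\underline{\epsilon}}(\underline{a})\,P_{\ker\varphi_{\underline{\epsilon}}}(X)$. Expanding $\Delta_{\underline{\epsilon}}(\underline{a},X)$ along its last column $(0,X,X^{p},\dots,X^{p^{n-1}})^{t}$ exhibits it as an additive polynomial of degree $p^{n-1}$ whose leading coefficient, the cofactor of the bottom-right entry, is $\delta_{\underline{\epsilon}}(\underline{a})\neq 0$. On the other hand, if $X=\sum_i c_i a_i\in\ker\varphi_{\underline{\epsilon}}$ with $c_i\in\F_p$, then $X^{p^{j}}=\sum_i c_i a_i^{p^{j}}$ while $\sum_i c_i\epsilon_i=\varphi_{\underline{\epsilon}}(X)=0$, so the last column of the matrix is the $\F_p$-linear combination $\sum_i c_i\cdot(\mbox{$i$-th column})$ and the determinant vanishes. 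A nonzero polynomial of degree $p^{n-1}$ vanishing on the $p^{n-1}$ elements of $\ker\varphi_{\underline{\epsilon}}$ must equal its leading coefficient times $\prod_{w\in\ker\varphi_{\underline{\epsilon}}}(X-w)=P_{\ker\varphi_{\underline{\epsilon}}}(X)$, which is the first displayed identity. For the normal form I would read off the coefficient of $X^{p^{0}}=X$ from the same Laplace expansion: that cofactor is $(-1)^{n-1}\delta_{\underline{\epsilon}}(\underline{a}^{p})=(-1)^{n-1}\delta_{\underline{\epsilon}}(\underline{a})^{p}$ (again using $\epsilon_i^{p}=\epsilon_i$), and dividing by the leading coefficient $\delta_{\underline{\epsilon}}(\underline{a})$ produces $X^{p^{n-1}}+\dots+(-1)^{n-1}\delta_{\underline{\epsilon}}(\underline{a})^{p-1}X$; alternatively one may invoke Lemma~\ref{lem:FM2.2.} applied to the $(n-1)$-dimensional space $\ker\varphi_{\underline{\epsilon}}$. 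The only point requiring care is the sign bookkeeping in the cofactor expansions and the systematic use of the fact that Frobenius fixes $\F_p$; beyond that there is no genuine obstacle, the proposition being essentially a repackaging of standard facts about additive polynomials.
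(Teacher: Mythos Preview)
Your proof is correct. The paper does not actually supply a proof of this proposition: it is quoted from \cite{FresnelMatignon23} and, as the appendix explicitly states, is ``recalled with minimal or no proof.'' Your argument is a self-contained derivation via Laplace expansion along the first row (for the three displayed identities and for $\delta_{\underline{\epsilon}}(\underline{a})\neq 0$ after a $\GL_n(\F_p)$-change of basis) and along the last column (for additivity, the leading coefficient, and the $X$-coefficient), combined with the root-counting observation that a degree-$p^{n-1}$ polynomial vanishing on the $p^{n-1}$ elements of $\ker\varphi_{\underline{\epsilon}}$ must be the stated multiple of the structural polynomial. All sign computations check out, and the key device---that Frobenius fixes the $\epsilon_i$, so $\delta_{\underline{\epsilon}}(\underline{a}^p)=\delta_{\underline{\epsilon}}(\underline{a})^p$ and $(\underline{a}M)^{p^j}=\underline{a}^{p^j}M$ for $M\in\GL_n(\F_p)$---is used exactly where it should be.
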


\begin{proposition}[Proposition 5.1. of \cite{FresnelMatignon23}]\label{prop:FM5.1.}
Let $k$ be an algebraically closed field containing $\Fp$. Let $V(\Delta_n):=\{(a_1, \dots, a_n) | \Moore{n}{a}=0 \}$. Then the map

\[
\begin{tikzcd}[row sep=0.2cm]
\varphi: & k^n \arrow[r] & k^n \\
&\ul{a} \arrow[r, mapsto] & \left((-1)^{i-1}\Moore{n-1}{\hat{a_i}}\right)_{i=1, \dots, n}
\end{tikzcd}\]

induces a surjective function $k^n -  V(\Delta_n) \to k^n -  V(\Delta_n)$.
This satisfies the property that
\[\varphi^2(\underline{a}) = (-1)^{n-1}\Moore{n}{a}^{1+p+\dots+p^{n-3}}(\underline{a})^{p^{n-2}}.\]
Moreover, $\underline{a}, \underline{a}'$ are such that $\varphi(\underline{a})=\varphi(\underline{a}')$ if, and only if, $\underline{a}=\theta\underline{a}'$ for some $\theta$ satisfying \[\theta^{1+p+\dots+p^{n-2}}=1.\]
\end{proposition}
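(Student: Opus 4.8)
Everything will be deduced from a single determinant identity — the formula for $\varphi^2$ — together with the fact that $k$, being algebraically closed, is perfect. First I would check that $\varphi$ maps $k^n-V(\Delta_n)$ into itself: plugging $\varphi(\ul a)_i=(-1)^{i-1}\Moore{n-1}{\hat{a_i}}$ into Theorem~\ref{thm:FM4.1.} gives $\Moore{n}{\varphi(\ul a)}=\Moore{n}{a}^{1+p+\dots+p^{n-2}}$, which is nonzero exactly when $\Moore{n}{a}\neq 0$. (Alternatively this follows from Proposition~\ref{prop:FM2.3.}: $\delta_{\ul\epsilon}(\ul a)=\sum_i(-1)^{i-1}\epsilon_i\Moore{n-1}{\hat{a_i}}$ is the pairing of $\ul\epsilon$ with $\varphi(\ul a)$, and it is nonzero for every $\ul\epsilon\in\F_p^n-\{0\}$ when $\ul a$ is an $\F_p$-basis, i.e. the coordinates of $\varphi(\ul a)$ are then $\F_p$-independent.)

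Next, and this is the technical core, I would prove the identity $\varphi^2(\ul a)=(-1)^{n-1}\Moore{n}{a}^{1+p+\dots+p^{n-3}}(\ul a)^{p^{n-2}}$ for $\ul a\notin V(\Delta_n)$; both sides being polynomials in $\ul a$, it then holds identically. Let $M=(a_j^{p^{i-1}})_{1\le i,j\le n}$ be the Moore matrix of $\ul a$; after unwinding signs one has $\varphi(\ul a)_s=(-1)^{n-1}C_{n,s}$, where $C_{n,s}$ is the $(n,s)$-cofactor of $M$. The crucial observation is a family of ``alien cofactor'' relations: the $(n,s)$-cofactor of the entrywise Frobenius power $M^{[p^m]}$ is $C_{n,s}^{\,p^m}$, so $\sum_s C_{n,s}^{\,p^m}a_s^{p^{n-2}}$ is the determinant of $M^{[p^m]}$ with its last row replaced by $(a_s^{p^{n-2}})_s$; for $0\le m\le n-2$ this matrix has two equal rows and the sum vanishes, while for $m=n-1$ a single cyclic permutation of rows identifies it with the Moore matrix $M^{[p^{n-2}]}$, yielding $\sum_s C_{n,s}^{\,p^{n-1}}a_s^{p^{n-2}}=(-1)^{n-1}\Moore{n}{a}^{p^{n-2}}$. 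Writing $\ul b=\varphi(\ul a)$ and $N=(b_j^{p^{i-1}})$ for its Moore matrix, these identities say exactly that $N\cdot(\ul a)^{p^{n-2}}=\Moore{n}{a}^{p^{n-2}}e_n$ (coordinatewise powers, $e_n$ the last basis vector). Since $\det N=\Moore{n}{b}=\Moore{n}{a}^{1+p+\dots+p^{n-2}}\neq 0$ by Theorem~\ref{thm:FM4.1.}, I can invert $N$; reading off the last column of $\operatorname{adj}(N)$, whose $i$-th entry is $(-1)^{n+i}\Moore{n-1}{\hat{b_i}}$, gives $\Moore{n-1}{\hat{b_i}}=(-1)^{n+i}\Moore{n}{a}^{1+p+\dots+p^{n-3}}a_i^{p^{n-2}}$, hence $\varphi^2(\ul a)_i=(-1)^{i-1}\Moore{n-1}{\hat{b_i}}=(-1)^{n-1}\Moore{n}{a}^{1+p+\dots+p^{n-3}}a_i^{p^{n-2}}$. (When $p=2$ all signs are trivial; in odd characteristic one uses $(-1)^{p^j}=-1$.)

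Finally I would deduce the remaining claims. Since $\Moore{n-1}{\,\cdot\,}$ is homogeneous of degree $E:=1+p+\dots+p^{n-2}$, one has $\varphi(\theta\ul a')=\theta^{E}\varphi(\ul a')$, which gives the ``if'' direction of the fibre description (and the equivalence for a fixed $\ul a'$, since $\varphi(\ul a')\neq 0$). Conversely, if $\varphi(\ul a)=\varphi(\ul a')$, applying $\varphi$ again and using the $\varphi^2$-formula gives $\Moore{n}{a}^{E'}(\ul a)^{p^{n-2}}=\Moore{n}{a'}^{E'}(\ul a')^{p^{n-2}}$ with $E'=1+p+\dots+p^{n-3}$; taking $p^{n-2}$-th roots coordinatewise (unique in the perfect field $k$) gives $\ul a=\theta\ul a'$ for a single scalar $\theta$, while applying $\Moore{n}{\,\cdot\,}$ to $\varphi(\ul a)=\varphi(\ul a')$ gives $\Moore{n}{a}^{E}=\Moore{n}{a'}^{E}$; comparing the two forces $\theta^{E}=1$. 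For surjectivity, given $\ul c\notin V(\Delta_n)$ let $\ul d$ be the coordinatewise $p^{n-2}$-th root of $\varphi(\ul c)$; as $\varphi$ commutes with entrywise Frobenius, $\varphi(\ul d)^{p^{n-2}}=\varphi^2(\ul c)$ is a nonzero scalar multiple of $(\ul c)^{p^{n-2}}$, so $\varphi(\ul d)$ is a scalar multiple of $\ul c$, and rescaling $\ul d$ by a suitable $E$-th root of that scalar (available since $k$ is algebraically closed) yields $\ul a$ with $\varphi(\ul a)=\ul c$, necessarily in $k^n-V(\Delta_n)$ by the first step. The one genuinely delicate point I anticipate is the cofactor and sign bookkeeping in the middle step — in particular pinning the constant down to exactly $\Moore{n}{a}^{p^{n-2}}$ via the cyclic row permutation and dispatching characteristic $2$ cleanly; the rest is formal.
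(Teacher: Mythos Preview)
The paper does not give its own proof of this proposition; it is quoted verbatim from \cite{FresnelMatignon23} and the appendix explicitly says these results ``are therefore recalled with minimal or no proof.'' So there is nothing in the paper to compare against. That said, your argument is correct and self-contained: the cofactor computation $N\cdot(\ul a)^{p^{n-2}}=\Moore{n}{a}^{p^{n-2}}e_n$ via the alien-cofactor vanishing relations is a clean way to establish the $\varphi^2$-formula, and everything else (well-definedness via Theorem~\ref{thm:FM4.1.}, the fibre description, surjectivity by rescaling a $p^{n-2}$-th root) follows formally from it as you indicate.

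One small wording issue in the fibre direction: after obtaining $\ul a=\theta\ul a'$ from the $\varphi^2$-identity, the cleanest way to get $\theta^{1+p+\dots+p^{n-2}}=1$ is to plug $\ul a=\theta\ul a'$ directly back into $\varphi(\ul a)=\varphi(\ul a')$ and use homogeneity of $\varphi$, rather than passing through $\Moore{n}{\,\cdot\,}$ (which alone only gives $\theta^{(1+p+\dots+p^{n-1})(1+p+\dots+p^{n-2})}=1$). This is presumably what you mean by ``comparing the two,'' but it is worth stating explicitly.
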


Finally, we prove two results that are not in \cite{FresnelMatignon23} and that are used in Section \ref{sec:Pagotn}.

\begin{lemma}\label{lem:structuralMoore} Let $X_1,\dots,X_n$ be variables over $\Fp$, and  for every $0\leq t \leq n$ let $V_{n-t}=\langle X_{1},\dots,X_{n-t} \rangle_{\F_p}$. 
Then, we have that
\[\Delta_n(X_1,\dots,X_n)=\Delta_{n-t}(X_{1},\dots,X_{n-t} )\Delta_{t}(P_{V_{n-t}}(X_{n-t+1}),\dots,P_{V_{n-t}}(X_n)), \]
where $P_{V_{n-t}}(X)$ is the structural polynomial of $V_{n-t}$ (see Definition \ref{defn:strucpoly}).
\end{lemma}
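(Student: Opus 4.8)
The plan is to derive the identity directly from the product expansion of the Moore determinant recorded in \eqref{eq:Mooreprod}, combined with the additivity of the structural polynomial from Lemma \ref{lem:FM2.2.}. Write $V := V_{n-t} = \langle X_1, \dots, X_{n-t}\rangle_{\F_p}$ and $P := P_{V}$ for its structural polynomial. Applying \eqref{eq:Mooreprod} to $\Delta_n(X_1, \dots, X_n)$ and splitting the outer product according to whether the running index $i$ is at most $n-t$ or exceeds it, the first block is exactly $\Delta_{n-t}(X_1, \dots, X_{n-t})$, so it will suffice to identify the remaining block
\[\prod_{i=n-t+1}^{n}\ \prod_{\epsilon_{i-1}\in\F_p}\cdots\prod_{\epsilon_1\in\F_p}\bigl(X_i + \epsilon_{i-1}X_{i-1} + \cdots + \epsilon_1 X_1\bigr)\]
with $\Delta_t\bigl(P(X_{n-t+1}), \dots, P(X_n)\bigr)$.

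To do this, I would apply \eqref{eq:Mooreprod} to $\Delta_t\bigl(P(X_{n-t+1}),\dots,P(X_n)\bigr)$, obtaining a product, over $j = 1, \dots, t$ and over all choices of $\F_p$-coefficients $\mu_{j-1}, \dots, \mu_1$, of the factors $P(X_{n-t+j}) + \mu_{j-1}P(X_{n-t+j-1}) + \cdots + \mu_1 P(X_{n-t+1})$. Since $P$ is additive by Lemma \ref{lem:FM2.2.}, each such factor equals $P\bigl(X_{n-t+j} + \mu_{j-1}X_{n-t+j-1} + \cdots + \mu_1 X_{n-t+1}\bigr)$, and by the definition of the structural polynomial (Definition \ref{defn:strucpoly}), together with the fact that $V$ is stable under negation, this in turn equals $\prod_{(c_1,\dots,c_{n-t})\in\F_p^{n-t}}\bigl(X_{n-t+j} + \mu_{j-1}X_{n-t+j-1} + \cdots + \mu_1 X_{n-t+1} + c_{n-t}X_{n-t} + \cdots + c_1 X_1\bigr)$. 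Collecting these products and setting $i = n-t+j$, the combined product over the $\mu$'s and the $c$'s becomes precisely the product over all $\F_p$-linear combinations of $X_1, \dots, X_{i-1}$, which recovers the displayed tail block above.

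Multiplying the two blocks then reproduces the full product \eqref{eq:Mooreprod} for $\Delta_n(X_1,\dots,X_n)$, which proves the identity (the degenerate cases $t=0$ and $t=n$ are covered by the same computation, reading empty products as $1$). I do not expect a serious obstacle here: the argument is pure bookkeeping of indexed products, and the only point requiring a little care is the passage from $P(Y) = \prod_{v\in V}(Y-v)$ to $\prod_{v\in V}(Y+v)$, which is legitimate because $V$ is an $\F_p$-vector space and hence $-V = V$. One could alternatively run an induction on $t$, the base case $t=1$ being Lemma \ref{lem:FM2.2.} itself, but the direct approach above avoids reindexing at each step and is cleaner.
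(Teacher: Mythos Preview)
Your proof is correct and follows essentially the same approach as the paper: both split the Moore product \eqref{eq:Mooreprod} at $i=n-t$, identify the first block with $\Delta_{n-t}(X_1,\dots,X_{n-t})$, and then use the additivity of $P_{V_{n-t}}$ from Lemma~\ref{lem:FM2.2.} together with a second application of \eqref{eq:Mooreprod} to match the remaining block with $\Delta_t(P_{V_{n-t}}(X_{n-t+1}),\dots,P_{V_{n-t}}(X_n))$. The only cosmetic difference is that you expand $\Delta_t(\dots)$ and match it to the tail block, whereas the paper rewrites the tail block and recognizes it as $\Delta_t(\dots)$; your explicit remark that $-V=V$ makes transparent a step the paper leaves implicit.
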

\begin{proof}
From Moore's formula (\ref{eq:Mooreprod}) we have 
\begin{align*}
    \Moore{n}{X}& = \prod_{i=1}^n \prod_{\epsilon_{i-1}\in \F_p} \dots \prod_{\epsilon_{1}\in \F_p} (X_i+\epsilon_{i-1}X_{i-1}+\dots+\epsilon_1X_1)  = A\cdot B,
\end{align*}
where 
\[ A :=  \prod_{i=1}^{n-t} \prod_{\epsilon_{i-1}\in \F_p} \dots \prod_{\epsilon_{1}\in \F_p} (X_i+\epsilon_{i-1}X_{i-1}+\dots+\epsilon_1X_1) \] 
and
\[
B := \prod_{i=n-t+1}^n \prod_{\epsilon_{i-1}\in \F_p} \dots \prod_{\epsilon_{1}\in \F_p} (X_i+\epsilon_{i-1}X_{i-1}+\dots+\epsilon_1X_1).
\]

Moore's formula ensures that $ A =  \Delta_{n-t}(X_1, \dots, X_{n-t})$, while the definition of the structural polynomial gives that
\[B= \prod_{i=n-t+1}^n\prod_{\epsilon_{i-1}\in \F_p} \dots \prod_{\epsilon_{n-t+1}\in \F_p} P_{V_{n-t}}(X_i+\epsilon_{i-1}X_{i-1}+\dots+\epsilon_{n-t+1}X_{n-t+1}).\]

By Lemma \ref{lem:FM2.2.}, $P_{V_{n-t}}$ is an additive polynomial and therefore the above is also equal (after reindexing) to 
\[ B = \prod_{i=1}^t \prod_{\epsilon_{i-1}\in \F_p} \dots \prod_{\epsilon_{1}\in \F_p} (P_{V_{n-t}}(X_{n-t+i})+ \epsilon_{i-1} P_{V_{n-t}}(X_{n-t+i-1})+
\dots+\epsilon_1 P_{V_{n-t}}(X_{n-t+1})),\]
and we can see from Moore's formula that this is precisely equal to
\[\Delta_{t}(P_{V_{n-t}}(X_{n-t+1}),\dots,P_{V_{n-t}}(X_n)),\]
which completes the proof of the claim.
\end{proof} 

\begin{corollary}\label{cor:structuralMoore} 
Under the hypotheses of Lemma \ref{lem:structuralMoore}, for every $n-t+1\leq i\leq n$ we have that
\[\frac{\Delta_{n-1}(X_1,\dots,\widehat{X_i},\dots,X_n)}{\Delta_n(X_1,\dots,X_n)}=\frac{\Delta_{t-1}(P_{V_{n-t}}(X_{n-t+1}),\dots,\widehat{P_{V_{n-t}}(X_i)},\dots,P_{V_{n-t}}(X_n))}{\Delta_{t}(P_{V_{n-t}}(X_{n-t+1}),\dots,P_{V_{n-t}}(X_n))}.\]
 \end{corollary}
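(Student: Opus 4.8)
The plan is to deduce this identity by applying Lemma~\ref{lem:structuralMoore} twice and taking a quotient. First I would invoke the lemma for the full $n$-tuple $(X_1,\dots,X_n)$, giving
\[\Delta_n(X_1,\dots,X_n)=\Delta_{n-t}(X_{1},\dots,X_{n-t})\,\Delta_{t}\bigl(P_{V_{n-t}}(X_{n-t+1}),\dots,P_{V_{n-t}}(X_n)\bigr).\]

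Next, the key observation is that since $n-t+1\leq i\leq n$, removing $X_i$ from the $n$-tuple leaves the first block $X_1,\dots,X_{n-t}$ untouched: the truncated tuple $(X_1,\dots,\widehat{X_i},\dots,X_n)$ has $n-1$ entries, whose first $n-t$ entries are still $X_1,\dots,X_{n-t}$. Hence I would apply Lemma~\ref{lem:structuralMoore} again, this time with $n$ replaced by $n-1$ and $t$ replaced by $t-1$; the relevant vector space $V_{(n-1)-(t-1)}$ is again $V_{n-t}=\langle X_1,\dots,X_{n-t}\rangle_{\F_p}$, so its structural polynomial $P_{V_{n-t}}$ is the same. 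This yields
\[\Delta_{n-1}(X_1,\dots,\widehat{X_i},\dots,X_n)=\Delta_{n-t}(X_{1},\dots,X_{n-t})\,\Delta_{t-1}\bigl(P_{V_{n-t}}(X_{n-t+1}),\dots,\widehat{P_{V_{n-t}}(X_i)},\dots,P_{V_{n-t}}(X_n)\bigr).\]

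Finally I would divide the second identity by the first. The common factor $\Delta_{n-t}(X_1,\dots,X_{n-t})$ is the Moore determinant of $n-t$ algebraically independent variables over $\F_p$, hence a nonzero polynomial by Remark~\ref{rmk:elementaryMoore}; working in the fraction field $\F_p(X_1,\dots,X_n)$ we may cancel it, and the stated equality follows immediately. There is no real obstacle: the entire content is already packaged in Lemma~\ref{lem:structuralMoore}, and the only subtlety to verify is that the index $i$ falls in the second block, so that the first block — and therefore the cancelled factor — is genuinely identical in the two applications.
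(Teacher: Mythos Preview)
Your proof is correct and is exactly the argument the paper intends: the corollary is stated without proof, as an immediate consequence of Lemma~\ref{lem:structuralMoore}, and your two applications of the lemma (once to the full $n$-tuple, once to the $(n-1)$-tuple with $X_i$ removed) followed by cancellation of the common factor $\Delta_{n-t}(X_1,\dots,X_{n-t})$ is precisely how this consequence is meant to be read off.
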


\begin{lemma}\label{lem:Anew}
Let $n \geq 1$, let $M \in \GL_n(\F_p)$ and let $X_1,\dots,X_n$ be variables over $\F_p$.
Denote by $Y_i=\Delta_{n-1}(\hat{\ul X_i})$ and by $Y_i^M=\Delta_{n-1}\left(\widehat{(\ul{X}M)_i}\right)$.
Then, we have
\[ (Y_1^M, \dots, Y_n^M) = \ul{Y}M^c,\]
where $M^c \in GL_n(\F_p)$ is the cofactor matrix of $M$
\end{lemma}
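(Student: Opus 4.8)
\medskip
\noindent\textbf{Proof plan for Lemma~\ref{lem:Anew}.}
The identity is purely formal: it records how the $n$-tuple of Moore minors $\big(\Delta_{n-1}(\hat{\ul X_1}),\dots,\Delta_{n-1}(\hat{\ul X_n})\big)$ transforms under the linear substitution $\ul X \mapsto \ul X M$. The plan is to deduce it from exactly the multilinear expansion already carried out inside the proof of Proposition~\ref{prop:QiMi}, applied now with the $n$-tuple of polynomials there replaced by the $n$-tuple of variables $\ul X$ itself.

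Concretely, I would fix $j\in\{1,\dots,n\}$ and write each coordinate of $\ul X M$ as $(\ul X M)_\ell=\sum_{i=1}^n M_{i,\ell}\,X_i$. Since the Moore determinant $\Delta_{n-1}$ is $\F_p$-multilinear and alternating in its $n-1$ arguments (Remark~\ref{rmk:elementaryMoore}), expanding $Y_j^M=\Delta_{n-1}\!\big((\ul X M)_\ell:\ell\neq j\big)$ by multilinearity annihilates every term with a repeated row index, so the surviving terms are indexed by the $(n-1)$-element subsets of $\{1,\dots,n\}$, i.e. by the index $i$ of the omitted variable $X_i$. Collecting the coefficient of $\Delta_{n-1}(\hat{\ul X_i})=Y_i$ and using the standard identification of an alternating multilinear sum with a determinant, that coefficient is the $(n-1)\times(n-1)$ minor of $M$ obtained by deleting row $i$ and column $j$; up to the sign $(-1)^{i+j}$ this is the $(i,j)$-entry of the cofactor matrix $M^c$. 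This is precisely the identity $\Delta_{n-1}\big(\widehat{(\ul{X}M)_j}\big)=\sum_i M_{i,j}\,\Delta_{n-1}(\hat{\ul X_i})$ used in the proof of Proposition~\ref{prop:QiMi}; letting $j$ range over $\{1,\dots,n\}$ assembles these into the matrix identity $(Y_1^M,\dots,Y_n^M)=\ul Y\,M^c$.

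The one point requiring care is the bookkeeping of signs and of the row/column indexing of $M^c$ (cofactor matrix versus its transpose, and placement of $(-1)^{i+j}$): I would pin this down once and for all by checking $n=2$ (or $n=3$) by hand, and then state the lemma with the convention making $M\,{}^t\!M^c=\det(M)\,I_n$. This is harmless — and in the only place the lemma is invoked, namely characteristic $2$ (Section~\ref{sec:p=2}), the sign is immaterial anyway. I do not expect any structural obstacle: the entire content is the behaviour of the Moore determinant under a linear change of variables, already exploited in Proposition~\ref{prop:common}.
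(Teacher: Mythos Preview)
Your proposal is correct and is essentially the same argument as the paper's: expand $\Delta_{n-1}\big(\widehat{(\ul X M)_j}\big)$ by $\F_p$-multilinearity and alternation, and identify the surviving coefficient of $Y_i$ with the $(i,j)$ cofactor of $M$. The paper carries this out by indexing the surviving terms by $\sigma\in\frakS_n$ (with $\sigma(1)=i$ picking out the omitted variable) rather than by invoking Proposition~\ref{prop:common}(i), but the content is identical; your observation that the computation is already present in the proof of Proposition~\ref{prop:common}(i) is apt.
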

\begin{proof}
This is proved with a direct computation.
We write $M = \begin{pmatrix}
m_{ij}
\end{pmatrix}_{ij}$ and, for simplicity, we show only that $Y_1^M = \ul{Y} M^c_{\bullet,1}$ where $M^c_{\bullet,1}$ is the first column of $M^c$, the proof that $Y_j^M = \ul{Y} M^c_{\bullet,j}$ being completely analogous.
By definition, we have
\[Y_1^M= \Delta_{n-1}\left(\sum_{i=1}^n m_{i,2}X_2, \dots, \sum_{i=1}^n m_{i,n}X_n\right)\]
and, using the multi-linear properties of Moore determinants, this can be rewritten as
\begin{align*}
Y_1^M = \sum_{i_2, \dots, i_n} m_{i_2, 2} \cdots m_{i_n, n} \Delta_{n-1}(X_{i_2}, \dots, X_{i_n}) & = \sum_{\sigma \in \frakS_n} m_{\sigma(2),2} \cdots m_{\sigma(n),n} \Delta_{n-1}(X_{\sigma(2)}, \dots, X_{\sigma(n)}) \\
& = \sum_{\sigma \in \frakS_n} (-1)^{\sgn(\sigma)} m_{\sigma(2),2} \cdots m_{\sigma(n),n} Y_{\sigma(1)},
\end{align*}
where $\frakS_n$ is the symmetric group over the set with $n$ elements (note that we used that $m_{i,j}^p=m_{i,j}$ for getting rid of the exponents).
Since for every $i$, the coefficient of $Y_i$ is the minor $M_{i1}$ of the matrix $M$, it results that $Y_1^M = \ul{Y} M^c_{\bullet,1}$.
\end{proof}

\bibliographystyle{siam}                         
\bibliography{VectorDiffForms.bib}

\vfill

\end{document}